\documentclass[11pt]{article}
\usepackage{amsmath, amssymb, amsthm, esint, hyperref, cite}
\usepackage{geometry, mathabx}
\newcommand{\bd}{\boldsymbol}

\usepackage{xcolor}
\usepackage{graphicx} 

\geometry{margin = 1.0in}

\def\R{{\mathbb R}}

\theoremstyle{definition}

\newtheorem{theorem}{Theorem}[section]

\newtheorem{definition}{Definition}[section]
\newtheorem{proposition}{Proposition}[section]
\newtheorem{remark}{Remark}[section]
\newtheorem{lemma}{Lemma}[section]
\begin{document}

\title{\sc{A Regularized Interface Method for Fluid-Poroelastic Structure Interaction Problems with Nonlinear Geometric Coupling}}
\author{Jeffrey Kuan, Sun\v{c}ica \v{C}ani\'{c}, Boris Muha}
\maketitle
\if 1 = 0
{\bf{Corresponding author information:}} Sun\v{c}ica \v{C}ani\'{c}, Department of Mathematics, University of California, Berkeley, canics@berkeley.edu
\fi
\begin{abstract}
\if 1 = 0
We study a benchmark fluid-poroelastic structure interaction (FPSI) problem coupling an incompressible, viscous Newtonian fluid, modeled by the Navier-Stokes equations, with a bulk poroelastic medium modeled by the Biot system, with both phases occupying domains of the same spatial dimension. The fluid and structure are in direct contact along a moving interface determined by the trace of the poroelastic displacement. Both subproblems evolve on a priori unknown domains and exhibit strong geometric nonlinearities. To address the lack of existence theory in this \emph{nonlinear, moving domain setting}, we introduce a \emph{regularized interface method}: we regularize the Biot displacement 
by spatial convolution at scale $\delta>0$ to define smooth, regularized moving domains and interface, and we modify the weak formulation so that it remains energy-consistent with the original problem.
For each fixed $\delta>0$, we prove existence of a weak solution to the resulting {\emph{nonlinearly coupled}} regularized interface problem by first inserting a thin plate of thickness $h>0$ at the interface and establishing existence for the model with the plate--involving a time-discretization via Lie operator splitting scheme, uniform a priori bounds, and Aubin-Lions compactness on moving domains. We then pass to the singular limit $h\to0$ using uniform-in-$h$ estimates to recover a {\emph{regularized interface weak solution}}. The regularization introduced in this manuscript is essential to maintain uniform geometric control of the moving interface and to accommodate vector-valued structural displacements. 

An accompanying work will show {\emph{weak-classical consistency}}, recovering classical solutions to the non-regularized problem in the limit as $\delta\to0$, when such classical solutions exist.
\fi

\if 1 = 0
We introduce a new {\emph{regularized interface method}} to study the existence of weak solutions for nonlinear moving boundary problems with {\emph{low geometric boundary regularity}}. As a benchmark, we consider a fluid-poroelastic structure interaction (FPSI) problem coupling two physically rich systems: the Navier-Stokes equations for an incompressible viscous fluid and the Biot system for a bulk poroelastic medium. These are coupled across a moving interface with low regularity and strong geometric nonlinearities. The fluid and structure occupy domains of the same spatial dimension, with the interface determined by the trace of the poroelastic displacement, so that the domains are a priori unknown. Despite its fundamental importance and broad engineering applications, no existence theory has been available in this nonlinear moving-domain setting, primarily because the lack of interface regularity precludes even the formulation of a weak solution framework.
\fi

We introduce a new {\emph{regularized interface method}} for proving existence of weak solutions to nonlinear moving boundary problems with {\emph{low-regularity interfaces.}}
 As a benchmark, we study a fluid-poroelastic structure interaction (FPSI) problem coupling the Navier-Stokes equations for an incompressible viscous fluid with the Biot system for a bulk poroelastic medium. The two phases occupy domains of the same spatial dimension, separated by a moving interface defined by the trace of the poroelastic displacement, which exhibits low regularity and strong geometric nonlinearities. Despite its importance in applications, no existence theory has been available for this nonlinear moving-domain setting, primarily because the lack of interface regularity precludes even the formulation of a weak solution framework.

To address this gap, we (1) introduce a regularization of the Biot displacement via spatial convolution at scale $\delta > 0$, which defines regularized moving domains and interface, and (2) modify the weak formulation in a way that preserves energy consistency with the original problem. For each fixed $\delta > 0$, we prove existence of a weak solution to the resulting regularized interface problem. The proof strategy involves inserting a thin plate of thickness $h > 0$ at the interface, applying a time-discretization via a Lie operator splitting scheme, establishing uniform a priori bounds, and employing Aubin-Lions compactness on moving domains. The analysis is particularly involved, partly because the thin plate allows displacements in all spatial directions, which may lead to the formation of certain geometric singularities not present when only scalar (transverse) displacements are considered. Passing to the singular limit $h \to 0$ with uniform-in-h estimates and Aubin--Lions-type compactness arguments yields a regularized interface weak solution.
The regularization introduced in this manuscript is essential to maintain uniform geometric control of the moving interface and to accommodate vector-valued structural displacements. 

An accompanying work will prove {\emph{weak-classical consistency}}, showing that in the limit $\delta \to 0$, the regularized weak solutions recover classical solutions to the non-regularized problem when such classical solutions exist.

\end{abstract}

\section{Introduction}

In this manuscript, we study a benchmark fluid-poroelastic structure interaction (FPSI) problem involving the coupling between an incompressible, viscous Newtonian fluid and a bulk poroelastic structure, both occupying domains of the same spatial dimension. Our focus is on a benchmark configuration in two spatial dimensions. The fluid is modeled by the Navier-Stokes equations, while the poroelastic structure is governed by the Biot system.
We are particularly interested in a nonlinearly coupled setting in which the interface between the fluid and the poroelastic structure is determined by the trace of the displacement of the thick poroelastic medium in direct contact with the fluid.
In this setting, the fluid and the Biot problems are each posed on a priori unknown moving domains, and hence create complex geometric nonlinearities.

The central objective of this manuscript is to study the existence of finite-energy solutions for this problem. To the best of our knowledge, no such result currently exists for FPSI or FSI problems in which a thick structure occupying a domain of the same dimension as the fluid, is in direct contact with the fluid, without a separating interface possessing mass and elastic energy, and where the elastic properties of the structure are modeled by the classical second-order elasticity equations.

This lack of existence theory stems from several key mathematical challenges. First, the interface is defined by the trace of the displacement field of the poroelastic structure, which, in general, belongs only to $H^{1/2}$. This regularity is insufficient to define the interface as the graph of a continuous function, thereby obstructing even the geometric description of the fluid domain. Second, for moving-boundary FPSI problems, the weak formulation itself becomes problematic: key geometric quantities such as the Jacobian of the Lagrangian map on the Biot domain lack the regularity needed to rigorously interpret all terms in the formally derived weak formulation.


In this manuscript, we introduce a new approach, called the \textit{regularized interface method}, which allows us to analyze a class of FPSI problems in which the fluid and poroelastic structure interact dynamically via two-way coupling along a moving interface, along which the fluid and poroelastic structure are in direct contact. 
While we focus on FPSI problems, we note that  the main ideas of this work can be extended to FSI problems involving purely elastic structures, and more generally,  to moving boundary problems in which the regularity of the moving domains depend on potentially lower regularity solutions.

The main idea behind the \textit{regularized interface method} is to introduce a regularization parameter $\delta$ and to regularize the vector-valued Biot structure displacement $\bd{\eta}: \Omega_{b} \to \R^{2}$ via spatial convolution against a kernel with support of size $\delta$.
Because the regularized displacement $\bd{\eta}^\delta$ is smooth, we can use $\bd{\eta}^\delta$ in place of $\bd{\eta}$ to define regularized moving fluid and Biot domains, as well as a regularized moving interface. In the weak formulation, any integrals over moving domains are then computed on these \textit{regularized} geometric domains.

{{However, this is not sufficient to define a {\emph{consistent regularized interface weak formulation}}, as a straightforward regularization of $\bd{\eta}$ does not preserve the energy structure of the original problem. To achieve an energy-consistent formulation, certain terms in the regularized weak formulation must be modified so that its solutions satisfy the same type of energy estimate as those of the original, non-regularized problem.}}
After taking all this into account, the resulting formulation is referred to as the \textit{regularized interface weak formulation} with regularization parameter~$\delta$.

The goal of this manuscript is to demonstrate that this formulation is robust, in the sense that it admits a weak solution. This solution can be viewed as an {\emph{approximation}} to the solution of the original, non-regularized problem. 

In the upcoming manuscript, we will show that the {\emph{regularized interface}} formulation introduced here is {\emph{consistent}} with the original problem--not only in terms of energy, but also in the sense that, as the approximation parameter $\delta \to 0$, the approximate solution constructed in this manuscript converges to a classical solution of the original, non-regularized problem, provided such a classical solution exists.

We emphasize that the spatial regularization introduced by the regularized interface method offers an additional advantage: it enables the treatment of \textit{vector-valued} elastic displacements. A common difficulty in fluid-structure interaction problems with vector-valued displacements is the potential for geometric degeneracy and loss of injectivity in the moving domains. For this reason, many FSI models restrict attention to scalar transverse or normal interface displacements. However, the additional regularity gained through spatial convolution of the Biot displacement yields uniform-in-time control over various spatial norms--such as Lipschitz norms--of the solution. This, in turn, ensures uniform-in-time control of the evolving geometry of the time-dependent domains, making the analysis of vector-valued displacements feasible.

To obtain existence of a weak solution to the regularized interface FPSI problem with geometric nonlinearities discussed above, we introduce a thin plate  of thickness $h>0$ with mass and elastic energy, positioned at the interface separating the regions of free fluid flow and Biot poroelastic medium.
We aim to show that for each fixed regularization parameter $\delta > 0$, we will recover the desired weak solution of the problem in which Biot poroelastic medium is in direct contact with free fluid flow, by taking the limit, as $h \to 0$.
The introduction of a plate with thickness $h > 0$ is advantageous, as it enables us to build on prior results from \cite{FPSIJMPA}, which established the existence of weak solutions to a geometrically nonlinear FPSI problem with a plate separating the Biot and fluid domains.
More specifically, to prove the existence of weak solutions for each fixed $h > 0$ (and a given fixed $\delta > 0$), we employ a constructive approach based on Lie operator splitting. We discretize the time interval $[0, T]$ into $N$ subintervals of size $\Delta t = T/N$, and then solve two subproblems on each time subinterval:
(1) a fluid/Biot subproblem, in which the fluid and Biot quantities are updated, and
(2) a plate subproblem, in which the interface dynamics are updated.

To pass to the limit as discretization parameter $\Delta t \to 0$, we use uniform bounds and compactness arguments--specifically, the Aubin--Lions compactness theorem for functions defined on moving domains. These tools allow us to extract subsequences of approximate solutions from the splitting scheme that converge as $\Delta t \to 0$. We then pass to the limit in the approximate weak formulations to obtain a regularized interface weak solution to the geometrically nonlinear FPSI problem with a reticular plate of thickness $h > 0$.
We remark that this existence analysis is particularly involved, due in part to the fact that the thin plate allows for displacements in all spatial direction--a feature that is new in the analytical study of FPSI problems. 

To establish existence of a regularized interface weak solution for the FPSI problem \textit{without a plate}, we consider approximate solutions with plate thickness $h > 0$ and take the singular limit as $h \to 0$. We obtain uniform-in-$h$ bounds and apply Aubin--Lions-type compactness arguments to extract strongly convergent subsequences. In the limit $h \to 0$, we recover a regularized interface weak solution satisfying the limiting weak formulation corresponding to the original problem with direct contact.

We remark that the {\emph{regularized interface construction is essential for this singular limit $h \to 0$ to work}}; without regularizing the interface, one does not obtain uniform geometric control on the dynamics of the interface separating the fluid and Biot material. This is because there are no sufficiently strong uniform bounds on the interface quantities in the limit as $h \to 0$ without additional regularization, since any spatial regularization of the interface displacement arising from the presence of the plate (giving rise to plate displacements in $H^{2}(\Gamma)$) is scaled by powers of $h$ due to the presence of the plate thickness in the plate elastodynamics equations, which causes us to lose uniform geometric control of the Biot-fluid interface in the limit as $h \to 0$, without interface regularization.

This manuscript is organized as follows. In Section \ref{problemdef}, we introduce the fluid-poroelastic structure problem under consideration and review prior work on FPSI models, contrasting our notion of \textit{regularized interface} solutions with earlier moving-boundary approaches. Section \ref{weaksection} defines weak solutions on both fixed and moving domains, and Section \ref{regintmethod} introduces the notion of a \textit{regularized interface} weak solution. In Section \ref{hproof}, we prove the main existence result for regularized interface weak solutions to the FPSI model with a moving boundary and a plate of thickness $h>0$. Section \ref{finallimit} then employs uniform bounds and compactness arguments to pass to the singular limit $h\to 0$, yielding a limiting \textit{regularized interface} weak solution for the case of direct fluid-Biot contact (no plate). Finally, Section \ref{conclusion} summarizes the main results of this manuscript, and discusses 
how the regularized interface method can be seen as a natural way for developing weak solutions to problems involving moving domains with limited geometric boundary regularity.

\if 1 = 0
The model in question has several novelties:
\begin{itemize}
\item \textbf{Nonlinear geometric coupling and bulk elasticity.} The Biot equations and the Navier-Stokes equations are posed on moving domains that depend on the displacement of the Biot material from its reference configurations. While there are past works that pose these equations on fixed domains in models with linear geometric coupling, nonlinear geometric coupling only been explored very recently in recent work. Moving domains pose a particularly difficult challenge here, since the structure and fluid are of the same dimension (bulk elasticity), and hence, geometric nonlinearities are particularly difficult to deal with, due to low spatial regularity of geometric terms for weak solutions in finite energy spaces.
\item \textbf{Direct Biot-fluid contact along the moving interface.} Most FPSI problems in consideration thus far involve elastic plates or membranes separating the fluid and Biot structure, and only recently, in the linear geometric coupled case has well-posedness of a FPSI model with direct Biot-fluid coupling been rigorously considered \cite{Gurvich}. This motivates us to naturally extend the study of Biot-fluid coupled systems with direct contact along the interface to moving boundary FPSI problems, which will be the primary goal of this manuscript.
\item \textbf{Vector-valued displacements.} We assume that the $2D$ bulk poroelastic BIot material has vector-valued displacements from its reference configuration. Thus, the $1D$ interface of direct contact between the fluid and the Biot material can potentially have significant vector-valued deformations. Much of the past literature on fluid-structure interaction (FSI) is limited to scalar transverse/normal displacements, and results on FSI with vector-valued elastic displacements are uncommon. In particular, handling injectivity and preventing geometric degeneracies in moving domains is often difficult in FSI problems.
\end{itemize}

We summarize key novelties of this work:

    Nonlinear geometric coupling and bulk elasticity. The Biot and Navier--Stokes equations are posed on moving domains determined by the Biot displacement. Unlike prior work based on fixed domains and linear coupling, this model incorporates nonlinear geometric coupling, which introduces significant analytical challenges�especially since the fluid and structure occupy domains of the same dimension. These challenges stem from the low spatial regularity of geometric terms in finite-energy weak solution spaces.

    Direct Biot�fluid contact along the moving interface. While most FPSI models include a separating plate or membrane between the fluid and Biot structure, this work considers direct contact along a moving interface. Well-posedness for such configurations has only recently been studied in the linearly coupled case \cite{Gurvich}. This manuscript aims to extend that analysis to geometrically nonlinear, moving-boundary FPSI problems with direct Biot�fluid interaction.

    Vector-valued displacements. The $2D$ Biot material undergoes vector-valued displacements, meaning the $1D$ interface in direct contact with the fluid can experience nontrivial vector deformations. Unlike much of the existing FSI literature, which often restricts to scalar transverse displacements, this model handles full vector-valued motion�requiring additional care to avoid issues like loss of injectivity or geometric degeneracy in the evolving domains.

We will consider a 2D-2D model of Biot-fluid FPSI with direct contact along a moving 1D interface. The main challenge here is that the interface will be defined via the trace of the Biot displacement along the interface. In the finite energy space, the displacement of the Biot material is in $H^{1}(\Omega_{b})$, so its trace will be in $H^{1/2}(\Gamma)$ which is not sufficient regularity to even define the moving interface as the graph of a continuous function in 1D. This poses a problem in properly defining a weak formulation to the problem at the level of finite-energy weak solutions, as moving domains are not well-defined, and geometric quantities (such as the Jacobian of the Lagrangian map) do not have sufficient regularity to allow all terms of the (formally defined) weak formulation to be rigorously interpreted. 

Thus, we will introduce a regularization method in this manuscript, which we refer to as the \textit{regularized interface method}, to handle the difficulties associated with FPSI problems posed on moving domains with potentially low regularity geometric domains. This will involve introducing a regularization parameter $\delta$, which will be fixed for the purpose of the existence proof. The regularized interface method involves taking the vector-valued Biot structure displacement $\bd{\eta}: \Omega_{b} \to \R^{2}$, which defines all of the moving domains in the problem and the moving interface, and using spatial convolution against a kernel of support $\delta$, to regularize it into a smooth regularized displacement $\bd{\eta}^{\delta}$. Because the regularized displacement $\bd{\eta}^{\delta}$ is now smooth, we can then use $\bd{\eta}^{\delta}$ instead of $\bd{\eta}$ to define regularized moving fluid and Biot domains, and a regularized moving interface. In the weak formulation, any integrals over moving domains will then be computed on the moving \textit{regularized} geometric domains and any terms arising from equations posed on moving domains are appropriately modified so as to preserve an energy estimate for the system. The resulting weak solution, which satisfies a regularized form of the weak formulation with respect to the regularization parameter $\delta > 0$, will be referred to as a \textit{regularized interface weak solution with regularization parameter $\delta$}. The goal of this manuscript is to show that this regularized interface weak solution is robust in the sense that we recover existence of weak solutions for the direct contact nonlinear geometrically coupled FPSI model, with this new definition of (regularized interface) weak solution.

The purpose of regularized interface solutions is to address deficiencies in spatial/geometric regularity for higher-dimensional problems through a sufficient, yet minimal, regularization of the weak formulation. Weak formulations to FPSI problems involving a moving time-dependent interface, due to geometric nonlinearities arising from the moving boundary, have certain geometric nonlinearities that cannot be appropriately defined in the finite-energy spaces, in the sense that certain terms in the weak formulation do not possess sufficient regularity to be interpreted rigorously. Hence, \textit{regularized interface weak solutions} are a way of addressing this deficiency in regularity; namely, regularized interface weak solutions satisfy a regularized form of the weak formulation, in which we use spatial convolution to regularize the least number of terms in the weak formulation, in such a way so that (1) the weak formulation is well-defined for finite energy solutions, (2) the energy structure of the problem is preserved (we still have a physically reasonable energy inequality), and (3) solutions as the regularization parameter $\delta \to 0$ agree with classical solutions to the original problem, whenever such classical solutions to the original problem exist. In addition, since the interface itself is regularized, regularized interface weak solutions must satisfy a form of the kinematic coupling condition (no-slip condition) that holds along the \textit{regularized} interface. 

We note that the additional spatial regularization arising from the regularized interface method, which spatially regularizes the Biot displacement via a spatial convolution, has the additional advantage of allowing us to handle \textit{vector-valued} elastic displacements. The usual challenge in considering fluid-structure interaction problems with vector-valued displacements is potential geometric degeneracy and loss of injectivity in moving domains, and hence, many fluid-structure interaction models consider scalar transverse/normal displacement of elastic structures. However, the additional regularity due to spatial convolution of the Biot displacement allows us to obtain uniform-in-time control of arbitrary spatial norms, such as Lipschitz norms, of the solution. This allows us to obtain uniform-in-time control of the moving geometry of time-dependent domains, which makes the consideration of vector-valued displacements tractable. 

To obtain existence of a regularized interface weak solution to the nonlinearly geometrically coupled FPSI problem with direct contact, we use an approximation parameter $h > 0$, where the approximate problem is the same FPSI model, but with a plate of thickness $h > 0$ along the moving interface separating the fluid and the Biot structure. The presence of the plate with thickness $h > 0$ is advantageous, as it allows us to build off of past results on the existence of weak solutions to a regularized formulation of a nonlinearly geometrically coupled FPSI problem with a plate separating the Biot material and the fluid, see \cite{FPSIJMPA}. Therefore, we will show existence of regularized interface weak solutions for the approximate problem with plate thickness $h > 0$, using a constructive existence proof via Lie operator splitting. This involves discretizing the time interval $[0, T]$ on which we are constructing the solution, into $N$ time intervals of size $\Delta t = T/N$ and then running a (1) fluid/Biot and (2) plate subproblem, in which we update the fluid/Biot quantities and then we update the interface dynamics, successively on each time subinterval. We then use uniform boundedness and compactness arguments (Aubin-Lions compactness arguments) for functions on moving domains, which will allow us to extract subsequences that converge as $\Delta t \to 0$ from the splitting scheme approximate solutions. This allows us to pass to the limit in approximate weak formulations in order to obtain a regularized interface weak solution to the nonlinearly geometrically coupled FPSI problem with a reticular plate of thickness $h > 0$. 

To then obtain the existence of a regularized interface weak solution for the nonlinearly geometrically coupled FPSI problem \textit{without a plate} (with direct contact), we use our approximate solutions for plate thickness parameters $h > 0$ and take a singular limit as $h \to 0$. More concretely, we then pass to the limit in the plate thickness $h \to 0$ via a singular limit by obtaining uniform bounds on the approximate solutions, which are uniform in $h$. This will involve using compactness arguments via compact embeddings of Aubin-Lions type in order to obtain strongly convergent subsequences of approximate solutions as $h \to 0$. We will then obtain in the limit as $h \to 0$, a regularized interface weak solution for the problem with direct contact (no plate), which satisfies the limiting weak formulation as $h \to 0$, which will exactly be the regularized interface weak formulation. We remark that the regularized interface construction is essential for this singular limit as $h \to 0$ to work; without regularizing the interface, one does not obtain uniform geometric control on the dynamics of the interface separating the fluid and Biot material. This is because there are no sufficiently strong uniform bounds on the interface quantities in the limit as $h \to 0$ without additional regularization, since any spatial regularization of the interface displacement arising from the presence of the plate (giving rise to plate displacements in $H^{2}(\Gamma)$) is scaled by powers of $h$ due to the presence of the plate thickness in the plate elastodynamics equations, which causes us to lose uniform geometric control of the Biot-fluid interface in the limit as $h \to 0$, without interface regularization.

This manuscript is organized as follows. In Section \ref{problemdef}, we define the fluid-poroelastic structure problem under consideration, and review past work for FPSI models, contrasting the notion of regularized interface solutions to past work on moving boundary FPSI. We then define the notion of a weak solution on both the fixed and moving domains in Section \ref{weaksection}, and we then define the notion of a \textit{regularized interface} weak solution in Section \ref{regintmethod}. In Section \ref{hproof}, we prove the main result on existence of regularized interface weak solutions to the FPSI model with moving boundary and plate with thickness $h > 0$. Finally, in Section \ref{finallimit}, we consider regularized interface weak solutions for the problem with plate of thickness $h > 0$ obtained in the previous section, and use uniform bounds and compactness arguments to take the singular limit as the plate thickness $h \to 0$, to obtain a limiting notion of regularized interface weak solution for the FPSI problem in which the fluid and poroelastic material are in direct contact, without a reticular plate separating them. In Section \ref{conclusion}, we summarize the resulting definition of a regularized interface weak solution for the FPSI problem with a moving fluid-Biot interface and direct contact, and discuss potential generalizations of the approach introduced in this manuscript, which we refer to as the \textit{regularized interface method}. We discuss in this final section how the regularized interface method can be seen as a natural way for developing weak solutions to problems involving moving domains with limited geometric boundary regularity.
\fi

\section{Problem description}\label{problemdef}

We consider the coupled dynamical interaction between a poroelastic medium described by the Biot equations and an incompressible viscous fluid described by the incompressible Navier-Stokes equations. We pose these equations on moving (time-dependent) domains that depend on the dynamical quantities of the problem itself (namely the displacement of the Biot material). The entire Biot-fluid system will be posed on a 2D domain, where the Biot material and the fluid occupy two-dimensional domains of the same spatial dimension. The poroelastic material is modeled by the (geometrically nonlinear) Biot equations posed on a time-dependent (moving) 2D domain $\Omega_{b}(t)$ occupied by the poroelastic material, and the incompressible fluid is modeled by the incompressible Navier-Stokes equations on a time-dependent (moving) 2D domain $\Omega_{f}(t)$ occupied by the fluid. The dynamical quantities that we will model are:
\begin{itemize}
\item \textbf{The Biot quantities.} These include the Biot vector-valued displacement $\bd{\eta}$ from its reference configuration (describing elastic displacements) and the scalar pore pressure $p$. 
\item \textbf{The fluid quantities.} These include the (divergence-free) vector-valued fluid velocity $\bd{u}$ and the scalar fluid pressure $\pi$.  
\end{itemize}

\subsection{The geometry of the problem}

The total domain occupied by both the Biot material and the fluid will be denoted by:
\begin{equation*}
\hat{\Omega} := \{\bd{x} = (x, y) \in \mathbb{R}^{2} : |\bd{x}| < 2\}.
\end{equation*}
See Fig.~\ref{geometryfig}. 
For all times $t > 0$, the time-dependent Biot and fluid domains will altogether occupy $\hat{\Omega}$, namely:
\begin{equation*}
\hat{\Omega} = \Omega_{b}(t) \cup \Omega_{f}(t) \cup \Gamma(t), \qquad \text{ for all } t \ge 0,
\end{equation*}
where $\Omega_{b}(t)$ is the time-dependent (two-dimensional) Biot domain, $\Omega_{f}(t)$ is the time-dependent (two-dimensional) fluid domain, and $\Gamma(t)$ is the (one-dimensional) moving interface along which the Biot poroelastic material and the fluid are in direct contact with each other.

\begin{figure}
\center
\includegraphics[scale=0.4]{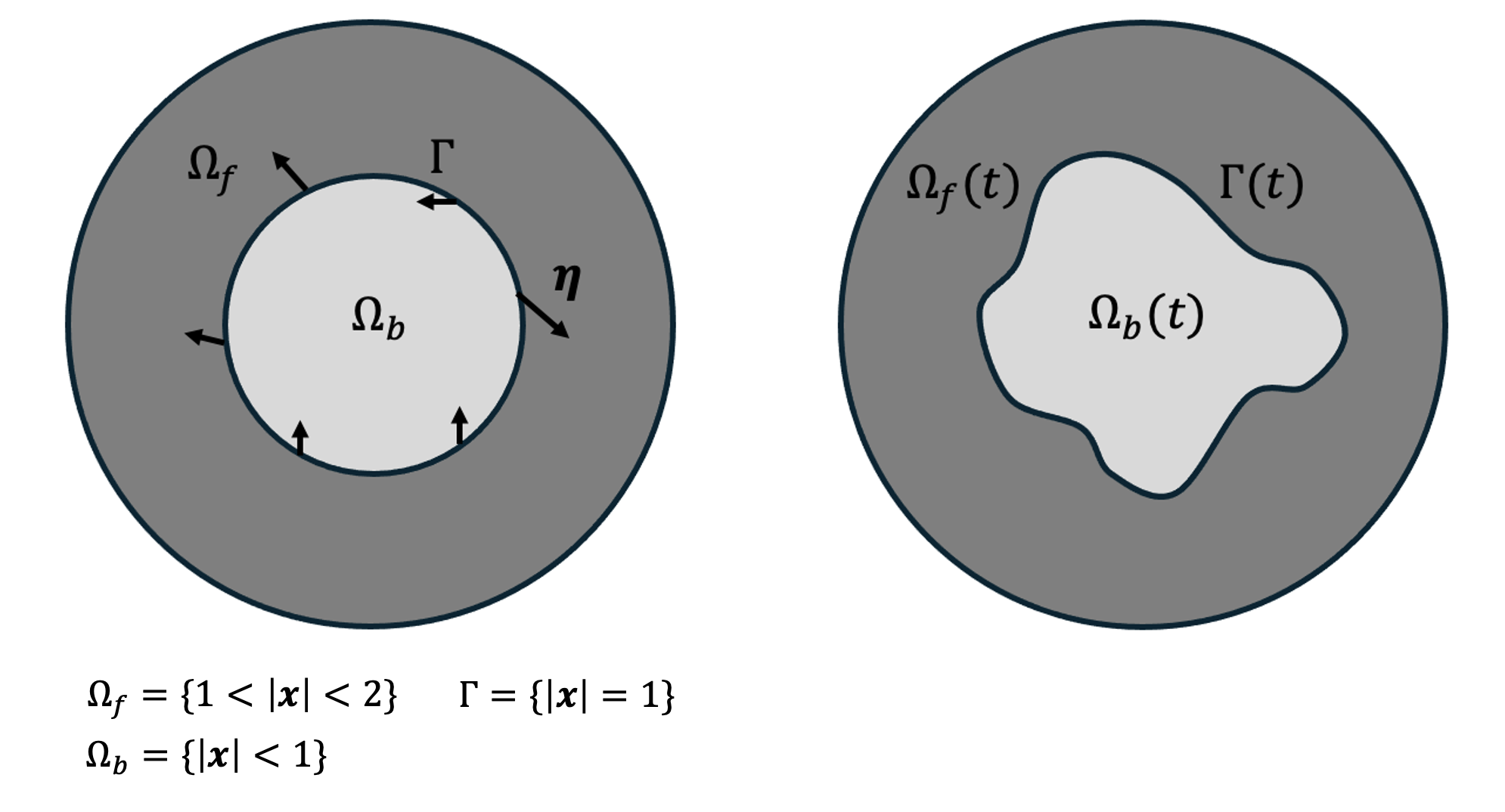}
\caption{A diagram of the fixed domain (left) and the moving domain (right) geometry of the direct contact FPSI problem.}
\label{geometryfig}
\end{figure}

The moving domains will be defined by transformations of reference (fixed) domains for both the Biot material and the fluid, where the Biot reference domain $\hat{\Omega}_{b}$ and the fluid reference domain $\hat{\Omega}_{f}$ are defined by
\begin{equation*}
\hat{\Omega}_{b} := \{\hat{\bd{x}} = (\hat{x}, \hat{y}) \in \R^{2} : |\hat{\bd{x}}| < 1\} , \qquad \hat{\Omega}_{f} := \{\hat{\bd{x}} = (\hat{x}, \hat{y}) \in \R^{2} : 1 < |\hat{\bd{x}}| < 2\},
\end{equation*}
and the reference configuration for the fluid-structure interface is
\begin{equation*}
\hat{\Gamma} := \{\hat{\bd{x}} = (\hat{x}, \hat{y}) \in \R^{2} : |\hat{\bd{x}}| = 1\}.
\end{equation*}
We will define the moving Biot domain by
\begin{equation*}
\Omega_{b}(t) := \{(\hat{x}, \hat{y}) + \hat{\bd{\eta}}(\hat{x}, \hat{y}) : (\hat{x}, \hat{y}) \in \hat{\Omega}_{b}\},
\end{equation*}
and we will define the moving interface by
\begin{equation}\label{gammat}
\Gamma(t) = \{(\hat{x}, \hat{y}) + \hat{\bd{\eta}}|_{\hat{\Gamma}}(\hat{x}) : (\hat{x}, \hat{y}) \in \hat{\Gamma}\},
\end{equation}
where the \textit{displacement of the moving interface from its reference configuration $\hat{\Gamma}$} is defined by $\hat{\bd{\eta}}|_{\hat{\Gamma}}$, which is to be interpreted as a trace along $\hat{\Gamma}$. We then define the moving fluid domain $\Omega_{f}(t)$ to be the remaining part of the entire domain:
\begin{equation}\label{omegaft}
\Omega_{f}(t) = \hat{\Omega} \setminus (\Omega_{b}(t) \cup \Gamma(t)).
\end{equation}
We therefore observe that the moving fluid/Biot domains and the moving interface depend on the a priori unknown Biot displacement $\hat{\bd{\eta}}$. See Figure \ref{geometryfig}.

\begin{remark}\label{notation}
We set a notation convention that will be used throughout the manuscript. We will use the ``hat" notation for geometric domains whenever these domains are fixed reference domains (such as in $\hat{\Omega}_{b}$, $\hat{\Omega}_{f}$, and $\hat{\Omega}$), and we use the ``hat" notation for physical quantities whenever they are defined on reference domains, such as the Biot displacement $\hat{\bd{\eta}}$ defined on $\hat{\Omega}_{b}$. Domains without hats or physical quantities without hats correspond to time-dependent moving domains (for example, as in the time-dependent fluid domain $\Omega_{f}(t)$). 
\end{remark}

\begin{remark}[A remark about the geometry of $\hat{\Gamma}$]
In the explicit analysis of the problem, it will be useful to parametrize the reference configuration $\hat{\Gamma}$ for the direct contact interface by 
\begin{equation*}
\hat{z} \in [0, 2\pi] \to (\cos(\hat{z}), \sin(\hat{z})) \in \hat{\Gamma},
\end{equation*}
so that we regard the unit circle $\hat{\Gamma}$ as a torus, or equivalently as the interval $\hat{z} \in [0, 2\pi]$ with periodic boundary conditions. It will be useful to regard functions on $\hat{\Gamma}$ as (periodic) functions of $\hat{z} \in [0, 2\pi]$. See the discussion in Section \ref{transformed}.
\end{remark}

\subsection{The Biot subproblem}

We first specify the dynamics of the Biot poroelastic material, and for this, we model the Biot displacement $\hat{\bd{\eta}}: \hat{\Omega} \to \R^{2}$ and the pore pressure $p: \hat{\Omega} \to \R$ using the nonlinear Biot equations. The Biot displacement is related to the elastic properties of the material, and it describes the displacement of the Biot material from its reference configuration $\hat{\Omega}_{b}$. Namely, we can define an associated \textbf{Lagrangian map} to the Biot displacement $\hat{\bd{\eta}}$, given by
\begin{equation}\label{lagrangianmap}
\hat{\bd{\Phi}}_{b}^{\eta}(\hat{x}, \hat{y}): \hat{\Omega}_{b} \to \Omega_{b}(t), \qquad \hat{\bd{\Phi}}^{\eta}_{b}(\hat{x}, \hat{y}) = (\hat{x}, \hat{y}) + \hat{\bd{\eta}}(\hat{x}, \hat{y}), \text{ for } (\hat{x}, \hat{y}) \in \hat{\Omega}_{b}.
\end{equation}
We can then describe the coupled dynamics of the elastic displacement of Biot material and the flow of fluid through its pores through the nonlinear Biot equations, which is the following system of coupled equations:
\begin{equation}\label{biot}
\begin{cases} 
\rho_{b}\partial_{tt}\hat{\bd{\eta}} = \hat{\nabla} \cdot \hat{S}_{b}(\hat{\nabla}\hat{\bd{\eta}}, \hat{p}) \qquad &\text{ in } \hat{\Omega}_{b}, \\
\displaystyle \frac{c_{0}}{[\text{det}(\hat{\nabla} \hat{\bd{\Phi}}^{\eta}_{b})] \circ (\bd{\Phi}^{\eta}_{b})^{-1}} \frac{D}{Dt} p + \alpha \nabla \cdot \frac{D}{Dt} \bd{\eta} = \nabla \cdot (\kappa \nabla p) \qquad &\text{ in } \Omega_{b}(t),
\end{cases}
\end{equation}
where the (visco)elastic Piola-Kirchhoff stress tensor is given by
\begin{equation}\label{kirchhoff}
\hat{S}_{b}(\nabla \bd{\eta}, p) = 2\mu_{e} \hat{\bd{D}}(\hat{\bd{\eta}}) + \lambda_{e} (\hat{\nabla} \cdot \hat{\bd{\eta}}) \bd{I} + 2\mu_{v} \hat{\bd{D}}(\hat{\bd{\eta}}_{t}) + \lambda_{v} (\hat{\nabla} \cdot \hat{\bd{\eta}}_{t}) \bd{I} - \alpha \text{det}(\hat{\nabla} \hat{\bd{\Phi}}^{\eta}_{b}) \hat{p} (\hat{\nabla} \hat{\bd{\Phi}}^{\eta}_{b})^{-t},
\end{equation}
and where $\hat{\Phi}^{\eta}_{b}$ denotes the Lagrangian map defined in \eqref{lagrangianmap}. Note that the elastodynamics equation, which is the first equation in \eqref{biot}, is defined in the Lagrangian formulation on the reference domain $\hat{\Omega}_{b}$, whereas the momentum equation for the pore pressure, which is the second equation in \eqref{biot}, is defined in the Eulerian formulation on the moving domain $\Omega_{b}(t)$. In the second equation of \eqref{biot}, we can interpret the term on the right-hand side in terms of the \textit{filtration velocity}, where the velocity $\bd{q}$ of the flow through the poroelastic material is given by \textbf{Darcy's law} via the relationship:
\begin{equation}\label{darcy}
\bd{q} := -\kappa \nabla p,
\end{equation}
for a permeability constant $\kappa > 0$. For the Biot material, we assume the elasticity coefficients in \eqref{kirchhoff} satisfy $\mu_{e}, \lambda_{e} > 0$, and $\mu_{v}, \lambda_{v} \ge 0$. We say that the Biot medium is:
\begin{equation}\label{poroviscoelastic}
\begin{cases}
\textbf{poroelastic} \text{ if } \mu_{v} = 0 \text{ and } \lambda_{v} = 0 \\
\textbf{poroviscoelastic} \text{ if } \mu_{v} > 0 \text{ and } \lambda_{v} > 0.
\end{cases}
\end{equation}
We emphasize that our analysis in terms of existence of regularized interface weak solutions holds for both cases, and we carry out the explicit analysis in the more challenging case of a purely poroelastic Biot medium.

We emphasize the difference in notation for the displacement and the pore pressure between these two equations. While $\hat{\bd{\eta}}: \hat{\Omega}_{b} \to \R^{2}$ and $\hat{p}: \hat{\Omega}_{b} \to \R$ are defined on the reference Biot domain, $\bd{\eta}$ and $p$ are defined on the moving Biot domain via the Lagrangian map:
\begin{equation*}
\bd{\eta}(x, y) = \hat{\bd{\eta}}\Big((\hat{\bd{\Phi}}^{\eta}_{b})^{-1}(x, y)\Big), \qquad p(x, y) = \hat{p}\Big((\hat{\bd{\Phi}}^{\eta}_{b})^{-1}(x, y)\Big), \qquad \text{ for } (x, y) \in \Omega_{b}(t). 
\end{equation*}
It will also be convenient to define the Lagrangian Biot velocity $\hat{\bd{\xi}}$ and the Eulerian Biot velocity $\bd{\xi}$:
\begin{equation}\label{eulerianvelocity}
\hat{\bd{\xi}}(\hat{x}, \hat{y}) = \partial_{t}\hat{\bd{\eta}}(\hat{x}, \hat{y}) \ \ \text{ for } (\hat{x}, \hat{y}) \in \hat{\Omega}_{b}, \qquad \bd{\xi}(x, y) = \hat{\bd{\xi}}\Big((\hat{\bd{\Phi}}^{\eta}_{b})^{-1}(x, y)\Big) \ \  \text{ for } (x, y) \in \Omega_{b}(t).
\end{equation}
We note that these definitions are in agreement with our notational convention in Remark \ref{notation}.

Similarly, it will at times be convenient to consider the (visco)elastic Piola-Kirchhoff stress tensor defined in \eqref{kirchhoff} on the fixed domain $\hat{\Omega}_{b}$, on the moving domain $\Omega_{b}(t)$ instead. We can do this by transforming $\hat{S}_{b}(\nabla \bd{\eta}, p)$ via the Piola transform (see \cite{Ciarlet}), to obtain the resulting Piola-Kirchhoff stress tensor defined in Eulerian (moving domain) coordinates:
\begin{multline}\label{kirchlagrangian}
S_{b}(\nabla \bd{\eta}, p) = [\text{det}(\hat{\nabla} \hat{\bd{\Phi}}^{\eta}_{b})^{-1} \hat{S}_{b}(\hat{\nabla} \hat{\bd{\eta}}, \hat{p}) (\hat{\nabla} \hat{\bd{\Phi}}^{\eta}_{b})^{t}] \circ (\bd{\Phi}^{\eta}_{b})^{-1} \\
= \left(\frac{1}{\text{det}(\hat{\nabla}\hat{\bd{\Phi}}^{\eta}_{b})} \Big[2\mu_{e} \hat{D}(\hat{\bd{\eta}}) + \lambda_{e}(\hat{\nabla} \cdot \hat{\bd{\eta}}) + 2\mu_{v} \hat{\bd{D}}(\hat{\bd{\eta}}_{t}) + \lambda_{v} (\hat{\nabla} \cdot \hat{\bd{\eta}}_{t})\Big] (\hat{\nabla} \hat{\bd{\Phi}}^{\eta}_{b})^{t}\right) \circ (\bd{\Phi}^{\eta}_{b})^{-1} - \alpha p \bd{I} \\
\text{ on } \Omega_{b}(t).
\end{multline}

\subsection{The fluid subproblem}

We model the incompressible, viscous fluid using the incompressible Navier-Stokes equations, which describe the evolution of the fluid velocity $\bd{u}: \Omega_{f}(t) \to \R^{2}$ and the fluid pressure $\pi: \Omega_{f}(t) \to \R$ via equations for balance of momentum and conservation of mass (incompressibility) respectively:
\begin{equation}\label{NS}
\begin{cases}
\partial_{t} \bd{u} + (\bd{u} \cdot \nabla) \bd{u} = \nabla \cdot \bd{\sigma}_{f}(\nabla \bd{u}, \pi), \\
\nabla \cdot \bd{u} = 0, \\
\end{cases}
\text{ on } \Omega_{f}(t),
\end{equation}
where the Cauchy stress tensor $\bd{\sigma}_{f}(\nabla \bd{u}, \pi)$ for the (Newtonian) fluid is given by
\begin{equation*}
\bd{\sigma}_{f}(\nabla \bd{u}, \pi) = 2\mu \bd{D}(\bd{u}) - \pi \bd{I}, \qquad \bd{D}(\bd{u}) = \frac{1}{2}\Big((\nabla \bd{u}) + (\nabla \bd{u})^{t}\Big).
\end{equation*}
These equations are posed on the moving fluid domain $\Omega_{f}(t)$, which is determined by the Biot displacement $\hat{\bd{\eta}}$ via \eqref{omegaft}. 

\subsection{The coupling conditions}

We couple the Biot and fluid subproblems via two sets of coupling conditions: a kinematic coupling condition and a dynamic coupling condition. These coupling conditions are evaluated along the moving (a priori unknown) interface $\Gamma(t)$, which introduces geometric nonlinearities into the problem, and furthermore, these coupling conditions are evaluated along an interface $\Gamma(t)$ along which the fluid and Biot material \textit{are in direct contact with each other}. We recall the definition of the Eulerian Biot velocity $\bd{\xi}$ from \eqref{eulerianvelocity}. In addition, we define $\bd{n}(t)$ and $\bd{\tau}(t)$ to be the outward pointing unit normal vector and the clockwise pointing tangential vector respectively to $\Gamma(t)$. 

\medskip

\noindent \underline{\textbf{I. Kinematic coupling conditions.}} We consider two kinematic coupling conditions evaluated along the moving interface $\Gamma(t)$, one for the \textbf{continuity of the normal velocities}:
\begin{equation}\label{kin1}
\bd{u} \cdot \bd{n}(t) = (\bd{q} + \bd{\xi}) \cdot \bd{n}(t), \qquad \text{ on } \Gamma(t) \text{ for all } t,
\end{equation}
and also, the \textbf{Beavers-Joseph-Saffman condition} for the fluid tangential slip along the moving interface:
\begin{equation}\label{kin2}
\beta (\bd{\xi} - \bd{u}) \cdot \bd{\tau}(t) = \bd{\sigma}_{f}(\nabla \bd{u}, \pi) \bd{n}(t) \cdot \bd{\tau}(t), \qquad \text{ on } \Gamma(t) \text{ for all } t.
\end{equation}

\bigskip

\noindent \underline{\textbf{II. Dynamic coupling conditions.}} We consider two dynamic coupling conditions, one for \textbf{balance of stress}:
\begin{equation}\label{normalstress}
\bd{\sigma}_{f}(\nabla \bd{u}, \pi) \bd{n}(t)  = S_{b}(\nabla \bd{\eta}, p) \bd{n}(t), \qquad \text{ on } \Gamma(t) \text{ for all } t,
\end{equation}
where $S_{b}(\nabla \bd{\eta}, p)$ is the Piola-Kirchhoff stress tensor in Lagrangian coordinates, see \eqref{kirchlagrangian}, and also \textbf{balance of pressure} along the moving interface:
\begin{equation}\label{dyn2}
-\bd{\sigma}_{f}(\nabla \bd{u}, \pi) \bd{n}(t) \cdot \bd{n}(t) + \frac{1}{2}|\bd{u}|^{2} = p, \qquad \text{ on } \Gamma(t) \text{ for all } t.
\end{equation}

\subsection{The boundary and initial conditions}

We assume that the total domain $\hat{\Omega} := \{\hat{\bd{x}} \in \R^{2} : |\hat{\bd{x}}| < 2\}$ has an outer rigid wall. Therefore, we prescribe Dirichlet boundary conditions for the fluid velocity along the rigid wall:
\begin{equation}\label{boundaryconditions}
\bd{u} = \bd{0} \qquad \text{ along } \partial\hat{\Omega}.
\end{equation}
We also prescribe finite-energy initial data for the problem:
\begin{equation*}
\bd{u}(0, \cdot) = \bd{u}_{0} \in L^{2}(\Omega_{f}(0)), \quad \hat{\bd{\eta}}(0, \cdot) = \hat{\bd{\eta}}_{0} \in H^{1}(\hat{\Omega}_{b}), \quad \partial_t\hat{\bd{\eta}}(0, \cdot)=\hat{\bd{\xi}}_{0} \in L^2(\hat{\Omega}_{b}),  \quad \hat{p}(0, \cdot) = \hat{p}_0 \in L^{2}(\hat{\Omega}_{b}),
\end{equation*}
with 
\begin{equation}\label{U0CC}
\nabla \cdot \bd{u}_{0} = 0\; {\rm on}\; \Omega_{f}(0),
\quad \bd{u}_0\cdot\bd{n}(0) = 0\; {\rm on}\; \partial\hat{\Omega}.
\end{equation}

\subsection{A discussion of past work in FPSI}

The study of fluid-poroelastic structure interaction (FPSI) is a mathematical area of recent interest, featuring applications to physical and biological sciences, which has been studied through a variety of mathematical approaches: analytic, numerical, and modeling. FPSI has grown out of the broader and classical study of the dynamical interaction between incompressible fluids and deformable elastic structures, known as fluid-structure interaction (FSI). 

The study of FSI began with fluid-structure models with \textbf{linear geometric coupling}, in which the fluid and structure domains (although moving in practice) are fixed for the purposes of the model, and the fluid/structure equations are hence posed on fixed domains \cite{barGruLasTuff2, BarGruLasTuff, KukavicaTuffahaZiane}, as a model for small displacements. The study of FSI expanded to more complex models of fluid-structure dynamics with \textbf{nonlinear geometric coupling}, involving a priori unknown and time-dependent moving fluid and structure domains, which introduce additional geometric nonlinearities and significant mathematical difficulties into the mathematical analysis. These types of moving boundary models with nonlinear geometric coupling have been analyzed through a variety of approaches, including strong solution approaches \cite{BdV1, Lequeurre, Grandmont16, FSIforBIO_Lukacova} and weak solution approaches \cite{CDEM, CG, MuhaCanic13, LengererRuzicka, FSIforBIO_Lukacova} to models involving incompressible fluids occupying domains with elastic boundaries of lower dimensionality. In addition, models involving immersed elastic solids and deformable solids of the same dimension as the fluid domain have been analyzed in \cite{BKS24, BrKS24, CSS1, CSS2, ChengShkollerCoutand, ChenShkoller, Kuk, IgnatovaKukavica, ignatova2014well, Raymond}.

Poroelasticity has been of interest since the mid-1900s, when Biot proposed the \textbf{Biot equations} of poroelasticity for modeling poroelastic materials, in the context of geosciences and soil consolidation \cite{Biot1, Biot2}. Since then, the use of the Biot equations of poroelasticity have expanded into a much broader range of real-life applications, including the modeling of fractures and groundwater flows \cite{GWG15, LDQ11} and the modeling of biological tissues by poroelasticity in biomedical applications \cite{FluidsCanic,YifanDES,CGH14, YRC14}. The practical importance of poroelasticity and the abundance of poroelastic materials in science have motivated the rigorous mathematical analysis of the Biot equations, and by now, there has been extensive work on studying well-posedness for the Biot equations via rigorous mathematical PDE analysis \cite{BGM23, Biotwell1, Biotwell2, Biotwell3, Biotwell4, Biotwell5, Biotwell6, Biotwell7, Biotwell8,BiotWell22,BiotWell23}.

Motivated in particular by applications to problems in which poroelastic media interacts with an elastic or deformable solid, the study of fluid-poroelastic structure interaction has been a recent field of mathematical research, which inherits many of the mathematical difficulties (including complex coupling along interfaces, moving domains, geometric nonlinearities) of FSI problems at large and the mathematical challenges associated with the analysis of the Biot equations. FPSI models with linear geometric coupling (with equations posed on fixed interfaces) were studied in works such as \cite{AEN19, Ces17, Sho05}, and a FPSI model with linear geometric coupling and a poroelastic plate at the fluid-poroelastic material interface was considered in \cite{BCMW21}, arising for a need for a rigorous mathematical model for bioartificial organs in biomedical applications \cite{FluidsCanic}. 

In \cite{BCMW21}, the poroelastic plate exists at the interface, separating the fluid domain and the Biot domain containing the poroelastic material. The use of plates at the fluid-structure interface is physically motivated, since many biological tissues are multilayered with both thick elastic layers and thin elastic layers, which has already been observed for example in FSI applications \cite{BorSunMultiLayered,YifanCMAME,FSIStent,BCMW21}. However, in addition as has been proposed in \cite{BorSunMultiLayered} in the broader context of FSI, the thin layer at the interface between the thick elastic structure and fluid can serve as a regularizing mechanism for the interface dynamics, as the trace of the elastic structure displacement along the interface at the level of weak solutions does not have sufficient regularity to interpret the interface as a continuous well-defined interface, without additional regularity in the elastodynamics of the structure. 

The idea of using a thin elastic plate to regularize interface dynamics in moving boundary FSI problems was applied to FPSI in \cite{NonlinearFPSI_CRM23, FPSIJMPA}, where existence of a weak solution to a regularized weak formulation was achieved for a moving boundary FPSI model with nonlinear geometric coupling and a reticular plate at the Biot-fluid interface. The regularized weak formulation in this case involved regularizing terms in the weak formulation related to the geometry of the Lagrangian map, and it is shown in \cite{FPSIJMPA} that this regularization is consistent with classical solutions to the original non-regularized problem when such classical solutions exist, in the sense that these regularized solutions converge as the regularization parameter goes to zero, which is called \textit{weak-classical consistency}. This motivates the approach of a regularized interface solution used in the current manuscript, though it is crucial to note that the interface itself is not regularized in \cite{FPSIJMPA} because of the elastodynamics of the reticular plate, which already gives the interface displacement a regularity of $H^{2}_{0}(\Gamma)$. However, in the current model with direct contact, we use a regularized interface condition, where the interface displacement is $\bd{\eta}^{\delta}|_{\Gamma}$, given by a spatial convoluted version of $\bd{\eta}$, rather than $\bd{\eta}|_{\Gamma}$, since there is no plate at the interface to serve as a regularizing mechanism for the interface dynamics.

Such a direct contact problem between a Biot medium and a fluid is classical and of both practical and mathematical importance, as many mathematical well-posedness results for the Biot-fluid problem are direct contact problems. In the literature, we mention in particular the recent works \cite{Gurvich,AW25}, which study well-posedness for a direct contact Biot-fluid problem with a Beavers-Joseph-Saffman condition imposed along the (linearized) direct contact interface and \textit{linear geometric coupling}. In particular, this work establishes existence of strong solutions satisfying an energy identity and weak solutions satisfying an energy inequality for appropriate initial data and source terms, via a semigroup approach. \textit{The work in this manuscript extends work on direct contact Biot-fluid FPSI problems by providing, to the best of our knowledge, the first rigorous well-posedness result for a direct contact Biot/fluid FPSI problem with nonlinear geometric coupling/a priori unknown moving domains, hence representing a significant development in the field of FPSI.} We remark that such direct contact FPSI models are essential in applications, as shown by the wealth of mathematical literature on the numerical simulation of such models for example in the work \cite{QuainiQuarteroniPoroelastic}, by mixed formulation approaches \cite{MixedBiot, MixedBiot2, MixedBiot3, MixedBiot4} and loosely coupled partitioned schemes \cite{BukacBiot, OyekoleBukacFPSI}. As such, the current manuscript addresses a relevant problem of particular importance to the broader FSI/FPSI mathematical and engineering communities.

Our approach relies fundamentally on a constructive existence scheme given by Lie operator splitting, first proposed in \cite{MuhaCanic13}, in which the full multiphysical FSI/FPSI problem is split into separate subproblems in a modular approach. This is a robust approach to analyzing FSI and FPSI problems both analytically in terms of well-posedness of FSI models via a constructive existence approach \cite{MuhaCanic13, BorSun3d, BorSunMultiLayered, BorSunNonLinearKoiter, BorSunSlip, BCMW21, TW20, MMNT22} and numerically especially for FPSI models in \cite{BYZFPSI, BukacBiot, OyekoleBukacFPSI}. We use this operator splitting approach to show constructive existence for an approximate problem that approximates the direct contact problem we study, which splits the interface displacement from the Biot/fluid dynamics.

Second, our approach builds on the regularized weak formulation framework developed in \cite{NonlinearFPSI_CRM23, FPSIJMPA} for problems involving bulk (poro)elasticity and a thin plate at the interface, transparent to fluid flow, and extends it by introducing a novel \textit{regularized interface method} that enables the analysis of {\emph{direct contact problems}} with nonlinear geometric coupling and bulk elasticity. We believe the regularized interface method proposed here is broadly applicable, including to FSI models with bulk elasticity and to nonlinear elastic materials in three spatial dimensions. In particular, this approach is well-suited for studying problems with \textbf{vector-valued elastic displacements in bulk elastic materials} within the weak solutions framework—a significant advance, as most existing works on weak solutions for FSI models focus on scalar displacements. Exceptions include \cite{Tawri3D, Grandmont2D, KSS2D, BorSunSlip, FSIStent, BKS24}, which consider vector-valued displacements for elastic plates or shells (rather than bulk materials), with \cite{Grandmont2D, KSS2D, BorSunSlip} addressing 2D fluids coupled to 1D structures, penalization of domain degeneracy via compression in \cite{KSS2D}, and additional a priori assumptions on Lipschitz domains in \cite{FSIStent}. Moreover, in \cite{BKS24, BrKS24}, weak solutions to FSI problems with bulk $3D$ viscoelasticity were obtained, but only for higher-order viscoelasticity models. While our exposition focuses on a 2D/2D bulk elasticity problem, we expect the regularized interface method to be robust for FSI problems with vector-valued displacements in higher dimensions, and we introduce new techniques for controlling geometric degeneracies in bulk elastic materials with vector-valued displacements.

\section{Definition of the weak formulation}\label{weaksection}

\subsection{Weak formulation on the physical domain}\label{derivation}

We first define an appropriate weak formulation for our nonlinearly geometrically coupled FPSI problem with direct contact on the physical domain by testing against sufficiently regular test functions. We derive the weak formulation for the full problem as follows. 

\medskip

\noindent \textbf{Fluid momentum equation.} We test the momentum equation in the fluid equation \eqref{NS} by a fluid test function $\bd{v}$ by multiplying by $\bd{v}$ and then integrating over $\Omega_{f}(t)$. We obtain by using the Reynold's transport theorem and integration by parts:
\begin{multline}\label{weakderive1}
\int_{\Omega_{f}(t)} (\partial_{t} \bd{u} + (\bd{u} \cdot \nabla) \bd{u}) - (\nabla \cdot \bd{\sigma}_{f}(\nabla \bd{u}, \pi))) \cdot \bd{v} = \frac{d}{dt}\int_{\Omega_{f}(t)} \bd{u} \cdot \bd{v} - \int_{\Omega_{f}(t)} \bd{u} \cdot \partial_{t} \bd{v} + 2\nu \int_{\Omega_{f}(t)} \bd{D}(\bd{u}) : \bd{D}(\bd{v}) \\
+ \frac{1}{2} \int_{\Omega_{f}(t)} [((\bd{u} \cdot \nabla) \bd{u}) \cdot \bd{v} - ((\bd{u} \cdot \nabla) \bd{v}) \cdot \bd{u}] + \frac{1}{2} \int_{\Gamma(t)} (\bd{u} \cdot \bd{n} - 2\bd{\xi} \cdot \bd{n}) \bd{u} \cdot \bd{v} - \int_{\Gamma(t)} \bd{\sigma}_{f}(\nabla \bd{u}, \pi) \bd{n} \cdot \bd{v},
\end{multline}
where $\bd{\xi}$ is the Eulerian structure velocity defined in \eqref{eulerianvelocity}.

\medskip

\noindent \textbf{Biot elasticity equation.} We test the first elastodynamics equation in the Biot equations \eqref{biot} by a test function $\hat{\bd{\psi}}$ and obtain by integration by parts, by using properties of the Piola transform, and by using the definition of the viscoelastic Piola-Kirchhoff stress tensor in \eqref{kirchhoff}:
\begin{multline}\label{weakderive2}
\int_{\hat{\Omega}_{b}} (\rho_{b} \partial_{tt}\hat{\bd{\eta}} - \hat{\nabla} \cdot \hat{S}_{b} (\hat{\nabla} \hat{\bd{\eta}}, \hat{p}) \cdot \hat{\bd{\psi}} = \rho_{b} \frac{d}{dt} \int_{\hat{\Omega}_{b}} \partial_{t}\hat{\bd{\eta}} \cdot \hat{\bd{\psi}} - \rho_{b} \int_{\hat{\Omega}_{b}} \partial_{t}\hat{\bd{\eta}} \cdot \partial_{t}\hat{\bd{\psi}} - \alpha \int_{\Omega_{b}(t)} p(\nabla \cdot \bd{\psi}) \\
+ \int_{\hat{\Omega}_{b}} (2\mu_{e} \hat{D}(\hat{\bd{\eta}}) : \hat{\bd{D}}(\hat{\bd{\psi}}) + \lambda_{e} (\hat{\nabla} \cdot \hat{\bd{\eta}}) (\hat{\nabla} \cdot \hat{\bd{\psi}}) + 2\mu_{v} \hat{\bd{D}}(\partial_{t}\hat{\bd{\eta}}) : \hat{\bd{D}}(\hat{\bd{\psi}}) + \lambda_{v} (\hat{\nabla} \cdot \partial_{t}\hat{\bd{\eta}}) (\nabla \cdot \hat{\bd{\psi}}) + \int_{\hat{\Gamma}} \hat{S}_{b}(\hat{\nabla} \hat{\bd{\eta}}, \hat{p}) \bd{e}_{r} \cdot \hat{\bd{\psi}},
\end{multline}
where $p = \hat{p} \circ (\bd{\Phi}^{\eta}_{b})^{-1}$ and $\bd{\psi} = \hat{\bd{\psi}} \circ (\bd{\Phi}^{\eta}_{b})^{-1}$.

\medskip

\noindent \textbf{Biot pore pressure equation.} We multiply the second pore pressure equation in the Biot equations \eqref{biot} by a pore pressure test function $r$ and integrate by parts and use a change of variables via the Lagrangian map $\hat{\bd{\Phi}}^{\eta}_{b}$, recalling that the Darcy velocity $\bd{q}$ is defined as $\bd{q} := -\kappa \nabla p$ on $\Omega_{b}(t)$:
\begin{multline}\label{weakderive3}
\int_{\Omega_{b}(t)} \left(\frac{c_{0}}{[\text{det}(\hat{\nabla}\hat{\bd{\Phi}}^{\eta}_{b})] \circ (\bd{\Phi}^{\eta}_{b})^{-1}} \frac{D}{Dt} p + \alpha \nabla \cdot \frac{D}{Dt} \bd{\eta} - \nabla \cdot (\kappa \nabla p) \right) r \\
= c_{0} \frac{d}{dt} \int_{\hat{\Omega}_{b}} \hat{p} \cdot \hat{r} - c_{0} \int_{\hat{\Omega}_{b}} \hat{p} \cdot \partial_{t}\hat{r} - \alpha \int_{\Omega_{b}(t)} \frac{D}{Dt} \bd{\eta} \cdot \nabla r + \kappa \int_{\Omega_{b}(t)} \nabla p \cdot \nabla r - \alpha \int_{\Gamma(t)} (\bd{\xi} \cdot \bd{n}) r - \int_{\Gamma(t)} (\bd{q} \cdot \bd{n}) r. 
\end{multline}

\medskip

\noindent \textbf{Conclusion of the derivation.} To obtain the final weak formulation, we add together the results of \eqref{weakderive1}, \eqref{weakderive2}, and \eqref{weakderive3}. Since the filtration velocity $\bd{q}$ is not explicitly solved for in a weak solution, we use the continuity of velocity condition in \eqref{kin1} to rewrite the term $\displaystyle \int_{\Gamma(t)} (\bd{q} \cdot \bd{n}) r$ in \eqref{weakderive3} as
\begin{equation*}
\int_{\Gamma(t)} (\bd{q} \cdot \bd{n}) r = \int_{\Gamma(t)} ((\bd{u} - \bd{\xi}) \cdot \bd{n}) r.
\end{equation*}
In addition, we can combine the boundary terms arising from the fluid and Biot stress tensors as follows. By the continuity of normal stresses in \eqref{normalstress}, we obtain that
\begin{align*}
-\int_{\Gamma(t)} \bd{\sigma}_{f}(\nabla \bd{u}, \pi) \bd{n} \cdot \bd{v} &+ \int_{\hat{\Gamma}} \hat{S}_{b}(\hat{\nabla} \hat{\bd{\eta}}, \hat{p}) \bd{e}_{r} \cdot \hat{\bd{\psi}} = \int_{\Gamma(t)} \bd{\sigma}_{f}(\nabla \bd{u}, \pi) \bd{n} \cdot (\bd{\psi} - \bd{v}) \\
&= \int_{\Gamma(t)} \bd{\sigma}_{f}(\nabla \bd{u}, \pi) \bd{n} \cdot \bd{n} [(\bd{\psi} - \bd{v}) \cdot \bd{n}] + \int_{\Gamma(t)} \bd{\sigma}_{f}(\nabla \bd{u}, \pi) \bd{n} \cdot \bd{\tau} [(\bd{\psi} - \bd{v}) \cdot \bd{\tau}] \\
&= \int_{\Gamma(t)} \left(\frac{1}{2} |\bd{u}|^{2} - p\right) (\bd{\psi} - \bd{v}) \cdot \bd{n} + \int_{\Gamma(t)} \beta (\bd{\xi} - \bd{u}) \cdot \bd{\tau} (\bd{\psi} - \bd{v}) \cdot \bd{\tau},
\end{align*}
where we used the coupling conditions \eqref{kin2} and \eqref{dyn2} in the last step. We therefore, after integrating from $t = 0$ to $t = T$ in the time variable, have obtained the following weak formulation for the direct contact FPSI problem with nonlinear geometric coupling.

\begin{definition}[Weak solution, moving domain formulation]\label{weakmoving}
We say that $(\bd{u}, \bd{\eta}, p)$ is a weak solution to the FPSI problem if for all $C_{c}^{1}([0, T))$ (continuously differentiable) test functions $(\bd{v}, \bd{\psi}, r)$ taking values in an appropriate test space with $\bd{v}$ defined on $\Omega_{f}(t)$, and both $\bd{\psi}$ and $r$ defined on $\Omega_{b}(t)$, the following weak formulation holds:
\begin{multline*}
-\int_{0}^{T} \int_{\Omega_{f}(t)} \bd{u} \cdot \partial_{t} \bd{v} + \frac{1}{2} \int_{0}^{T} \int_{\Omega_{f}(t)} [((\bd{u} \cdot \nabla) \bd{u}) \cdot \bd{v} - ((\bd{u} \cdot \nabla) \bd{v}) \cdot \bd{u}] + \frac{1}{2} \int_{0}^{T} \int_{\Gamma(t)} (\bd{u} \cdot \bd{n} - 2\bd{\xi} \cdot \bd{n}) \bd{u} \cdot \bd{v} \\
+ 2\mu \int_{0}^{T} \int_{\Omega_{f}(t)} \bd{D}(\bd{u}) : \bd{D}(\bd{v}) + \int_{0}^{T} \int_{\Gamma(t)} \left(\frac{1}{2} |\bd{u}|^{2} - p\right) (\bd{\psi} - \bd{v}) \cdot \bd{n}(t) + \beta \int_{0}^{T} \int_{\Gamma} (\bd{\xi} - \bd{u}) \cdot \bd{\tau} (\bd{\psi} - \bd{v}) \cdot \bd{\tau} \\
- \rho_{b} \int_{0}^{T} \int_{\hat{\Omega}_{b}} \partial_{t} \hat{\bd{\eta}} \cdot \partial_{t} \hat{\bd{\psi}} + 2\mu_{e} \int_{0}^{T} \int_{\hat{\Omega}_{b}} \hat{\bd{D}}(\hat{\bd{\eta}}) : \hat{\bd{D}}(\hat{\bd{\psi}}) + \lambda_{e} \int_{0}^{T} \int_{\hat{\Omega}_{b}} (\hat{\nabla} \cdot \hat{\bd{\eta}}) (\hat{\nabla} \cdot \hat{\bd{\psi}}) + 2\mu_{v} \int_{0}^{T} \int_{\hat{\Omega}_{b}} \hat{\bd{D}}(\partial_{t}\hat{\bd{\eta}}) : \hat{\bd{D}}(\hat{\bd{\psi}}) \\
+ \lambda_{v} \int_{0}^{T} \int_{\hat{\Omega}_{b}} (\hat{\nabla} \cdot \partial_{t}\hat{\bd{\eta}})(\hat{\nabla} \cdot \hat{\bd{\psi}}) - \alpha \int_{0}^{T} \int_{\Omega_{b}(t)} p \nabla \cdot \bd{\psi} - c_{0} \int_{0}^{T} \int_{\hat{\Omega}_{b}} \hat{p} \cdot \partial_{t}\hat{r} \\
- \alpha \int_{0}^{T} \int_{\Omega_{b}(t)} \frac{D}{Dt} \bd{\eta} \cdot \nabla r - \alpha \int_{0}^{T} \int_{\Gamma(t)} (\bd{\xi} \cdot \bd{n})r + \kappa \int_{0}^{T} \int_{\Omega_{b}(t)} \nabla p \cdot \nabla r - \int_{0}^{T} \int_{\Gamma(t)} ((\bd{u} - \bd{\xi}) \cdot \bd{n}) r \\
= \int_{\Omega_{f}(0)} \bd{u}_0 \cdot \bd{v}(0) + \rho_{b} \int_{\hat{\Omega}_{b}}\hat{\bd{\xi}}_{0}  \cdot \hat{\bd{\psi}}(0) + c_{0} \int_{\hat{\Omega}_{b}} \hat{p}_0 \cdot \hat{r}(0). 
\end{multline*}
\end{definition}

\subsection{Transformations between fixed and moving domains}\label{transformed}

Our next goal is to define an appropriate weak formulation on the fixed domain, to give a weak formulation in the fixed domain (Lagrangian) formulation. This will involve examining maps between fixed domains and moving (physical) domains, and examining how differential operators and functions transform between the fixed and moving domains. 

\medskip

\noindent \textbf{Transformation between the fixed and moving Biot domains.} We have the Lagrangian map $\hat{\bd{\Phi}}^{\eta}_{b}: \hat{\Omega}_{b} \to \Omega_{b}(t)$, defined in \eqref{lagrangianmap}. Given a scalar function $g: \Omega_{b}(t) \to \R$ or a vector-valued function $\bd{g}: \Omega_{b}(t) \to \R^{2}$, we can define corresponding transformed functions on the fixed domain via:
\begin{equation}\label{hatbiot}
\hat{g}(\hat{x}, \hat{y}) := g(\hat{\bd{\Phi}}^{\eta}_{b}(\hat{x}, \hat{y})), \qquad \hat{\bd{g}}(\hat{x}, \hat{y}) := \bd{g}(\hat{\bd{\Phi}}^{\eta}_{b}(\hat{x}, \hat{y})), \qquad \text{ for } (\hat{x}, \hat{y}) \in \hat{\Omega}_{b}.
\end{equation}
Furthermore, by the chain rule, we have the following formula for the transformation of spatial gradients via the Lagrangian map:
\begin{equation}\label{etanabla}
\hat{\nabla}^{\eta}_{b}\hat{g} = (\nabla g) \circ \hat{\bd{\Phi}}^{\eta}_{b}, \qquad \text{ where } \hat{\nabla}^{\eta}_{b} \hat{g} = \left(\frac{\partial \hat{g}}{\partial \hat{x}}, \frac{\partial \hat{g}}{\partial \hat{y}}\right) \cdot (\bd{I} + \hat{\nabla} \hat{\bd{\eta}})^{-1}.
\end{equation}
To clarify notation, $\displaystyle \hat{\nabla} \hat{g} := \left(\frac{\partial \hat{g}}{\partial \hat{x}}, \frac{\partial \hat{g}}{\partial \hat{y}}\right)$ is the (standard) gradient on the fixed domain $\hat{\Omega}_{b}$, $\displaystyle \nabla g := \left(\frac{\partial g}{\partial x}, \frac{\partial g}{\partial y}\right)$ is the (standard) gradient on the moving domain $\Omega_{b}(t)$, and $\hat{\nabla}^{\eta}_{b}$ is the transformed gradient on the reference domain $\hat{\Omega}_{b}$. The Jacobian of the Lagrangian map will be denoted by
\begin{equation}\label{jeta}
\hat{\mathcal{J}}^{\eta}_{b} := \text{det}(\bd{I} + \hat{\nabla} \hat{\bd{\eta}}).
\end{equation}

\medskip

\noindent \textbf{Transformation between the fixed and moving fluid domains.} Given the vector-valued interface displacement $\hat{\bd{\eta}}|_{\hat{\Gamma}}$, we will define an associated \textbf{Arbitrary Lagrangian-Eulerian (ALE) map} $\hat{\bd{\Phi}}^{\eta}_{f}: \hat{\Omega}_{f} \to \Omega_{f}(t)$. Because $\hat{\bd{\eta}}|_{\hat{\Gamma}}$ is a two-dimensional vector field along $\hat{\Gamma}$, we use the approach of \textit{harmonic extension} to define the ALE map. Namely, we define $\hat{\bd{\Phi}}^{\eta}_{f}: \overline{\hat{\Omega}_{f}} \to \R^{2}$ to be the unique solution to the following elliptic problem:
\begin{equation}\label{aleeta1}
\Delta \hat{\bd{\Phi}}^{\eta}_{f} = \bd{0} \qquad \text{ on } \hat{\Omega}_{f},
\end{equation}
with boundary conditions
\begin{equation}\label{aleeta2}
\hat{\bd{\Phi}}^{\eta}_{f}(\hat{x}, \hat{y}) = (\hat{x}, \hat{y}) + \hat{\bd{\eta}}|_{\hat{\Gamma}}(\hat{x}, \hat{y}) \text{ on } \hat{\Gamma}, \qquad \hat{\bd{\Phi}}^{\eta}_{f}(\hat{x}, \hat{y}) = (\hat{x}, \hat{y}) \text{ on } \partial \hat{\Omega}_{f} \setminus \hat{\Gamma}.
\end{equation}
By elliptic regularity theorems and Sobolev embedding (see \cite{Grisvard} and the discussion preceding Proposition 2.1 in \cite{Tawri3D}), we have that for all $p \ge 2$, 
\begin{equation*}
\|\hat{\bd{\Phi}}^{\eta}_{f}\|_{W^{2, p}(\hat{\Omega}_{f})} \le C_{p}\|(\hat{\bd{\eta}}|_{\hat{\Gamma}})\|_{W^{2 - \frac{1}{p}, p}(\hat{\Gamma})} \le C_{p}\|(\hat{\bd{\eta}}|_{\hat{\Gamma}})\|_{H^{\frac{5}{2} - \frac{2}{p}}(\hat{\Gamma})}. 
\end{equation*}
We then define the Jacobian of the ALE mapping $\hat{\bd{\Phi}}^{\eta}_{f}: \hat{\Omega}_{f} \to \Omega_{f}(t)$ by
\begin{equation*}
\mathcal{J}^{\eta}_{f} := \text{det}(\hat{\bd{\Phi}}^{\eta}_{f})
\end{equation*}
and we define the associated velocity of the ALE map defined on the reference domain $\hat{\Omega}_{f}$:
\begin{equation}\label{ALEvel}
\hat{\bd{w}}^{\eta}(\hat{x}, \hat{y}) = \partial_{t}\hat{\bd{\Phi}}^{\eta}_{f}(\hat{x}, \hat{y}), \qquad \text{ for } (\hat{x}, \hat{y}) \in \hat{\Omega}_{f}.
\end{equation}
We can use the ALE map to transform functions from the moving (physical) domain to the reference domain: given a function $h$ defined on $\Omega_{f}(t)$, we can define
\begin{equation}\label{hatfluid}
\hat{h}(\hat{x}, \hat{y}) := h(\hat{\bd{\Phi}}^{\eta}_{f}(\hat{x}, \hat{y})), \qquad \text{ for } (\hat{x}, \hat{y}) \in \hat{\Omega}_{f},
\end{equation}
and by the chain rule, we have the following transformation rule for spatial derivatives:
\begin{equation}\label{gradtransform}
\hat{\nabla}^{\eta}_{f} \hat{h} = (\nabla h) \circ \hat{\bd{\Phi}}^{\eta}_{f} \quad \text{ where } \hat{\nabla}^{\eta}_{f} \hat{h} := \left(\frac{\partial \hat{h}}{\partial \hat{x}}, \frac{\partial \hat{h}}{\partial \hat{y}}\right) \cdot \Big(\hat{\nabla} \hat{\bd{\Phi}}^{\eta}_{f}\Big)^{-1},
\end{equation}
analogously to \eqref{etanabla}, and by taking traces, we can define the transformed divergence:
\begin{equation}\label{transformdiv}
\hat{\nabla}^{\eta}_{f} \cdot \hat{h} = (\nabla \cdot h) \circ \hat{\bd{\Phi}}^{\omega}_{f} \quad \text{ where } \hat{\nabla}^{\eta}_{f} \cdot \hat{h} := \text{tr}\left[\left(\frac{\partial \hat{h}}{\partial \hat{x}}, \frac{\partial \hat{h}}{\partial \hat{y}}\right) \cdot \Big(\hat{\nabla} \hat{\bd{\Phi}}^{\eta}_{f}\Big)^{-1}\right].
\end{equation}
We also have the following transformation rule for time derivatives;
\begin{equation}\label{aletime}
\partial_{t} h(\bd{\Phi}^{\eta}_{f}(\hat{x}, \hat{y})) = \partial_{t}\hat{h}(\hat{x}, \hat{y}) - (\hat{\bd{w}}^{\eta} \cdot \hat{\nabla}^{\eta}_{f}) \hat{h}(\hat{x}, \hat{y}).
\end{equation}

More generally, given a (vector-valued) function $\hat{\bd{\omega}}: \hat{\Gamma} \to \R^{2}$, we define the associated fluid ALE map $\hat{\bd{\Phi}}^{\omega}_{f}: \hat{\Omega}_{f} \to \Omega_{f}(t)$ as the solution to 
\begin{equation}\label{alef1}
\Delta \hat{\bd{\Phi}}^{\omega}_{f} = \bd{0} \qquad \text{ on } \hat{\Omega}_{f},
\end{equation}
with boundary conditions
\begin{equation}\label{alef2}
\hat{\bd{\Phi}}^{\omega}_{f}(\hat{x}, \hat{y}) = (\hat{x}, \hat{y}) + \hat{\bd{\omega}}(\hat{x}, \hat{y}) \text{ on } \hat{\Gamma}, \qquad \hat{\bd{\Phi}}^{\omega}_{f}(\hat{x}, \hat{y}) = (\hat{x}, \hat{y}) \text{ on } \partial \hat{\Omega}_{f} \setminus \hat{\Gamma}.
\end{equation}
In this context, we have the analogous definitions of
\begin{equation*}
\hat{h}(\hat{x}, \hat{y}) := h(\hat{\bd{\Phi}}^{\omega}_{f}(\hat{x}, \hat{y})), \qquad \hat{\nabla}^{\omega}_{f}\hat{h} = (\nabla h) \circ \hat{\bd{\Phi}}^{\omega}_{f} \quad \text{ where } \hat{\nabla}^{\omega}_{f}\hat{h} := \left(\frac{\partial \hat{h}}{\partial \hat{x}}, \frac{\partial \hat{h}}{\partial \hat{y}}\right) \cdot \left(\hat{\nabla} \hat{\bd{\Phi}}^{\omega}_{f}\right)^{-1},
\end{equation*}
\begin{equation}\label{omegadiff}
\hat{\nabla}^{\omega}_{f} \cdot \hat{h} = (\nabla \cdot h) \circ \hat{\bd{\Phi}}^{\omega}_{f} \quad \text{ where } \hat{\nabla}^{\omega}_{f} \cdot \hat{h} := \text{tr}\left[\left(\frac{\partial \hat{h}}{\partial \hat{x}}, \frac{\partial \hat{h}}{\partial \hat{y}}\right) \cdot \left(\hat{\nabla} \hat{\bd{\Phi}}^{\omega}_{f}\right)^{-1}\right].
\end{equation}
We can also define the ALE velocity $\hat{\bd{w}}^{\omega} = \partial_{t}\hat{\bd{\Phi}}^{\omega}_{f}$ and we obtain a similar transformation rule for time derivatives as in \eqref{aletime}.

\begin{remark}[Notation remarks]
Using the more general notational convention for the ALE map in \eqref{alef1} and \eqref{alef2} given a vector-valued interface displacement $\hat{\bd{\omega}}$, the definition of the ALE map defined the Biot displacement $\hat{\bd{\Phi}}^{\eta}_{f}$ given in \eqref{aleeta1} and \eqref{aleeta2} is more accurately described by $\hat{\bd{\Phi}}^{\eta|_{\Gamma}}_{f}$, since the interface displacement is given by $\hat{\bd{\eta}}|_{\hat{\Gamma}}$ in the direct contact fluid-Biot problem. However, we choose to use the notation $\hat{\bd{\Phi}}^{\eta}_{f}$ for notational simplicity. 

As a notational remark, note that we use the same notation for functions pulled back to the reference domain, for functions defined on the Biot domain via the Biot Lagrangian map $\hat{\bd{\Phi}}^{\eta}_{b}$ in \eqref{hatbiot} and for functions defined on the fluid domain via the ALE fluid map $\hat{\bd{\Phi}}^{\eta}_{f}$ in \eqref{hatfluid}. While we do not explicitly disambiguate the notation as both are denoted using the same ``hat" notation $\hat{h}$ for example, we rely on context to disambiguate these two notions, as it will be clear from the context whether the associated function is defined on the Biot domain or the fluid domain.
\end{remark}

\medskip

\noindent \textbf{Transformation between the fixed and moving interface.} Next, we describe the transformation from the fixed reference interface $\hat{\Gamma}$ and the moving interface $\Gamma(t)$. To do this, we note that it will be convenient to use a standard parametrization to parametrize the unit circle $\hat{\Gamma}$:
\begin{equation}\label{circle}
\hat{z} \in [0, 2\pi] \to (\cos(\hat{z}), \sin(\hat{z})) \in \hat{\Gamma}.
\end{equation}
Hence, given a (scalar-valued or vector-valued) function $\hat{f}(\hat{x}, \hat{y})$ for $(\hat{x}, \hat{y}) \in \hat{\Gamma}$, we can regard the function as a periodic function on $[0, 2\pi]$ as:
\begin{equation*}
\hat{f}(\hat{z}) := \hat{f}(\cos(\hat{z}), \sin(\hat{z})), \qquad \text{ for } \hat{z} \in [0, 2\pi].
\end{equation*}
To simplify notation, we will implicitly switch between the notations $\hat{f}(\hat{z})$ and $\hat{f}(\hat{x}, \hat{y})$ for functions $\hat{f}$ defined on $\hat{\Gamma}$, without explicitly notating the composition of the function with the standard parametrization \eqref{circle}.  

Next, we define the transformation between $\hat{\Gamma}$ and $\Gamma(t)$, described by the vector-valued interface displacement $\hat{\bd{\eta}}|_{\hat{\Gamma}}: \hat{\Gamma} \to \R^{2}$, denoted component-wise as $\hat{\bd{\eta}}|_{\hat{\Gamma}}(\hat{z}) := (\hat{\eta}_{1}|_{\hat{\Gamma}}(\hat{z}), \hat{\eta}_{2}|_{\hat{\Gamma}}(\hat{z}))$, $\hat{z} \in [0, 2\pi]$, for the (vector-valued) Biot displacement $\hat{\bd{\eta}} = (\hat{\eta}_{1}, \hat{\eta}_{2})$. We define the map $\bd{\Phi}^{\eta}_{\Gamma}: \hat{\Gamma} \to \Gamma(t)$ defined by
\begin{equation}\label{inteta}
\hat{\bd{\Phi}}^{\eta}_{\Gamma}(\hat{z}) := (\cos(\hat{z}), \sin(\hat{z})) + \hat{\bd{\eta}}|_{\hat{\Gamma}}(\hat{z}) = (\cos(\hat{z}) + \hat{\eta}_{1}|_{\hat{\Gamma}}(\hat{z}), \sin(\hat{z}) + \hat{\eta}_{2}|_{\hat{\Gamma}}(\hat{z})). 
\end{equation}
In addition, we define some relevant quantities related to the geometry of the moving interface $\Gamma(t)$. We first define the arc length element, given by
\begin{equation}\label{arclengthdef}
\hat{\mathcal{S}}^{\eta}_{\Gamma} := \sqrt{(-\sin(\hat{z}) + \partial_{\hat{z}}\hat{\eta}_{1}|_{\hat{\Gamma}})^{2} + (\cos(\hat{z}) + \partial_{\hat{z}}\hat{\eta}_{2}|_{\hat{\Gamma}})^{2}},
\end{equation}
and we define the normal vector $\bd{n}$ and the tangent vector $\bd{\tau}$ to the moving interface $\Gamma(t)$ (with a standardized orientation) by
\begin{equation}\label{ntdef}
\bd{n} := \frac{1}{\hat{S}^{\eta}_{\Gamma}} (\cos(\hat{z}) + \partial_{\hat{z}}\hat{\eta}_{2}|_{\hat{\Gamma}}, \sin(\hat{z}) - \partial_{\hat{z}}\hat{\eta}_{1}|_{\hat{\Gamma}}), \qquad \bd{\tau} := \frac{1}{\hat{S}^{\eta}_{\Gamma}} (-\sin(\hat{z}) + \partial_{\hat{z}}\hat{\eta}_{1}|_{\hat{\Gamma}}, \cos(\hat{z}) + \partial_{\hat{z}}\hat{\eta}_{2}|_{\hat{\Gamma}}).
\end{equation}
As for the fluid ALE map in \eqref{alef1} and \eqref{alef2}, it will at times be more convenient to instead emphasize the dependence of the Lagrangian map for the interface by its vector-valued displacement $\hat{\bd{\omega}}$ in which case, the notation is analogous:
\begin{equation}\label{intomega}
\hat{\bd{\Phi}}^{\omega}_{\Gamma}(\hat{z}) := (\cos(\hat{z}), \sin(\hat{z})) + \hat{\bd{\omega}}(\hat{z}) = (\cos(\hat{z}) + \hat{\omega}_{1}(\hat{z}), \sin(\hat{z}) + \hat{\omega}_{2}(\hat{z})) \quad \text{ for } \hat{\bd{\Phi}}^{\omega}_{\Gamma}: \hat{\Gamma} \to \Gamma(t),
\end{equation}
\begin{equation*}
\hat{\mathcal{S}}^{\omega}_{\Gamma} := \sqrt{(-\sin(\hat{z}) + \partial_{\hat{z}}\hat{\omega}_{1})^{2} + (\cos(\hat{z}) + \partial_{\hat{z}} \hat{\omega}_{2})^{2}},
\end{equation*}
\begin{equation}\label{ntdef2}
\bd{n} := \frac{1}{\hat{S}^{\omega}_{\Gamma}} (\cos(\hat{z}) + \partial_{\hat{z}}\hat{\omega}_{2}, \sin(\hat{z}) - \partial_{\hat{z}}\hat{\omega}_{1}), \quad \bd{\tau} := \frac{1}{\hat{\mathcal{S}}^{\omega}_{\Gamma}}(-\sin(\hat{z}) + \partial_{\hat{z}}\hat{\omega}_{1}, \cos(\hat{z}) + \partial_{\hat{z}}\hat{\omega}_{2}).
\end{equation}

\medskip

\noindent \textbf{Some results about injectivity.} We observe that these transformations between fixed and moving domains that we have defined for the fluid, the Biot medium, and the fluid-Biot interface, are only well-defined when the maps are injective, and therefore, it will be useful to have ways of characterizing injectivity. We start with the following result about injectivity of the map for the moving interface.

\begin{proposition}\label{gammainjective}
Let $\hat{\bd{\omega}}^{*}: [0, 2\pi] \to \R^{2}$ be a continuously differentiable $C^{1}(\hat{\Gamma})$ function with $\hat{\bd{\omega}}^{*}(0) = \hat{\bd{\omega}}^{*}(2\pi)$, which satisfies the following non-degeneracy condition for some positive constant $\alpha > 0$:
\begin{equation*}
|(-\sin(\hat{z}), \cos(\hat{z})) + \partial_{\hat{z}}\hat{\bd{\omega}}^{*}(\hat{z})| \ge \alpha \text{ for all } \hat{z} \in [0, 2\pi],
\end{equation*}
and $\hat{\bd{\Phi}}^{\omega^{*}}_{\Gamma}(\hat{z}) := (\cos(\hat{z}), \sin(\hat{z})) + \hat{\bd{\omega}}^{*}(\hat{z})$ is an injective map on $\hat{\Gamma}$ such that $\bd{\Phi}^{\omega^{*}}_{\Gamma}(\hat{\Gamma}) \cap \partial \hat{\Omega} = \varnothing$ for $\hat{\Omega} := \{\hat{\bd{x}} \in \R^{2} : |\hat{\bd{x}}| < 2\}$. Then, there exists $\delta > 0$ sufficiently small such that for all $C^{1}(\hat{\Gamma})$ functions $\hat{\bd{\omega}}: [0, 2\pi] \to \R^{2}$ such that 
\begin{equation*}
\hat{\bd{\omega}}(0) = \hat{\bd{\omega}}(2\pi) \quad \text{ and } \quad \|\hat{\bd{\omega}} - \hat{\bd{\omega}}^{*}\|_{C^{1}(\hat{\Gamma})} \le \delta,
\end{equation*}
the map $\hat{\bd{\Phi}}^{\omega}_{\Gamma}(\hat{z}) := (\cos(\hat{z}), \sin(\hat{z})) + \hat{\bd{\omega}}(\hat{z})$ is injective and $\hat{\bd{\Phi}}^{\omega}_{\Gamma}(\hat{\Gamma}) \cap \partial \hat{\Omega} = \varnothing$, with 
\begin{equation}\label{nondegeneracy}
\|(-\sin(\hat{z}), \cos(\hat{z})) + \partial_{\hat{z}} \hat{\bd{\omega}}(\hat{z})\| \ge \frac{\alpha}{2} \text{ for all }\hat{z} \in [0, 2\pi].
\end{equation}

\end{proposition}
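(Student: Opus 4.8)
The plan is to prove this via a compactness/perturbation argument, treating the non-degeneracy bound and the injectivity separately. First I would establish the non-degeneracy estimate \eqref{nondegeneracy}: since $\|\hat{\bd{\omega}} - \hat{\bd{\omega}}^{*}\|_{C^{1}(\hat{\Gamma})} \le \delta$ controls $\|\partial_{\hat{z}}\hat{\bd{\omega}} - \partial_{\hat{z}}\hat{\bd{\omega}}^{*}\|_{C^{0}}$, the reverse triangle inequality gives, pointwise in $\hat{z}$,
\begin{equation*}
|(-\sin\hat{z}, \cos\hat{z}) + \partial_{\hat{z}}\hat{\bd{\omega}}(\hat{z})| \ge |(-\sin\hat{z},\cos\hat{z}) + \partial_{\hat{z}}\hat{\bd{\omega}}^{*}(\hat{z})| - \|\partial_{\hat{z}}\hat{\bd{\omega}} - \partial_{\hat{z}}\hat{\bd{\omega}}^{*}\|_{C^0} \ge \alpha - \delta,
\end{equation*}
so choosing $\delta \le \alpha/2$ yields the claim. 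The condition $\hat{\bd{\Phi}}^{\omega}_{\Gamma}(\hat{\Gamma}) \cap \partial\hat{\Omega} = \varnothing$ is equally easy: $\hat{\bd{\Phi}}^{\omega^*}_{\Gamma}(\hat{\Gamma})$ is a compact set disjoint from the compact set $\partial\hat{\Omega}$, hence at positive distance $d_0 > 0$; since $\|\hat{\bd{\Phi}}^{\omega}_{\Gamma} - \hat{\bd{\Phi}}^{\omega^*}_{\Gamma}\|_{C^0} = \|\hat{\bd{\omega}} - \hat{\bd{\omega}}^*\|_{C^0} \le \delta$, taking $\delta < d_0$ keeps the perturbed curve away from $\partial\hat{\Omega}$.

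The main step — and the main obstacle — is injectivity of $\hat{\bd{\Phi}}^{\omega}_{\Gamma}$. I would argue by contradiction: suppose no such $\delta$ works, so there is a sequence $\hat{\bd{\omega}}_k \to \hat{\bd{\omega}}^{*}$ in $C^1(\hat{\Gamma})$ with each $\hat{\bd{\Phi}}^{\omega_k}_{\Gamma}$ failing to be injective, i.e. there exist $\hat{z}_k \neq \hat{w}_k$ in the torus $[0,2\pi]$ with $\hat{\bd{\Phi}}^{\omega_k}_{\Gamma}(\hat{z}_k) = \hat{\bd{\Phi}}^{\omega_k}_{\Gamma}(\hat{w}_k)$. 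By compactness of the torus, pass to a subsequence so that $\hat{z}_k \to \hat{z}_\infty$ and $\hat{w}_k \to \hat{w}_\infty$. Passing to the limit and using uniform convergence $\hat{\bd{\Phi}}^{\omega_k}_{\Gamma} \to \hat{\bd{\Phi}}^{\omega^*}_{\Gamma}$ gives $\hat{\bd{\Phi}}^{\omega^*}_{\Gamma}(\hat{z}_\infty) = \hat{\bd{\Phi}}^{\omega^*}_{\Gamma}(\hat{w}_\infty)$; since $\hat{\bd{\Phi}}^{\omega^*}_{\Gamma}$ is injective we must have $\hat{z}_\infty = \hat{w}_\infty$, so the collision points collapse. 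The delicate part is ruling out this degenerate near-diagonal case: one must show a quantitative local injectivity of $\hat{\bd{\Phi}}^{\omega^*}_{\Gamma}$ near the diagonal, uniformly under the $C^1$ perturbation. This follows from the non-degeneracy hypothesis: since $\partial_{\hat{z}}\hat{\bd{\Phi}}^{\omega^*}_{\Gamma}(\hat{z}) = (-\sin\hat{z},\cos\hat{z}) + \partial_{\hat{z}}\hat{\bd{\omega}}^*(\hat{z})$ has modulus bounded below by $\alpha$ and is continuous (hence uniformly continuous on the compact torus), a mean-value / fundamental-theorem-of-calculus estimate shows there is $r_0 > 0$ such that $|\hat{\bd{\Phi}}^{\omega^*}_{\Gamma}(\hat{z}) - \hat{\bd{\Phi}}^{\omega^*}_{\Gamma}(\hat{w})| \ge \tfrac{\alpha}{2}\,\mathrm{dist}(\hat{z},\hat{w})$ whenever $\mathrm{dist}(\hat{z},\hat{w}) \le r_0$; by the already-established bound \eqref{nondegeneracy} on $\hat{\bd{\omega}}$, the same lower bound (with a slightly worse constant) holds for $\hat{\bd{\Phi}}^{\omega}_{\Gamma}$ itself for all $\hat{\bd{\omega}}$ with $\|\hat{\bd{\omega}} - \hat{\bd{\omega}}^*\|_{C^1} \le \delta$, once $\delta$ and $r_0$ are further shrunk using uniform continuity of $\partial_{\hat{z}}\hat{\bd{\omega}}^*$. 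This contradicts $\hat{\bd{\Phi}}^{\omega_k}_{\Gamma}(\hat{z}_k) = \hat{\bd{\Phi}}^{\omega_k}_{\Gamma}(\hat{w}_k)$ with $0 < \mathrm{dist}(\hat{z}_k,\hat{w}_k) \le r_0$ for large $k$.

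To finish, I would combine the two regimes: for the fixed $r_0 > 0$ from the local estimate, the set $K := \{(\hat{z},\hat{w}) \in \hat{\Gamma}\times\hat{\Gamma} : \mathrm{dist}(\hat{z},\hat{w}) \ge r_0\}$ is compact, and on $K$ the continuous function $(\hat{z},\hat{w}) \mapsto |\hat{\bd{\Phi}}^{\omega^*}_{\Gamma}(\hat{z}) - \hat{\bd{\Phi}}^{\omega^*}_{\Gamma}(\hat{w})|$ is strictly positive (by global injectivity of $\hat{\bd{\Phi}}^{\omega^*}_{\Gamma}$), hence bounded below by some $c_0 > 0$; shrinking $\delta < c_0/2$ keeps $|\hat{\bd{\Phi}}^{\omega}_{\Gamma}(\hat{z}) - \hat{\bd{\Phi}}^{\omega}_{\Gamma}(\hat{w})| \ge c_0/2 > 0$ on $K$. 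Together with the near-diagonal estimate, this gives $\hat{\bd{\Phi}}^{\omega}_{\Gamma}(\hat{z}) \neq \hat{\bd{\Phi}}^{\omega}_{\Gamma}(\hat{w})$ for all $\hat{z} \neq \hat{w}$, i.e. global injectivity, for all $\hat{\bd{\omega}}$ within the final choice of $\delta$ of $\hat{\bd{\omega}}^*$. This direct (non-contradiction) packaging is cleaner than the sequential argument and I would present it that way; the contradiction argument above is mainly a way to discover the right quantitative estimates. The only real subtlety to watch is that all "distances" are torus distances (periodic), so the $C^1$ periodicity hypotheses $\hat{\bd{\omega}}(0) = \hat{\bd{\omega}}(2\pi)$, $\hat{\bd{\omega}}^*(0) = \hat{\bd{\omega}}^*(2\pi)$ are used exactly to make $\hat{\bd{\Phi}}^{\omega}_{\Gamma}$ and its derivative well-defined on the circle.
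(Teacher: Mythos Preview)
Your argument is correct and complete. The paper's proof, however, takes a more streamlined route that avoids your explicit split into near-diagonal and off-diagonal regimes. It defines the single function
\[
f^{\omega}(\hat z_{1},\hat z_{2}) =
\begin{cases}
\dfrac{\hat{\bd{\Phi}}^{\omega}_{\Gamma}(\hat z_{1}) - \hat{\bd{\Phi}}^{\omega}_{\Gamma}(\hat z_{2})}{|\hat z_{1}-\hat z_{2}|}, & \hat z_{1}\neq \hat z_{2},\\[1ex]
(-\sin\hat z,\cos\hat z)+\partial_{\hat z}\hat{\bd{\omega}}(\hat z), & \hat z_{1}=\hat z_{2}=\hat z,
\end{cases}
\]
observes that $f^{\omega^{*}}$ is continuous on the compact square and strictly positive by the hypotheses (injectivity off the diagonal, non-degeneracy on it), hence bounded below by some $c>0$, and then uses the mean-value estimate $|f^{\omega}-f^{\omega^{*}}|\le \|\hat{\bd{\omega}}-\hat{\bd{\omega}}^{*}\|_{C^{1}}$ to conclude $|f^{\omega}|\ge c/2$ for $\delta$ small. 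This one inequality simultaneously gives injectivity \emph{and} the non-degeneracy bound \eqref{nondegeneracy}.

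In effect the paper's difference-quotient function is exactly the device that merges your two regimes: its positivity away from the diagonal is your ``off-diagonal'' compactness bound, and its positivity on the diagonal is your local-injectivity estimate, with the continuous extension eliminating the need to choose an intermediate radius $r_{0}$. Your version has the virtue of making the two mechanisms explicit (and you handle the torus-distance issue more carefully than the paper, which glosses over the identification $0\sim 2\pi$); the paper's version is shorter and gives a uniform quantitative lower bound on the difference quotient in one stroke.
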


\begin{proof}
Consider the following (vector-valued) function $f: [0, 2\pi] \times [0, 2\pi] \to \R^{2}$, defined by
\begin{equation*}
f^{\omega}(z_{1}, z_{2}) = \begin{cases}
\displaystyle \frac{\hat{\bd{\Phi}}^{\omega}_{\Gamma}(\hat{z}_{1}) - \hat{\bd{\Phi}}^{\omega}_{\Gamma}(\hat{z}_{2})}{|z_{1} - z_{2}|}, \quad &\text{ if } \hat{z}_{1} \ne \hat{z}_{2}, \\
(-\sin(\hat{z}), \cos(\hat{z})) + \partial_{\hat{z}} \hat{\bd{\omega}}(\hat{z}), \quad &\text{ if } \hat{z}_{1} = \hat{z}_{2} = \hat{z}. \\
\end{cases}
\end{equation*}
Note that by the given assumptions on $\hat{\bd{\omega}}^{*}$, we have that $|f^{\omega^{*}}(\hat{z}_{1}, \hat{z}_{2})| > 0$ for all $\hat{z}_{1}, \hat{z}_{2} \in [0, 2\pi]$, and since $\hat{\bd{\omega}}^{*}$ is continuously differentiable, $f^{\omega^{*}}(\hat{z}_{1}, \hat{z}_{2})$ is continuous on $[0, 2\pi] \times [0, 2\pi]$. So there exists some positive constant $c$ such that $f^{\omega^{*}}(\hat{z}_{1}, \hat{z}_{2}) \ge c > 0$ for $\hat{z}_{1}, \hat{z}_{2} \in [0, 2\pi]$. For $\hat{z}_{1} \ne \hat{z}_{2}$, 
\begin{align*}
|f^{\omega}(\hat{z}_{1}, \hat{z}_{2}) - f^{\omega^{*}}(\hat{z}_{1}, \hat{z}_{2})| &= \left|\frac{\hat{\bd{\Phi}}^{\omega}_{\Gamma}(\hat{z}_{1}) - \hat{\bd{\Phi}}^{\omega}_{\Gamma}(\hat{z}_{2})}{|\hat{z}_{1} - \hat{z}_{2}|} - \frac{\hat{\bd{\Phi}}^{\omega^{*}}_{\Gamma}(\hat{z}_{1}) - \hat{\bd{\Phi}}^{\omega^{*}}_{\Gamma}(\hat{z}_{2})}{|\hat{z}_{1} - \hat{z}_{2}|}\right| \\
&= \frac{|(\hat{\bd{\omega}} - \hat{\bd{\omega}}^{*})(\hat{z}_{1}) - (\hat{\bd{\omega}} - \hat{\bd{\omega}}^{*})(\hat{z}_{2})|}{|\hat{z}_{1} - \hat{z}_{2}|} \le \|\hat{\bd{\omega}} - \hat{\bd{\omega}}^{*}\|_{C^{1}(\hat{\Gamma})}.
\end{align*}
Therefore, there exists $\delta > 0$ sufficiently small such that for all $\hat{\bd{\omega}}$ with $\hat{\bd{\omega}}(0) = \hat{\bd{\omega}}(2\pi)$ and $\|\hat{\bd{\omega}} - \hat{\bd{\omega}}^{*}\|_{C^{1}(\hat{\Gamma})} \le \delta$, we have that $f^{\omega}(\hat{z}_{1}, \hat{z}_{2}) \ge c/2 > 0$ for all $\hat{z}_{1}, \hat{z}_{2} \in [0, L]$, which implies that $\hat{\bd{\Phi}}^{\omega}_{\Gamma}$ is injective. Finally, to show that $\hat{\bd{\Phi}}^{\omega}_{\Gamma}(\hat{\Gamma}) \cap \partial \hat{\Omega} = \varnothing$ for $\delta$ sufficiently small, note that $\text{dist}\Big(\hat{\bd{\Phi}}^{\omega^{*}}_{\Gamma}(\hat{\Gamma}), \partial \hat{\Omega}\Big) \ge c > 0$ for some positive constant $c$, so for $\|\hat{\bd{\omega}} - \hat{\bd{\omega}}^{*}\|_{C^{1}(\hat{\Gamma})} \le \delta$ for $\delta < c/2$, we have that $\text{dist}\Big(\hat{\bd{\Phi}}^{\omega}_{\Gamma}(\hat{\Gamma}), \partial \hat{\Omega}\Big) \ge c/2$. The non-degeneracy claim in \eqref{nondegeneracy} follows similarly.

\end{proof}

\begin{proposition}\label{mapinjective}
Consider a $C^{1}(\hat{\Gamma})$ function $\hat{\bd{\omega}}: [0, 2\pi] \to \R^{2}$ such that $\hat{\bd{\omega}}(0) = \hat{\bd{\omega}}(2\pi)$, $\hat{\bd{\Phi}}^{\omega}_{f}$ is injective on $[0, L]$, $\hat{\bd{\Phi}}^{\omega}_{f}(\hat{\Gamma}) \cap \partial \hat{\Omega} = \varnothing$, and \eqref{nondegeneracy} is satisfied for some positive constant $\alpha > 0$. 
\begin{itemize}
\item If $\text{det}(\bd{I} + \nabla \hat{\bd{\eta}}) > 0$ on $\hat{\Omega}_{b}$ for some $C^{1}$ function $\hat{\bd{\eta}}$ on $\hat{\overline{\Omega}}_{b}$, then $\hat{\bd{\Phi}}^{\eta}_{b}$ is a bijective map from $\hat{\Omega}_{b}$ to $\Omega_{b}(t)$. 
\item If $\text{det}(\hat{\bd{\Phi}}^{\omega}_{f}) > 0$ on $\hat{\Omega}_{f}$ where $\hat{\bd{\Phi}}^{\omega}_{f}$ is $C^{1}$ on $\hat{\overline{\Omega}}_{f}$, then $\hat{\bd{\Phi}}^{\omega}_{f}$ is a bijective map from $\hat{\Omega}_{f}$ to $\Omega_{f}(t)$. 
\end{itemize}
\end{proposition}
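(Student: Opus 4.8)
The plan is to deduce both assertions from a single classical global-invertibility principle (essentially the Meisters--Olech theorem; cf.\ \cite{Ciarlet}), applied once to the Biot Lagrangian map and once to the fluid ALE map. The principle: if $\mathcal{O}\subset\R^2$ is a bounded connected domain with Lipschitz boundary and $\bd{\Psi}\in C^1(\overline{\mathcal{O}};\R^2)$ satisfies $\det\nabla\bd{\Psi}>0$ on $\mathcal{O}$ and is injective on $\partial\mathcal{O}$, then $\bd{\Psi}$ is injective on $\overline{\mathcal{O}}$, $\bd{\Psi}(\mathcal{O})$ is open with $\partial(\bd{\Psi}(\mathcal{O}))=\bd{\Psi}(\partial\mathcal{O})$, the map $\bd{\Psi}:\mathcal{O}\to\bd{\Psi}(\mathcal{O})$ is a $C^1$-diffeomorphism, and $\bd{\Psi}(\mathcal{O})$ is exactly the set of points off $\bd{\Psi}(\partial\mathcal{O})$ at which the topological degree of $\bd{\Psi}$ equals $1$. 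I would include the short proof: positivity of the Jacobian makes $\bd{\Psi}$ a local homeomorphism, so for $y\notin\bd{\Psi}(\partial\mathcal{O})$ the degree $\deg(\bd{\Psi},\mathcal{O},y)$ equals the number of preimages of $y$; injectivity of $\bd{\Psi}|_{\partial\mathcal{O}}$ makes $\bd{\Psi}(\partial\mathcal{O})$ a disjoint union of Jordan curves, so $\deg(\bd{\Psi},\mathcal{O},\cdot)$ is constant on each component of the complement, vanishes on the unbounded one, and equals a winding number (hence is $0$ or $1$) on each bounded one; matching the two computations forces all fibers to be singletons, and the openness of $\bd{\Psi}(\mathcal{O})$ together with $\deg=0$ just outside $\bd{\Psi}(\partial\mathcal{O})$ rules out interior points mapping onto $\bd{\Psi}(\partial\mathcal{O})$.

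For the first bullet I take $\mathcal{O}=\hat{\Omega}_b$ and $\bd{\Psi}=\hat{\bd{\Phi}}^\eta_b=\mathrm{id}+\hat{\bd{\eta}}\in C^1(\overline{\hat{\Omega}}_b;\R^2)$. Then $\nabla\hat{\bd{\Phi}}^\eta_b=\bd{I}+\hat{\nabla}\hat{\bd{\eta}}$, so $\det\nabla\hat{\bd{\Phi}}^\eta_b=\det(\bd{I}+\hat{\nabla}\hat{\bd{\eta}})>0$ on $\hat{\Omega}_b$ by hypothesis, and in the parametrization \eqref{circle} the trace of $\hat{\bd{\Phi}}^\eta_b$ on $\partial\hat{\Omega}_b=\hat{\Gamma}$ is $(\cos\hat{z},\sin\hat{z})+\hat{\bd{\eta}}|_{\hat{\Gamma}}(\hat{z})=\hat{\bd{\Phi}}^\omega_\Gamma(\hat{z})$ with $\hat{\bd{\omega}}:=\hat{\bd{\eta}}|_{\hat{\Gamma}}$, which is injective by assumption (and a $C^1$ embedding by the non-degeneracy \eqref{nondegeneracy}). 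The principle gives injectivity of $\hat{\bd{\Phi}}^\eta_b$ on $\overline{\hat{\Omega}}_b$, hence on $\hat{\Omega}_b$; surjectivity onto $\Omega_b(t)$ is automatic since $\Omega_b(t)$ is \emph{defined} as $\hat{\bd{\Phi}}^\eta_b(\hat{\Omega}_b)$. Thus $\hat{\bd{\Phi}}^\eta_b:\hat{\Omega}_b\to\Omega_b(t)$ is a bijection, and $\Omega_b(t)$ is the interior of the Jordan curve $\Gamma(t)$.

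For the second bullet I take $\mathcal{O}=\hat{\Omega}_f$ (an annulus) and $\bd{\Psi}=\hat{\bd{\Phi}}^\omega_f\in C^1(\overline{\hat{\Omega}}_f;\R^2)$, with $\det\nabla\hat{\bd{\Phi}}^\omega_f>0$ on $\hat{\Omega}_f$ by hypothesis. Now $\partial\hat{\Omega}_f=\hat{\Gamma}\cup\partial\hat{\Omega}$ is a disjoint union of two circles; by \eqref{alef2}, $\hat{\bd{\Phi}}^\omega_f$ restricts on $\hat{\Gamma}$ to the injective map $\hat{\bd{\Phi}}^\omega_\Gamma$ and on $\partial\hat{\Omega}$ to the identity, and the image curves $\Gamma(t)=\hat{\bd{\Phi}}^\omega_\Gamma(\hat{\Gamma})$ and $\partial\hat{\Omega}$ are disjoint by the hypothesis $\hat{\bd{\Phi}}^\omega_f(\hat{\Gamma})\cap\partial\hat{\Omega}=\varnothing$; hence $\hat{\bd{\Phi}}^\omega_f|_{\partial\hat{\Omega}_f}$ is injective. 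The principle gives injectivity of $\hat{\bd{\Phi}}^\omega_f$ on $\overline{\hat{\Omega}}_f$, so on $\hat{\Omega}_f$, and shows $U:=\hat{\bd{\Phi}}^\omega_f(\hat{\Omega}_f)$ is the open set $\{\deg(\hat{\bd{\Phi}}^\omega_f,\hat{\Omega}_f,\cdot)=1\}$. Evaluating this degree by winding numbers of the two boundary curves --- $+1$ coming from $\partial\hat{\Omega}$, and $0$ or $-1$ from $\Gamma(t)$ according to whether the test point is outside or inside $\Gamma(t)$, the orientation being fixed by $\det>0$ --- shows that $U$ is exactly the annular region between $\Gamma(t)$ and $\partial\hat{\Omega}$. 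Since $\Gamma(t)\subset\hat{\Omega}$ and $\Omega_b(t)$ is the interior of $\Gamma(t)$ by the first bullet, that region equals $\hat{\Omega}\setminus(\Omega_b(t)\cup\Gamma(t))=\Omega_f(t)$ by \eqref{omegaft}. Hence $\hat{\bd{\Phi}}^\omega_f:\hat{\Omega}_f\to\Omega_f(t)$ is a bijection.

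I expect the crux to be the global-invertibility principle itself: positivity of the Jacobian gives only \emph{local} injectivity, and the upgrade to injectivity on all of $\overline{\mathcal{O}}$ using only injectivity on $\partial\mathcal{O}$ is the nontrivial (degree-theoretic) content --- one may instead simply cite \cite{Ciarlet}. Everything else is routine: the image identification for the fluid map is planar topology (Jordan curve theorem for the $C^1$-embedded loop $\Gamma(t)$), and the Biot case needs nothing beyond the defining relation $\Omega_b(t)=\hat{\bd{\Phi}}^\eta_b(\hat{\Omega}_b)$. Two small bookkeeping points to flag: the hypothesis ``$\hat{\bd{\Phi}}^\omega_f$ injective on $[0,L]$'' should be read as injectivity of the boundary trace $\hat{\bd{\Phi}}^\omega_\Gamma$ on $\hat{\Gamma}$ (identified with $[0,2\pi]$), and in the first bullet $\hat{\bd{\eta}}$ is understood to be a $C^1$ extension of the interface displacement $\hat{\bd{\omega}}$, so that $\hat{\bd{\Phi}}^\eta_b|_{\hat{\Gamma}}=\hat{\bd{\Phi}}^\omega_\Gamma$.
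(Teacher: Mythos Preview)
Your proposal is correct and takes essentially the same approach as the paper: the paper's proof consists of the single sentence ``This is a direct consequence of Theorem 5-5-2 in \cite{Ciarlet} and the previous proposition,'' which is precisely the Meisters--Olech global-invertibility principle you invoke. Your version supplies considerably more detail (the degree-theoretic sketch, the explicit verification of boundary injectivity on each component of $\partial\hat{\Omega}_f$, and the winding-number identification of the image as $\Omega_f(t)$), but the underlying idea is identical.
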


\begin{proof}
This is a direct consequence of Theorem 5-5-2 in \cite{Ciarlet} and the previous proposition.
\end{proof}

\subsection{Weak formulation on the fixed domain}

Using the function transformations defined in the previous section, we can consider the weak formulation to the direct contact FPSI problem on the fixed domain. For this purpose, we introduce the rescaled normal and tangent vectors, compare with the unit normal and tangent vectors defined in \eqref{ntdef}, as follows:
\begin{equation}\label{rescalednt}
\hat{\bd{n}}^{\eta} := (\cos(\hat{z}) + \partial_{\hat{z}}\hat{\eta}_{2}|_{\hat{\Gamma}}, \sin(\hat{z}) -  \partial_{\hat{z}}\hat{\eta}_{1}|_{\hat{\Gamma}}), \qquad \hat{\bd{\tau}}^{\eta} = (-\sin(\hat{z}) + \partial_{\hat{z}}\hat{\eta}_{1}|_{\hat{\Gamma}}, \cos(\hat{z}) + \partial_{\hat{z}}\hat{\eta}_{2}|_{\hat{\Gamma}}),
\end{equation}
and we also recall the definition of the arc length element from \eqref{arclengthdef}. Using the maps from the reference to moving domains for the Biot medium, the fluid domain, and the plate, we thus have the following definition of a weak solution to the FPSI problem on the reference domain.

\begin{definition}[Weak formulation, fixed domain formulation]\label{woregularitydef}
We say that $(\hat{\bd{u}}, \hat{\bd{\eta}}, \hat{p})$ is a weak solution to the FPSI problem (on the fixed domain) if for all $C_{c}^{1}([0, T))$ (continuously differentiable) test functions $(\hat{\bd{v}}, \hat{\bd{\psi}}, \hat{r})$ for the fluid velocity, Biot displacement, and pore pressure respectively, taking values in an appropriate test space, the following weak formulation holds:
\begin{multline*}
-\int_{0}^{T} \int_{\hat{\Omega}_{f}} \hat{\mathcal{J}}^{\eta}_{f} \hat{\bd{u}} \cdot \partial_{t} \hat{\bd{v}} + \frac{1}{2} \int_{0}^{T} \int_{\hat{\Omega}_{f}} \hat{\mathcal{J}}^{\eta}_{f} \Big[((\hat{\bd{u}} - \hat{\bd{w}}^{\eta}) \cdot \hat{\nabla}^{\eta}_{f} \hat{\bd{u}}) \cdot \hat{\bd{v}} - ((\hat{\bd{u}} - \hat{\bd{w}}^{\eta}) \cdot \hat{\nabla}^{\eta}_{f} \hat{\bd{v}}) \cdot \hat{\bd{u}}\Big] \\
- \frac{1}{2} \int_{0}^{T} \int_{\hat{\Omega}_{f}} (\partial_{t}\hat{\mathcal{J}}^{\eta}_{f}) \hat{\bd{u}} \cdot \hat{\bd{v}} + \frac{1}{2} \int_{0}^{T} \int_{\hat{\Gamma}} (\hat{\bd{u}} - \hat{\bd{\xi}}) \cdot \hat{\bd{n}}^{\eta} (\hat{\bd{u}} \cdot \hat{\bd{v}}) + 2\nu \int_{0}^{T} \int_{\hat{\Omega}_{f}} \hat{\mathcal{J}}^{\eta}_{f} \hat{\bd{D}}^{\eta}_{f}(\hat{\bd{u}}) : \hat{\bd{D}}^{\eta}_{f}(\hat{\bd{v}}) \\
+ \int_{0}^{T} \int_{\hat{\Gamma}} \left(\frac{1}{2} |\hat{\bd{u}}|^{2} - \hat{p}\right) (\hat{\bd{\psi}} - \hat{\bd{v}}) \cdot \hat{\bd{n}}^{\eta} + \beta \int_{0}^{T} \int_{\hat{\Gamma}} \Big(\hat{\mathcal{S}}^{\eta}_{\Gamma}\Big)^{-1} (\hat{\bd{\xi}} - \hat{\bd{u}}) \cdot \hat{\bd{\tau}}^{\eta} (\hat{\bd{\psi}} - \hat{\bd{v}}) \cdot \hat{\bd{\tau}}^{\eta} \\
- \rho_{b} \int_{0}^{T} \int_{\hat{\Omega}_{b}} \partial_{t} \hat{\bd{\eta}} \cdot \partial_{t} \hat{\bd{\psi}} + 2\mu_{e} \int_{0}^{T} \int_{\hat{\Omega}_{b}} \hat{\bd{D}}(\hat{\bd{\eta}}) : \hat{\bd{D}}(\hat{\bd{\psi}}) + \lambda_{e} \int_{0}^{T} \int_{\hat{\Omega}_{b}} (\hat{\nabla} \cdot \hat{\bd{\eta}}) (\hat{\nabla} \cdot \hat{\bd{\psi}}) + 2\mu_{v} \int_{0}^{T} \int_{\hat{\Omega}_{b}} \hat{\bd{D}}(\partial_{t}\hat{\bd{\eta}}) : \hat{\bd{D}}(\hat{\bd{\psi}}) \\
+ \lambda_{v} \int_{0}^{T} \int_{\hat{\Omega}_{b}} (\hat{\nabla} \cdot \partial_{t} \hat{\bd{\eta}}) (\hat{\nabla} \cdot \hat{\bd{\psi}}) - \alpha \int_{0}^{T} \int_{\hat{\Omega}_{b}} \hat{\mathcal{J}}^{\eta}_{b} \hat{p} \hat{\nabla}^{\eta}_{b} \cdot \bd{\psi} - c_{0} \int_{0}^{T} \int_{\hat{\Omega}_{b}} \hat{p} \cdot \partial_{t} \hat{r} - \alpha \int_{0}^{T} \int_{\hat{\Omega}_{b}} \hat{\mathcal{J}}^{\eta}_{b} \partial_{t} \hat{\bd{\eta}} \cdot \hat{\nabla}^{\eta}_{b} \hat{r} - \alpha \int_{0}^{T} \int_{\hat{\Gamma}} (\hat{\bd{\xi}} \cdot \hat{\bd{n}}^{\eta}) \hat{r} \\
+ \kappa \int_{0}^{T} \int_{\hat{\Omega}_{b}} \hat{\mathcal{J}}^{\eta}_{b} \hat{\nabla}^{\eta}_{b} \hat{p} \cdot \hat{\nabla}^{\eta}_{b} \hat{r} - \int_{0}^{T} \int_{\hat{\Gamma}} ((\hat{\bd{u}} - \hat{\bd{\xi}}) \cdot \hat{\bd{n}}^{\eta}) \hat{r} = \int_{\Omega_{f}(0)} \bd{u}_0 \cdot \bd{v}(0) + \rho_{b} \int_{\hat{\Omega}_{b}} \hat{\bd{\xi}}_{0} \cdot \hat{\bd{\psi}}(0) + c_{0} \int_{\hat{\Omega}_{b}} \hat{p}_0 \cdot \hat{r}(0).
\end{multline*}
\end{definition}

This weak formulation follows similarly as in the moving domain weak formulation derivation in Section \ref{derivation}, with the exception of the integral $\displaystyle \int_{\Omega_{f}(t)} \Big(\partial_{t}\bd{u} + (\bd{u} \cdot \nabla) \bd{u}\Big) \cdot \bd{v}$ being handled using \eqref{aletime}. This can be handled for general vector-valued displacements using harmonic extension ALE maps as in \cite{BorSunSlip}, and we include the calculation here for completeness. We have the following computation using \eqref{gradtransform} and \eqref{aletime}, where $\partial_{t}\hat{\bd{u}}|_{\Omega_{f}(t)}(x, y) = \partial_{t}\hat{\bd{u}}\Big((\bd{\Phi}^{\eta}_{f})^{-1}(x, y)\Big)$ and $\bd{w}^{\eta}(x, y) = \hat{\bd{w}}\Big((\bd{\Phi}^{\eta}_{f})^{-1}(x, y)\Big)$ are defined on $\Omega_{f}(t)$, and where we integrate by parts and use the divergence free condition on $\bd{u}$:
\begin{align*}
\int_{0}^{T} \int_{\Omega_{f}(t)} &\Big(\partial_{t}\bd{u} + (\bd{u} \cdot \nabla) \bd{u}\Big) \cdot \bd{v} = \int_{0}^{T} \int_{\Omega_{f}(t)} \Big(\partial_{t}\hat{\bd{u}}|_{\Omega_{f}(t)} + (\bd{u} - \bd{w}^{\eta}) \cdot \nabla \bd{u}\Big) \cdot \bd{v} \\
& = \int_{0}^{T} \int_{\hat{\Omega}_{f}} \hat{\mathcal{J}}^{\eta}_{f} \partial_{t}\hat{\bd{u}} \cdot \bd{v} + \frac{1}{2} \int_{0}^{T} \int_{\Omega_{f}(t)} \Big[((\bd{u} - \bd{w}^{\eta}) \cdot \nabla \bd{u}) \cdot \bd{v} - ((\bd{u} - \bd{w}^{\eta}) \cdot \nabla \bd{v}) \cdot \bd{u}\Big] \\
&+ \frac{1}{2} \int_{0}^{T} \int_{\Omega_{f}(t)} (\nabla \cdot \bd{w}^{\eta}) \bd{u} \cdot \bd{v} + \frac{1}{2} \int_{0}^{T} \int_{\Gamma(t)} (\bd{u} - \bd{w}^{\eta}) \cdot \bd{n} (\bd{u} \cdot \bd{v}).
\end{align*}
By using the definition of the ALE velocity in \eqref{ALEvel} and the boundary conditions for the ALE map in \eqref{aleeta2}, we deduce that $\hat{\bd{w}}^{\eta}|_{\Gamma(t)} = \bd{\xi}|_{\Gamma(t)}$. We also integrate by parts in time, change variables to the reference fluid domain $\hat{\Omega}_{f}$, use \eqref{gradtransform} and \eqref{rescalednt}, and use the identity:
\begin{equation*}
\partial_{t}\hat{\mathcal{J}}^{\eta}_{f} = \hat{\mathcal{J}}^{\eta}_{f} (\nabla^{\omega}_{f} \cdot \hat{\bd{w}}^{\eta})
\end{equation*}
to deduce that
\begin{multline*}
\int_{0}^{T} \int_{\Omega_{f}(t)} \Big(\partial_{t}\bd{u} + (\bd{u} \cdot \nabla) \bd{u}\Big) \cdot \bd{v} = -\int_{0}^{T} \int_{\hat{\Omega}_{f}} \hat{\mathcal{J}}^{\eta}_{f} \hat{\bd{u}} \cdot \partial_{t}\hat{\bd{v}} - \int_{\Omega_{f}(0)} \bd{u}_{0} \cdot \bd{v}(0) \\
+ \frac{1}{2} \int_{0}^{T} \int_{\hat{\Omega}_{f}} \hat{\mathcal{J}}^{\eta}_{f} \Big[((\hat{\bd{u}} - \hat{\bd{w}}^{\eta}) \cdot \hat{\nabla}^{\eta}_{f}\hat{\bd{u}}) \cdot \hat{\bd{v}} - ((\hat{\bd{u}} - \hat{\bd{w}}^{\eta}) \cdot \hat{\nabla}^{\eta}_{f}\hat{\bd{v}}) \cdot \hat{\bd{u}}\Big] - \frac{1}{2} \int_{0}^{T} \int_{\hat{\Omega}_{f}} (\partial_{t}\hat{\mathcal{J}}^{\eta}_{f}) \hat{\bd{u}} \cdot \hat{\bd{v}} + \frac{1}{2} \int_{0}^{T} \int_{\hat{\Gamma}} (\hat{\bd{u}} - \hat{\bd{\xi}}) \cdot \hat{\bd{n}}^{\eta} (\hat{\bd{u}} \cdot \hat{\bd{v}}). 
\end{multline*}
This gives the weak formulation in Definition \ref{woregularitydef} in the fixed domain formulation.

We note that there are various terms in the weak formulation in Definition \ref{woregularitydef} for which weak solutions at the level of finite energy spaces do not suffice. 

\medskip

\noindent (1) \textbf{First}, we note that the Biot displacement $\hat{\bd{\eta}}$, which in analogy to traditional bulk elasticity would have finite energy regularity of $H^{1}(\hat{\Omega}_{b})$, does not have enough regularity for a term like 
\begin{equation*}
-\alpha \int_{0}^{T} \int_{\hat{\Omega}_{b}} \hat{\mathcal{J}}^{\eta}_{b} \hat{p} \hat{\nabla}^{\eta}_{b} \cdot \bd{\psi},
\end{equation*}
to make sense. This is because $\hat{p} \in H^{1}(\hat{\Omega}_{b})$ in the finite energy space and $\hat{\mathcal{J}}^{\eta}_{b} := \text{det}(\bd{I} + \nabla \hat{\bd{\eta}})$ in two spatial dimensions is only in $L^{1}(\hat{\Omega}_{b})$. In particular, the low regularity of geometric terms arising from the Biot displacement, such as the Jacobian $\hat{\mathcal{J}}^{\eta}_{b}$ and the nonlinear operator $\nabla^{\eta}_{b}$, prevent us from properly interpreting the weak formulation in Definition \ref{woregularitydef} at the level of finite energy weak solutions.

\medskip

\noindent (2) \textbf{Second}, the moving domains and moving interface are not well-defined. Since $\hat{\bd{\eta}} \in H^{1}(\hat{\Omega}_{b})$, $\hat{\bd{\eta}}$ is not even a continuous function on $\hat{\Omega}_{b}$ and its trace along the reference configuration $\hat{\Gamma}$ of the interface is only in $H^{1/2}(\hat{\Gamma})$, which is also not continuous. Therefore, it is unclear if the Lagrangian map $\hat{\bd{\Phi}}^{\eta}_{b}: \hat{\Omega}_{b} \to \Omega_{b}(t)$ defined in \eqref{lagrangianmap} is injective or even well-defined. Similarly, the interface displacement $\hat{\bd{\eta}}|_{\hat{\Gamma}}$ is not even a continuous function along $\hat{\Gamma}$, so the moving interface $\Gamma(t)$ is not well-defined, and hence the fluid domain $\Omega_{f}(t)$ in \eqref{omegaft} is not well-defined
-- this is in addition to geometric quantities involving the ALE map which depend on $\hat{\bd{\eta}}|_{\hat{\Gamma}}$.

\medskip

Therefore, in order to analyze this FPSI problem, at least with the current tools we have, we need to have additional regularity. Both of the problems above arise from the low regularity of the Biot displacement $\hat{\bd{\eta}}$, which prevents us from analyzing terms in the FPSI system arising from the moving Biot domain $\Omega_{b}(t)$ and which prevents us from having a well-defined interface $\Gamma(t)$. While in past work, such as in \cite{NonlinearFPSI_CRM23, FPSIJMPA}, the interface can be defined by having a reticular plate separating the Biot and fluid material (so that the interface displacement becomes the plate displacement which is in $H^{2}(\hat{\Gamma})$), in this problem with direct contact between the Biot material and the fluid, we must obtain the interface displacement directly from the Biot displacement. This motivates the use of a regularization method to analyze this FPSI problem with direct contact, which we refer to as the \emph{regularized interface method}, which we describe in the next section.

\section{The regularized interface method}\label{regintmethod}

\subsection{Overview of method and the regularized Biot displacement}

As mentioned in the previous section, the main challenges with regularity arise from the low regularity of the Biot displacement $\hat{\bd{\eta}}$, which prevent us from properly interpreting the weak formulation of equations posed on the moving Biot domain and which prevents us from having a well-defined moving interface. \textit{The main idea of the \textbf{regularized interface method} is to spatially regularize the Biot displacement $\hat{\bd{\eta}}$ to obtain a smooth regularized Biot displacement $\hat{\bd{\eta}}^{\delta}$ for a fixed but arbitrary regularization parameter $\delta > 0$, which we can then use to define all geometric quantities in the problem and in the weak formulation.} Assuming the initial data is sufficiently well-behaved, this guarantees existence of a weak solution locally in time, for a time of existence potentially depending on the regularization parameter $\delta > 0$.

Given a Biot displacement $\hat{\bd{\eta}}: \hat{\Omega}_{b} \to \R^{2}$ and a regularization parameter $\delta > 0$, we define a regularized Biot displacement $\hat{\bd{\eta}}$ by convolving spatially with a convolution kernel with support of size $\delta$. Since we are working on a bounded domain $\hat{\Omega}_{b}$, we must use an extension to first extend $\hat{\bd{\eta}}$ to $\R^{2}$ before convolving. To do this, we recall the definition of a strong $1$-extension from Section 5.17 in \cite{Adams} --  such an extension exists for our current geometry in $\hat{\Omega}_{b}$, as discussed in Chapter 5 of \cite{Adams}.

\begin{definition}[Strong $1$-extension from $\hat{\Omega}_{b}$ to $\R^{2}$]\label{extension}
For the domain $\hat{\Omega}_{b}$, a \textbf{strong $1$-extension} is a linear operator $E$ that acts on functions in $W^{k, p}(\hat{\Omega}_{b})$, for all $1 \le p < \infty$ and $k = 0, 1$, such that for each $1 \le p < \infty$ and $k = 0, 1$, $E: W^{k, p}(\hat{\Omega}_{b}) \to W^{k, p}(\R^{2})$ is a linear map such that $Ef|_{\hat{\Omega}_{b}} = f$ for all $f \in W^{k, p}(\hat{\Omega}_{b})$ and there exists a uniform constant $C_{k, p}$ (depending only on $k$ and $p$) such that $\|Ef\|_{W^{k, p}(\R^{2})} \le C\|f\|_{W^{k, p}(\hat{\Omega}_{b})}$. 
\end{definition}

We use this strong $1$-extension to define a regularized Biot displacement. Consider a compactly supported, nonnegative, radially symmetric, smooth function $\varphi \in C_{c}^{\infty}(\R^{2})$ with support being the closed ball of radius one in $\R^{2}$ and define the standard convolution kernel $\varphi_{\delta}(z) = \delta^{-2}\varphi(\delta^{-1}z)$ for $z \in \R^{2}$.

\begin{definition}[Regularized Biot displacement and regularized domains]\label{regulardefs}
Given a Biot displacement $\hat{\bd{\eta}}: \hat{\Omega}_{b} \to \R^{2}$ and a regularity parameter $\delta > 0$, we define the \textbf{regularized Biot displacement} $\hat{\bd{\eta}}^{\delta}: \hat{\Omega}_{b} \to \R^{2}$ by
\begin{equation*}
\hat{\bd{\eta}}^{\delta} := (E\hat{\bd{\eta}} * \varphi_{\delta})|_{\hat{\Omega}_{b}} \text{ on } \hat{\Omega}_{b},
\end{equation*}
where $E$ is a simple $H^{1}(\hat{\Omega}_{b})$ extension, see Definition \ref{extension}. Then, we define the \textbf{regularized Lagrangian map} by
\begin{equation*}
\hat{\bd{\Phi}}^{\eta^{\delta}}_{b}(t, \cdot): \hat{\Omega}_{b} \to \R^{2}, \qquad \hat{\bd{\Phi}}^{\eta^{\delta}}_{b}(t, \hat{x}, \hat{y}) := (\hat{x}, \hat{y}) + \hat{\bd{\eta}}^{\delta}(t, \hat{x}, \hat{y}) \quad \text{ for } (\hat{x}, \hat{y}) \in \hat{\Omega}_{b}.
\end{equation*}
We define the \textbf{regularized Biot domain} $\Omega^{\delta}_{b}(t)$ by
\begin{equation*}
\Omega^{\delta}_{b}(t) := \hat{\bd{\Phi}}^{\eta^{\delta}}_{b}(t, \hat{\Omega}_{b}).
\end{equation*}
The trace $\hat{\bd{\eta}}^{\delta}|_{\hat{\Gamma}}$ of the regularized Biot displacement defines the \textit{regularized interface displacement}, and hence, we define the \textbf{moving interface} $\Gamma(t)$ by
\begin{equation*}
\Gamma(t) := \{(\hat{x}, \hat{y}) + \hat{\bd{\eta}}^{\delta}|_{\hat{\Gamma}}(t, \hat{x}, \hat{y}) : (\hat{x}, \hat{y}) \in \hat{\Gamma}\},
\end{equation*}
and we define the \textbf{moving fluid domain} $\Omega_{f}(t)$ by
\begin{equation*}
\Omega_{f}(t) = \hat{\Omega} \setminus (\Omega_{b}(t) \cup \Gamma(t)),
\end{equation*}
where we recall that $\hat{\Omega} := \{\hat{\bd{x}} \in \R^{2} : |\hat{\bd{x}}| < 2\}$ is the total domain.
\end{definition}

The definition above motivates the characterization of this regularization scheme as the \textit{regularized interface method}. Namely, by regularizing the Biot displacement, we obtain a spatially smooth regularized Biot displacement $\hat{\bd{\eta}}^{\delta}$, which is locally in time injective given that the initial data is sufficiently well-behaved. Then, we can use this smooth regularized Biot displacement to define all geometric quantities in the problem, which can be defined in a well-defined manner as long as the regularized Lagrangian map is bijective, since now the Lagrangian map is a smooth map. The idea will then be to use all of these regularized geometric quantities in the (regularized) weak formulation, instead of the previous geometric quantities defined using the finite energy Biot displacement $\hat{\bd{\eta}}$, which does not possess sufficiently regularity to produce sufficiently spatially regular geometric terms in the weak formulation. However, the Biot displacement $\hat{\bd{\eta}}$ (before regularization) is what we will keep track of in the weak formulation and in most terms; we only use the regularized Biot displacement $\hat{\bd{\eta}}^{\delta}$ when dealing with any geometric quantities. We will hence now define a \textit{regularized interface weak solution} by stating a new \textit{regularized interface weak formulation}.

\subsection{Definition of a regularized interface weak solution}

We first define the solution and test spaces. We have the following solution spaces for the \textbf{fluid velocity on the moving domain}:
\begin{equation*}
\begin{cases}
V_{f}(t) := \{\bd{u} \in H^{1}(\Omega_{f}(t)) : \nabla \cdot \bd{u} = 0 \text{ on } \Omega_{f}(t) \text{ and } \bd{u} = \bd{0} \text{ on } \partial \Omega_{f}(t) \setminus \Gamma(t)\}, \\
\mathcal{V}_{f} := L^{\infty}(0, T; L^{2}(\Omega_{f}(t))) \cap L^{2}(0, T; V_{f}(t)).
\end{cases}
\end{equation*}
We can transfer this to the fixed domain by noting the transformation law for the divergence in \eqref{transformdiv}, and we obtain the solution spaces for the \textbf{fluid velocity on the fixed domain}, given the interface displacement $\hat{\bd{\eta}}^{\delta}|_{\hat{\Gamma}}$:
\begin{equation}\label{Vetaf}
\begin{cases}
V^{\eta^{\delta}}_{f}(t) := \{\hat{\bd{u}} \in H^{1}(\hat{\Omega}_{f}) : \nabla^{\eta^{\delta}(t)}_{f} \cdot \hat{\bd{u}} = 0 \text{ on } \hat{\Omega}_{f} \text{ and } \hat{\bd{u}} = \bd{0} \text{ on } \partial \hat{\Omega}_{f} \setminus \hat{\Gamma}\}, \\
\mathcal{V}^{\eta^{\delta}}_{f} := L^{\infty}(0, T; L^{2}(\hat{\Omega}_{f})) \cap L^{2}(0, T; V^{\eta^{\delta}}_{f}(t))\}.
\end{cases}
\end{equation}
Next, we define the following solution space for the \textbf{Biot displacement} $\hat{\bd{\eta}}$:
\begin{equation}\label{Vd}
\begin{cases}
V_{d} := H^{1}(\hat{\Omega}_{b}), \\
\mathcal{V}_{d} := W^{1, \infty}(0, T; L^{2}(\hat{\Omega}_{b})) \cap L^{\infty}(0, T; V_{d}).
\end{cases}
\end{equation}
and we define the following solution space for the \textbf{Biot pore pressure} $\hat{p}$:
\begin{equation}\label{Vp}
\begin{cases}
V_{p} := H^{1}(\hat{\Omega}_{b}), \\
\mathcal{V}_{p} := L^{\infty}(0, T; L^{2}(\hat{\Omega}_{b})) \cap L^{2}(0, T; V_{p}).
\end{cases}
\end{equation}
Finally, we define the fully coupled solution spaces in the moving domain and fixed domain formulations respectively:
\begin{equation}\label{directsolution}
\begin{cases}
\mathcal{V} := \mathcal{V}_{f} \times \mathcal{V}_{d} \times \mathcal{V}_{p}, \\
\mathcal{V}^{\eta^{\delta}} := \mathcal{V}_{f}^{\eta^{\delta}} \times \mathcal{V}_{d} \times \mathcal{V}_{p},
\end{cases}
\end{equation}
and the fully coupled test spaces in the moving domain and fixed domain formulations respectively:
\begin{equation}\label{directtest}
\begin{cases}
\mathcal{Q} := \{(\bd{v}, \hat{\bd{\psi}}, \hat{r}) \in C_{c}^{1}([0, T); V_{f}(t) \times V_{d} \times V_{p})\}, \\
\mathcal{Q}^{\eta^{\delta}} := \{(\hat{\bd{v}}, \hat{\bd{\psi}}, \hat{r}) \in C_{c}^{1}([0, T); V^{\eta^{\delta}}_{f}(t) \times V_{d} \times V_{p})\}.
\end{cases}
\end{equation}

\begin{remark}
We note that in the case of a Biot \textit{poroviscoelastic} material with $\lambda_{v} > 0$ and $\mu_{v} > 0$, see \eqref{poroviscoelastic}, the solution space $\mathcal{V}_{d}$ for the Biot displacement $\hat{\bd{\eta}}$ is more specifically:
\begin{equation*}
\mathcal{V}_{d} := W^{1, \infty}(0, T; L^{2}(\hat{\Omega}_{b})) \cap L^{\infty}(0, T; V_{d}) \cap H^{1}(0, T; V_{d}).
\end{equation*}
However, since the case of a purely poroelastic Biot medium is more challenging (due to less regularization), we have defined the solution space for the Biot displacement above for this case, and \textit{we adopt the convention throughout the paper of writing the existence proof for the more challenging case of a purely poroelastic Biot medium.}
\end{remark}

With the solution and test spaces defined, we can state the following regularized interface weak formulation for the FPSI problem with direct contact and nonlinear geometric coupling.

\begin{definition}[Regularized interface weak solution: fixed domain formulation]\label{regularweak} 
An ordered triple $(\hat{\bd{u}}, \hat{\bd{\eta}}, \hat{p}) \in \mathcal{V}^{\eta^{\delta}}$ is a \textbf{regularized interface weak solution with regularization parameter $\delta > 0$} to the FPSI problem if the following \textit{regularized interface weak formulation} holds for all test functions $(\hat{\bd{v}}, \hat{\bd{\psi}}, \hat{r}) \in \mathcal{Q}^{\eta^{\delta}}$, where the regularized Biot displacement, moving domains, and interface are defined in Definition \ref{regulardefs}:
\begin{multline*}
-\int_{0}^{T} \int_{\hat{\Omega}_{f}} \hat{\mathcal{J}}^{\eta^{\delta}}_{f} \hat{\bd{u}} \cdot \partial_{t} \hat{\bd{v}} + \frac{1}{2} \int_{0}^{T} \int_{\hat{\Omega}_{f}} \hat{\mathcal{J}}^{\eta^{\delta}}_{f} \Big[((\hat{\bd{u}} - \hat{\bd{w}}^{\eta}) \cdot \hat{\nabla}^{\eta^{\delta}}_{f} \hat{\bd{u}}) \cdot \hat{\bd{v}} - ((\hat{\bd{u}} - \hat{\bd{w}}^{\eta}) \cdot \hat{\nabla}^{\eta^{\delta}}_{f} \hat{\bd{v}}) \cdot \hat{\bd{u}}\Big] \\
- \frac{1}{2} \int_{0}^{T} \int_{\hat{\Omega}_{f}} (\partial_{t}\hat{\mathcal{J}}^{\eta^{\delta}}_{f}) \hat{\bd{u}} \cdot \hat{\bd{v}} + \frac{1}{2} \int_{0}^{T} \int_{\hat{\Gamma}} (\hat{\bd{u}} - \hat{\bd{\xi}}^{\delta}) \cdot \hat{\bd{n}}^{\eta^{\delta}} (\hat{\bd{u}} \cdot \hat{\bd{v}}) + 2\nu \int_{0}^{T} \int_{\hat{\Omega}_{f}} \hat{\mathcal{J}}^{\eta^{\delta}}_{f} \hat{\bd{D}}^{\eta^{\delta}}_{f}(\hat{\bd{u}}) : \hat{\bd{D}}^{\eta^{\delta}}_{f}(\hat{\bd{v}}) \\
+ \int_{0}^{T} \int_{\hat{\Gamma}} \left(\frac{1}{2} |\hat{\bd{u}}|^{2} - \hat{p}\right) (\hat{\bd{\psi}}^{\delta} - \hat{\bd{v}}) \cdot \hat{\bd{n}}^{\omega} + \beta \int_{0}^{T} \int_{\hat{\Gamma}} \Big(\hat{\mathcal{S}}^{\eta^{\delta}}_{\Gamma}\Big)^{-1} (\hat{\bd{\xi}}^{\delta} - \hat{\bd{u}}) \cdot \hat{\bd{\tau}}^{\omega} (\hat{\bd{\psi}}^{\delta} - \hat{\bd{v}}) \cdot \hat{\bd{\tau}}^{\omega} \\
- \rho_{b} \int_{0}^{T} \int_{\hat{\Omega}_{b}} \partial_{t} \hat{\bd{\eta}} \cdot \partial_{t} \hat{\bd{\psi}} + 2\mu_{e} \int_{0}^{T} \int_{\hat{\Omega}_{b}} \hat{\bd{D}}(\hat{\bd{\eta}}) : \hat{\bd{D}}(\hat{\bd{\psi}}) + \lambda_{e} \int_{0}^{T} \int_{\hat{\Omega}_{b}} (\hat{\nabla} \cdot \hat{\bd{\eta}}) (\hat{\nabla} \cdot \hat{\bd{\psi}}) + 2\mu_{v} \int_{0}^{T} \int_{\hat{\Omega}_{b}} \hat{\bd{D}}(\partial_{t}\hat{\bd{\eta}}) : \hat{\bd{D}}(\hat{\bd{\psi}}) \\
+ \lambda_{v} \int_{0}^{T} \int_{\hat{\Omega}_{b}} (\hat{\nabla} \cdot \partial_{t} \hat{\bd{\eta}}) (\hat{\nabla} \cdot \hat{\bd{\psi}}) - \alpha \int_{0}^{T} \int_{\hat{\Omega}_{b}} \hat{\mathcal{J}}^{\eta^{\delta}}_{b} \hat{p} \hat{\nabla}^{\eta^{\delta}}_{b} \cdot \hat{\bd{\psi}}^{\delta} - c_{0} \int_{0}^{T} \int_{\hat{\Omega}_{b}} \hat{p} \cdot \partial_{t} \hat{r} - \alpha \int_{0}^{T} \int_{\hat{\Omega}_{b}} \hat{\mathcal{J}}^{\eta^{\delta}}_{b} \partial_{t} \hat{\bd{\eta}}^{\delta} \cdot \hat{\nabla}^{\eta^{\delta}}_{b} \hat{r} - \alpha \int_{0}^{T} \int_{\hat{\Gamma}} (\hat{\bd{\xi}}^{\delta} \cdot \hat{\bd{n}}^{\eta^{\delta}}) \hat{r} \\
+ \kappa \int_{0}^{T} \int_{\hat{\Omega}_{b}} \hat{\mathcal{J}}^{\eta^{\delta}}_{b} \hat{\nabla}^{\eta^{\delta}}_{b} \hat{p} \cdot \hat{\nabla}^{\eta^{\delta}}_{b} \hat{r} - \int_{0}^{T} \int_{\hat{\Gamma}} ((\hat{\bd{u}} - \hat{\bd{\xi}}^{\delta}) \cdot \hat{\bd{n}}^{\eta^{\delta}}) \hat{r} = \int_{\Omega_{f}(0)} \bd{u}_0 \cdot \bd{v}(0) + \rho_{b} \int_{\hat{\Omega}_{b}} \hat{\bd{\xi}}_{0} \cdot \hat{\bd{\psi}}(0) + c_{0} \int_{\hat{\Omega}_{b}} \hat{p}_0 \cdot \hat{r}(0),
\end{multline*}
where $\hat{\bd{\psi}}^{\delta}$ for the test function $\hat{\bd{\psi}}$ is defined analogously via simple $H^{1}(\hat{\Omega}_{b})$ extension and convolution as in Definition \ref{regulardefs}, and the geometric quantities and operators are defined in Section \ref{transformed}.
\end{definition}

\subsection{Discussion of regularized interface weak solution and a priori estimate}

If one compares the weak formulation for the original FPSI problem (Definition \ref{woregularitydef}) and the regularized interface weak formulation for the regularized interface FPSI problem (Definition \ref{regularweak}), we can make several observations. First, we see that the moving domain geometry is regularized via the regularized Biot displacement $\hat{\bd{\eta}}^{\delta}$ and that any equations posed on the moving geometry, namely the pore pressure equation in the Biot equations \eqref{biot}, involve integrals over the \textit{regularized moving Biot domain}. For example, the terms 
\begin{equation*}
-\alpha \int_{0}^{T} \int_{\hat{\Omega}_{b}} \hat{\mathcal{J}}^{\eta^{\delta}}_{b} \hat{p} \hat{\nabla}^{\eta^{\delta}}_{b} \cdot \hat{\bd{\psi}}^{\delta} \qquad \text{ and } \qquad -\alpha \int_{0}^{T} \int_{\hat{\Omega}_{b}} \hat{\mathcal{J}}^{\eta^{\delta}}_{b} \partial_{t} \hat{\bd{\eta}}^{\delta} \cdot \hat{\nabla}^{\eta^{\delta}}_{b} \hat{r}
\end{equation*}
can be rewritten by a change of variables via the Lagrangian map as integrals over the \textit{regularized Biot domain} $\Omega^{\delta}_{b}(t)$, defined in Definition \ref{regulardefs}. These integrals are now well-defined, since $\hat{\bd{\eta}}^{\delta}$ is smooth, in contrast to their counterparts in the original weak formulation (Definition \ref{woregularitydef})
\begin{equation*}
-\alpha \int_{0}^{T} \int_{\hat{\Omega}_{b}} \hat{\mathcal{J}}^{\eta}_{b} \hat{p} \hat{\nabla}^{\eta}_{b} \cdot \hat{\bd{\psi}} \qquad \text{ and } \qquad -\alpha \int_{0}^{T} \int_{\hat{\Omega}_{b}} \hat{\mathcal{J}}^{\eta}_{b} \partial_{t} \hat{\bd{\eta}} \cdot \hat{\nabla}^{\eta}_{b} \hat{r},
\end{equation*}
which do not have enough spatial regularity (due to the limited $H^{1}(\hat{\Omega}_{b})$ regularity of $\hat{\bd{\eta}}$ leading to low regularity for the geometric terms) to be properly interpreted.

When comparing the weak formulations for the original and the regularized interface FPSI problem, we see another set of differences arising from the \textit{regularized interface condition}. While initially, for the direct problem, we obtained the movement of the fluid-Biot interface along which there is direct contact via a trace $\hat{\bd{\eta}}|_{\hat{\Gamma}}$ along $\hat{\Gamma}$, in order to have sufficient regularity for the interface, we define the interface displacement in the regularized interface method via the trace of the \textit{regularized Biot displacement} $\hat{\bd{\eta}}^{\delta}|_{\hat{\Gamma}}$. Therefore, some additional regularizations must be done in the weak formulation in order to take into account this regularized interface condition, in order to preserve the a priori energy estimate. For example, if we examine the following terms in the weak formulation
\begin{equation*}
-\alpha \int_{0}^{T} \int_{\hat{\Omega}_{b}} \hat{\mathcal{J}}^{\eta^{\delta}}_{b} \hat{p} \hat{\nabla}^{\eta^{\delta}}_{b} \cdot \hat{\bd{\psi}}^{\delta} - \alpha \int_{0}^{T} \int_{\hat{\Omega}_{b}} \hat{\mathcal{J}}^{\eta^{\delta}}_{b} \partial_{t} \hat{\bd{\eta}}^{\delta} \cdot \hat{\nabla}^{\eta^{\delta}}_{b} \hat{r} - \alpha \int_{0}^{T} \int_{\hat{\Gamma}} (\hat{\bd{\xi}}^{\delta} \cdot \hat{\bd{n}}^{\eta^{\delta}}) \hat{r},
\end{equation*}
we see the reason why the first term in the immediately preceding expression has a regularized test function $\hat{\bd{\psi}}^{\delta}$ instead of just $\hat{\bd{\psi}}$ and the second term has $\partial_{t}\hat{\bd{\eta}}^{\delta}$ instead of $\partial_{t} \hat{\bd{\eta}}$. Namely, if we derive an energy inequality by substituting $\hat{\bd{\psi}} = \partial_{t} \hat{\bd{\eta}}$ for the Biot displacement test function and $\hat{r} = \hat{p}$ for the Biot pore pressure test function, these terms are consistent and cancel out, since $\partial_{t}\hat{\bd{\eta}}^{\delta} := \hat{\bd{\xi}}^{\delta}$, which preserves the desired a priori energy estimate:
\begin{align*}
&-\alpha \int_{0}^{T} \int_{\hat{\Omega}_{b}} \hat{\mathcal{J}}^{\eta^{\delta}}_{b} \hat{p} \hat{\nabla}^{\eta^{\delta}}_{b} \cdot \hat{\bd{\eta}}^{\delta} - \alpha \int_{0}^{T} \int_{\hat{\Omega}_{b}} \hat{\mathcal{J}}^{\eta^{\delta}}_{b} \partial_{t} \hat{\bd{\eta}}^{\delta} \cdot \hat{\nabla}^{\eta^{\delta}}_{b} \hat{p} - \alpha \int_{0}^{T} \int_{\hat{\Gamma}} (\hat{\bd{\xi}}^{\delta} \cdot \hat{\bd{n}}^{\eta^{\delta}}) \hat{p} \\
&= -\alpha \int_{0}^{T} \int_{\Omega^{\delta}_{b}(t)} p (\nabla \cdot \bd{\eta}^{\delta}) - \alpha \int_{0}^{T} \int_{\Omega^{\delta}_{b}(t)} \partial_{t} \bd{\eta}^{\delta} \cdot \nabla p - \alpha \int_{0}^{T} \int_{\Gamma(t)} (\bd{\xi}^{\delta} \cdot \bd{n}) p = 0, \\
&\qquad \qquad \qquad \qquad \qquad \text{ where } \bd{\eta}^{\delta} := \hat{\bd{\eta}}^{\delta} \circ (\bd{\Phi}^{\eta^{\delta}}_{b})^{-1} \text{ and } \bd{\xi}^{\delta} = \hat{\bd{\xi}}^{\delta} \circ (\bd{\Phi}^{\eta^{\delta}}_{b})^{-1}.
\end{align*}

More generally, if we substitute $(\hat{\bd{v}}, \hat{\bd{\psi}}, \hat{r}) = (\hat{\bd{u}}, \partial_{t} \hat{\bd{\eta}}, \hat{p})$ for the test functions into the regularized interface weak formulation (Definition \ref{woregularitydef}), we obtain the following energy estimate:
\begin{equation}\label{energyest}
E_{f}(t) + E_{b}(t) + D(t) = E_{f}(0) + E_{b}(0),
\end{equation}
where 
\begin{align*}
E_{f}(t) &= \frac{1}{2} \int_{\Omega_{f}(t)} |\bd{u}|^{2} = \frac{1}{2} \int_{\hat{\Omega}_{f}} \hat{\mathcal{J}}^{\eta^{\delta}}_{f} |\bd{u}|^{2}, \quad (\text{fluid kinetic energy}), \\
E_{b}(t) &= \frac{1}{2} \int_{\hat{\Omega}_{b}} |\partial_{t}\hat{\bd{\eta}}|^{2} + \mu_{e} \int_{\hat{\Omega}_{b}} |\hat{\bd{D}}(\hat{\bd{\eta}})|^{2} + \frac{1}{2}\lambda_{e} \int_{\hat{\Omega}_{b}} |\hat{\nabla} \cdot \hat{\bd{\eta}}|^{2} + \frac{1}{2} c_{0} \int_{\hat{\Omega}_{b}} |\hat{p}|^{2}, \quad (\text{Biot kinetic energy}), \\
D(t) &= 2\nu \int_{0}^{t} \int_{\hat{\Omega}_{f}} \hat{\mathcal{J}}^{\eta^{\delta}}_{f} |\hat{\bd{D}}^{\eta^{\delta}}_{f}(\hat{\bd{u}})|^{2} + \beta \int_{0}^{T} \int_{\hat{\Gamma}} \Big(\hat{S}^{\eta^{\delta}}_{\Gamma}\Big)^{-1} |(\hat{\bd{\xi}}^{\delta} - \hat{\bd{u}}) \cdot \bd{\tau}^{\eta^{\delta}}|^{2} + 2\mu_{v} \int_{0}^{t} \int_{\hat{\Omega}_{b}} |\hat{\bd{D}}(\partial_{t} \hat{\bd{\eta}})|^{2} \\
&\ \ \ + \lambda_{v} \int_{0}^{t} \int_{\hat{\Omega}_{b}} |\hat{\nabla} \cdot \partial_{t}\hat{\bd{\eta}}|^{2} + \kappa \int_{0}^{t} \int_{\hat{\Omega}_{b}} \hat{\mathcal{J}}^{\eta^{\delta}}_{b} |\hat{\nabla}^{\eta^{\delta}}_{b} \hat{p}|^{2}, \quad (\text{dissipation}).
\end{align*}

In this sense, the regularization of the weak formulation found in the regularized interface weak formulation (Definition \ref{regularweak}) is the minimal regularization of the original weak formulation (Definition \ref{woregularitydef}) such that:
\begin{enumerate}
\item All terms in the weak formulation are well-defined at the level of finite energy solutions.
\item The regularization does not alter the a priori energy estimate.
\end{enumerate}

\subsection{Main results and proof strategy}

The main result of this manuscript is the existence of a regularized interface weak solution to the FPSI problem with direct contact and nonlinear geometric coupling, for each fixed but arbitrary regularization parameter $\delta > 0$.

\begin{theorem}\label{mainthm}
Consider a fixed regularization parameter $\delta > 0$ and either a purely poroelastic or poroviscoelastic Biot medium, see \eqref{poroviscoelastic}. Suppose that we are given a divergence-free initial fluid velocity $\bd{u}_{0} \in L^{2}(\Omega_{f}(0))$, an initial Biot displacement $\hat{\bd{\eta}}_{0} \in H^{1}(\hat{\Omega}_{b})$, an initial Biot velocity $\hat{\bd{\xi}}_{0} \in L^{2}(\hat{\Omega}_{b})$, and an initial pore pressure $\hat{p}_{0} \in L^{2}(\hat{\Omega}_{b})$. Assume that the following geometric nondegeneracy conditions on the initial data hold:
\begin{itemize}
\item The initial Lagrangian map $\hat{\bd{\Phi}}^{\eta^{\delta}_{0}}_{b}: \hat{\Omega}_{b} \to \Omega^{\delta}_{b}(0)$ is bijective and satisfies $\text{det}(\bd{I} + \hat{\nabla} \hat{\bd{\eta}}_{0}^{\delta}) \ge c > 0$ and $|\bd{I} + \hat{\nabla} \hat{\bd{\eta}}^{\delta}_{0}| \ge c > 0$ on $\overline{\Omega_{b}}$ for some positive constant $c > 0$.
\item The initial fluid ALE map $\hat{\bd{\Phi}}^{\eta^{\delta}_{0}}_{f}: \hat{\Omega}_{f} \to \Omega_{f}(0)$ is bijective and satisfies $\text{det}(\hat{\nabla} \hat{\bd{\Phi}}^{\eta^{\delta}_{0}}_{f}) \ge c > 0$ and $|\hat{\nabla} \hat{\bd{\Phi}}^{\eta^{\delta}_{0}}_{f}| \ge c > 0$ on $\overline{\Omega_{f}}$ for some positive constant $c > 0$. 
\item The parametrization of the initial moving regularized direct contact interface:
\begin{equation*}
\bd{r}(\hat{z}) = \Big(\cos(\hat{z}) + \hat{\bd{\eta}}^{\delta}_{0}|_{\hat{\Gamma}}(\hat{z}), \sin(\hat{z}) + \hat{\bd{\eta}}^{\delta}_{0}|_{\hat{\Gamma}}(\hat{z})\Big), \quad \hat{z} \in [0, 2\pi],
\end{equation*}
has $\|\bd{r}'(\hat{z})\| \ge \alpha > 0$ for all $\hat{z} \in [0, 2\pi]$ for some positive constant $\alpha > 0$. 
\end{itemize}
In addition, assume that the initial fluid velocity satisfies the compatibility condition \eqref{U0CC}. Then, for some time $T > 0$ (depending potentially on $\delta > 0$), there exists a regularized interface weak solution $(\hat{\bd{u}}, \hat{\bd{\eta}}, \hat{p})$ to the direct contact FPSI problem with nonlinear geometric coupling in the sense of Definition \ref{regularweak} on the time interval $[0, T]$.
\end{theorem}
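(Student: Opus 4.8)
The plan is to prove existence in three stages, as outlined in the introduction: (i) introduce an auxiliary problem in which a reticular plate of thickness $h>0$, carrying mass and elastic energy scaled by $h$, is inserted at the fluid-Biot interface; (ii) for each fixed $h>0$ (and the fixed $\delta>0$), construct a regularized interface weak solution of the plate problem via a Lie operator splitting scheme and a passage to the limit in the time step $\Delta t\to0$; and (iii) pass to the singular limit $h\to0$ to recover a regularized interface weak solution of the direct-contact problem in the sense of Definition~\ref{regularweak}. The crucial preliminary observation, used throughout, is that mollification at scale $\delta$ maps $H^{1}(\hat{\Omega}_{b})$ boundedly into $C^{1}(\overline{\hat{\Omega}_{b}})$ (with constant depending on $\delta$), so that the a priori bound $\hat{\bd{\eta}}\in W^{1,\infty}(0,T;L^{2}(\hat{\Omega}_{b}))\cap L^{\infty}(0,T;H^{1}(\hat{\Omega}_{b}))$ yields uniform-in-time control of $\|\hat{\bd{\eta}}^{\delta}(t)-\hat{\bd{\eta}}^{\delta}_{0}\|_{C^{1}(\hat{\Omega}_{b})}$. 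By Propositions~\ref{gammainjective} and \ref{mapinjective}, this keeps the regularized Lagrangian map $\hat{\bd{\Phi}}^{\eta^{\delta}}_{b}$, the ALE map $\hat{\bd{\Phi}}^{\eta^{\delta}}_{f}$ (defined by harmonic extension), and the interface parametrization bijective and nondegenerate, with positive lower bounds on $\hat{\mathcal{J}}^{\eta^{\delta}}_{b}$, $\hat{\mathcal{J}}^{\eta^{\delta}}_{f}$, and $\hat{\mathcal{S}}^{\eta^{\delta}}_{\Gamma}$, provided $T=T(\delta)$ is chosen small enough; this is exactly what makes every geometric quantity appearing in Definition~\ref{regularweak} well defined, given the geometric nondegeneracy hypotheses on the initial data.

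For the second stage I would discretize $[0,T]$ into $N$ subintervals of length $\Delta t=T/N$ and, on each subinterval, freeze the geometry using the regularized displacement from the previous step and solve two subproblems in sequence: a fluid-Biot subproblem, which after eliminating the fluid pressure and using the skew-symmetric form of the convective term becomes a linear elliptic system on the fixed (regularized) domains, solved by Lax-Milgram in $V^{\eta^{\delta}}_{f}(t)\times V_{d}\times V_{p}$; and a plate subproblem, updating the interface displacement via the linear plate elastodynamics together with the kinematic coupling conditions. Summing the discrete energy identities (following the computation preceding \eqref{energyest}, with the additional nonnegative $h$-scaled plate kinetic and elastic contributions) yields the uniform bound $E_{f}^{N}+E_{b}^{N}+E_{\mathrm{plate}}^{N}+D^{N}\le E_{f}(0)+E_{b}(0)$, uniform in both $\Delta t$ and $h$. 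Forming the piecewise-constant and piecewise-linear interpolants, I would obtain uniform bounds in $\mathcal{V}^{\eta^{\delta}}$ together with the $h$-weighted plate spaces, and pass to the limit $\Delta t\to0$ using Aubin-Lions-Simon compactness adapted to functions on the moving regularized domains (cf. \cite{MuhaCanic13,BCMW21,FPSIJMPA}). The mollification guarantees that the geometric maps and their Jacobians converge strongly in $C^{1}$, so the nonlinear convective term, the geometric terms over $\hat{\Omega}_{f}$ and $\hat{\Omega}_{b}$, and the interface terms over $\hat{\Gamma}$ all pass to the limit, giving a regularized interface weak solution of the plate problem obeying the same energy inequality.

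For the third stage, the energy estimate above is uniform in $h$ for all fluid and Biot quantities, and because the interface is defined throughout by $\hat{\bd{\eta}}^{\delta}|_{\hat{\Gamma}}$ rather than by the plate displacement, the uniform geometric nondegeneracy from the preliminary observation persists as $h\to0$ on the same interval $[0,T]$. This is precisely the point emphasized in the introduction: constructing the interface by convolution, rather than via the plate's $H^{2}(\hat{\Gamma})$ regularity whose constants degenerate like powers of $h$ in the plate elastodynamics, is what makes the singular limit possible. Extracting weakly convergent subsequences for $(\hat{\bd{u}}_{h},\hat{\bd{\eta}}_{h},\hat{p}_{h})$ and strongly convergent ones via Aubin-Lions on the moving domains, and observing that the $h$-scaled plate kinetic and elastic terms vanish while the plate displacement converges to $\hat{\bd{\eta}}^{\delta}|_{\hat{\Gamma}}$ in the kinematic coupling, I would pass to the limit in the plate-problem weak formulation and obtain exactly the regularized interface weak formulation of Definition~\ref{regularweak}.

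The main obstacle is maintaining uniform geometric control — injectivity together with uniform positive lower bounds on $\hat{\mathcal{J}}^{\eta^{\delta}}_{b}$, $\hat{\mathcal{J}}^{\eta^{\delta}}_{f}$, and $\hat{\mathcal{S}}^{\eta^{\delta}}_{\Gamma}$ — simultaneously along the splitting scheme (uniformly in $\Delta t$), along the plate approximation (uniformly in $h$), and for a vector-valued displacement, which, unlike the scalar transverse case, can produce tangential degeneration of the interface parametrization. The $\delta$-regularization renders this tractable through the Lipschitz-in-time control of $\hat{\bd{\eta}}^{\delta}$ inherited from the finite-energy bound, but it must be installed carefully already at the semidiscrete level so that the discrete maps inherit nondegeneracy on a $\delta$-dependent but $\Delta t$- and $h$-independent time interval. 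A secondary difficulty is passing to the limit in the Beavers-Joseph-Saffman slip term and in the $\tfrac12|\hat{\bd{u}}|^{2}$ and pore-pressure trace terms on $\hat{\Gamma}$, which requires the strong $L^{2}$ convergence of interface traces delivered by the compactness arguments.
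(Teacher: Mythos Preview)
Your proposal is correct and follows essentially the same three-stage architecture as the paper: plate approximation at thickness $h>0$, Lie splitting with $\Delta t\to 0$, then the singular limit $h\to 0$, all underpinned by the $\delta$-mollification giving uniform $C^{1}$ control of the geometry.

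One point of execution differs from what you anticipate. You ask for the discrete maps to be nondegenerate on a time interval that is simultaneously $\Delta t$- and $h$-independent, but in the paper the semidiscrete geometric control (its Proposition~\ref{geometricN}) actually \emph{does} depend on $h$: the only available bound on the discrete plate velocity in $H^{2}(\Gamma)$ comes from the plate viscoelastic dissipation, which is scaled by $h$, so the resulting $C^{1}$ increment estimate on $\bd{\omega}^{n}_{N}$ carries an implicit $h^{-1/2}$. The paper accepts an $h$-dependent time interval at the discrete level and recovers $h$-independence only \emph{after} $N\to\infty$, where the regularized kinematic coupling $\bd{\omega}=\bd{\eta}^{\delta}|_{\Gamma}$ lets one bound the interface velocity directly from the $h$-independent Biot energy (Proposition~\ref{geometryh}). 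Also, the paper runs the plate subproblem \emph{before} the Biot/fluid subproblem, so that the latter is posed on a geometry already updated by the former; this ordering is what makes the fluid divergence-free constraint linear at each step.
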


This existence result is novel since there is direct contact between the Biot medium and the fluid. The only past existence result on Biot-fluid FPSI with nonlinear geometric coupling is in \cite{NonlinearFPSI_CRM23, FPSIJMPA}, with a plate at the Biot-fluid interface, whose elastodynamics regularizes the interface, for which there is existence to a slightly different regularized weak formulation (without a regularized interface condition). 

Therefore, the proof strategy for establishing Theorem \ref{mainthm} will be to use an approximation parameter $h > 0$, where the approximate problem will be the problem with a plate of thickness $h > 0$ at the fluid-structure interface $\hat{\Gamma}$. Namely, the approximate problem will have a plate of thickness $h$, and the interface displacement $\hat{\bd{\omega}}$ will be the (vector-valued) displacement of the plate from its reference configuration $\hat{\Gamma}$. To impose the condition in the regularized interface method that the interface displacement is determined by the trace $\hat{\bd{\eta}}^{\delta}|_{\hat{\Gamma}}$, we require the following regularized interface kinematic coupling condition:
\begin{equation*}
\hat{\bd{\omega}} = \hat{\bd{\eta}}^{\delta}|_{\hat{\Gamma}}. 
\end{equation*}
Furthermore, the plate of thickness $h > 0$ at the interface will have inertia and will hence have its own elastodynamics that affects the entire coupled system. Specifically, the interface displacement (which is now the plate displacement) will be modeled by a viscoelastic plate equation given by: 
\begin{equation}\label{plate}
h \partial_{t}^{2} \hat{\bd{\omega}} + h \hat{\Delta}^{2} \partial_{t} \hat{\bd{\omega}} + h \hat{\Delta}^{2} \hat{\bd{\omega}} = \hat{\bd{F}}_{p} \text{ on } \hat{\Gamma}, \qquad \text{ for } \hat{\Delta} := \partial^{2}_{\hat{z}},
\end{equation}
where we consider this equation for $\hat{z} \in [0, 2\pi]$ with periodic boundary conditions using the natural parametrization of $\hat{\Gamma}$ in \eqref{circle}. In addition, instead of direct balance of normal stress (in the case of direct contact in \eqref{normalstress}), we have a \textbf{dynamic coupling condition} in the approximate problem with the plate, specifying the load on the elastodynamics of the plate $\hat{\bd{F}}_{p}$ as the difference in normal stress from the Biot medium and fluid:
\begin{equation}\label{traction}
\hat{\bd{F}}_{p} = -\text{det}(\nabla \hat{\bd{\Phi}}^{\omega}_{f}) [\bd{\sigma}_{f}(\nabla \bd{u}, \pi) \circ \hat{\bd{\Phi}}^{\omega}_{f}] (\nabla \hat{\bd{\Phi}}^{\omega}_{f})^{-t} \hat{\bd{n}} + \hat{S}_{b}(\hat{\nabla} \hat{\bd{\eta}}, \hat{p})\hat{\bd{n}}|_{\hat{\Gamma}},
\end{equation}
where the ALE map $\hat{\bd{\Phi}}^{\omega}_{f}$ is defined via \eqref{alef1} and \eqref{alef2}, using the plate displacement $\hat{\bd{\omega}}$. 

\medskip

We then show existence of a regularized interface weak solution (satisfying a suitably regularized weak formulation) to the approximate problem with a plate of thickness $h > 0$ at the interface for each parameter $h > 0$. This will be done via a constructive existence scheme using Lie operator splitting, in which we discretize in time into $N$ total subintervals of length $\Delta t$. Upon obtaining approximate solutions for each $N$ via Lie operator splitting, we then obtain uniform bounds in $N$ and use compactness arguments (Aubin-Lions) to obtain strongly convergent subsequences and pass to the limit as $N \to \infty$ in the semidiscrete weak formulations to obtain a regularized weak solution to the problem with a plate of thickness $h > 0$. At this stage, we have a regularized interface weak solution for each $h > 0$, and from our previous analysis of constructive existence, we can show that \textbf{the interval of existence is independent of the parameter $h$, so that all regularized weak interface solutions can be constructed on a common interval $[0, T]$ independently of $h$, but which potentially depends on $\delta > 0$ which is fixed.} We then take the singular limit in the resulting regularized interface weak solutions (to the regularized problem with plate with thickness parameter $h > 0$) as $h \to 0$ to obtain a regularized weak interface solution which satisfies the limiting weak formulation with direct contact in Definition \ref{regularweak}. This will involve obtaining uniform bounds independently of $h$, and then passing to the limit using compactness arguments. This gives us a limiting regularized interface solution as $h \to 0$ satisfying the regularized interface weak formulation (with $h = 0$) in Definition \ref{regularweak}, which will complete the proof. 

Before we show existence of a regularized interface weak solution to the approximate problem with a plate of thickness $h > 0$ in the sense of Definition \ref{plateweak}, we make some comments about the choice of the plate equation \eqref{plate} approximate problem.
\begin{enumerate}
\item \textbf{Presence of the plate.} Having a plate at the boundary is advantageous as it simplifies constructive existence of solutions to the nonlinearly coupled FPSI problem. The main advantage is that in the constructive existence proof which uses a splitting scheme, the plate displacement (which updates the moving interface configuration and hence determines the geometry of the problem) can be updated independently (in a separate subproblem) from the Biot/fluid quantities, which are posed on domains that rely on the interface location. This enables us to separate the time-discrete moving interface update and the dynamical update of the Biot/fluid dynamical quantities, which allows us to construct a stable scheme. In particular, we can pose the Biot/fluid subproblem on a fixed geometry that is determined already by a prior plate subproblem update in the prior step, which at the approximate level allows us to solve the Biot/fluid subproblem on a fixed domain at each time-discrete step.
\item \textbf{Second order viscoelasticity and regularization of plate.} Note in addition that in the equation \eqref{plate}, there is second order viscoelasticity and regularization, so both the plate displacement $\hat{\bd{\eta}}$ and the plate velocity $\partial_{t}\hat{\bd{\eta}}$ are in $H^{2}(\hat{\Gamma})$. The plate displacement $\hat{\bd{\eta}}$ being in $H^{2}(\hat{\Gamma})$ is needed since we have a \textbf{vector-valued} (rather than purely transverse) \textbf{plate displacement}, and the stability results we use for preventing geometric degeneracies rely on having $C^{1}$ interfaces, see Proposition \ref{gammainjective}. We also need $H^{2}(\hat{\Gamma})$ estimates on the plate velocity $\partial_{t}\hat{\bd{\eta}}$, because we need to propagate uniform geometric estimates on $\hat{\mathcal{J}}^{\omega}_{f}$ and $\nabla \hat{\bd{\Phi}}^{\omega}_{f}$ defined in \eqref{omegadiff} involving $\nabla \hat{\bd{\eta}}$ in time. This requires uniform pointwise estimates on the time derivative of $\nabla \hat{\bd{\eta}}$, for which we need the plate velocity to be in $H^{2}(\hat{\Gamma})$ since the plate is one-dimensional, see Proposition \ref{geometricN}. \textbf{We emphasize that the second-order viscoelasticity is needed only for the construction via splitting scheme, which staggers the interface (geometric) update and the fluid/Biot update, and this requirement vanishes in the limit as $h \to 0$ once we have a limiting solution that is no longer time-discrete.}
\end{enumerate}

\section{The approximate problem with reticular plate of thickness $h > 0$}\label{hproof}

We consider the approximate problem with a plate of thickness $h > 0$ at the interface, described by the elastodynamics equation \eqref{plate} and the dynamic coupling condition specifying the load on the plate in \eqref{traction}. In this case, the movement of the interface is determined by the (vector-valued) displacement $\hat{\bd{\omega}}$ of the reticular plate from its reference configuration $\hat{\Gamma}$. 

We define a concept of a regularized interface weak solution for the problem with plate thickness (approximation) parameter $h > 0$. Throughout the analysis, we will consider the regularization parameter $\delta > 0$ to be a fixed but arbitrary parameter (even in the singular limit as $h \to 0$ which will be used to obtain a regularized interface weak solution for the problem with direct contact, we will keep $\delta > 0$ fixed). Since the plate equation has finite energy solutions in $H^{2}(\hat{\Gamma})$, we have that the solution space and test space for the approximate problem are as follows, in analogy to the direct contact solution/test spaces in \eqref{directsolution} and \eqref{directtest}. In contrast to the direct contact solution space (which does not include the interface displacement $\hat{\bd{\omega}}$ since this is just the trace of the Biot displacement), the solution space for the problem with parameter $h > 0$ has the plate displacement included. We define the following (fixed domain) solution space and test space:

\begin{equation*}
\begin{cases}
\mathcal{V}_h^{\omega} := H^1(0,T;H^2(\hat{\Gamma})),\\
\mathcal{V}_{h}^{\omega} := \{(\hat{\bd{u}}, \hat{\bd{\omega}}, \hat{\bd{\eta}}, \hat{p})\in\mathcal{V}_{f}^{\omega} \times \mathcal{V}_h^{\omega} \times \mathcal{V}_{d} \times \mathcal{V}_{p},\; \hat{\bd{\omega}} := \hat{\bd{\eta}}^{\delta}|_{\hat{\Gamma}}\}, \\
\mathcal{Q}_{h}^{\omega} := \{(\hat{\bd{v}}, \hat{\bd{\varphi}}, \hat{\bd{\psi}}, \hat{r}) \in C_{c}^{1}([0, T); V^{\omega}_{f}(t) \times H^{2}(\hat{\Gamma}) \times V_{d} \times V_{p}) : \hat{\bd{\varphi}} := \hat{\bd{\psi}}^{\delta}|_{\hat{\Gamma}}\},
\end{cases}
\end{equation*}
where $\mathcal{V}_{d}$ and $\mathcal{V}_{p}$ are defined in \eqref{Vd} and \eqref{Vp}, and the fluid space is now indexed by the plate displacement $\hat{\bd{\omega}}$. In analogy to \eqref{Vetaf}, we also define the moving domain solution spaces:
\begin{equation}\label{Vomegaf}
\begin{cases}
V^{\omega}_{f} := \{\bd{u} \in H^{1}(\hat{\Omega}_{f}) : \nabla^{\omega}_{f} \cdot \hat{\bd{u}} = 0 \text{ on } \hat{\Omega}_{f} \text{ and } \hat{\bd{u}} = \bd{0} \text{ on } \partial \hat{\Omega}_{f} \setminus \hat{\Gamma}\}, \\
\mathcal{V}^{\omega}_{f} := L^{\infty}(0, T; L^{2}(\hat{\Omega}_{f})) \cap L^{2}(0, T; V^{\omega}_{f}(t)),
\end{cases}
\end{equation}
where we recall the definition of the nonlinear differential operators indexed by $\hat{\bd{\omega}}$ in \eqref{omegadiff}. We also define the following rescaled normal and tangent vectors in analogy to \eqref{rescalednt} given the vector-valued plate displacement $\hat{\bd{\omega}} := (\hat{\omega}_{1}, \hat{\omega}_{2})$:
\begin{equation}\label{ntomega}
\bd{n}^{\omega} := (\cos(\hat{z}) + \partial_{\hat{z}}\hat{\omega}_{2}, \sin(\hat{z}) - \partial_{\hat{z}}\hat{\omega}_{1}), \qquad \bd{\tau}^{\omega} := (-\sin(\hat{z}) + \partial_{\hat{z}}\hat{\omega}_{1}, \cos(\hat{z}) + \partial_{\hat{z}}\hat{\omega}_{2}).
\end{equation}
We then have the following \textit{regularized interface weak formulation} for the regularized interface weak solutions to the problem with plate of thickness $h > 0$. 

\begin{definition}[Regularized interface weak solution: problem with plate of thickness $h$]\label{plateweak}
An ordered collection $(\hat{\bd{u}}, \hat{\bd{\omega}}, \hat{\bd{\eta}}, \hat{p}) \in \mathcal{V}^{\omega}_{h}$ is a regularized interface weak solution to the problem with regularization parameter $\delta > 0$ and a plate of thickness $h > 0$ if $\hat{\bd{\omega}} := \hat{\bd{\eta}}^{\delta}|_{\hat{\Gamma}}$ and the following \textit{regularized interface weak formulation} holds for all test functions $(\hat{\bd{v}}, \hat{\bd{\varphi}}, \hat{\bd{\psi}}, \hat{r}) \in \mathcal{Q}^{\omega}_{h}$:
\begin{multline*}
-\int_{0}^{T} \int_{\hat{\Omega}_{f}} \hat{\mathcal{J}}^{\omega}_{f} \hat{\bd{u}} \cdot \partial_{t} \hat{\bd{v}} + \frac{1}{2} \int_{0}^{T} \int_{\hat{\Omega}_{f}} \hat{\mathcal{J}}^{\omega}_{f} \Big[((\hat{\bd{u}} - \hat{\bd{w}}^{\eta}) \cdot \hat{\nabla}^{\omega}_{f} \hat{\bd{u}}) \cdot \hat{\bd{v}} - ((\hat{\bd{u}} - \hat{\bd{w}}^{\eta}) \cdot \hat{\nabla}^{\omega}_{f} \hat{\bd{v}}) \cdot \hat{\bd{u}}\Big] \\
- \frac{1}{2} \int_{0}^{T} \int_{\hat{\Omega}_{f}} (\partial_{t}\hat{\mathcal{J}}^{\omega}_{f}) \hat{\bd{u}} \cdot \hat{\bd{v}} + \frac{1}{2} \int_{0}^{T} \int_{\hat{\Gamma}} (\hat{\bd{u}} - \hat{\bd{\xi}}^{\delta}) \cdot \hat{\bd{n}}^{\omega} (\hat{\bd{u}} \cdot \hat{\bd{v}}) + 2\nu \int_{0}^{T} \int_{\hat{\Omega}_{f}} \hat{\mathcal{J}}^{\omega}_{f} \hat{\bd{D}}^{\omega}_{f}(\hat{\bd{u}}) : \hat{\bd{D}}^{\omega}_{f}(\hat{\bd{v}}) \\
+ \int_{0}^{T} \int_{\hat{\Gamma}} \left(\frac{1}{2} |\hat{\bd{u}}|^{2} - \hat{p}\right) (\hat{\bd{\psi}}^{\delta} - \hat{\bd{v}}) \cdot \hat{\bd{n}}^{\omega} + \beta \int_{0}^{T} \int_{\hat{\Gamma}} \Big(\hat{\mathcal{S}}^{\omega}_{\Gamma}\Big)^{-1} (\hat{\bd{\xi}}^{\delta} - \hat{\bd{u}}) \cdot \hat{\bd{\tau}}^{\omega} (\hat{\bd{\psi}}^{\delta} - \hat{\bd{v}}) \cdot \hat{\bd{\tau}}^{\omega} \\
- h \int_{0}^{T} \int_{\hat{\Gamma}} \partial_{t}\hat{\bd{\omega}} \cdot \partial_{t} \hat{\bd{\varphi}} + h \int_{0}^{T} \int_{\hat{\Gamma}} \Delta \partial_{t} \hat{\bd{\omega}} \cdot \Delta \hat{\bd{\varphi}} + h \int_{0}^{T} \int_{\hat{\Gamma}} \hat{\Delta} \hat{\bd{\omega}} \cdot \hat{\Delta} \hat{\bd{\varphi}} - \rho_{b} \int_{0}^{T} \int_{\hat{\Omega}_{b}} \partial_{t} \hat{\bd{\eta}} \cdot \partial_{t} \hat{\bd{\psi}} + 2\mu_{e} \int_{0}^{T} \int_{\hat{\Omega}_{b}} \hat{\bd{D}}(\hat{\bd{\eta}}) : \hat{\bd{D}}(\hat{\bd{\psi}}) \\
+ \lambda_{e} \int_{0}^{T} \int_{\hat{\Omega}_{b}} (\hat{\nabla} \cdot \hat{\bd{\eta}}) (\hat{\nabla} \cdot \hat{\bd{\psi}}) + 2\mu_{v} \int_{0}^{T} \int_{\hat{\Omega}_{b}} \hat{\bd{D}}(\partial_{t}\hat{\bd{\eta}}) : \hat{\bd{D}}(\hat{\bd{\psi}}) + \lambda_{v} \int_{0}^{T} \int_{\hat{\Omega}_{b}} (\hat{\nabla} \cdot \partial_{t} \hat{\bd{\eta}}) (\hat{\nabla} \cdot \hat{\bd{\psi}}) \\
- \alpha \int_{0}^{T} \int_{\hat{\Omega}_{b}} \hat{\mathcal{J}}^{\eta^{\delta}}_{b} \hat{p} \hat{\nabla}^{\eta^{\delta}}_{b} \cdot \hat{\bd{\psi}}^{\delta} - c_{0} \int_{0}^{T} \int_{\hat{\Omega}_{b}} \hat{p} \cdot \partial_{t} \hat{r} - \alpha \int_{0}^{T} \int_{\hat{\Omega}_{b}} \hat{\mathcal{J}}^{\eta^{\delta}}_{b} \partial_{t} \hat{\bd{\eta}}^{\delta} \cdot \hat{\nabla}^{\eta^{\delta}}_{b} \hat{r} - \alpha \int_{0}^{T} \int_{\hat{\Gamma}} (\bd{\zeta} \cdot \hat{\bd{n}}^{\omega}) \hat{r} \\
+ \kappa \int_{0}^{T} \int_{\hat{\Omega}_{b}} \hat{\mathcal{J}}^{\eta^{\delta}}_{b} \hat{\nabla}^{\eta^{\delta}}_{b} \hat{p} \cdot \hat{\nabla}^{\eta^{\delta}}_{b} \hat{r} - \int_{0}^{T} \int_{\hat{\Gamma}} ((\hat{\bd{u}} - \hat{\bd{\zeta}}) \cdot \hat{\bd{n}}^{\omega}) \hat{r} = \int_{\Omega_{f}(0)} \bd{u}_0 \cdot \bd{v}(0) + \rho_{b} \int_{\hat{\Omega}_{b}} \hat{\bd{\xi}}_{0} \cdot \hat{\bd{\psi}}(0) + c_{0} \int_{\hat{\Omega}_{b}} \hat{p}_0 \cdot \hat{r}(0),
\end{multline*}
\end{definition}

\begin{remark}[Notational convention in the approximate problem with a plate]
In the approximate problem, we have a viscoelastic plate of thickness $h > 0$ at the interface that has its own vector-valued displacement $\hat{\bd{\omega}}$, which satisfies continuity of displacements at the interface, $\hat{\bd{\omega}} := \hat{\bd{\eta}}^{\delta}|_{\hat{\Gamma}}$. Therefore, to emphasize the presence of the plate, whose displacement directly specifies the time-dependent interface $\Gamma(t)$, we index all geometric quantities in the fluid and plate by $\hat{\bd{\omega}}$ instead of $\hat{\bd{\eta}}^{\delta}$, as in $\hat{\mathcal{J}}^{\omega}_{f}$, $\hat{\nabla}^{\omega}_{f}$, $\hat{\bd{n}}^{\omega}$, and $\hat{\bd{\tau}}^{\omega}$, see the discussion around \eqref{alef1} and \eqref{alef2}, and the definitions in \eqref{ntomega}. While for example by \eqref{aleeta1} and \eqref{aleeta2}, we can equivalently denote the map $\hat{\bd{\Phi}}^{\omega}_{f}$ by $\hat{\bd{\Phi}}^{\eta^{\delta}}_{f}$, we choose to use $\omega$ in Definition \ref{plateweak} to emphasize the presence of the plate, whereas in contrast, we use $\eta^{\delta}$ to index geometric quantities in the regularized interface direct contact weak formulation in Definition \ref{regularweak} to emphasize the fact that there is direct contact (no plate).
\end{remark}

The goal is then (1) to establish existence of regularized weak solutions for the regularized problem of the form above (with the additional regularized kinematic coupling condition) and then (2) derive uniform estimates, which are independent of the plate thickness parameter $h > 0$, which will allow us to take the singular limit of solutions as $h \to 0$ for fixed regularization parameter $\delta > 0$ to obtain existence of weak solutions for a limiting problem without a plate separating the poroelastic medium and the fluid. \textit{Note that an important part of this analysis involves showing that the weak solutions to the regularized problem with regularization parameter $\delta > 0$ and plate thickness $h > 0$ exist on a uniform time interval of existence that is independent of $h > 0$.} In particular, the goal of this section is to prove the following result:

\begin{theorem}\label{hthm}
Consider a fixed regularization parameter $\delta > 0$ and consider an initial plate displacement and plate velocity $\hat{\bd{\omega}}_{0}, \bd{\zeta}_{0} \in H^{2}(\hat{\Gamma})$ and in addition: an initial fluid velocity $\bd{u}_{0}$, initial Biot displacement $\hat{\bd{\eta}}_{0}$, an initial Biot velocity $\bd{\xi}_{0}$, and an initial pore pressure $\hat{p}_{0}$ satisfying the conditions listed in Theorem \ref{mainthm}, along with the regularized kinematic coupling condition: $\bd{\omega}_{0} = \bd{\eta}_{0}^{\delta}|_{\Gamma}$ and $\bd{\zeta}_{0} = \bd{\xi}_{0}^{\delta}|_{\Gamma}$. Then, for some time $T > 0$ (depending only on $\delta > 0$) and \textit{independent of the plate thickness $h > 0$}, there exists a regularized interface weak solution $(\hat{\bd{u}}, \hat{\bd{\omega}}, \hat{\bd{\eta}}, \hat{p})$ to the FPSI problem with nonlinear geometric coupling and a reticular plate of thickness $h > 0$ in the sense of Definition \ref{plateweak}.
\end{theorem}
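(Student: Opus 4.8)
The plan is to prove Theorem~\ref{hthm} by a constructive time-discretization of Lie operator-splitting type, in the spirit of \cite{MuhaCanic13, BCMW21, FPSIJMPA} but adapted to the regularized-interface setting. Fix $\delta > 0$ and $h > 0$, partition $[0,T]$ into $N$ subintervals of length $\dt = T/N$, and on each subinterval $[n\dt, (n+1)\dt)$ solve two decoupled subproblems in succession, initialized by the prescribed data. In Subproblem~A (structure/geometry update) we update, from the data of the previous step, the plate displacement $\hat{\bd{\omega}}^{n+1}$ and plate velocity through a semidiscretization of the viscoelastic plate equation \eqref{plate}, and the Biot displacement $\hat{\bd{\eta}}^{n+1}$ through the structure elastodynamics, subject to the regularized kinematic constraint $\hat{\bd{\omega}} = \hat{\bd{\eta}}^{\delta}|_{\hat{\Gamma}}$; this is a linear coercive elliptic problem, solvable by Lax--Milgram, and its solution is smooth enough that the regularized Lagrangian map $\hat{\bd{\Phi}}^{\eta^{\delta}}_{b}$, the fluid ALE map $\hat{\bd{\Phi}}^{\omega}_{f}$ and the interface map are well-defined at step $n+1$. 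In Subproblem~B (fluid/Darcy + coupling update) we freeze this geometry and update $\hat{\bd{u}}^{n+1}$, $\hat{p}^{n+1}$ and the velocities by solving the linearized coupled Stokes--Biot--Darcy system with the Beavers--Joseph--Saffman slip, balance of pressure, and continuity of normal velocity imposed along the regularized interface; on a frozen geometry this is again a linear coercive problem on the fixed spaces $V^{\omega}_{f} \times V_{d} \times V_{p}$ (with $\omega$-transformed differential operators), solvable by Galerkin approximation.

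The scheme is arranged so that summing the energy changes produced by Subproblems~A and B over $n = 0, \dots, N-1$ yields a discrete counterpart of the energy identity \eqref{energyest}: the fluid kinetic energy, the Biot kinetic and elastic energies, the pore-pressure term, the plate kinetic and elastic energies, and the viscous, Darcy and slip dissipations are all controlled by the initial energy, with lower-order contributions absorbed via a discrete Gr\"onwall inequality. The decisive point is that this bound is uniform both in $N$ and in $h$: the plate contributions $\tfrac{h}{2}\|\partial_{t}\hat{\bd{\omega}}\|_{L^{2}(\hat{\Gamma})}^{2}$ and $\tfrac{h}{2}\|\hat{\Delta}\hat{\bd{\omega}}\|_{L^{2}(\hat{\Gamma})}^{2}$ enter with favorable signs, and the initial plate energy $\tfrac{h}{2}\big(\|\bd{\zeta}_{0}\|_{L^{2}(\hat{\Gamma})}^{2} + \|\hat{\Delta}\hat{\bd{\omega}}_{0}\|_{L^{2}(\hat{\Gamma})}^{2}\big)$ is $O(h)$ and hence bounded. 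For fixed $h > 0$ this gives $H^{2}(\hat{\Gamma})$ control of $\hat{\bd{\omega}}$, but only with a constant blowing up like $h^{-1/2}$; this degeneration as $h\to 0$ is precisely what the interface regularization repairs, and it is dealt with in Section~\ref{finallimit}, not here.

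The regularization at scale $\delta$ enters crucially in controlling the evolving geometry on a time interval independent of $h$. Convolution with $\varphi_{\delta}$ after the strong $1$-extension $E$ gives, for each integer $k$, an estimate $\|\hat{\bd{\eta}}^{\delta}\|_{C^{k}(\overline{\hat{\Omega}}_{b})} \le C_{k,\delta}\|\hat{\bd{\eta}}\|_{L^{2}(\hat{\Omega}_{b})}$, and likewise for $\hat{\bd{\xi}}^{\delta} = \partial_{t}\hat{\bd{\eta}}^{\delta}$ in terms of $\|\hat{\bd{\xi}}\|_{L^{2}(\hat{\Omega}_{b})}$. By the discrete energy estimate these $L^{2}$ norms are bounded in $L^{\infty}(0,T)$ uniformly in $N$ and $h$, so $\|\hat{\bd{\eta}}^{\delta}(t) - \hat{\bd{\eta}}^{\delta}_{0}\|_{C^{1}} \le \int_{0}^{t} \|\hat{\bd{\xi}}^{\delta}\|_{C^{1}} \le C_{\delta}\, t\,(\text{data bound})$; choosing $T = T(\delta, \text{data})$ small, and independent of $h$ and $N$, keeps $\hat{\bd{\eta}}^{\delta}$ in the $C^{1}$-neighborhoods to which Propositions~\ref{gammainjective} and \ref{mapinjective} apply, so the Biot Lagrangian map, the fluid ALE map, and the interface map remain injective with uniform non-degeneracy bounds (uniform positive lower bounds on $\hat{\mathcal{J}}^{\eta^{\delta}}_{b}$, $\hat{\mathcal{J}}^{\omega}_{f}$ and the arc-length element $\hat{\mathcal{S}}^{\eta^{\delta}}_{\Gamma}$) on $[0,T]$. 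With these bounds in hand, one passes to the limit $\dt\to 0$: introduce piecewise-constant and piecewise-linear interpolants, extract weakly/weakly-$*$ convergent subsequences from the uniform bounds, obtain strong $C^{1}$ convergence of $\hat{\bd{\eta}}^{\delta}$ (hence uniform convergence of all geometric coefficients) by compactness of convolution together with the uniform bound on $\partial_{t}\hat{\bd{\eta}}^{\delta}$ and Arzel\`a--Ascoli, and obtain strong $L^{2}$ convergence of the fluid velocity and the pore pressure by an Aubin--Lions-type compactness argument on moving domains as in \cite{MuhaCanic13, BorSunMultiLayered, FPSIJMPA}, facilitated by the smooth, uniformly controlled geometry. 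Care is needed with the time-dependent, $\omega$-dependent solenoidal test spaces $V^{\omega}_{f}(t)$: test functions built on the discrete geometries are transported to the limit geometry by composition with the (strongly convergent) ALE maps. Passing to the limit term-by-term recovers the weak formulation of Definition~\ref{plateweak}, and attainment of the initial data follows from the time-continuity provided by the uniform bounds; since $T$ does not depend on $h$, this proves Theorem~\ref{hthm}.

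The main obstacle, I expect, is the combination of maintaining uniform geometric non-degeneracy for \emph{vector-valued} bulk displacements --- a regime in which the moving Biot and fluid domains can genuinely lose injectivity or degenerate --- on a time horizon independent of $h$, and then passing this control through the nonlinear geometric terms in the $\dt\to 0$ limit. The whole argument hinges on converting the uniform-in-$h$ low-order energy bounds on $(\hat{\bd{\eta}}, \hat{\bd{\xi}})$ into uniform $C^{k}$ control of the $\delta$-regularized geometry and running a short-time continuation argument whose lifespan depends only on $\delta$ and the data; the remaining delicate point is the careful handling of the time-dependent, constraint-carrying function spaces and the strong convergence of the $\omega$- and $\eta^{\delta}$-transformed differential operators needed to take the limit in the coupled weak formulation.
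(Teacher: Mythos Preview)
Your overall strategy---Lie operator splitting, Lax--Milgram for the subproblems, uniform discrete energy estimates, Aubin--Lions compactness on moving domains, and passage to the limit $\Delta t \to 0$---is essentially the paper's approach, and your identification of the regularization mechanism (convolution turns $L^{2}$ bounds on $\hat{\bd{\eta}}$, $\hat{\bd{\xi}}$ into $C^{k}$ bounds on $\hat{\bd{\eta}}^{\delta}$, $\hat{\bd{\xi}}^{\delta}$, hence $h$-independent geometric control) is exactly the key idea exploited in Proposition~\ref{geometryh}.

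Two points need sharpening. First, your splitting differs from the paper's: the paper isolates only the \emph{plate} in the first subproblem (the viscoelastic plate equation with zero load), and keeps the Biot elastodynamics coupled with the fluid and Darcy equations in the second subproblem. This is done so that the dynamic coupling \eqref{traction} and the $\alpha p\, \hat{\nabla}^{\eta^{\delta}}_{b}\!\cdot\bd{\psi}^{\delta}$ terms are handled monolithically and the resulting bilinear form \eqref{bilinear} is coercive. Your Subproblem~A places the Biot elastodynamics with the plate, which decouples it from the fluid traction and pore pressure; you would need to specify precisely how those coupling terms are then distributed and verify that the scheme still yields the energy identity and a coercive step.

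Second, and more substantively: the paper's scheme does \emph{not} enforce $\bd{\omega}^{n}_{N} = (\bd{\eta}^{n}_{N})^{\delta}|_{\Gamma}$ at the discrete level---the plate displacement is advanced by $\bd{\zeta}^{n+1/2}_{N}$ from the plate subproblem, which differs from $(\dot{\bd{\eta}}^{n+1}_{N})^{\delta}|_{\Gamma}$ computed in the Biot--fluid subproblem. Consequently the discrete geometric bounds in Proposition~\ref{geometricN} \emph{do} depend on $h$ (see the Remark immediately following it), and your argument $\|\hat{\bd{\eta}}^{\delta}(t) - \hat{\bd{\eta}}^{\delta}_{0}\|_{C^{1}} \le C_{\delta}\,t$ cannot be applied to control the discrete interface $\bd{\omega}^{n}_{N}$. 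The paper accepts $h$-dependence of $T$ at the discrete stage, passes to the continuous limit $N \to \infty$ for each fixed $h$, and only \emph{then}---in Proposition~\ref{geometryh}, using the now-exact identity $\bd{\omega}_{h} = \bd{\eta}^{\delta}_{h}|_{\Gamma}$---invokes your regularization argument to obtain a common $T > 0$ independent of $h$. If you want $h$-independence already at the discrete level, you must design the scheme so that the constraint holds exactly at every step, and then check that the modified subproblems remain well-posed.
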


Note that by formally substituting for the test function $(\hat{\bd{v}}, \hat{\bd{\varphi}}, \hat{\bd{\psi}}, \hat{r}) = (\hat{\bd{u}}, \hat{\bd{\omega}}, \partial_{t} \hat{\bd{\eta}}, \hat{p})$, we obtain the following a priori energy estimate, which is the analogue of the energy estimate in \eqref{energyest}, where the terms $E_{f}(t)$, $E_{b}(0)$, and $D(t)$ are defined, but with the addition of energy arising from the plate (which depends on the plate thickness $h > 0$):
\begin{align*}
E_{f}&(t) + E_{b}(t) + E_{h}(t) + D_{h}(t) = E_{f}(0) + E_{b}(0) + E_{h}(0), \\
&\text{ where } E_{h}(t) := \frac{1}{2} h \int_{\hat{\Gamma}} |\partial_{t} \hat{\bd{\omega}}|^{2} + \frac{1}{2} h \int_{\hat{\Gamma}} |\hat{\Delta} \hat{\bd{\omega}}|^{2} \text{ and } D_{h}(t) := D(t) + h \int_{0}^{t} \int_{\hat{\Gamma}} |\hat{\Delta} \partial_{t}\hat{\bd{\omega}}|^{2}.
\end{align*}
We note that $E_{h}(t)$ is the contribution arising to the energy arising from the plate (scaled by the plate thickness $h > 0$), and $D_{h}(t)$ is the usual dissipation $D(t)$ from \eqref{energyest} with the additional dissipation arising from the viscoelasticity of the plate.

\medskip

\noindent \textbf{A remark on a notational convention.} To simplify the notation in the remainder of the manuscript, from this point onwards, we will omit the ``hat" notation when referring to reference domains and functions/differential operators defined on reference domains. We will instead rely on the context to disambiguate functions/differential operators defined on reference and moving (time-dependent) domains. For example, integrands can be disambiguated by observing whether the domain of integration is a reference domain (such as $\Omega_{f}$, $\Omega_{b}$, or $\Gamma$) or a moving domain (such as $\Omega_{f}(t)$, $\Omega_{b}(t)$, or $\Gamma(t)$). 

\if 1 = 0
\section{Remarks about the singular limit and the additional kinematic coupling regularization}

Note that the weak formulation presented in \eqref{regularweakref} looks the same as the one used for the original problem, presented in the past paper in JMPA, with regularization, and in fact the weak formulation is exactly the same. However, while the paper in JMPA works with a structure Biot displacement $\bd{\eta}$ and a plate displacement $\omega$ such that
\begin{equation*}
\bd{\eta}|_{\Gamma} = \omega \bd{e}_{y},
\end{equation*}
this is not sufficient to pass to the limit in the plate thickness as $h \to 0$. In particular, we would have the following energy estimate:
\begin{multline}\label{energy}
\frac{1}{2} \int_{\Omega_{f}(T)} |\boldsymbol{u}(T)|^{2} + 2\nu \int_{0}^{T} \int_{\Omega_{f}(t)} |\boldsymbol{D}(\boldsymbol{u})|^{2} + \beta \int_{0}^{T} \int_{\Gamma(t)} |(\boldsymbol{\xi} - \boldsymbol{u}) \cdot \boldsymbol{\tau}|^{2} + \frac{1}{2}\rho_{p} h \int_{\hat{\Gamma}} |\partial_{t} \hat{\omega}(T)|^{2} + C_{0}h^{3} \int_{\hat{\Gamma}} |\hat{\Delta} \hat{\omega}(T)|^{2} \\
+ \frac{1}{2} \rho_{b} \int_{\hat{\Omega}_{b}} |\partial_{t}\hat{\boldsymbol{\eta}}(T)|^{2} + 2\mu_{e} \int_{\hat{\Omega}_{b}} |\hat{\bd{D}}(\hat{\bd{\eta}})(T)|^{2} + 2\lambda_{e} \int_{\hat{\Omega}_{b}} |\hat{\nabla} \cdot \hat{\bd{\eta}}(T)|^{2} + 2\mu_{v} \int_{0}^{T} \int_{\hat{\Omega}_{b}} |\hat{\boldsymbol{D}}(\partial_{t} \hat{\bd{\eta}})|^{2} \\
+ \lambda_{v} \int_{0}^{T} \int_{\hat{\Omega}_{b}} |\hat{\nabla} \cdot \partial_{t}\hat{\boldsymbol{\eta}}|^{2}
+ \frac{1}{2} c_{0} \int_{\hat{\Omega}_{b}} |\hat{p}(T)|^{2} + \kappa \int_{0}^{T} \int_{\Omega_{b}^{\delta}(t)} |\nabla p|^{2} = \frac{1}{2} \int_{\Omega_{f}(0)} |\boldsymbol{u}(0)|^{2} + \frac{1}{2} \rho_{b} \int_{\hat{\Omega}_{b}} |\hat{\bd{\xi}}_{0}|^{2} \\
+ \frac{1}{2} \rho_{p} h \int_{\hat{\Gamma}} |\partial_{t} \hat{\omega}(0)|^{2} + C_{0}h^{3} \int_{\hat{\Gamma}} |\hat{\Delta} \hat{\omega}(0)|^{2} + 2\mu_{e} \int_{\hat{\Omega}_{b}} |\hat{\bd{D}}(\hat{\bd{\eta}})(0)|^{2} + 2\lambda_{e} \int_{\hat{\Omega}_{b}} |\hat{\nabla} \cdot \hat{\bd{\eta}}(0)|^{2} + \frac{1}{2} c_{0} \int_{\hat{\Omega}_{b}} |\hat{p}_0|^{2}.
\end{multline}
Notationally, we have omitted the dependence of the solutions on $\delta, h$ for ease of notation. Note that we therefore, directly from the energy estimate have the following uniform bounds (where we have notated the dependence on solutions on $\delta, h$ once again):
\begin{itemize}
\item $\{\bd{u}_{\delta, h}\}_{h > 0}$ is uniformly bounded in $L^{\infty}(0, T; L^{2}(\Omega_{f}(t))) \cap L^{2}(0, T; H^{1}(\Omega_{f}(t)))$.
\item $\{h^{3/2} \cdot \omega_{\delta, h}\}_{h > 0}$ is uniformly bounded in $L^{\infty}(0, T; H^{2}(\Gamma))$.
\item $\{h^{1/2} \cdot \partial_{t}\omega_{\delta, h}\}_{h > 0}$ is uniformly bounded in $L^{\infty}(0, T; L^{2}(\Gamma))$. 
\item $\{\bd{\eta}_{\delta, h}\}_{h > 0}$ is uniformly bounded in $W^{1, \infty}(0, T; L^{2}(\Omega_{b})) \cap L^{\infty}(0, T; H^{1}(\Omega_{b}))$.
\item $\{\bd{\xi}_{\delta, h}\}_{h > 0}$ is uniformly bounded in $L^{2}(0, T; H^{1}(\Omega_{b})) \cap L^{\infty}(0, T; L^{2}(\Omega_{b}))$. 
\item $\{p_{\delta, h}\}_{h > 0}$ is uniformly bounded in $L^{\infty}(0, T; L^{2}(\Omega_{b})) \cap L^{2}(0, T; H^{1}(\Omega_{b}))$.
\end{itemize}
But note that for the plate displacement $\omega_{\delta, h}$, all of these uniform estimates that can be obtained directly from the energy estimate involve the multiplicative factor of $h$ to some power, and hence, we do not have uniform control yet of $\omega_{\delta, h}$ itself in any space. And having uniform control of $\omega_{\delta, h}$ independently of $h$ is important for passing to the limit, since one needs to obtain convergence of $\omega_{\delta, h}$ strongly as $h \to 0$ in order to pass to the limit in geometric nonlinearities.

Thus, we would need to appeal to the kinematic coupling condition to obtain any information about the plate displacement $\omega_{\delta, h}$, in the hopes of transferring information about uniform bounds on the Biot displacement $\bd{\eta}_{\delta, h}$ to information about the plate displacement $\omega_{\delta, h}$ (which is the trace of $\bd{\eta}_{\delta, h}$ along $\Gamma$). We can now see why having the ordinary kinematic coupling condition (as in the JMPA paper) would not be enough to pass to the singular limit as $h \to 0$. To demonstrate this, consider at this point what would happen if we had the ordinary kinematic coupling condition (as in the JMPA paper):
\begin{equation*}
\bd{\eta}_{\delta, h}|_{\Gamma} = \omega_{\delta, h} \bd{e}_{y}
\end{equation*}
However, from this, we would only have that $\{\omega_{\delta, h}\}_{h \ge 0}$ is bounded uniformly in $L^{2}(0, T; H^{1/2}(\Gamma))$ as the trace along $\Gamma$ of the Biot displacements $\{\bd{\eta}_{\delta, h}\}_{h \ge 0}$ which are uniformly bounded in $L^{2}(0, T; H^{1}(\Omega_{b}))$. \textit{Since $H^{1/2}(\Gamma)$ does not even embed into the space of continuous functions along $\Gamma$ in even the simplest case of $\Gamma$ being one-dimensional, this does not give us enough uniform control of the plate displacements in the limit as $h \to 0$ to define a limiting fluid-structure interface, that is for example at least a continuous interface.}

Hence, the strategy is to add an additional layer of regularization in order to pass to the singular limit as $h \to 0$. In particular, we will add an additional regularization to the \textit{kinematic coupling condition}, so that we instead require:
\begin{equation*}
\bd{\eta}^{\delta}_{\delta, h}|_{\Gamma} = \omega_{\delta, h} \bd{e}_{y} 
\end{equation*}
so that the plate displacement is the trace of the \textit{regularized Biot displacement} $\bd{\eta}^{\delta}_{\delta, h}$ rather than the Biot displacement $\bd{\eta}_{\delta, h}$. The only change this would make to the weak formulation \eqref{regularweakref} are:
\begin{itemize}
\item The plate displacement $\omega_{\delta, h}$ and Biot displacement $\bd{\eta}_{\delta, h}$ would be required to instead satisfy a regularized and $\delta$-dependent kinematic coupling condition of $\bd{\eta}_{\delta, h}^{\delta}|_{\Gamma} = \omega_{\delta, h} \bd{e}_{y}$.
\item This same regularized kinematic coupling condition would be required to be imposed on the test functions also so that $\bd{\psi}^{\delta}|_{\Gamma} = \varphi \bd{e}_{y}$. This would justify testing with $\bd{\psi} = \bd{\eta}_{\delta, h}$ and $\varphi = \omega_{\delta, h}$ to obtain an a priori estimate.
\end{itemize}
However, we emphasize that the integral formulation \eqref{regularweakref} is unchanged (even though the test functions and kinematic coupling condition itself on the solution are different). This is important because this means that the energy estimate \eqref{energy} \textit{still holds even with this additional kinematic coupling regularization}, so we have the same uniform bounds that are outlined in the list provided after \eqref{energy}, but \textit{in fact we have even more due to the regularization in the kinematic coupling condition.} In particular, for fixed $\delta$, we can use the fact that $\{\bd{\eta}_{\delta, h}\}_{h > 0}$ is uniformly bounded in $L^{2}(0, T; H^{1}(\Omega_{b}))$ and the regularization properties of spatial convolution combined with the trace inequality to conclude that since $\bd{\eta}^{\delta}_{\delta, h}|_{\Gamma} = \omega_{\delta, h} \bd{e}_{y}$, we in fact have that
\begin{equation*}
\{\omega_{\delta, h}\}_{h > 0} \text{ is uniformly bounded in $L^{2}(0, T; H^{s}(\Gamma))$}, \text{ for all $s > 0$}.
\end{equation*}
This gives us sufficiently strong regularity to pass to the limit. 

Therefore, the main novelty here would be constructing a weak solution for the problem with the additional regularized kinematic coupling condition. However, it is expected that since the integral equation in the weak formulation itself is unchanged that this would proceed in an analogous manner to the problem studied in JMPA, even though it is slightly different. A similar splitting could be used that preserves the energy structure of the problem, as long as one uses the regularized kinematic coupling condition $\bd{\eta}^{\delta}|_{\Gamma} = \omega \bd{e}_{y}$ instead of the usual kinematic coupling condition $\bd{\eta}|_{\Gamma} = \omega \bd{e}_{y}$. However, an interesting feature of this problem is that the test functions now are required to satisfy a regularized kinematic coupling condition also.

\section{Existence of weak solutions to the regularized problem}

The existence of weak solutions to the same problem, without the \textit{regularized kinematic coupling condition}, has been established previously in TODO via a constructive splitting scheme approach, compactness results on moving domains, and construction of test functions on moving domains. The same methodology can be used to show existence of weak solutions to the regularized weak formulation with additional kinematic coupling regularization, so we simply outline the proof without providing full details and refer the reader to full details in TODO. We will, in particular, only provide explicit details for those steps which have notable differences arising due to the regularized kinematic coupling condition imposed on both the actual solution itself and also, the test functions.

\subsection{A priori energy estimate}

We first establish an a priori energy estimate for the regularized FPSI problem with a regularized kinematic coupling condition $\bd{\eta}^{\delta}|_{\Gamma} = \varphi \bd{e}_{y}$, and regularized test functions $\bd{\psi}^{\delta}|_{\Gamma} = \varphi \bd{e}_{y}$. Because the regularization of the kinematic coupling condition is reflected explicitly in the solution itself and in the test functions, we can formally substitute $(\bd{q}, \bd{\psi}, r, \varphi) = (\bd{u}, \partial_{t}\bd{\eta}, p, \partial_{t}\omega)$ into the weak formulation. We have that the following cancellations in the weak formulation will occur:
\begin{equation*}
\frac{1}{2} \int_{0}^{T} \int_{\hat{\Gamma}} (\hat{\bd{u}} \cdot \hat{\bd{n}}^{\omega} - \hat{\zeta} \bd{e}_{y} \cdot \hat{\bd{n}}^{\omega}) |\hat{\bd{u}}|^{2} + \int_{0}^{T} \int_{\hat{\Gamma}} \left(\frac{1}{2}|\hat{\bd{u}}|^{2} - \hat{p}\right) (\hat{\zeta}\bd{e}_{y} - \hat{\bd{u}}) \cdot \hat{\bd{n}}^{\omega} - \int_{0}^{T} \int_{\hat{\Gamma}} ((\hat{\bd{u}} - \hat{\zeta} \bd{e}_{y}) \cdot \hat{\bd{n}}^{\omega}) \hat{p} = 0,
\end{equation*}
where we used the fact that $\partial_{t}\bd{\eta}^{\delta} = \hat{\zeta} \bd{e}_{y}$. In addition, by transferring back to the physical domain and using integration by parts on $\Omega_{b}(t)$, we obtain that 
\begin{equation*}
-\alpha \int_{0}^{T} \int_{\hat{\Omega}_{b}} \hat{\mathcal{J}}^{\eta^{\delta}}_{b} \hat{p} \hat{\nabla}^{\eta^{\delta}}_{b} \cdot \partial_{t}\hat{\bd{\eta}}^{\delta} - \alpha \int_{0}^{T} \int_{\hat{\Omega}_{b}} \hat{\mathcal{J}}^{\eta^{\delta}}_{b} \partial_{t} \hat{\bd{\eta}}^{\delta} \cdot \hat{\nabla}^{\eta^{\delta}}_{b} \hat{p} - \alpha \int_{0}^{T} \int_{\hat{\Gamma}} (\hat{\zeta} \bd{e}_{y} \cdot \hat{\bd{n}}^{\omega}) \hat{p} = 0.
\end{equation*}
We therefore obtain the following a priori energy estimate:
\begin{multline*}
\frac{1}{2} \int_{\Omega_{f}(T)} |\hat{\bd{u}}(T)|^{2} + 2\nu \int_{0}^{T} \int_{\hat{\Omega}_{f}(t)} |\hat{\bd{D}}(\hat{\bd{u}})|^{2} + \frac{\beta}{\hat{\mathcal{J}}^{\omega}_{\Gamma}} \int_{0}^{T} \int_{\hat{\Gamma}} |(\hat{\zeta} \bd{e}_{y} - \hat{\bd{u}}) \cdot \hat{\bd{\tau}}^{\omega}|^{2} + \frac{1}{2}\rho_{p} h \int_{0}^{T} \int_{\hat{\Gamma}} |\partial_{t}\omega|^{2} \\
+ \frac{1}{2}C_{0}h^{3} \int_{0}^{T} \int_{\hat{\Gamma}} |\hat{\Delta} \hat{\omega}|^{2} + \frac{1}{2}\rho_{b} \int_{\hat{\Omega}_{b}} |\partial_{t}\hat{\bd{\eta}}(T)|^{2} + 2\mu_{e} \int_{0}^{T} \int_{\hat{\Omega}_{b}} |\hat{\bd{D}}(\hat{\bd{\eta}})|^{2} + \lambda_{e} \int_{0}^{T} \int_{\hat{\Omega}_{b}} |\hat{\nabla} \cdot \hat{\bd{\eta}}|^{2} \\
+ 2\mu_{v} \int_{0}^{T} \int_{\hat{\Omega}_{b}} |\hat{\bd{D}}(\partial_{t}\hat{\bd{\eta}})|^{2} + \lambda_{v} \int_{0}^{T} \int_{\hat{\Omega}_{b}} |\hat{\nabla} \cdot \partial_{t}\hat{\bd{\eta}}|^{2} + \frac{1}{2}c_{0} \int_{\hat{\Omega}_{b}} |\hat{p}|^{2} + \kappa \int_{0}^{T} \int_{\hat{\Omega}_{b}} \hat{\mathcal{J}}^{\eta^{\delta}}_{b} |\hat{\nabla}^{\eta^{\delta}}_{b} \hat{p}|^{2} \\
= \frac{1}{2} \int_{\Omega_{f}(0)} |\hat{\bd{u}}(0)|^{2} + \frac{1}{2} \rho_{p} \int_{\hat{\Gamma}} |\partial_{t}\hat{\omega}(0)|^{2} + \frac{1}{2} \rho_{b} \int_{\hat{\Omega}_{b}} |\hat{\bd{\xi}}_{0}|^{2} + \frac{1}{2} c_{0} \int_{\hat{\Omega}_{b}} |\hat{p}_0|^{2}. 
\end{multline*}
\fi

\subsection{A constructive splitting scheme}

We use the following splitting scheme to semidiscretize the problem in time and construct approximate solutions. We split the entire time interval $[0, T]$ into $N$ subintervals $[t_{n}, t_{n + 1}]$ where $t_{n} = n\Delta t$ for $n = 0, 1, ..., N - 1$ and solve the following two subproblems iteratively to construct approximate solutions. We will denote the (spatially non-discrete and time-discrete) approximate solution on $[t_{n}, t_{n + 1}]$ by
\begin{equation*}
\left(\bd{u}^{n + \frac{i}{2}}_{N}, \bd{\omega}^{n + \frac{i}{2}}_{N}, \bd{\zeta}^{n + \frac{i}{2}}_{N}, \bd{\eta}^{n + \frac{i}{2}}_{N}, p^{n + \frac{i}{2}}_{N}\right), \qquad \text{ for } i = 1, 2
\end{equation*}
where $\bd{\zeta}^{n + \frac{i}{2}}_{N}$ is the approximate (vector-valued) plate velocity. The approximate vector for each $n$ at $i = 1$ represents the result after running the plate subproblem and the approximate vector for each $n$ at $i = 2$ represents the result after running the Biot/fluid subproblem. We use the initial data to initialize the splitting scheme:
\begin{equation*}
\Big(\bd{u}^{0}_{N}, \bd{\omega}^{N}, \bd{\zeta}^{0}_{N}, \bd{\eta}^{0}_{N}, p^{0}_{N}\Big) := \Big(\bd{u}_{0}, \bd{\eta}_{0}^{\delta}|_{\Gamma}, \bd{\xi}_{0}^{\delta}|_{\Gamma}, \bd{\eta}_{0}, p_{0}\Big)
\end{equation*}
Note that in accordance with the notational convention just introduced, we have omitted the hat notation on these approximate semidiscrete solutions.

This constructive splitting scheme shows why the plate approximation is helpful for obtaining existence, as to obtain a stable scheme, it is useful to have a separate subproblem for the interface update, which is accomplished in this case by giving the interface mass/inertia. Thus, the plate (interface) displacement in the splitting scheme evolves dynamically via a \textit{separate elastodynamics equation} for the movement of the interface.

\medskip

\noindent \textbf{Plate subproblem.} In the plate subproblem, we solve the viscoelastic plate equation with zero external forcing, in order to update the interface displacement (which will hence be used in the following Biot/fluid subproblem to define the semidiscrete moving domains). We keep $\bd{u}^{n + \frac{1}{2}}_{N} = \bd{u}^{n}_{N}$, $\bd{\eta}^{n + \frac{1}{2}}_{N} = \bd{\eta}^{n}_{N}$, and $p^{n + \frac{1}{2}}_{N} = p^{n}_{N}$ unchanged. The \textbf{plate subproblem} is to find $\bd{\omega}^{n + \frac{1}{2}}_{N} \in H^{2}(\Gamma)$ and $\bd{\zeta}^{n + \frac{1}{2}}_{N} \in H^{2}(\Gamma)$ such that: 
\begin{equation}\label{plateweak1}
\int_{\Gamma} \left(\frac{\bd{\omega}^{n + \frac{1}{2}}_{N} - \bd{\omega}^{n - \frac{1}{2}}_{N}}{\Delta t}\right) \cdot \phi = \int_{\Gamma} \bd{\zeta}^{n + \frac{1}{2}}_{N} \cdot \phi, \qquad \text{ for all } \phi \in L^{2}(\Gamma),
\end{equation}
\begin{equation}\label{plateweak2}
h \int_{\Gamma} \left(\frac{\bd{\zeta}^{n + \frac{1}{2}}_{N} - \bd{\zeta}^{n}_{N}}{\Delta t}\right) \cdot \varphi + h \int_{\Gamma} \Delta \bd{\zeta}^{n + \frac{1}{2}}_{N} \cdot \Delta \bd{\varphi} + h \int_{\Gamma} \Delta \bd{\omega}^{n + \frac{1}{2}}_{N} \cdot \Delta \bd{\varphi} = 0, \qquad \text{ for all } \varphi \in H^{2}(\Gamma),
\end{equation}
where we are solving the plate elastodynamics equation in weak formulation with zero external loading. Existence and uniqueness for the plate subproblem is classically established using the Lax-Milgram lemma, \cite{Evans}. By substituting $\bd{\varphi} = \bd{\zeta}^{n + \frac{1}{2}}_{N}$ and using the facts that $\displaystyle \bd{\zeta}^{n + \frac{1}{2}}_{N} = \frac{\bd{\omega}^{n + \frac{1}{2}}_{N} - \bd{\omega}^{n - \frac{1}{2}}_{N}}{\Delta t}$ and $\bd{\omega}^{n}_{N} = \bd{\omega}^{n - \frac{1}{2}}_{N}$, we obtain the following energy estimate:
\begin{equation}\label{energyplate}
\frac{1}{2} h \int_{\Gamma} |\bd{\zeta}^{n + \frac{1}{2}}_{N}|^{2} + \frac{1}{2} h \int_{\Gamma} |\Delta \bd{\omega}^{n + \frac{1}{2}}_{N}|^{2} + (\Delta t) h \int_{\Gamma} |\Delta \bd{\zeta}^{n + \frac{1}{2}}_{N}|^{2} = \frac{1}{2} h \int_{\Gamma} |\bd{\zeta}^{n}_{N}|^{2} + \frac{1}{2} h \int_{\Gamma} |\Delta \bd{\omega}^{n}_{N}|^{2}.
\end{equation}

\medskip

\noindent \textbf{Biot-fluid subproblem.} We keep the plate displacement unchanged $\omega^{n + 1}_{N} = \omega^{n + \frac{1}{2}}_{N}$. We recall the function spaces $V^{\omega}_{f}$, $V_{d}$, and $V_{p}$ from \eqref{Vomegaf}, \eqref{Vd}, and \eqref{Vp}. The \textbf{Biot-fluid subproblem} is to find $(\bd{u}^{n + 1}_{N}, \bd{\zeta}^{n + 1}_{N}, \bd{\eta}^{n + 1}_{N}, p^{n + 1}_{N}) \in V^{\omega^{n}_{N}}_{f} \times H^{2}(\Gamma) \times V_{d} \times V_{p}$ such that the following weak formulation is satisfied for all test functions $(\bd{v}, \bd{\varphi}, \bd{\psi}, r) \in V^{\omega^{n}_{N}}_{f} \times H^{2}(\Gamma) \times V_{d} \times V_{p}$ satisfying the regularized kinematic coupling condition $\bd{\psi}^{\delta}|_{\Gamma} = \bd{\varphi}$:
\begin{equation}\label{weakBiot1}
\begin{aligned}
&\int_{\Omega_{f}} \mathcal{J}^{\omega^{n}_{N}}_{f} \dot{\bd{u}}^{n + 1}_{N} \cdot \boldsymbol{v} 
+ 2\nu \int_{\Omega_{f}} \mathcal{J}^{\omega^{n}_{N}}_{f} \bd{D}^{\omega^{n}_{N}}_{f}(\boldsymbol{u}^{n + 1}_{N}) : \bd{D}^{\omega^{n}_{N}}_{f}(\boldsymbol{v}) + \int_{\Gamma} \left(\frac{1}{2}\boldsymbol{u}^{n + 1}_{N} \cdot \boldsymbol{u}^{n}_{N} - p^{n + 1}_{N}\right)(\boldsymbol{\psi}^{\delta} - \boldsymbol{v})\cdot \boldsymbol{n}^{\omega^{n}_{N}} 
\\
&+ \frac{1}{2} \int_{\Omega_{f}} \mathcal{J}^{\omega^{n}_{N}}_{f} \left[\left(\left(\boldsymbol{u}^{n}_{N} - \bd{w}^{n + 1}_{N}\right) \cdot \nabla^{\omega^{n}_{N}}_{f} \boldsymbol{u}^{n + 1}_{N}\right) \cdot \boldsymbol{v} - \left(\left(\boldsymbol{u}^{n}_{N} - \bd{w}^{n + 1}_{N}\right) \cdot \nabla^{\omega^{n}_{N}}_{f} \boldsymbol{v}\right) \cdot \boldsymbol{u}^{n + 1}_{N}\right] 
\\
&+ \frac{1}{2} \int_{\Omega_{f}} \left(\frac{\mathcal{J}^{\omega^{n + 1}_{N}}_{f} - \mathcal{J}^{\omega^{n}_{N}}_{f}}{\Delta t}\right) \bd{u}^{n + 1}_{N} \cdot \boldsymbol{v} + \frac{1}{2} \int_{\hat{\Gamma}} \left(\boldsymbol{u}^{n + 1}_{N} - \left(\dot{\bd{\eta}}^{n + 1}_{N}\right)^{\delta}\right) \cdot \boldsymbol{n}^{\omega^{n}_{N}} (\boldsymbol{u}^{n}_{N} \cdot \boldsymbol{v}) 
\\
&+ \beta \int_{\Gamma} \Big(\mathcal{S}^{\omega}_{\Gamma}\Big)^{-1} \left(\left(\boldsymbol{\dot{\eta}}^{n + 1}_{N}\right)^{\delta} - \boldsymbol{u}^{n + 1}_{N}\right) \cdot \boldsymbol{\tau}^{\omega^{n}_{N}} (\boldsymbol{\psi}^{\delta} - \boldsymbol{v}) \cdot \boldsymbol{\tau}^{\omega^{n}_{N}} + \rho_{b} \int_{\Omega_{b}} \left(\frac{\boldsymbol{\dot{\eta}}^{n + 1}_{N} - \boldsymbol{\dot{\eta}}^{n}_{N}}{\Delta t}\right) \cdot \boldsymbol{\psi} 
\\
&+ h \int_{\Gamma} \left(\frac{\bd{\zeta}^{n + 1}_{N} - \bd{\zeta}^{n + \frac{1}{2}}_{N}}{\Delta t}\right) \cdot \bd{\varphi} + 2\mu_{e} \int_{\Omega_{b}} \boldsymbol{D}(\boldsymbol{\eta}^{n + 1}_{N}) : \boldsymbol{D}(\boldsymbol{\psi}) + \lambda_{e} \int_{\Omega_{b}} (\nabla \cdot \boldsymbol{\eta}^{n + 1}_{N})(\nabla \cdot \boldsymbol{\psi}) 
\\
&+ 2\mu_{v} \int_{\Omega_{b}} \bd{D}(\dot{\bd{\eta}}^{n + 1}_{N}) : \bd{D}(\bd{\psi}) + \lambda_{v} \int_{\Omega_{b}} (\nabla \cdot \dot{\bd{\eta}}^{n + 1}_{N}) (\nabla \cdot \bd{\psi}) - \alpha \int_{\Omega_{b}} \mathcal{J}^{(\eta^{n}_{N})^{\delta}}_{b} p^{n + 1}_{N} \nabla^{(\eta^{n}_{N})^{\delta}}_{b} \cdot \boldsymbol{\psi}^{\delta} 
\\
&+ c_{0}  \int_{\Omega_{b}} \frac{p^{n + 1}_{N} - p^{n}_{N}}{\Delta t} r - \alpha \int_{\Omega_{b}} \mathcal{J}^{(\eta^{n}_{N})^{\delta}}_{b} \left(\boldsymbol{\dot{\eta}}^{n + 1}_{N}\right)^{\delta} \cdot \nabla^{(\eta^{n}_{N})^{\delta}}_{b} r - \alpha \int_{\Gamma} \left(\left(\boldsymbol{\dot{\eta}}^{n + 1}_{N}\right)^{\delta} \cdot \boldsymbol{n}^{\omega^{n}_{N}}\right) r 
\\
&+ \kappa \int_{\Omega_{b}} \mathcal{J}^{(\eta^{n}_{N})^{\delta}}_{b} \nabla^{(\eta^{n}_{N})^{\delta}}_{b} p^{n + 1}_{N} \cdot \nabla^{(\eta^{n}_{N})^{\delta}}_{b} r - \int_{\Gamma} \left[\left(\boldsymbol{u}^{n + 1}_{N} - \left(\boldsymbol{\dot{\eta}}^{n + 1}_{N}\right)^{\delta}\right) \cdot \boldsymbol{n}^{\omega^{n}_{N}}\right]r = 0,
\end{aligned}
\end{equation}
and
\begin{equation}\label{weakBiot2}
\int_{\Gamma} \left(\frac{\left(\boldsymbol{\eta}^{n + 1}_{N}\right)^{\delta} - \left(\boldsymbol{\eta}^{n}_{N}\right)^{\delta}}{\Delta t} \right) \cdot \boldsymbol{\phi} = \int_{\Gamma} \bd{\zeta}^{n + 1}_{N} \cdot \boldsymbol{\phi}, \qquad \text{ for all } \boldsymbol{\phi} \in L^{2}(\Gamma),
\end{equation}
where the discrete ALE velocity is defined by:
\begin{equation}\label{aleveldisc}
\bd{w}^{n + 1}_{N} = \frac{1}{\Delta t}\Big(\bd{\Phi}^{\omega^{n + 1}_{N}}_{f} - \bd{\Phi}^{\omega^{n}_{N}}_{f}\Big),
\end{equation}
and where we use the shorthand ``dot" notation for discrete time derivatives:
\begin{equation*}
\dot{f}^{n}_{N} = \frac{f^{n}_{N} - f^{n - 1}_{N}}{\Delta t}.
\end{equation*}
We emphasize that to derive this weak formulation, we must use the fact that $\displaystyle \bd{\zeta}^{n + 1}_{N}|_{\Gamma} = (\bd{\dot{\eta}}^{n + 1}_{N})^{\delta}$. 

\medskip

\noindent \underline{\textit{Existence and uniqueness for the Biot-fluid subproblem.}} Due to the additional regularization in the kinematic coupling condition, the well-posedness for the Biot-fluid subproblem is different from that found in other works on nonlinearly coupled FPSI (such as in Section 6.2 of \cite{FPSIJMPA}), so we present the full argument and emphasize the parts that change due to the additional kinematic regularization. 

The goal will be to eliminate the variable $\bd{\zeta}^{n + 1}_{N}$ from the weak formulation so that we are solving for $(\bd{u}^{n + 1}_{N}, \bd{\eta}^{n + 1}_{N}, p^{n + 1}_{N})$. By the regularized kinematic coupling condition which is deduced from \eqref{weakBiot2}:
\begin{equation*}
\bd{\zeta}^{n + 1}_{N} = \left(\frac{\left(\bd{\eta}^{n + 1}_{N}\right)^{\delta} - \left(\bd{\eta}^{n}_{N}\right)^{\delta}}{\Delta t}\right).
\end{equation*}
We will also use the rescaling of test functions $v \to (\Delta t) v$ and $r \to (\Delta t) r$, and we hence obtain the following weak formulation of the problem: find $(\bd{u}^{n + 1}_{N}, \bd{\eta}^{n + 1}_{N}, p^{n + 1}_{N}) \in V^{\omega^{n}_{N}}_{f} \times V_{d} \times V_{p}$ such that for all test functions $(\bd{v}, \bd{\psi}, r) \in V^{\omega^{n}_{N}}_{f} \times V_{d} \times V_{p}$:
\begin{small}
\begin{multline}\label{BiotFluidLaxMilgram}
B[\bd{u}^{n + 1}_{N}, \bd{v}, \bd{\eta}^{n + 1}_{N}, \bd{\psi}, p^{n + 1}_{N}, r] = (\Delta t)^{2} \int_{\Omega_{f}} \mathcal{J}^{\omega^{n}_{N}}_{f} \bd{u}^{n}_{N} \cdot \boldsymbol{v} - \frac{1}{2} (\Delta t)^{2} \int_{\Gamma} \left(\boldsymbol{\eta}^{n}_{N}\right)^{\delta} \cdot \boldsymbol{n}^{\omega^{n}_{N}} (\boldsymbol{u}^{n}_{N} \cdot \boldsymbol{v}) \\
+ \beta (\Delta t) \int_{\Gamma} \Big(\mathcal{S}^{\omega}_{\Gamma}\Big)^{-1} \left(\boldsymbol{\eta}^{n}_{N}\right)^{\delta} \cdot \boldsymbol{\tau}^{\omega^{n}_{N}} (\boldsymbol{\psi}^{\delta} - (\Delta t) \boldsymbol{v}) \cdot \boldsymbol{\tau}^{\omega^{n}_{N}} + \rho_{b} \int_{\Omega_{b}} (2\boldsymbol{\eta}^{n}_{N} - \boldsymbol{\eta}^{n - 1}_{N}) \cdot \boldsymbol{\psi} \\
+ h \int_{\Gamma} \left(\left(\boldsymbol{\eta}^{n}_{N}\right)^{\delta} + (\Delta t) \bd{\zeta}^{n + \frac{1}{2}}_{N}\right) \cdot \boldsymbol{\psi}^{\delta} + 2\mu_{v}(\Delta t) \int_{\Omega_{b}} \bd{D}(\bd{\eta}^{n}_{N}) : \bd{D}(\bd{\psi}) + \lambda_{v} (\Delta t) \int_{\Omega_{b}} (\nabla \cdot \bd{\eta}^{n}_{N}) (\nabla \cdot \bd{\psi}) \\
+ c_{0} (\Delta t)^{2} \int_{\Omega_{b}} p^{n}_{N} r - \alpha (\Delta t)^{2} \int_{\Omega_{b}} \mathcal{J}^{(\eta^{n}_{N})^{\delta}}_{b} \left(\boldsymbol{\eta}^{n}_{N}\right)^{\delta} \cdot \nabla^{(\eta^{n}_{N})^{\delta}}_{b} r - \alpha (\Delta t)^{2} \int_{\Gamma} \left(\left(\boldsymbol{\eta}^{n}_{N}\right)^{\delta} \cdot \boldsymbol{n}^{\omega^{n}_{N}}\right) r + (\Delta t)^{2} \int_{\Gamma} \left(\left(\boldsymbol{\eta}^{n}_{N}\right)^{\delta} \cdot \boldsymbol{n}^{\omega^{n}_{N}}\right) r,
\end{multline}
\end{small}
for the bilinear form $B: V^{\omega^{n}_{N}}_{f} \times V_{d} \times V_{p} \times V^{\omega^{n}_{N}}_{f} \times V_{d} \times V_{p}$ defined by:
\begin{small}
\begin{align}\label{bilinear}
&B[\bd{u}, \bd{v}, \bd{\eta}, \bd{\psi}, p, r] := (\Delta t)^{2} \int_{\Omega_{f}} \mathcal{J}^{\omega^{n}_{N}}_{f} \boldsymbol{u} \cdot \boldsymbol{v} \nonumber \\
&+ \frac{1}{2} (\Delta t)^{3} \int_{\Omega_{f}} \mathcal{J}^{\omega^{n}_{N}}_{f} \Big[\Big((\boldsymbol{u}^{n}_{N} - \bd{w}^{n + 1}_{N}) \cdot \nabla^{\omega^{n}_{N}} \boldsymbol{u}\Big) \cdot \boldsymbol{v} - \Big((\boldsymbol{u}^{n}_{N} - \bd{w}^{n + 1}_{N}) \cdot \nabla^{\omega^{n}_{N}} \boldsymbol{v}\Big) \cdot \boldsymbol{u}\Big] \nonumber \\
&+ \frac{(\Delta t)^{2}}{2} \int_{\Omega_{f}} \Big(\mathcal{J}^{\omega^{n + 1}_{N}}_{f} - \mathcal{J}^{\omega^{n}_{N}}_{f}\Big) \boldsymbol{u} \cdot \boldsymbol{v} + \frac{1}{2} (\Delta t)^{3} \int_{\Gamma} (\boldsymbol{u} - (\Delta t)^{-1} \boldsymbol{\eta}^{\delta}) \cdot \boldsymbol{n}^{\omega^{n}_{N}} (\boldsymbol{u}^{n}_{N} \cdot \boldsymbol{v}) \nonumber \\
&+ 2\nu (\Delta t)^{3} \int_{\Omega_{f}} \mathcal{J}^{\omega^{n}_{N}}_{f} \boldsymbol{D}^{\omega^{n}_{N}}_{f}(\boldsymbol{u}) : \boldsymbol{D}^{\omega^{n}_{N}}_{f}(\boldsymbol{v}) + (\Delta t)^{2} \int_{\Gamma} \left(\frac{1}{2}\boldsymbol{u} \cdot \boldsymbol{u}^{n}_{N} - p\right)(\boldsymbol{\psi}^{\delta} - (\Delta t) \boldsymbol{v})\cdot \boldsymbol{n}^{\omega^{n}_{N}} \nonumber \\
&+ \beta (\Delta t)^{2} \int_{\Gamma} \Big(\mathcal{S}^{\omega}_{\Gamma}\Big)^{-1} \left((\Delta t)^{-1} \boldsymbol{\eta}^{\delta} - \boldsymbol{u}\right) \cdot \boldsymbol{\tau}^{\omega^{n}_{N}} (\boldsymbol{\psi}^{\delta} - (\Delta t) \boldsymbol{v}) \cdot \boldsymbol{\tau}^{\omega^{n}_{N}} + \rho_{b} \int_{\Omega_{b}} \boldsymbol{\eta} \cdot \boldsymbol{\psi} + h \int_{\Gamma} \boldsymbol{\eta}^{\delta} \cdot \boldsymbol{\psi}^{\delta} \nonumber \\
&+ (2\mu_{e} (\Delta t)^{2} + 2\mu_{v}(\Delta t)) \int_{\Omega_{b}} \boldsymbol{D}(\boldsymbol{\eta}) : \boldsymbol{D}(\boldsymbol{\psi}) + (\lambda_{e} (\Delta t)^{2} + \lambda_{v}(\Delta t)) \int_{\Omega_{b}} (\nabla \cdot \boldsymbol{\eta})(\nabla \cdot \boldsymbol{\psi}) \nonumber \\
&- \alpha (\Delta t)^{2} \int_{\Omega_{b}} \mathcal{J}^{(\eta^{n}_{N})^{\delta}}_{b} p \nabla^{(\eta^{n}_{N})^{\delta}}_{b} \cdot \boldsymbol{\psi}^{\delta} + c_{0} (\Delta t)^{2} \int_{\Omega_{b}} p r - \alpha (\Delta t)^{2} \int_{\Omega_{b}} \mathcal{J}^{(\eta^{n}_{N})^{\delta}}_{b} \boldsymbol{\eta}^{\delta} \cdot \nabla^{(\eta^{n}_{N})^{\delta}}_{b} r \nonumber \\
&- \alpha (\Delta t)^{2} \int_{\Gamma} (\boldsymbol{\eta}^{\delta} \cdot \boldsymbol{n}^{\omega^{n}_{N}}) r + \kappa (\Delta t)^{3} \int_{\Omega_{b}} \mathcal{J}^{(\eta^{n}_{N})^{\delta}}_{b} \nabla^{(\eta^{n}_{N})^{\delta}}_{b} p \cdot \nabla^{(\eta^{n}_{N})^{\delta}}_{b} r - (\Delta t)^{3} \int_{\Gamma} [(\boldsymbol{u} - (\Delta t)^{-1} \boldsymbol{\eta}^{\delta})\cdot \boldsymbol{n}^{\omega^{n}_{N}}]r.
\end{align}
\end{small}
Note that the influence of the kinematic coupling regularization appears in several terms in the weak formulation, through additional convolution terms involving the convolution parameter $\delta$. However, the terms in which these additional convolutions appear are all consistent in the sense that the resulting bilinear form for the Biot-fluid subproblem is \textbf{still coercive}, which reflects the fact that the influence of the kinematic regularization appears consistently throughout the weak formulation in such a way that there is still a well-defined energy estimate. In particular,
\begin{multline}\label{coerciveB}
B[(\bd{u}, \bd{\eta}, p), (\bd{u}, \bd{\eta}, p)] = \frac{1}{2}(\Delta t)^{2} \int_{\Omega_{f}} \left(\mathcal{J}^{\omega^{n}_{N}}_{f} + \mathcal{J}^{\omega^{n + 1}_{N}}_{f}\right) |\bd{u}|^{2} + 2\nu (\Delta t)^{3} \int_{\Omega_{f}} \mathcal{J}^{\omega^{n}_{N}}_{f} |\bd{D}^{\omega^{n}_{N}}_{f}(\bd{u})|^{2} \\
+ \beta (\Delta t) \int_{\Gamma} \Big(\mathcal{S}^{\omega}_{\Gamma}\Big)^{-1} |(\bd{\eta}^{\delta} - (\Delta t) \bd{u}) \cdot \bd{\tau}^{\omega^{n}_{N}}|^{2} + \rho_{b} \int_{\Omega_{b}} |\bd{\eta}|^{2} + h \int_{\Gamma} |\bd{\eta}^{\delta}|^{2} + (2\mu_{e}(\Delta t)^{2} + 2\mu_{v}(\Delta t)) \int_{\Omega_{b}} |\bd{D}(\bd{\eta})|^{2} \\
+ (\lambda_{e}(\Delta t)^{2} + \lambda_{v}(\Delta t)) \int_{\Omega_{b}} |\nabla \cdot \bd{\eta}|^{2} + c_{0}(\Delta t)^{2} \int_{\Omega_{b}} p^{2} + \kappa(\Delta t)^{3} \int_{\Omega_{b}} \mathcal{J}^{(\eta^{n}_{N})^{\delta}} |\nabla^{(\eta^{n}_{N})^{\delta}}_{b} p|^{2}.
\end{multline}
Therefore, the existence of a unique solution $(\bd{u}^{n + 1}_{N}, \bd{\eta}^{n + 1}_{N}, p^{n + 1}_{N}) \in V^{\omega^{n}_{N}}_{f} \times V_{d} \times V_{p}$ satisfying the weak formulation is guaranteed by the Lax-Milgram lemma and Korn's inequality (see Proposition 6.1 in \cite{FPSIJMPA}), which allows us to find a solution to the Biot-fluid subproblem, as long as the bilinear form defined above is coercive. This will be true under the following non-degeneracy assumptions.

\begin{proposition}\label{coercivegeometry}
Suppose that the following geometric non-degeneracy assumptions are satisfied for a fixed but arbitrary $N$ and $0 \le n \le N - 1$:
\begin{enumerate}
\item \textbf{Non-degeneracy of the moving interface.} The map $\hat{\bd{\Phi}}^{\omega^{n}_{N}}_{\Gamma}: \hat{\Gamma} \to \Gamma(t)$ is injective, and $\hat{\bd{\Phi}}_{f}^{\omega^{n}_{N}}(\hat{\Gamma}) \cap \partial \hat{\Omega} = \varnothing$. Furthermore, there exists a positive constant $\alpha > 0$ (independent of $n$) such that
\begin{equation*}
\|(-\sin(z), \cos(z)) + \partial_{z} \bd{\omega}^{n}_{N}(z)\| \ge \alpha, \qquad \text{ for all } z \in [0, 2\pi].
\end{equation*}
\item \textbf{Invertibility of the regularized Lagrangian map.} There exists positive constants $c_{1}, c_{2} > 0$ such that
\begin{equation*}
\text{det}\Big(\bd{I} + \nabla(\bd{\eta}^{n}_{N})^{\delta}\Big) \ge c_{1} > 0 \ \ \text{ and } \ \ |\bd{I} + \nabla (\bd{\eta}^{n}_{N})^{\delta}| \le c_{2} \quad \text{ on } \overline{\Omega_{b}}.
\end{equation*}
\item \textbf{Non-degeneracy of the (fluid domain) ALE map.} There exists a positive constant $c_{3} > 0$ such that
\begin{equation*}
\mathcal{J}^{\omega^{n + 1}_{N}}_{f} \ge c_{3} > 0, \quad \mathcal{J}^{\omega^{n}_{N}}_{f} \ge c_{3} > 0, \quad \text{ on } \overline{\Omega_{b}}.
\end{equation*}
\end{enumerate}
Then, there exists a unique solution $(\bd{u}^{n + 1}_{N}, \bd{\zeta}^{n + 1}_{N}, \bd{\eta}^{n + 1}_{N}, p^{n + 1}_{N})$ to the weak formulation of the fluid/Biot subproblem in \eqref{weakBiot1} and \eqref{weakBiot2}.
\end{proposition}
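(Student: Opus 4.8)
The plan is to recast the coupled Biot-fluid subproblem \eqref{weakBiot1}--\eqref{weakBiot2} as a single elliptic variational problem and solve it by Lax--Milgram. Using \eqref{weakBiot2} to eliminate $\bd\zeta^{n+1}_N = \big((\bd\eta^{n+1}_N)^\delta-(\bd\eta^n_N)^\delta\big)/\Delta t$ and rescaling the fluid and pressure test functions by $\Delta t$, the problem becomes: find $(\bd u^{n+1}_N,\bd\eta^{n+1}_N,p^{n+1}_N)$ in the Hilbert space $H := V^{\omega^n_N}_f\times V_d\times V_p$ (a closed subspace of $H^1(\Omega_f)\times H^1(\Omega_b)\times H^1(\Omega_b)$, with the product $H^1$ norm) such that \eqref{BiotFluidLaxMilgram} holds for all test triples in $H$, with $B$ the bilinear form \eqref{bilinear}. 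I will therefore verify the three hypotheses of Lax--Milgram: boundedness of $B$ on $H$, coercivity of $B$ on $H$, and boundedness of the right-hand side functional of \eqref{BiotFluidLaxMilgram}. Once a unique solution in $H$ is produced, I will set $\bd\zeta^{n+1}_N$ as above -- it lies in $H^2(\Gamma)$ since $(\bd\eta^{n+1}_N)^\delta$ and $(\bd\eta^n_N)^\delta$ are smooth and their traces on $\Gamma$ are smooth -- and check that, after undoing the rescaling, the quadruple $(\bd u^{n+1}_N,\bd\zeta^{n+1}_N,\bd\eta^{n+1}_N,p^{n+1}_N)$ satisfies \eqref{weakBiot1}--\eqref{weakBiot2}.

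For boundedness, the key observation is that for this fixed $N$ and $0\le n\le N-1$ every geometric coefficient entering $B$ and the functional -- namely $\mathcal J^{\omega^n_N}_f$, $\mathcal J^{\omega^{n+1}_N}_f$, $(\nabla\Phi^{\omega^n_N}_f)^{-1}$, $(\mathcal S^\omega_\Gamma)^{-1}$, $\bd n^{\omega^n_N}$, $\bd\tau^{\omega^n_N}$, $\mathcal J^{(\eta^n_N)^\delta}_b$ and $(\bd I+\nabla(\eta^n_N)^\delta)^{-1}$ -- is bounded in $L^\infty$ thanks to the non-degeneracy hypotheses (1)--(3), the resulting constants being allowed to depend on $N$, $n$, $\delta$ and the data. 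The previous-step quantities $\bd u^n_N$, $\bd w^{n+1}_N$, $\bd\eta^n_N$, $\bd\eta^{n-1}_N$, $\bd\zeta^{n+\frac12}_N$, $p^n_N$ are fixed elements of the corresponding finite-energy spaces, so the convective-type terms are genuinely bilinear in $(\bd u,\bd v)$ with fixed coefficients in $L^4$; then the generalized H\"older inequality together with $H^1(\Omega_f)\hookrightarrow L^4$, $H^1(\Omega_b)\hookrightarrow L^4$, the trace estimates $H^1(\Omega_f)\to L^2(\Gamma)$ and $H^1(\Omega_b)\to L^2(\Gamma)$, and the convolution bound $\|\psi^\delta\|_{H^1(\Omega_b)}\lesssim\|\psi\|_{H^1(\Omega_b)}$ (so that the convolved traces on $\Gamma$ are controlled by $\|\psi\|_{H^1(\Omega_b)}$) give $|B[(\bd u,\bd\eta,p),(\bd v,\bd\psi,r)]|\lesssim\|(\bd u,\bd\eta,p)\|_H\,\|(\bd v,\bd\psi,r)\|_H$ and the corresponding bound for the right-hand side functional.

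Coercivity is the crux, and here I rely on the cancellation already recorded in \eqref{coerciveB}: diagonal testing $(\bd v,\bd\psi,r)=(\bd u,\bd\eta,p)$ makes the skew-symmetric convective terms vanish, makes the Jacobian-difference term combine with the leading mass term into $\tfrac12(\Delta t)^2\int_{\Omega_f}(\mathcal J^{\omega^n_N}_f+\mathcal J^{\omega^{n+1}_N}_f)|\bd u|^2$, and makes the four interface terms carrying the normal $\bd n^{\omega^n_N}$ cancel in pairs, leaving only the nonnegative Beavers--Joseph--Saffman tangential term. The remaining task is to upgrade the three dissipative quadratic forms in \eqref{coerciveB} to control the full $H^1$ norm of each unknown: for $\bd\eta$, the term $\rho_b\int_{\Omega_b}|\bd\eta|^2$ together with $2\mu_e(\Delta t)^2\int_{\Omega_b}|\bd D(\bd\eta)|^2$ gives $\|\bd\eta\|^2_{H^1(\Omega_b)}$ via Korn's first inequality; for $p$, the uniform positivity of $\mathcal J^{(\eta^n_N)^\delta}_b$ and the uniform invertibility of $\bd I+\nabla(\eta^n_N)^\delta$ yield $\|\nabla^{(\eta^n_N)^\delta}_b p\|_{L^2}\gtrsim\|\nabla p\|_{L^2}$, which combined with $c_0(\Delta t)^2\int_{\Omega_b}p^2$ (using $c_0>0$) gives $\|p\|^2_{H^1(\Omega_b)}$; and for $\bd u$, I will push the problem forward through $\Phi^{\omega^n_N}_f$ to the physical domain $\Omega_f(t)$, where $\nu\int_{\Omega_f(t)}|\bd D(\bd u)|^2$ controls $\|\bd u\|^2_{H^1(\Omega_f(t))}$ by the standard Korn inequality (valid because $\bd u$ vanishes on the rigid outer wall $\partial\Omega_f(t)\setminus\Gamma(t)$), and then transfer the equivalence of norms back to the reference domain using that $\Phi^{\omega^n_N}_f$ has Jacobian bounded above and below. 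I expect this last Korn-type step on the transformed domain -- reconciling $\bd D^{\omega^n_N}_f$ with the honest symmetric gradient and tracking the geometry dependence of the constant -- to be the main technical obstacle; it is precisely the content of the Korn inequality cited as Proposition 6.1 in \cite{FPSIJMPA}.

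With boundedness and coercivity in hand, Lax--Milgram produces a unique $(\bd u^{n+1}_N,\bd\eta^{n+1}_N,p^{n+1}_N)\in H$ solving \eqref{BiotFluidLaxMilgram}. Defining $\bd\zeta^{n+1}_N := \big((\bd\eta^{n+1}_N)^\delta-(\bd\eta^n_N)^\delta\big)/\Delta t\in H^2(\Gamma)$, identity \eqref{weakBiot2} holds by construction, and reversing the test-function rescaling recovers \eqref{weakBiot1}; hence $(\bd u^{n+1}_N,\bd\zeta^{n+1}_N,\bd\eta^{n+1}_N,p^{n+1}_N)$ is the unique solution of the Biot-fluid subproblem, completing the argument.
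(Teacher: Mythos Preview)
Your proposal is correct and follows essentially the same approach as the paper: both apply Lax--Milgram to the bilinear form $B$ in \eqref{bilinear}, use the diagonal-testing identity \eqref{coerciveB} together with the geometric non-degeneracy hypotheses and Korn's inequality (Proposition 6.1 in \cite{FPSIJMPA}) for coercivity, and recover $\bd\zeta^{n+1}_N$ from \eqref{weakBiot2}. The paper's proof is terser---it records only the key $\kappa$-term lower bound \eqref{kappabound}, notes bijectivity of the regularized Lagrangian map via Proposition \ref{mapinjective}, and defers the remaining details to Lemma 6.2 of \cite{FPSIJMPA}---whereas you spell out boundedness and each coercivity ingredient explicitly.
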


\begin{proof}
Under the assumptions, the existence of a solution to the fluid-Biot problem follows from a usual application of the Lax-Milgram theorem to the bilinear form $B$ defined in \eqref{bilinear}, which as a result of the geometric assumptions, is coercive due to the calculation in \eqref{coerciveB} and the observation using \eqref{etanabla} and \eqref{jeta} that
\begin{equation}\label{kappabound}
\kappa (\Delta t)^{3} \int_{\Omega_{b}} \mathcal{J}^{(\eta^{n}_{N})^{\delta}} |\nabla_{b}^{(\eta^{n}_{N})^{\delta}}p|^{2} \ge \kappa (\Delta t)^{3} c_{1} \int_{\Omega_{b}} |\bd{I} + \nabla(\bd{\eta}^{n}_{N})^{\delta}|^{-2} |\nabla p|^{2} \ge \kappa (\Delta t)^{3} c_{1} c_{2}^{-2} \int_{\Omega_{b}} |\nabla p|^{2}.
\end{equation}
Furthermore, the discrete Lagrangian map $\hat{\bd{\Phi}}^{(\eta^{n}_{N})^{\delta}}_{b}: \hat{\Omega}_{b} \to (\Omega_{b})^{n, \delta}_{N}$ is bijective by Proposition \ref{mapinjective} and the given assumption on the non-degeneracy of the interface and the regularized Lagrangian map. From this point, the rest of the proof of existence/uniqueness for the fluid-Biot subproblem proceeds as in the proof of Lemma 6.2 in \cite{FPSIJMPA}.
\end{proof}

We further obtain the following energy estimate for the fluid/Biot subproblem by substituting $(\bd{v}, \bd{\varphi}, \bd{\psi}, r) = (\bd{u}^{n + 1}_{N}, \bd{\zeta}^{n + 1}_{N}, \bd{\eta}^{n + 1}_{N}, p^{n + 1}_{N})$ in the weak formulation for the Biot-fluid subproblem:
\begin{multline}\label{energyBiot}
\frac{1}{2} \int_{\Omega_{f}} \mathcal{J}^{\omega^{n + 1}_{N}}_{f} |\bd{u}^{n + 1}_{N}|^{2} + 2\nu \int_{\Omega_{f}} \mathcal{J}^{\omega^{n}_{N}}_{f} |\bd{D}^{\omega^{n}_{N}}_{f}(\bd{u}^{n + 1}_{N})|^{2} + \beta \int_{\Gamma} \Big(\mathcal{S}^{\omega}_{\Gamma}\Big)^{-1} \left|\left(\left(\dot{\bd{\eta}}^{n + 1}_{N}\right)^{\delta} - \bd{u}^{n + 1}_{N}\right) \cdot \bd{\tau}^{\omega^{n}_{N}}\right|^{2} \\
+ \frac{1}{2}\rho_{b} \int_{\Omega_{b}} |\dot{\bd{\eta}}^{n + 1}_{N}|^{2} + \frac{1}{2} h \int_{\Gamma} |\bd{\zeta}^{n + 1}_{N}|^{2} + \mu_{e} \int_{\Omega_{b}} |\bd{D}(\bd{\eta}^{n + 1}_{N})|^{2} + \frac{1}{2} \lambda_{e} \int_{\Omega_{b}} |\nabla \cdot \bd{\eta}^{n + 1}_{N}|^{2} \\
+ 2\mu_{v} \int_{\Omega_{b}} |\bd{D}(\dot{\bd{\eta}}^{n + 1}_{N})|^{2} + \lambda_{v} \int_{\Omega_{b}} |\nabla \cdot \dot{\bd{\eta}}^{n + 1}_{N}|^{2} + \frac{1}{2}c_{0}\int_{\Omega_{b}} |p^{n + 1}_{N}|^{2} + \kappa \int_{\Omega_{b}} \mathcal{J}^{(\eta^{n}_{N})^{\delta}}_{b} \left|\nabla^{(\eta^{n}_{N})^{\delta}}_{b} p^{n + 1}_{N}\right|^{2} \\
= \frac{1}{2} \int_{\Omega_{f}} \mathcal{J}^{\omega^{n}_{N}}_{f} |\bd{u}^{n}_{N}|^{2} + \frac{1}{2}\rho_{b} \int_{\Omega_{b}} |\dot{\bd{\eta}}^{n}_{N}|^{2} + \mu_{e} \int_{\Omega_{b}} |\bd{D}(\bd{\eta}^{n}_{N})|^{2} + \frac{1}{2}\lambda_{e} \int_{\Omega_{b}} |\nabla \cdot \bd{\eta}^{n}_{N}|^{2} + \frac{1}{2} \int_{\Gamma} |\bd{\zeta}^{n}_{N}|^{2} + \frac{1}{2} c_{0} \int_{\Omega_{b}} |p^{n}_{N}|^{2}.
\end{multline}

\medskip

\subsection{Uniform estimates and approximate solutions}

Adding the weak formulations for the plate and the Biot-fluid subproblems, we obtain the following semidiscrete formulation, which is satisfied for all test functions $(\bd{v}, \varphi, \bd{\psi}, r) \in V^{\omega^{n}_{N}}_{f} \times H^{2}(\Gamma) \times V_{s} \times V_{p}$ such that $\bd{\psi}^{\delta}|_{\Gamma} = \bd{\varphi}$:

\begin{small}
\begin{equation}\label{semi1}
\begin{aligned}
&\int_{\Omega_{f}} \mathcal{J}^{\omega^{n}_{N}}_{f} \boldsymbol{\dot{u}}^{n + 1}_{N} \cdot \boldsymbol{v} 
+ 2\nu \int_{\Omega_{f}} \mathcal{J}^{\omega^{n}_{N}}_{f} \boldsymbol{D}^{\omega^{n}_{N}}_{f}(\boldsymbol{u}^{n + 1}_{N}) : \boldsymbol{D}^{\omega^{n}_{N}}_{f}(\boldsymbol{v}) + \int_{\Gamma} \left(\frac{1}{2}\boldsymbol{u}^{n + 1}_{N} \cdot \boldsymbol{u}^{n}_{N} - p^{n + 1}_{N}\right)(\boldsymbol{\psi}^{\delta} - \boldsymbol{v})\cdot \boldsymbol{n}^{\omega^{n}_{N}} 
\\
&+ \frac{1}{2} \int_{\Omega_{f}} \mathcal{J}^{\omega^{n}_{N}}_{f} \left[\left(\left(\boldsymbol{u}^{n}_{N} - \bd{w}^{n + 1}_{N}\right) \cdot \nabla^{\omega^{n}_{N}}_{f} \boldsymbol{u}^{n + 1}_{N}\right) \cdot \boldsymbol{v} - \left(\left(\boldsymbol{u}^{n}_{N} - \bd{w}^{n + 1}_{N}\right) \cdot \nabla^{\omega^{n}_{N}}_{f} \boldsymbol{v}\right) \cdot \boldsymbol{u}^{n + 1}_{N}\right] 
\\
&+ \frac{1}{2} \int_{\Omega_{f}} \left(\frac{\mathcal{J}^{\omega^{n + 1}_{N}}_{f} - \mathcal{J}^{\omega^{n}_{N}}_{f}}{\Delta t}\right) \boldsymbol{u}^{n + 1}_{N} \cdot \boldsymbol{v} + \frac{1}{2} \int_{\Gamma} \left(\boldsymbol{u}^{n + 1}_{N} - \left(\boldsymbol{\dot{\eta}}^{n + 1}_{N}\right)^{\delta}\right) \cdot \boldsymbol{n}^{\omega^{n}_{N}} (\boldsymbol{u}^{n}_{N} \cdot \boldsymbol{v}) 
\\
&+ \beta \int_{\Gamma} \Big(\mathcal{S}^{\omega}_{\Gamma}\Big)^{-1} \left(\left(\boldsymbol{\dot{\eta}}^{n + 1}_{N}\right)^{\delta} - \boldsymbol{u}^{n + 1}_{N}\right) \cdot \boldsymbol{\tau}^{\omega^{n}_{N}} (\boldsymbol{\psi}^{\delta} - \boldsymbol{v}) \cdot \boldsymbol{\tau}^{\omega^{n}_{N}} + \rho_{b} \int_{\Omega_{b}} \left(\frac{\boldsymbol{\dot{\eta}}^{n + 1}_{N} - \boldsymbol{\dot{\eta}}^{n}_{N}}{\Delta t}\right) \cdot \boldsymbol{\psi} 
\\
&+ h \int_{\Gamma} \left(\frac{\bd{\zeta}^{n + 1}_{N} - \bd{\zeta}^{n}_{N}}{\Delta t}\right) \cdot \bd{\varphi} + 2\mu_{e} \int_{\Omega_{b}} \boldsymbol{D}(\boldsymbol{\eta}^{n + 1}_{N}) : \boldsymbol{D}(\boldsymbol{\psi}) + \lambda_{e} \int_{\Omega_{b}} (\nabla \cdot \boldsymbol{\eta}^{n + 1}_{N})(\nabla \cdot \boldsymbol{\psi}) 
\\
&+ 2\mu_{v} \int_{\Omega_{b}} \bd{D}(\dot{\bd{\eta}}^{n + 1}_{N}) : \bd{D}(\bd{\psi}) + \lambda_{v} \int_{\Omega_{b}} (\nabla \cdot \dot{\bd{\eta}}^{n + 1}_{N}) (\nabla \cdot \bd{\psi}) - \alpha \int_{\Omega_{b}} \mathcal{J}^{(\eta^{n}_{N})^{\delta}}_{b} p^{n + 1}_{N} \nabla^{(\eta^{n}_{N})^{\delta}}_{b} \cdot \boldsymbol{\psi}^{\delta} + c_{0}  \int_{\Omega_{b}} \frac{p^{n + 1}_{N} - p^{n}_{N}}{\Delta t} r \\
&- \alpha \int_{\Omega_{b}} \mathcal{J}^{(\eta^{n}_{N})^{\delta}}_{b} \left(\boldsymbol{\dot{\eta}}^{n + 1}_{N}\right)^{\delta} \cdot \nabla^{(\eta^{n}_{N})^{\delta}}_{b} r - \alpha \int_{\Gamma} \left(\left(\boldsymbol{\dot{\eta}}^{n + 1}_{N}\right)^{\delta} \cdot \boldsymbol{n}^{\omega^{n}_{N}}\right) r + \kappa \int_{\Omega_{b}} \mathcal{J}^{(\eta^{n}_{N})^{\delta}}_{b} \nabla^{(\eta^{n}_{N})^{\delta}}_{b} p^{n + 1}_{N} \cdot \nabla^{(\eta^{n}_{N})^{\delta}}_{b} r \\
&- \int_{\Gamma} \left[\left(\boldsymbol{u}^{n + 1}_{N} - \left(\boldsymbol{\dot{\eta}}^{n + 1}_{N}\right)^{\delta}\right) \cdot \boldsymbol{n}^{\omega^{n}_{N}}\right]r + h \int_{\Gamma} \Delta \bd{\zeta}^{n + \frac{1}{2}}_{N} \cdot \Delta \bd{\varphi} + h \int_{\Gamma} \Delta \bd{\omega}^{n + \frac{1}{2}}_{N} \cdot \Delta \bd{\varphi} = 0,
\end{aligned}
\end{equation}
\end{small}

\begin{multline*}
\int_{\Gamma} \left(\frac{\bd{\omega}^{n + \frac{1}{2}}_{N} - \bd{\omega}^{n - \frac{1}{2}}_{N}}{\Delta t}\right) \cdot \bd{\phi} = \int_{\Gamma} \bd{\zeta}^{n + \frac{1}{2}}_{N} \cdot \bd{\phi}, \qquad \int_{\Gamma} \left(\frac{(\bd{\eta}^{n + 1}_{N})^{\delta} - (\bd{\eta}^{n}_{N})^{\delta}}{\Delta t}\right) \cdot \bd{\phi} = \int_{\Gamma} \bd{\zeta}^{n + 1}_{N} \cdot \bd{\phi}, \\
\qquad \qquad \qquad \qquad \text{ for all } \bd{\phi} \in L^{2}(\Gamma).
\end{multline*}

In addition, we can obtain an energy estimate for the approximate solutions. To do this, we define the discrete energy $E^{n + \frac{i}{2}}_{N}$ and the dissipation $D^{n}_{N}$ for $n = 0, 1, ..., N$ and $i = 0, 1$ as follows:
\begin{multline*}
E^{n + \frac{i}{2}}_{N} = \frac{1}{2} \int_{\Omega_{f}} \mathcal{J}^{\omega^{n}_{N}}_{f} |\bd{u}^{n + \frac{i}{2}}_{N}|^{2} + \frac{1}{2}\rho_{b} \int_{\Omega_{b}} |\dot{\bd{\eta}}^{n + \frac{i}{2}}_{N}|^{2} + \frac{1}{2}  h \int_{\Gamma} |\bd{\zeta}^{n + \frac{i}{2}}_{N}|^{2} + \frac{1}{2} h \int_{\Gamma} |\Delta \bd{\omega}^{n + \frac{i}{2}}_{N}|^{2} \\
+ \mu_{e} \int_{\Omega_{b}} |\bd{D}(\bd{\eta}^{n + \frac{i}{2}}_{N})|^{2} + \frac{1}{2}\lambda_{e} \int_{\Omega_{b}} |\nabla \cdot \bd{\eta}^{n + \frac{i}{2}}_{N}|^{2} + \frac{1}{2} c_{0} \int_{\Omega_{b}} |p^{n + \frac{i}{2}}_{N}|^{2},
\end{multline*}
\begin{multline*}
D^{n}_{N} = 2\nu \int_{\Omega_{f}} \mathcal{J}^{\omega^{n}_{N}}_{f} |\bd{D}^{\omega^{n}_{N}}_{f}(\bd{u}^{n + 1}_{N})|^{2} + \beta \int_{\Gamma} \Big(\mathcal{S}^{\omega}_{\Gamma}\Big)^{-1} \left|\left(\left(\dot{\bd{\eta}}^{n + 1}_{N}\right)^{\delta} - \bd{u}^{n + 1}_{N}\right) \cdot \bd{\tau}^{\omega^{n}_{N}}\right|^{2} \\
+ 2\mu_{v} \int_{\Omega_{b}} |\bd{D}(\dot{\bd{\eta}}^{n + 1}_{N})|^{2} + \lambda_{v} \int_{\Omega_{b}} |\nabla \cdot \dot{\bd{\eta}}^{n + 1}_{N}|^{2} + \kappa \int_{\Omega_{b}} \mathcal{J}^{(\eta^{n}_{N})^{\delta}}_{b} \left|\nabla^{(\eta^{n}_{N})^{\delta}}_{b} p^{n + 1}_{N}\right|^{2} + h\int_{\Gamma} |\Delta \bd{\zeta}^{n + \frac{1}{2}}_{N}|^{2}.
\end{multline*}

We then obtain the following energy estimates, which show that the energy of the approximate solutions is uniformly bounded independently of the time discretization parameter $N$. In addition, we obtain uniform bounds on geometric quantities involved in the problem up until some sufficiently small time $T > 0$ which is independent of $N$ and which depends only on the initial data of the problem. This will be useful for showing non-degeneracy of the regularized Lagrangian map for the Biot domain and of the ALE map for the fluid domain.

\begin{proposition}\label{geometricN}
There exists a sufficiently small time $T > 0$, \textit{which is independent of the parameters $N$}, such that for all positive integers $N$ and $n = 0, 1, ..., N$, there exist positive constants $c_{0}$, $c_{1}$, $c_{2}$, and $c_{3}$ such that:
\begin{equation}\label{uniformgeom1}
\text{det}(\bd{I} + \nabla (\bd{\eta}^{n}_{N})^{\delta}) \ge c_{0} > 0, \quad |(\bd{I} + \nabla(\bd{\eta}^{n}_{N})^{\delta})^{-1}| \ge c_{1} > 0, \quad |\bd{I} + \nabla (\bd{\eta}^{n}_{N})^{\delta}| \le c_{2}, \quad \text{ on } \overline{\Omega_{b}},
\end{equation}
\begin{equation}\label{uniformgeom2}
0 < c_{3}^{-1} \le \mathcal{J}^{\omega^{n}_{N}}_{f} \le c_{3}, \quad |\nabla \bd{\Phi}^{\omega^{n}_{N}}_{f}| \le c_{3}, \quad \text{ on } \overline{\Omega_{f}},
\end{equation}
and furthermore, the map $\bd{\Phi}^{\omega^{n}_{N}}_{\Gamma}: \Gamma \to \Gamma(t)$ is injective, with $\bd{\Phi}^{\omega^{n}_{N}}_{\Gamma}(\Gamma) \cap \partial \Omega = \varnothing$, and 
\begin{equation}\label{tnorm}
\|(-\sin(z), \cos(z)) + \partial_{z} \bd{\omega}^{n}_{N}(z)\| \ge \alpha > 0, \quad \text{ for all } z \in \Gamma,
\end{equation}
for all $n = 0, 1, ..., N$ and for all positive integers $N$, where the constant $\alpha > 0$ is independent of $n$ and $N$. In addition, there exists a uniform constant $C$ that is independent of $N$ which depends only on the initial data in the problem, such that
\begin{equation}\label{energyN}
\max_{i = 0, 1, 2}\left(\max_{n = 0, 1, ..., N - 1} E^{n + \frac{i}{2}}\right) \le C, \qquad \sum_{n = 0}^{N - 1} D^{n}_{N} \le C,
\end{equation}
and such that we have the following uniform numerical dissipation estimate:
\begin{multline}\label{numerdiss}
\sum_{n = 0}^{N - 1} \Bigg(\int_{\Omega_{f}} \mathcal{J}^{\omega^{n}_{N}}_{f} |\bd{u}^{n + 1}_{N} - \bd{u}^{n}_{N}|^{2} + \rho_{b} \int_{\Omega_{b}} |\dot{\bd{\eta}}^{n + 1}_{N} - \dot{\bd{\eta}}^{n}_{N}|^{2} + h\int_{\Gamma} |\bd{\zeta}^{n + 1}_{N} - \bd{\zeta}^{n + \frac{1}{2}}_{N}| + h\int_{\Gamma} |\bd{\zeta}^{n + \frac{1}{2}}_{N} - \bd{\zeta}^{n}_{N}|^{2} \\
+ h \int_{\Gamma} |\Delta \bd{\omega}^{n + \frac{1}{2}}_{N} - \Delta \bd{\omega}^{n}_{N}|^{2} + \mu_{e} \int_{\Omega_{b}} |\bd{D}(\bd{\eta}^{n + 1}_{N}) - \bd{D}(\bd{\eta}^{n}_{N})|^{2} + \lambda_{e} \int_{\Omega_{b}} |\nabla \cdot (\bd{\eta}^{n + 1}_{N} - \bd{\eta}^{n}_{N})|^{2} + c_{0} \int_{\Omega_{b}} |p^{n + 1}_{N} - p^{n}_{N}|^{2}\Bigg) \le C
\end{multline}
with the following explicit energy equalities:
\begin{equation*}
E^{n + \frac{1}{2}}_{N} + \frac{1}{2} h \int_{\Gamma} |\bd{\zeta}^{n + \frac{1}{2}}_{N} - \bd{\zeta}^{n}_{N}|^{2} + \frac{1}{2} h \int_{\Gamma} |\Delta \bd{\omega}^{n + \frac{1}{2}}_{N} - \Delta \bd{\omega}^{n}_{N}|^{2} = E^{n}_{N},
\end{equation*}
\begin{multline*}
E^{n + 1}_{N} + D^{n}_{N} + \frac{1}{2} \int_{\Omega_{f}} \mathcal{J}^{\omega^{n}_{N}}_{f} |\bd{u}^{n + 1}_{N} - \bd{u}^{n}_{N}|^{2} + \frac{1}{2} \rho_{b} \int_{\Omega_{b}} \left|\dot{\bd{\eta}}^{n + 1}_{N} - \dot{\bd{\eta}}^{n}_{N}\right|^{2} + \frac{1}{2} h \int_{\Gamma} |\bd{\zeta}^{n + 1}_{N} - \bd{\zeta}^{n + \frac{1}{2}}_{N}|^{2} \\
+ \mu_{e} \int_{\Omega_{b}} |\bd{D}(\bd{\eta}^{n + 1}_{N} - \bd{\eta}^{n}_{N})|^{2} + \frac{1}{2} \lambda_{e} \int_{\Omega_{b}} |\nabla \cdot (\bd{\eta}^{n + 1}_{N} - \bd{\eta}^{n}_{N})|^{2} + \frac{1}{2} c_{0} \int_{\Omega_{b}} |p^{n + 1}_{N} - p^{n}_{N}|^{2} = E^{n + \frac{1}{2}}_{N}.
\end{multline*}
\end{proposition}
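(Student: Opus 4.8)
The plan is to run a bootstrap/induction in $n$ that simultaneously propagates the geometric nondegeneracy bounds \eqref{uniformgeom1}--\eqref{tnorm} and the energy bounds \eqref{energyN}, closing the argument on a time interval $[0,T]$ whose length depends only on the initial data and on $\delta$, but not on $N$. The mechanism linking these two families of estimates is the regularization: the energy estimate only controls $\bd{\eta}^{n}_{N}$ in $L^\infty(0,T;H^1(\Omega_b))$ and $\bd{\zeta}^{n}_N$-type quantities in $L^2(0,T;H^1(\Omega_b))$, but convolution against $\varphi_\delta$ upgrades this to uniform $C^1(\overline{\Omega_b})$ (indeed $C^k$ for any $k$) control of $(\bd{\eta}^n_N)^\delta$, with constants depending on $\delta$, via the standard convolution estimate $\|f*\varphi_\delta\|_{C^1} \le C_\delta \|f\|_{L^2}$ together with the strong $1$-extension of Definition \ref{extension}. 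This is exactly the point of the regularized interface method, and it is the reason the geometric quantities $\hat{\mathcal J}^{\eta^\delta}_b$, $\nabla \hat{\bd\Phi}^{\eta^\delta}_b$, and the interface normal/tangent vectors in \eqref{ntomega} stay uniformly controlled.

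I would proceed in the following steps. \textbf{Step 1 (energy equalities and summed dissipation).} First derive the two explicit energy equalities by testing the plate subproblem \eqref{plateweak1}--\eqref{plateweak2} with $\bd\varphi = \bd\zeta^{n+\frac12}_N$ (giving \eqref{energyplate} rewritten as the first equality, with the numerical-dissipation squared-difference terms produced by the algebraic identity $a\cdot(a-b) = \frac12|a|^2 - \frac12|b|^2 + \frac12|a-b|^2$), and testing the Biot-fluid subproblem weak formulation \eqref{weakBiot1}--\eqref{weakBiot2} with $(\bd v,\bd\varphi,\bd\psi,r) = (\bd u^{n+1}_N,\bd\zeta^{n+1}_N,\dot{\bd\eta}^{n+1}_N,p^{n+1}_N)$, which is admissible because $\bd\zeta^{n+1}_N|_\Gamma = (\dot{\bd\eta}^{n+1}_N)^\delta$ so the regularized kinematic coupling constraint on test functions is met; here the crucial cancellation of the interface pressure/stress/slip terms and of the $\alpha$-coupling terms (the one displayed in the "Discussion" subsection, which relies precisely on $\partial_t\hat{\bd\eta}^\delta = \hat{\bd\xi}^\delta$) reproduces \eqref{energyBiot} with the squared-difference terms. \textbf{Step 2 (telescoping).} Add the two equalities and sum over $n = 0,\dots,m-1$ for any $m \le N$; the $E^{n}_N$ terms telescope, yielding $E^m_N + \sum_{n<m} D^n_N + (\text{sum of numerical dissipation}) = E^0_N$, \emph{provided} the geometric nondegeneracy hypotheses of Proposition \ref{coercivegeometry} hold for every $n<m$ so that all the dissipative quadratic forms (in particular the $\kappa$-term, via \eqref{kappabound}) are genuinely sign-definite. \textbf{Step 3 (propagating geometry).} Given $E^n_N \le C$ uniformly for $n \le m-1$, bound $\|\dot{\bd\eta}^{n}_N\|_{L^2(\Omega_b)} \le C$, hence $\|(\bd\eta^{n}_N)^\delta - (\bd\eta^{n-1}_N)^\delta\|_{C^1(\overline{\Omega_b})} = \Delta t\,\|(\dot{\bd\eta}^{n}_N)^\delta\|_{C^1} \le C_\delta\,\Delta t$ (using \eqref{weakBiot2}: the increment of $(\bd\eta)^\delta$ over one step is $\Delta t\,\bd\zeta^{n}_N = \Delta t\,(\dot{\bd\eta}^n_N)^\delta$), so that $\|(\bd\eta^{n}_N)^\delta - \bd\eta^\delta_0\|_{C^1} \le C_\delta\, t_n \le C_\delta T$; choosing $T$ small (depending on $\delta$ and the initial-data constants $c, \alpha$ from Theorem \ref{mainthm}) keeps $\det(\bd I + \nabla(\bd\eta^{n}_N)^\delta)$, $|(\bd I + \nabla(\bd\eta^n_N)^\delta)^{-1}|$, $|\bd I + \nabla(\bd\eta^n_N)^\delta|$ within the bounds \eqref{uniformgeom1}; similarly, the interface displacement increment is $\Delta t\,\bd\zeta^{n+\frac12}_N$ or $\Delta t\,\bd\zeta^{n+1}_N$, but here one must be careful since the plate-subproblem energy \eqref{energyplate} only gives $h^{1/2}\|\bd\zeta^{n+\frac12}_N\|_{L^2(\Gamma)} \le C$ — instead use the regularized kinematic coupling $\bd\omega^{n}_N = (\bd\eta^n_N)^\delta|_\Gamma$ together with the trace theorem and the $C^1$ control just obtained on $(\bd\eta^n_N)^\delta$ to get $\|\bd\omega^n_N - \bd\omega_0\|_{C^1(\Gamma)} \le C_\delta T$, which by Proposition \ref{gammainjective} preserves injectivity, the nondegeneracy \eqref{tnorm}, and disjointness from $\partial\Omega$; for the fluid ALE map \eqref{uniformgeom2}, harmonic-extension elliptic regularity bounds $\|\bd\Phi^{\omega^n_N}_f - \bd\Phi^{\omega_0}_f\|_{W^{2,p}(\Omega_f)} \le C\|\bd\omega^n_N - \bd\omega_0\|_{W^{2-1/p,p}(\Gamma)}$ and Sobolev embedding into $C^1(\overline{\Omega_f})$ then controls $\mathcal J^{\omega^n_N}_f$ and $|\nabla\bd\Phi^{\omega^n_N}_f|$ on $[0,T]$. \textbf{Step 4 (close the induction and solve each subproblem).} With the geometry verified at step $m$, Proposition \ref{coercivegeometry} supplies existence/uniqueness of the $(m+1)$-st Biot-fluid iterate and the plate subproblem is solved by Lax--Milgram as in \eqref{plateweak1}--\eqref{plateweak2}; this advances the induction. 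The uniform constant $C$ in \eqref{energyN}, \eqref{numerdiss} is then just $E^0_N$ plus a harmless multiple, independent of $N$ (and of $h$, since the $h$-weighted plate terms only add nonnegative contributions to $E^0_N$, which is controlled by the $h$-independent assumptions on $\bd\omega_0,\bd\zeta_0 \in H^2(\Gamma)$).

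The main obstacle is the circularity inherent in Step 2 versus Step 3: the telescoped energy identity is only valid if the bilinear form $B$ was coercive at every prior step, which in turn requires the geometric bounds, which are themselves deduced \emph{from} the energy bound. The resolution is the strict induction on $n$ — at stage $n$ one has the energy bound and geometry for all indices $\le n$, uses them to solve the $(n+1)$-st subproblems, then uses the \emph{one-step} energy equality (not the full telescoped sum) plus the smallness of $\Delta t \cdot (\text{number of steps}) = t_{n+1} \le T$ to extend the geometric bounds to index $n+1$ with \emph{room to spare}, i.e. with the nondegeneracy constants still strictly better than the threshold; this is what forces $T$ to be chosen small but, crucially, the smallness threshold involves only $\delta$ and the initial data, never $N$ or $h$ — the $N$-independence is automatic because each one-step increment is $O(\Delta t)$ in $C^1$ and there are $t_{n+1}/\Delta t$ of them. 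A secondary technical point worth isolating is justifying that the squared-difference ("numerical dissipation") terms really do appear with the right signs after transferring the fluid kinetic-energy term from the domain $\Omega_f$ with Jacobian $\mathcal J^{\omega^n_N}_f$ at time level $n$ to level $n+1$ (the term $\frac12\int_{\Omega_f}\frac{\mathcal J^{\omega^{n+1}_N}_f - \mathcal J^{\omega^n_N}_f}{\Delta t}\bd u^{n+1}_N\cdot\bd v$ in \eqref{weakBiot1} is exactly what makes this discrete Reynolds-transport bookkeeping close), and that the Beavers--Joseph--Saffman and convective boundary terms on $\Gamma$ combine into a perfect square as in the dissipation $D^n_N$ — both are routine but must be done with the regularized quantities $(\dot{\bd\eta}^{n+1}_N)^\delta$ in place of the traces, exactly as flagged after \eqref{weakBiot2}.
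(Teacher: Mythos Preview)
Your overall architecture---induction on $n$, the two energy equalities obtained by testing the subproblems with the solution itself, telescoping, and propagating geometric nondegeneracy via the $C^1$-upgrade from convolution---matches the paper. The Biot-domain part of Step 3 (controlling $\det(\bd I+\nabla(\bd\eta^n_N)^\delta)$ etc.\ through $\|(\dot{\bd\eta}^{n}_N)^\delta\|_{C^1}\le C_\delta\|\dot{\bd\eta}^n_N\|_{L^2}$) is correct and is essentially what the paper invokes from \cite{FPSIJMPA}.

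There is, however, a genuine gap in your treatment of the \emph{interface} displacement. You write ``use the regularized kinematic coupling $\bd\omega^n_N = (\bd\eta^n_N)^\delta|_\Gamma$,'' but this identity is \emph{false} at the discrete level. The operator splitting decouples the two updates: the plate displacement evolves via $\bd\omega^{n+1}_N-\bd\omega^n_N=\Delta t\,\bd\zeta^{n+\frac12}_N$ (the velocity from the plate subproblem \eqref{plateweak1}), whereas $(\bd\eta^{n}_N)^\delta|_\Gamma$ evolves via $\Delta t\,\bd\zeta^{n+1}_N=\Delta t\,(\dot{\bd\eta}^{n+1}_N)^\delta|_\Gamma$ (the velocity from \eqref{weakBiot2}). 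These two plate velocities are different at finite $N$; the coupling $\bd\omega=\bd\eta^\delta|_\Gamma$ is only recovered in the continuous limit $N\to\infty$. Hence you cannot read off $C^1(\Gamma)$ control of $\bd\omega^n_N$ from the Biot energy alone, and your route to \eqref{tnorm}--\eqref{uniformgeom2} breaks.

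The paper instead controls $\bd\omega^n_N$ directly through the viscoelastic plate dissipation: from $D^n_N$ one has $(\Delta t)\,h\sum_n\|\bd\zeta^{n+\frac12}_N\|_{H^2(\Gamma)}^2\le C$, whence by Cauchy--Schwarz $\|\bd\omega^n_N-\bd\omega_0\|_{H^2(\Gamma)}\le(\Delta t)\sum\|\bd\zeta^{i+\frac12}_N\|_{H^2}\le CT^{1/2}$, and then Sobolev embedding, Proposition~\ref{gammainjective}, and elliptic regularity for the harmonic extension give the interface and fluid-ALE bounds. Note two consequences: (i) the decay is $T^{1/2}$, not $T$; and (ii) the constant here \emph{does} depend on $h$, because the dissipation carries a prefactor $h$. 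Your claim in Step 4 that the constants are independent of $h$ is therefore incorrect at this stage---the paper explicitly flags this in the Remark following the proof, explaining that the $h$-dependence is an artifact of the staggered scheme that disappears only after passing to the continuous-in-time limit (where $\bd\zeta^{n+\frac12}_N$ and $\bd\zeta^{n+1}_N$ coalesce), and is handled separately in Proposition~\ref{geometryh}.
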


\begin{proof}
The semidiscrete energy estimates follow by combining the semidiscrete energy estimates in the plate subproblem \eqref{energyplate} and the Biot/fluid subproblem \eqref{energyBiot}. Therefore, it remains to prove the uniform geometric estimates, which are uniform in $N$. By the uniform energy estimates, note that there exists some uniform constant $C$, independent of $N$, such that
\begin{equation}\label{esth}
(\Delta t) h \sum_{i = 0}^{N - 1} \|\bd{\zeta}^{i + \frac{1}{2}}_{N}\|^{2}_{H^{2}(\Gamma)} \le C.
\end{equation}
Therefore, using the fact from \eqref{plateweak1} in the plate subproblem that $\bd{\omega}^{n + \frac{1}{2}}_{N} - \bd{\omega}^{n - \frac{1}{2}}_{N} = (\Delta t) \bd{\zeta}^{n + \frac{1}{2}}_{N}$ and the fact from the Biot/fluid subproblem that $\bd{\omega}^{n + 1}_{N} = \bd{\omega}^{n + \frac{1}{2}}_{N}$, we obtain:
\begin{align}\label{H02esth}
\|\bd{\omega}^{n}_{N} - \bd{\omega}_{0}\|_{C^{1}(\Gamma)} \le C\|\bd{\omega}^{n}_{N} - \bd{\omega}_{0}\|_{H^{2}(\Gamma)} &\le C\sum_{i = 0}^{N - 1} \|\bd{\omega}^{i + 1}_{N} - \bd{\omega}^{i}_{N}\|_{H^{2}(\Gamma)} \le C(\Delta t) \sum_{i = 0}^{N - 1} \|\bd{\zeta}^{i + \frac{1}{2}}_{N}\|_{H^{2}(\Gamma)} \nonumber \\
&\le C(\Delta t) N^{1/2} \left(\sum_{i = 0}^{N - 1} \|\bd{\zeta}^{i + \frac{1}{2}}_{N}\|_{H^{2}(\Gamma)}^{2}\right)^{1/2} \le CT^{1/2},
\end{align}
since $N^{1/2}(\Delta t)^{1/2} = T^{1/2}$. Note that the constant in the final estimate \eqref{H02esth} potentially depends on $h$. Then, by taking $T$ sufficiently small, using assumptions on the initial data, and applying Proposition \ref{gammainjective}, we obtain the desired statements about the injectivity and non-degeneracy of the map $\bd{\Phi}^{\omega^{n}_{N}}_{\Gamma}$. 

The statements about uniform geometric non-degeneracy of the Lagrangian map for the Biot medium follow similarly as in the proof of Lemma 7.2 in \cite{FPSIJMPA}, so it remains to prove the corresponding statement for the ALE maps for the fluid domain. For this purpose, we observe from our previous estimate that 
\begin{equation*}
\|\bd{\omega}^{n}_{N} - \bd{\omega}_{0}\|_{H^{2}(\Gamma)} \le CT^{1/2},
\end{equation*}
for all $n = 0, 1, ..., N$ and for all $N$, for a constant $C$ that is independent of $N$ (which may potentially depend on $h$, which is fixed for now). By the definition of the ALE map in \eqref{alef1} and \eqref{alef2}, and regularity properties of elliptic equations (see Section 5 in \cite{Grisvard}), we obtain that
\begin{equation*}
\|\bd{\Phi}^{\omega^{n}_{N}}_{f} - \bd{\Phi}^{\omega_{0}}_{f}\|_{C^{1}(\Omega_{f})} \le C\|\bd{\Phi}^{\omega^{n}_{N}}_{f} - \bd{\Phi}^{\omega_{0}}_{f}\|_{H^{5/2}(\Omega_{f})} \le C\|\bd{\omega}^{n}_{N} - \bd{\omega}_{0}\|_{H^{2}(\Gamma)} \le CT^{1/2},
\end{equation*}
so since the initial data satisfies $\mathcal{J}^{\omega_{0}}_{f} := \text{det}(\nabla \bd{\Phi}^{\omega_{0}}_{f}) > 0$ and $|\nabla \bd{\Phi}^{\omega_{0}}_{f}| \le C$ for some positive constant $C$ on $\overline{\Omega}_{f}$, the desired assertion on the fluid domain geometric quantities in \eqref{uniformgeom2} follows by continuity.  
\end{proof}

\begin{remark}[A remark about dependence on the plate thickness $h > 0$]
Due to the estimate in \eqref{esth}, the constants $C$ in the preceding proof all implicitly depend on the plate thickness $h > 0$. At first, this seems to imply that the time of existence of a solution depends on $h$, even though it is explicitly stated in Theorem \ref{mainthm} that the time of existence is independent of $h$. However, we will see that this dependence on $h$ is a technicality in the scheme, that vanishes in the limit as $N \to \infty$ when we obtain the continuous-in-time problem. In particular, the difference of the discrete plate displacement $\bd{\omega}^{n}_{N}$ is determined by the plate velocities $\bd{\zeta}^{n + \frac{1}{2}}_{N}$ obtained in the plate subproblem, which may be different from the plate velocities $\bd{\zeta}^{n + 1}_{N}$ obtained in the Biot/fluid subproblem. However, in the limit as $N \to \infty$, these plate velocities coincide, and therefore, we can obtain estimates on the plate velocity $\bd{\zeta}$ directly from the trace of the regularized Biot velocity $\bd{\xi}^{\delta}$, and the estimates on the Biot velocity are \textit{independent of $h$}, since they arise directly from the energy of the Biot medium, not from the energy of the plate (which depends on $h$). Thus, we can obtain estimates in the spirit of Proposition \ref{gammainjective} which are independent of the plate thickness $h > 0$ in the continuous limit as $N \to \infty$, which gives a (uniform) time of existence that is independent of $h$. 
\end{remark}

\medskip

\noindent \underline{\textit{Approximate solutions.}} We have approximate solutions $\left(\bd{u}^{n + \frac{i}{2}}_{N}, p^{n + \frac{i}{2}}_{N}, \bd{\omega}^{n + \frac{i}{2}}_{N}, \bd{\zeta}^{n + \frac{i}{2}}_{N}, \bd{\eta}^{n + \frac{i}{2}}_{N}, \bd{\xi}^{n + \frac{i}{2}}_{N}\right)$ on each time step $[n\Delta t, (n + 1)\Delta t]$, and we will now use piecewise constant extensions and (continuous-in-time) linear interpolations to combine these approximate solutions on each individual time step into an approximate solution defined on the full time interval $[0, T]$, for each time discretization parameter $N$. We define an approximate fluid velocity $\bd{u}_{N}$, Biot displacement $\bd{\eta}_{N}$, Biot pore pressure $p_{N}$, plate displacement $\bd{\omega}_{N}$, and plate velocity $\bd{\zeta}_{N}$ by taking the value at the right endpoint of each subinterval $(n\Delta t, (n + 1)\Delta t]$, namely:
\begin{equation}\label{approxdef}
(\bd{u}_{N}(t), \bd{\eta}_{N}(t), p_{N}(t), \bd{\omega}_{N}(t), \bd{\zeta}_{N}(t)) = (\bd{u}^{n + 1}_{N}, \bd{\eta}^{n + 1}_{N}, p^{n + 1}_{N}, \bd{\omega}^{n + 1}_{N}, \bd{\zeta}^{n + 1}_{N}), \quad \text{ for $n \Delta t < t \le (n + 1)\Delta t$}.
\end{equation}
Because the plate velocity is updated after both the plate subproblem and the Biot/fluid subproblem, giving rise to both $\bd{\zeta}^{n}_{N}$ and $\bd{\zeta}^{n + \frac{1}{2}}_{N}$ at each time step, we define two versions of the approximate plate velocity since both appear in the semidiscrete formulation: (1) the piecewise constant extension $\bd{\zeta}_{N}$ from before defined by
\begin{equation}\label{zetaN}
\bd{\zeta}_{N}(t) = \bd{\zeta}^{n + 1}_{N}, \quad \text{ for } n\Delta t < t \le (n + 1)\Delta t,
\end{equation}
and (2) a piecewise constant extension $\bd{\zeta}^{*}_{N}$ using the value of the plate velocity at the intermediate ``half" steps:
\begin{equation}\label{zetastar}
\bd{\zeta}_{N}^{*}(t) = \bd{\zeta}^{n + \frac{1}{2}}_{N}, \quad \text{ for } n\Delta t < t \le (n + 1)\Delta t.
\end{equation}
Note that these piecewise constant extensions are potentially discontinuous at each $t_{n} = n\Delta t$. Hence, in cases where we need the approximate solutions to be continuous or where we want the approximate solutions to have a weak time derivative, we use the following linear interpolations as approximate solutions:
\begin{equation*}
\overline{\bd{u}}_{N}(t) = \left(\frac{t - n\Delta t}{\Delta t}\right) \bd{u}^{n}_{N} + \left(\frac{(n + 1)\Delta t - t}{\Delta t}\right) \bd{u}^{n + 1}_{N}, \quad \text{ for } n\Delta t \le t \le (n + 1)\Delta t,
\end{equation*}
\begin{equation*}
\bd{\overline{\zeta}}_{N}(t) = \left(\frac{t - n\Delta t}{\Delta t}\right) \bd{\zeta}^{n}_{N} + \left(\frac{(n + 1)\Delta t - t}{\Delta t}\right) \bd{\zeta}^{n + 1}_{N}, \quad \text{ for } n\Delta t \le t \le (n + 1)\Delta t,
\end{equation*}
\begin{equation*}
\overline{p}_{N}(t) = \left(\frac{t - n\Delta t}{\Delta t}\right) p^{n}_{N} + \left(\frac{(n + 1)\Delta t - t}{\Delta t}\right) p^{n + 1}_{N}, \quad \text{ for } n\Delta t \le t \le (n + 1)\Delta t,
\end{equation*}
\begin{equation*}
\overline{\bd{\omega}}_{N}(t) = \left(\frac{t - n\Delta t}{\Delta t}\right) \bd{\omega}^{n}_{N} + \left(\frac{(n + 1)\Delta t - t}{\Delta t}\right) \bd{\omega}^{n + 1}_{N}, \quad \text{ for } n\Delta t \le t \le (n + 1)\Delta t,
\end{equation*}
\begin{equation*}
\overline{\bd{\eta}}_{N}(t) = \left(\frac{t - n\Delta t}{\Delta t}\right) \bd{\eta}^{n}_{N} + \left(\frac{(n + 1)\Delta t - t}{\Delta t}\right) \bd{\eta}^{n + 1}_{N}, \quad \text{ for } n\Delta t \le t \le (n + 1)\Delta t.
\end{equation*}
From the uniform energy estimate in Proposition \ref{geometricN}, we therefore have the following uniform boundedness results for the approximate solutions in the parameter $N$, which also implies weak and weak-star subsequential compactness results for the approximate solutions.

\begin{proposition}\label{uniformenergy}
For each fixed $h > 0$ and $\delta > 0$, we have the following uniform bounds in the time discretization parameter $N$ for the approximate solutions:
\begin{itemize}
\item $\{\bd{u}_{N}\}_{N = 1}^{\infty}$ is uniformly bounded in $L^{\infty}(0, T; L^{2}(\Omega_{f}(t)))$ and $L^{2}(0, T; H^{1}(\Omega_{f}(t)))$. 
\item $\{\bd{\eta}_{N}\}_{N = 1}^{\infty}$ is uniformly bounded in $L^{\infty}(0, T; H^{1}(\Omega_{b}))$ and $\{\overline{\bd{\eta}}_{N}\}_{N = 1}^{\infty}$ is uniformly bounded in $L^{\infty}(0, T; H^{1}(\Omega_{b}))$ and $W^{1, \infty}(0, T; L^{2}(\Omega_{b}))$. 
\item $\{p_{N}\}_{N = 1}^{\infty}$ is uniformly bounded in $L^{\infty}(0, T; L^{2}(\Omega_{b}))$ and $L^{2}(0, T; H^{1}(\Omega_{b}))$. 
\item $\{\bd{\omega}_{N}\}_{N = 1}^{\infty}$ is uniformly bounded in $L^{\infty}(0, T; H^{2}(\Gamma))$ and $\{\overline{\bd{\omega}}_{N}\}_{N = 1}^{\infty}$ is uniformly bounded in $L^{\infty}(0, T; H^{2}(\Gamma))$, $W^{1, \infty}(0, T; L^{2}(\Gamma))$, and $H^{1}(0, T; H^{2}(\Gamma))$. 
\end{itemize}
Hence, along a subsequence in $N$ (which is still denoted by $N$ for notational simplicity), we have the following convergences to limiting functions $\bd{\eta}$, $p$, and $\bd{\omega}$:
\begin{align*}
\bd{\eta}_{N} \rightharpoonup \bd{\eta}, \qquad &\text{ weakly-star in $L^{\infty}(0, T; L^{2}(\Omega_{b}))$}, \\
p_{N} \rightharpoonup p, \qquad &\text{ weakly-star in $L^{\infty}(0, T; L^{2}(\Omega_{b}))$ and weakly in $L^{2}(0, T; H^{1}(\Omega_{b}))$}, \\
\bd{\omega}_{N} \rightharpoonup \bd{\omega}, \qquad &\text{ weakly-star in $L^{\infty}(0, T; H^{2}(\Gamma))$}. \\
\end{align*}
\end{proposition}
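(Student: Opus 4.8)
The plan is to read off every bound in the statement directly from the discrete energy estimate \eqref{energyN} together with the uniform-in-$N$ geometric non-degeneracy bounds \eqref{uniformgeom1}--\eqref{uniformgeom2} of Proposition \ref{geometricN}, and then to extract convergent subsequences by Banach--Alaoglu. First I would convert $\max_{n,i} E^{n+\frac i2}_N + \sum_n D^n_N \le C$ into pointwise-in-$n$ bounds on the iterates. From the fluid kinetic term and the lower bound $\mathcal{J}^{\omega^n_N}_f \ge c_3^{-1}$ on $\overline{\Omega_f}$ one gets $\sup_n\|\bd{u}^{n+1}_N\|_{L^2(\Omega_f(t))} \le C$; the elastic terms $\mu_e\|\bd{D}(\bd{\eta}^{n+1}_N)\|_{L^2}^2+\tfrac12\lambda_e\|\nabla\cdot\bd{\eta}^{n+1}_N\|_{L^2}^2 \le C$, combined with the $L^2$ bound on $\dot{\bd{\eta}}^{n+1}_N$ (which, telescoped over $O(N)$ steps, controls $\|\bd{\eta}^{n+1}_N\|_{L^2}$ up to time $T$) and Korn's inequality, give $\sup_n\|\bd{\eta}^{n+1}_N\|_{H^1(\Omega_b)} \le C$; and the pore-pressure kinetic term handles $\sup_n\|p^{n+1}_N\|_{L^2(\Omega_b)}$ (in the purely poroelastic case $c_0=0$, supplemented by an elliptic estimate for the pore-pressure equation using the $L^2$ control of $\dot{\bd{\eta}}^{n+1}_N$). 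From $\sum_n D^n_N \le C$ the viscous term gives $\sum_n \Delta t\,\|\bd{D}^{\omega^n_N}_f(\bd{u}^{n+1}_N)\|_{L^2}^2 \le C$, which --- via the transformed Korn inequality (Proposition 6.1 of \cite{FPSIJMPA}), the uniform control of $\nabla\bd{\Phi}^{\omega^n_N}_f$ and $\mathcal{J}^{\omega^n_N}_f$, and the homogeneous boundary condition on $\partial\hat\Omega_f\setminus\hat\Gamma$ --- upgrades to $\sum_n \Delta t\,\|\bd{u}^{n+1}_N\|_{H^1(\Omega_f(t))}^2 \le C$; the Darcy term, via an estimate of the type \eqref{kappabound} using \eqref{uniformgeom1}, gives $\sum_n \Delta t\,\|\nabla p^{n+1}_N\|_{L^2(\Omega_b)}^2 \le C$.

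For the plate (with $h>0$ and $\delta>0$ both fixed), the terms $\tfrac12 h\|\bd{\zeta}^{n+1}_N\|_{L^2(\Gamma)}^2$ and $\tfrac12 h\|\Delta\bd{\omega}^{n+1}_N\|_{L^2(\Gamma)}^2$ in $E^{n+1}_N$, together with the regularized kinematic coupling $\bd{\omega}^{n+1}_N = (\bd{\eta}^{n+1}_N)^\delta|_{\hat\Gamma}$, give $\sup_n\|\bd{\omega}^{n+1}_N\|_{H^2(\Gamma)} \le C$; here one uses the uniform $H^1(\Omega_b)$ bound on $\bd{\eta}^{n+1}_N$, the boundedness of the extension $E$, the smoothing of the convolution kernel $\varphi_\delta$, and the trace theorem (this in fact bounds $\bd{\omega}_N$ in $L^\infty(0,T;H^s(\Gamma))$ for every $s$, with a $\delta$-dependent constant), while the half-step dissipation $h\sum_n\Delta t\,\|\Delta\bd{\zeta}^{n+\frac12}_N\|_{L^2(\Gamma)}^2 \le C$ together with the uniform bound on $h\|\bd{\zeta}^{n+\frac12}_N\|_{L^2(\Gamma)}^2$ from \eqref{energyplate} supplies the $H^1(0,T;H^2(\Gamma))$ control of $\overline{\bd{\omega}}_N$.

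Next I would transfer these discrete bounds to the piecewise-constant extensions and piecewise-linear interpolants: a uniform-in-$n$ bound $\sup_n\|f^{n+1}_N\|_X \le C$ gives $\|f_N\|_{L^\infty(0,T;X)} \le C$; a bound $\sum_n\Delta t\,\|f^{n+1}_N\|_X^2 \le C$ gives $\|f_N\|_{L^2(0,T;X)} \le C$; the $L^2(\Omega_b)$ bound on $\dot{\bd{\eta}}^{n+1}_N$ gives $\overline{\bd{\eta}}_N \in W^{1,\infty}(0,T;L^2(\Omega_b))$; and the $L^2(\Gamma)$ and half-step $H^2(\Gamma)$ bounds on $\bd{\zeta}^{n+\frac12}_N = \partial_t\overline{\bd{\omega}}_N$ give $\overline{\bd{\omega}}_N \in W^{1,\infty}(0,T;L^2(\Gamma))\cap H^1(0,T;H^2(\Gamma))$. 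Since each space appearing is (isometric to) the dual of a separable Banach space, Banach--Alaoglu plus a diagonal extraction over the finitely many bounds produces a single subsequence along which $\bd{\eta}_N$, $p_N$, $\bd{\omega}_N$ converge weakly-$\ast$ in $L^\infty(0,T;L^2(\Omega_b))$, $L^\infty(0,T;L^2(\Omega_b))$, $L^\infty(0,T;H^2(\Gamma))$ respectively (and $p_N$ also weakly in $L^2(0,T;H^1(\Omega_b))$) to limits $\bd{\eta}$, $p$, $\bd{\omega}$; since $\|\bd{\eta}_N-\overline{\bd{\eta}}_N\|_{L^\infty(0,T;L^2)}$ and $\|\bd{\omega}_N-\overline{\bd{\omega}}_N\|_{L^\infty(0,T;L^2)}$ are $O(\Delta t)$, the interpolants converge to the same limits.

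The arguments are all routine bookkeeping except for one point requiring genuine care: passing from the weighted viscous dissipation $\int_{\hat\Omega_f}\mathcal{J}^{\omega^n_N}_f|\bd{D}^{\omega^n_N}_f(\bd{u}^{n+1}_N)|^2$ to an honest $H^1$ bound on $\bd{u}^{n+1}_N$, uniform in $N$, since the symmetrized gradient is taken with respect to the $\omega^n_N$-dependent transformed derivative $\nabla^{\omega^n_N}_f$. This is exactly where the uniform geometric estimates of Proposition \ref{geometricN} are indispensable, so that the Korn-type inequality on the transformed domain from \cite{FPSIJMPA} applies with a constant independent of $N$. No strong-convergence (Aubin--Lions) compactness is needed at this stage; the numerical dissipation estimate \eqref{numerdiss} and the compact-embedding arguments enter only in the subsequent passage to the limit in the weak formulation.
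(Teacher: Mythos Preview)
Your proposal is correct and follows essentially the same route as the paper: the paper's proof is a two-sentence remark that all bounds are immediate from \eqref{energyN}, singling out only the $L^2(0,T;H^1(\Omega_b))$ bound on $p_N$ via the \eqref{kappabound}-type estimate and \eqref{uniformgeom1}, which is exactly what you do. One small correction: your parenthetical ``in the purely poroelastic case $c_0=0$'' conflates two different parameters---in this paper \emph{purely poroelastic} means $\mu_v=\lambda_v=0$ (see \eqref{poroviscoelastic}), while the storativity coefficient $c_0>0$ throughout, so the $\tfrac12 c_0\|p^{n+1}_N\|_{L^2}^2$ term in $E^{n+1}_N$ always gives the $L^\infty(0,T;L^2(\Omega_b))$ bound directly and no auxiliary elliptic estimate is needed.
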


\begin{proof}
These are all immediate from \eqref{energyN}, and we hence comment only on the uniform bound of $p_{N}$ in $L^{2}(0, T; H^{1}(\Omega_{b}))$. From the uniform energy estimates, we have that for a constant $C$ independent of $N$,
\begin{equation*}
\kappa \sum_{n = 0}^{N - 1} \int_{\Omega_{b}} \mathcal{J}^{(\eta^{n}_{N})^{\delta}}_{b} \Big|\nabla^{(\eta^{n}_{N})^{\delta}}_{b} p^{n + 1}_{N}\Big|^{2} \le C.
\end{equation*}
We can then deduce a bound for $\displaystyle \sum_{n = 0}^{N - 1} \int_{\Omega_{b}} |\nabla p|^{2}$ using a similar calculation, as in \eqref{kappabound}, along with the uniform geometric bounds in \eqref{uniformgeom1}.
\end{proof}

In addition, we can obtain the following uniform bound on the trace of the fluid velocities, which will be needed for estimating certain terms in the semidiscrete weak formulation. We note that this is not an immediate consequence of a trace inequality, since the fluid velocities $\bd{u}^{n + 1}_{N}$ are defined on varying spatial domains $\Omega^{n}_{f, N} := \bd{\Phi}^{\omega^{n}_{N}}_{f}(\Omega_{f})$. 

\begin{proposition}\label{traceuniform}
The traces of the approximate fluid velocities $\bd{u}_{N}|_{\Gamma}$ along $\Gamma$ are uniformly bounded in $L^{2}(0, T; L^{2}(\Gamma))$. 
\end{proposition}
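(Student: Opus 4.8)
The plan is to reduce the trace bound on the moving fluid domains $\Omega^{n}_{f,N}$ to a trace bound on the fixed reference domain $\hat{\Omega}_{f}$ via the ALE maps $\bd{\Phi}^{\omega^{n}_{N}}_{f}$, and then exploit the uniform $H^{1}$ bound on $\{\bd{u}_{N}\}$ together with the uniform geometric nondegeneracy of these ALE maps established in Proposition \ref{geometricN}. Concretely, for each $n$ and $N$, pull back $\bd{u}^{n+1}_{N}$ to the reference fluid domain by setting $\hat{\bd{u}}^{n+1}_{N} := \bd{u}^{n+1}_{N} \circ \bd{\Phi}^{\omega^{n}_{N}}_{f}$. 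The standard trace inequality on the fixed Lipschitz domain $\hat{\Omega}_{f}$ gives $\|\hat{\bd{u}}^{n+1}_{N}\|_{L^{2}(\hat{\Gamma})} \le C \|\hat{\bd{u}}^{n+1}_{N}\|_{H^{1}(\hat{\Omega}_{f})}$ with $C$ depending only on $\hat{\Omega}_{f}$. The key point is that by \eqref{uniformgeom2} in Proposition \ref{geometricN}, the Jacobian $\mathcal{J}^{\omega^{n}_{N}}_{f}$ is bounded above and below away from zero and $|\nabla \bd{\Phi}^{\omega^{n}_{N}}_{f}| \le c_{3}$, all uniformly in $n$ and $N$; hence the change-of-variables formula and the chain rule \eqref{gradtransform} give a two-sided equivalence of norms
\begin{equation*}
c^{-1}\|\bd{u}^{n+1}_{N}\|_{H^{1}(\Omega^{n}_{f,N})} \le \|\hat{\bd{u}}^{n+1}_{N}\|_{H^{1}(\hat{\Omega}_{f})} \le c\,\|\bd{u}^{n+1}_{N}\|_{H^{1}(\Omega^{n}_{f,N})},
\end{equation*}
with $c$ independent of $n$ and $N$ (depending on $\delta$ and $h$, which are fixed). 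Similarly, since $\bd{\Phi}^{\omega^{n}_{N}}_{f}$ restricted to $\hat{\Gamma}$ has arc-length element $\hat{\mathcal{S}}^{\omega^{n}_{N}}_{\Gamma}$ bounded above and below by \eqref{tnorm}, the trace norm transforms with uniformly controlled constants: $\|\bd{u}^{n+1}_{N}\|_{L^{2}(\Gamma(t_{n}))} \le c\,\|\hat{\bd{u}}^{n+1}_{N}\|_{L^{2}(\hat{\Gamma})}$.

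Chaining these three estimates yields, for each $n$,
\begin{equation*}
\|\bd{u}^{n+1}_{N}\|_{L^{2}(\Gamma(t_{n}))}^{2} \le C\,\|\bd{u}^{n+1}_{N}\|_{H^{1}(\Omega^{n}_{f,N})}^{2},
\end{equation*}
with $C$ uniform in $n,N$. Multiplying by $\Delta t$ and summing over $n = 0, \ldots, N-1$, the right-hand side is $C\sum_{n} \Delta t \,\|\bd{u}^{n+1}_{N}\|_{H^{1}(\Omega^{n}_{f,N})}^{2}$, which — after transforming each term to the reference domain and using the uniform Jacobian bounds once more — is exactly (a constant multiple of) the quantity controlled by the dissipation term $2\nu\int_{\Omega_{f}} \mathcal{J}^{\omega^{n}_{N}}_{f}|\bd{D}^{\omega^{n}_{N}}_{f}(\bd{u}^{n+1}_{N})|^{2}$ appearing in $D^{n}_{N}$, together with the $L^{\infty}(0,T;L^{2})$ bound on $\{\bd{u}_{N}\}$ and Korn's inequality on $\hat{\Omega}_{f}$ (as in Proposition 6.1 of \cite{FPSIJMPA}). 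By the uniform energy estimate $\sum_{n=0}^{N-1} D^{n}_{N} \le C$ from \eqref{energyN}, this sum is bounded independently of $N$, which is precisely the statement that $\bd{u}_{N}|_{\Gamma}$ is uniformly bounded in $L^{2}(0,T;L^{2}(\Gamma))$ (after noting that $\bd{u}_{N}(t) = \bd{u}^{n+1}_{N}$ on $(t_{n}, t_{n+1}]$ and that the fixed reference $\Gamma = \hat{\Gamma}$ carries the trace).

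The main obstacle — and the reason this is stated as a separate proposition rather than folded into a one-line trace inequality — is precisely that the fluid velocities live on a sequence of \emph{different} moving domains $\Omega^{n}_{f,N}$, so one cannot directly invoke a single trace constant; the entire argument hinges on the \emph{uniformity} of the geometric constants $c_{1}, c_{2}, c_{3}, \alpha$ in Proposition \ref{geometricN}, which in turn rests on the regularized-interface construction (the plate displacement $\bd{\omega}^{n}_{N}$ staying $C^{1}$-close to $\bd{\omega}_{0}$ up to a time $T$ independent of $N$). A secondary subtlety is bookkeeping the difference between the Korn/dissipation quantity $\bd{D}^{\omega^{n}_{N}}_{f}(\bd{u}^{n+1}_{N})$ (the symmetrized \emph{transformed} gradient) and the full $H^{1}$ norm of the pulled-back velocity: one needs Korn's inequality applied on the fixed domain together with the uniform invertibility of $\nabla\bd{\Phi}^{\omega^{n}_{N}}_{f}$ to pass between them, but this is routine given the bounds already in hand.
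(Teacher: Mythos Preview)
Your proposal is correct and follows essentially the same approach as the paper: pull back the approximate fluid velocities to the fixed reference fluid domain via the ALE maps $\bd{\Phi}^{\omega^{n}_{N}}_{f}$, use the uniform geometric bounds from Proposition~\ref{geometricN} to obtain a uniform $L^{2}(0,T;H^{1}(\hat{\Omega}_{f}))$ bound on the pulled-back velocities, and then apply the standard trace inequality on the fixed domain. The paper's version is slightly terser (it directly invokes the already-established uniform bound on $\bd{u}_{N}$ in $L^{2}(0,T;H^{1}(\Omega_{f}(t)))$ from Proposition~\ref{uniformenergy} rather than re-deriving it from the dissipation via Korn), but the content is the same.
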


\begin{proof}
We will establish this uniform bound by using the ALE map to map the fluid velocities $\bd{u}^{n + 1}_{N}$ defined on the moving approximate fluid domains $\Omega^{n}_{f, N} := \bd{\Phi}^{\omega^{n}_{N}}_{f}(\Omega_{f})$ back to the reference fluid domain $\Omega_{f}$. We define the following transformed fluid velocities on the reference fluid domain $\Omega_{f}$, which map the fluid velocities $\bd{u}^{n + 1}_{N}$ back to the reference domain via the ALE map:
\begin{equation*}
\hat{\bd{u}}^{n + 1}_{N}(t, \hat{x}, \hat{y}) = \bd{u}^{n + 1}_{N}\Big(t, \bd{\Phi}^{\omega^{n}_{N}}_{f}(\hat{x}, \hat{y})\Big).
\end{equation*}
We claim that the function 
\begin{equation}\label{hatu}
\hat{\bd{u}}_{N} = \sum_{n = 0}^{N - 1} \hat{\bd{u}}^{n + 1}_{N} \cdot 1_{[n\Delta t, (n + 1)\Delta t)}(t)
\end{equation}
is uniformly bounded in $L^{2}(0, T; H^{1}(\Omega_{f}))$, where we emphasize that $\Omega_{f}$ is now the fixed reference fluid domain, as the desired result on the traces would follow immediately from the trace inequality applied to the reference domain. By the uniform bounds on the Jacobian of the fluid domain ALE map in \eqref{uniformgeom2}, it is clear that $\hat{\bd{u}}_{N}$ is uniformly bounded in $L^{2}(0, T; L^{2}(\Omega_{f}))$ given the uniform bound of $\bd{u}_{N}$ in $L^{2}(0, T; L^{2}(\Omega_{f}(t)))$. By \eqref{omegadiff}, we can further estimate that
\begin{equation*}
\int_{0}^{T} \int_{\Omega_{f}} |\nabla \hat{\bd{u}}_{N}|^{2} \le C \int_{0}^{T} \int_{\Omega_{f}(t)} |\nabla \bd{u}_{N}|^{2} |\nabla \bd{\Phi}^{\omega}_{f}|^{2} \le C \int_{0}^{T} \int_{\Omega_{f}(t)} |\nabla \bd{u}_{N}|^{2},
\end{equation*}
for a constant $C$ that is independent of $N$, due to the uniform fluid domain bounds in \eqref{uniformgeom2}.
\end{proof}

Because we only obtain weak and weak-star convergences from these uniform boundedness results, we need compactness arguments to obtain strong convergence of approximate solutions in appropriate function spaces, in order to pass to the limit in the nonlinear terms in the approximate weak formulations. This will be the goal of the next subsections.

\subsection{Compactness arguments as $N \to \infty$}\label{compactN}

\noindent \textbf{Compactness for the approximate Biot/plate displacements.} Using the classical Arzela-Ascoli compactness results and the uniform bounds of $\overline{\bd{\eta}}_{N}$ in the function spaces $L^{\infty}(0, T; H^{1}(\Omega_{b}))$ and $W^{1, \infty}(0, T; L^{2}(\Omega_{b}))$ and of $\bd{\overline{\omega}}_{N}$ in $L^{\infty}(0, T; H^{2}(\Gamma))$ and $W^{1, \infty}(0, T; L^{2}(\Gamma))$ from Proposition \ref{uniformenergy}, we have the following strong convergence:

\begin{proposition}\label{omegaconv}
Along a subsequence, $\overline{\bd{\eta}}_{N} \to \bd{\eta}$ in $C(0, T; L^{2}(\Omega_{b}))$ and $\overline{\bd{\omega}}_{N} \to \bd{\omega}$ in $C(0, T; H^{s}(\Gamma))$ for any $0 < s < 2$.
\end{proposition}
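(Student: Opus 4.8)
The plan is to obtain the two strong convergences separately, each via an Aubin--Lions--Simon-type compactness argument adapted to the fact that these are piecewise-linear interpolants in time. First, for the Biot displacement: by Proposition \ref{uniformenergy}, the sequence $\{\overline{\bd{\eta}}_{N}\}$ is uniformly bounded in $L^{\infty}(0,T;H^{1}(\Omega_{b}))$ and in $W^{1,\infty}(0,T;L^{2}(\Omega_{b}))$. Since $H^{1}(\Omega_{b}) \hookrightarrow\hookrightarrow L^{2}(\Omega_{b}) \hookrightarrow L^{2}(\Omega_{b})$ with the first embedding compact (Rellich--Kondrachov on the bounded Lipschitz domain $\Omega_{b}$), the Aubin--Lions--Simon lemma applies: a bounded set of $L^{\infty}(0,T;H^{1})$ whose time derivatives are bounded in $L^{\infty}(0,T;L^{2})$ is relatively compact in $C([0,T];L^{2}(\Omega_{b}))$. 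This yields a subsequence with $\overline{\bd{\eta}}_{N} \to \bd{\eta}$ in $C(0,T;L^{2}(\Omega_{b}))$, and the limit agrees with the weak-star limit identified in Proposition \ref{uniformenergy} by uniqueness of limits in the sense of distributions.

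For the plate displacement, the same structure applies but with one more interpolation step to upgrade from $H^{s}$-strong for $s<2$. By Proposition \ref{uniformenergy}, $\{\overline{\bd{\omega}}_{N}\}$ is uniformly bounded in $L^{\infty}(0,T;H^{2}(\Gamma))$ and in $W^{1,\infty}(0,T;L^{2}(\Gamma))$. Aubin--Lions--Simon with the compact embedding $H^{2}(\Gamma) \hookrightarrow\hookrightarrow L^{2}(\Gamma)$ gives $\overline{\bd{\omega}}_{N} \to \bd{\omega}$ in $C(0,T;L^{2}(\Gamma))$ along a subsequence. To promote this to $C(0,T;H^{s}(\Gamma))$ for any $0<s<2$, I would use the interpolation inequality $\|f\|_{H^{s}(\Gamma)} \le C\|f\|_{L^{2}(\Gamma)}^{1-s/2}\|f\|_{H^{2}(\Gamma)}^{s/2}$ valid for $0 \le s \le 2$; applying it to $\overline{\bd{\omega}}_{N}(t) - \bd{\omega}(t)$ and using the uniform $L^{\infty}(0,T;H^{2})$ bound on the difference together with the just-established $C(0,T;L^{2})$ convergence gives $\sup_{t}\|\overline{\bd{\omega}}_{N}(t)-\bd{\omega}(t)\|_{H^{s}(\Gamma)} \to 0$. (Since $\Gamma$ is a smooth closed curve, realized as the torus $[0,2\pi]$ with periodic boundary conditions, the fractional Sobolev spaces, the Rellich embedding, and this interpolation inequality are all standard via Fourier series.)

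One technical point worth addressing explicitly: the interpolants $\overline{\bd{\eta}}_{N}$ and $\overline{\bd{\omega}}_{N}$ are only piecewise linear in time, so $\partial_{t}\overline{\bd{\eta}}_{N}$ and $\partial_{t}\overline{\bd{\omega}}_{N}$ are piecewise constant; the uniform $W^{1,\infty}(0,T;L^{2})$ bounds follow from the uniform bounds $\|\dot{\bd{\eta}}^{n+1}_{N}\|_{L^{2}(\Omega_{b})} \le C$ and $\|\bd{\zeta}^{n}_{N}\|_{L^{2}(\Gamma)} \le C$ extracted from the discrete energy estimate \eqref{energyN} (noting $\partial_{t}\overline{\bd{\omega}}_{N} = \bd{\overline{\zeta}}_{N}$ up to the identification in \eqref{plateweak1}, and $\partial_{t}\overline{\bd{\eta}}_{N}$ equals the discrete velocity on each subinterval). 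With these in hand the Aubin--Lions--Simon hypotheses are verified directly, without needing to estimate a genuine distributional time derivative of a non-smooth-in-time function.

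The argument is essentially routine; the main (minor) obstacle is bookkeeping the distinction between the piecewise-constant and piecewise-linear versions of the approximate solutions and confirming that they share the same limit — but since $\|\overline{\bd{\eta}}_{N} - \bd{\eta}_{N}\|_{L^{\infty}(0,T;L^{2})} \le \Delta t\,\|\partial_{t}\overline{\bd{\eta}}_{N}\|_{L^{\infty}(0,T;L^{2})} \to 0$ (and similarly for $\bd{\omega}$), the piecewise-constant versions converge to the same limits in the same topologies, which is what is needed when passing to the limit in the semidiscrete formulation \eqref{semi1} later. No genuinely hard estimate arises here; the real difficulties are deferred to the compactness of the fluid velocity on moving domains and to passing to the limit in the geometric nonlinearities.
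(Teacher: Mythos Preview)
Your argument is correct and essentially the same as the paper's. The paper invokes Arzela--Ascoli directly, first interpolating to obtain a uniform bound on $\overline{\bd{\omega}}_{N}$ in $C^{\alpha}(0,T;H^{s}(\Gamma))$ for some $\alpha>0$ depending on $s\in(0,2)$, whereas you obtain $C(0,T;L^{2})$ convergence via Aubin--Lions--Simon and then interpolate; these are equivalent repackagings of the same idea. For the identification of limits, the paper shows $\|\overline{\bd{\eta}}_{N}-\bd{\eta}_{N}\|_{L^{2}(0,T;H^{1}(\Omega_{b}))}\to 0$ via the numerical dissipation estimate \eqref{numerdiss}, while your simpler $L^{\infty}(0,T;L^{2})$ bound on the difference also suffices for the purposes of this proposition.
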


\begin{proof}
The convergence of $\bd{\eta}_{N}$ in $C(0, T; L^{2}(\Omega_{b}))$ and $\overline{\bd{\omega}}_{N}$ in $C(0, T; H^{s}(\Gamma))$ follows from Arzela-Ascoli, once we use interpolation to deduce a uniform bound of $\overline{\bd{\omega}}_{N}$ in $C^{\alpha}(0, T; H^{s}(\Gamma))$ for each $0 < s < 2$, for some $\alpha \in (0, 1)$ depending on $s \in (0, 2)$. From the uniform dissipation estimates in \eqref{numerdiss} and the uniform bounds in \eqref{energyN},
\begin{align*}
(\Delta t) \sum_{n = 0}^{N - 1} &\Big(\|\bd{\eta}^{n + 1}_{N} - \bd{\eta}^{n}_{N}\|_{L^{2}(\Omega_{b})}^{2} + \|\nabla (\bd{\eta}^{n + 1}_{N} - \bd{\eta}^{n}_{N})\|_{L^{2}(\Omega_{b})}^{2}\Big) \\
&\le (\Delta t)^{3} \sum_{n = 0}^{N - 1} \|\bd{\xi}^{n + 1}_{N}\|_{L^{2}(\Omega_{b})}^{2} + (\Delta t) \sum_{n = 0}^{N - 1} \|\nabla (\bd{\eta}^{n + 1}_{N} - \bd{\eta}^{n}_{N})\|_{L^{2}(\Omega_{b})}^{2} \le C\Delta t(1 + (\Delta t)).
\end{align*}
Hence, $\|\overline{\bd{\eta}}_{N} - \bd{\eta}_{N}\|_{L^{2}(0, T; H^{1}(\Omega_{b}))} \to 0$ (and by a similar argument, $\|\overline{\bd{\omega}}_{N} - \bd{\omega}_{N}\|_{L^{2}(0, T; H^{2}(\Gamma))} \to 0$), as $N \to \infty$, so the strong limits of $\overline{\bd{\eta}}_{N}$ and $\overline{\bd{\omega}}_{N}$ in this proposition coincide with the weak and weak-star limits of $\bd{\eta}_{N}$ and $\bd{\omega}_{N}$ in Proposition \ref{uniformenergy}.
\end{proof}

\medskip

\noindent \textbf{Compactness for the Biot pore pressures.} Next, we pass to the limit in the approximate pore pressure $p_{N}$ using a compactness criterion for piecewise constant functions, given by Dreher and Jungel (see Theorem 1 in \cite{DreherJungel}). In particular, we have the following compactness result:

\begin{proposition}
Along a subsequence, $p_{N} \to p$ in $L^{2}(0, T; L^{2}(\Omega_{b}))$. 
\end{proposition}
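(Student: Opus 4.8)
The plan is to invoke the compactness criterion of Dreher and J\"ungel \cite{DreherJungel} for piecewise-constant-in-time functions, applied with the Gelfand triple $H^{1}(\Omega_{b}) \hookrightarrow\hookrightarrow L^{2}(\Omega_{b}) \hookrightarrow (H^{1}(\Omega_{b}))'$. Recall that $p_{N}$ is, by \eqref{approxdef}, constant on each subinterval $(n\Delta t,(n+1)\Delta t]$, so the theorem applies once we verify two hypotheses: (i) a uniform-in-$N$ bound of $p_{N}$ in $L^{2}(0,T;H^{1}(\Omega_{b}))$, which is precisely the last assertion of Proposition \ref{uniformenergy}; and (ii) a uniform-in-$N$ bound on the discrete time shift, $\|p_{N}(\cdot+\Delta t)-p_{N}\|_{L^{2}(0,T-\Delta t;(H^{1}(\Omega_{b}))')}\le C\Delta t$, equivalently a uniform bound of the discrete time derivative $\tfrac{p^{n+1}_{N}-p^{n}_{N}}{\Delta t}$ in the weighted space $\ell^{2}_{\Delta t}\big((H^{1}(\Omega_{b}))'\big)$, i.e. $\Delta t\sum_{n}\big\|\tfrac{p^{n+1}_{N}-p^{n}_{N}}{\Delta t}\big\|_{(H^{1})'}^{2}\le C$.

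To obtain (ii) I would isolate $\tfrac{p^{n+1}_{N}-p^{n}_{N}}{\Delta t}$ in the Biot--fluid weak formulation \eqref{weakBiot1} by testing with $(\bd{v},\bd{\varphi},\bd{\psi},r)=(\bd{0},\bd{0},\bd{0},r)$, $r\in V_{p}=H^{1}(\Omega_{b})$ arbitrary (the kinematic constraint $\bd{\psi}^{\delta}|_{\Gamma}=\bd{\varphi}$ is trivially met), so that $c_{0}\int_{\Omega_{b}}\tfrac{p^{n+1}_{N}-p^{n}_{N}}{\Delta t}\,r$ equals minus the sum of the remaining $r$-terms: $\alpha\int_{\Omega_{b}}\mathcal{J}^{(\eta^{n}_{N})^{\delta}}_{b}(\dot{\bd{\eta}}^{n+1}_{N})^{\delta}\cdot\nabla^{(\eta^{n}_{N})^{\delta}}_{b}r$, $\alpha\int_{\Gamma}\big((\dot{\bd{\eta}}^{n+1}_{N})^{\delta}\cdot\bd{n}^{\omega^{n}_{N}}\big)r$, $\kappa\int_{\Omega_{b}}\mathcal{J}^{(\eta^{n}_{N})^{\delta}}_{b}\nabla^{(\eta^{n}_{N})^{\delta}}_{b}p^{n+1}_{N}\cdot\nabla^{(\eta^{n}_{N})^{\delta}}_{b}r$, and $\int_{\Gamma}\big[(\bd{u}^{n+1}_{N}-(\dot{\bd{\eta}}^{n+1}_{N})^{\delta})\cdot\bd{n}^{\omega^{n}_{N}}\big]r$. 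Each is bounded by $C\|r\|_{H^{1}(\Omega_{b})}$ times, respectively, $\|\dot{\bd{\eta}}^{n+1}_{N}\|_{L^{2}(\Omega_{b})}$, $\|(\dot{\bd{\eta}}^{n+1}_{N})^{\delta}|_{\Gamma}\|_{L^{2}(\Gamma)}$, $\|\nabla^{(\eta^{n}_{N})^{\delta}}_{b}p^{n+1}_{N}\|_{L^{2}(\Omega_{b})}$, and $\|\bd{u}^{n+1}_{N}|_{\Gamma}\|_{L^{2}(\Gamma)}+\|(\dot{\bd{\eta}}^{n+1}_{N})^{\delta}|_{\Gamma}\|_{L^{2}(\Gamma)}$, where the geometric coefficients $\mathcal{J}^{(\eta^{n}_{N})^{\delta}}_{b}$, $(\bd{I}+\nabla(\eta^{n}_{N})^{\delta})^{-1}$, $\bd{n}^{\omega^{n}_{N}}$ are uniformly bounded by Proposition \ref{geometricN}, convolution at the fixed scale $\delta$ together with the trace theorem on the fixed domain gives $\|(\dot{\bd{\eta}}^{n+1}_{N})^{\delta}|_{\Gamma}\|_{L^{2}(\Gamma)}\le C_{\delta}\|\dot{\bd{\eta}}^{n+1}_{N}\|_{L^{2}(\Omega_{b})}$, and $\|\bd{u}^{n+1}_{N}|_{\Gamma}\|_{L^{2}(\Gamma)}$ is controlled by Proposition \ref{traceuniform}. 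Squaring, multiplying by $\Delta t$ and summing in $n$, the energy and dissipation bounds \eqref{energyN} (which control $\Delta t\sum_{n}\|\dot{\bd{\eta}}^{n+1}_{N}\|_{L^{2}}^{2}$ and $\Delta t\sum_{n}\|\nabla^{(\eta^{n}_{N})^{\delta}}_{b}p^{n+1}_{N}\|_{L^{2}}^{2}$) together with $\Delta t\sum_{n}\|\bd{u}^{n+1}_{N}|_{\Gamma}\|_{L^{2}(\Gamma)}^{2}=\|\bd{u}_{N}|_{\Gamma}\|_{L^{2}(0,T;L^{2}(\Gamma))}^{2}\le C$ yield $\Delta t\sum_{n}\big\|\tfrac{p^{n+1}_{N}-p^{n}_{N}}{\Delta t}\big\|_{(H^{1}(\Omega_{b}))'}^{2}\le C$ uniformly in $N$, which is (ii) (and, by Cauchy--Schwarz in $n$, it also gives the corresponding $L^{1}$-in-time form if needed).

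With (i) and (ii) in hand, the Dreher--J\"ungel theorem furnishes relative compactness of $(p_{N})$ in $L^{2}(0,T;L^{2}(\Omega_{b}))$; extracting a further subsequence gives a strong limit in $L^{2}(0,T;L^{2}(\Omega_{b}))$, which must coincide with the weak-$\star$ limit in $L^{\infty}(0,T;L^{2}(\Omega_{b}))$ and the weak limit in $L^{2}(0,T;H^{1}(\Omega_{b}))$ already denoted $p$ in Proposition \ref{uniformenergy}, by uniqueness of limits. I expect the main obstacle to be the discrete time-derivative estimate (ii), and within it the two interface integrals over $\Gamma$: one must check that they are controlled in the dual of $H^{1}(\Omega_{b})$ (rather than some weaker dual) and that every dependence on $N$ has been absorbed into the uniform geometric bounds of Proposition \ref{geometricN}, the uniform trace bound of Proposition \ref{traceuniform}, and the fixed-scale convolution constants $C_{\delta}$; the $\delta$-regularization of the structure velocity is exactly what makes its interface trace summable against $r$ with constants independent of $N$.
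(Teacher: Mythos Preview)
Your proposal is correct and follows essentially the same route as the paper: isolate the discrete time derivative of $p_N$ by testing \eqref{weakBiot1} with $(\bd{v},\bd{\varphi},\bd{\psi},r)=(\bd{0},\bd{0},\bd{0},r)$, bound the four resulting $r$-terms using the uniform geometric estimates of Proposition~\ref{geometricN}, the trace bound of Proposition~\ref{traceuniform}, and the convolution/trace inequality for $(\dot{\bd{\eta}}^{n+1}_N)^\delta|_\Gamma$, then conclude via Dreher--J\"ungel with the triple $H^1(\Omega_b)\subset\subset L^2(\Omega_b)\subset V_p'$. The only cosmetic difference is that you prove the discrete-derivative bound in the $\ell^2_{\Delta t}$ form while the paper states it in the $L^1(\Delta t,T;V_p')$ form; yours is slightly stronger and implies the paper's by Cauchy--Schwarz, as you note.
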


\begin{proof}
Using the chain of embeddings:
\begin{equation*}
H^{1}(\Omega_{b}) \subset \subset L^{2}(\Omega_{b}) \subset V_{p}'
\end{equation*}
where $V_{p}'$ is the dual of the pore pressure function space $V_{p}$ defined in \eqref{Vp}. Since we already have the uniform bound that $\|p_{N}\|_{L^{2}(0, T; H^{1}(\Omega_{b}))} \le C$ independently of $N$, to establish the desired compactness result via the Dreher-Jungel compactness criterion, it suffices to show that
\begin{equation*}
\left\|\frac{\tau_{\Delta t} p_{N} - p_{N}}{\Delta t}\right\|_{L^{1}(\Delta t, T; V_{p}')} \le C
\end{equation*}
uniformly in the time discretization parameter $N$, where $\tau_{\Delta t}f(t, \cdot) = f(t - \Delta t, \cdot)$ for $t \in [\Delta t, T]$ is a time shift by $\Delta t$. In the semidiscrete formulation \eqref{weakBiot1}, we can take the test functions $\bd{\psi}$, $\bd{v}$, and $\varphi$ to all be zero, which leaves the following semidiscrete weak formulation for the approximate Biot pore pressure functions, which holds for all pore pressure test functions $r \in V_{p}$:
\begin{multline*}
c_{0} \int_{\Omega_{b}} \frac{p^{n + 1}_{N} - p^{n}_{N}}{\Delta t} r - \alpha \int_{\Omega_{b}} \mathcal{J}^{(\eta^{n}_{N})^{\delta}}_{b} (\dot{\bd{\eta}}^{n + 1}_{N})^{\delta} \cdot \nabla^{(\eta^{n}_{N})^{\delta}}_{b} r - \alpha \int_{\Gamma} \left(\left(\dot{\bd{\eta}}^{n + 1}_{N}\right)^{\delta} \cdot \bd{n}^{\omega^{n}_{N}}\right) r \\
+ \kappa \int_{\Omega_{b}} \mathcal{J}^{(\eta^{n}_{N})^{\delta}}_{b} \nabla^{(\eta^{n}_{N})^{\delta}}_{b} p^{n + 1}_{N} \cdot \nabla^{(\eta^{n}_{N})^{\delta}}_{b} r - \int_{\Gamma} \left[\left(\bd{u}^{n + 1}_{N} - (\dot{\bd{\eta}}^{n + 1}_{N})^{\delta}\right) \cdot \bd{n}^{\omega^{n}_{N}}\right] r = 0.
\end{multline*}
We will hence estimate $\displaystyle \left\|\frac{p^{n + 1}_{N} - p^{n}_{N}}{\Delta t}\right\|_{V_{p}'}$ using the definition of the dual norm in $V_{p}'$ and the uniform bounds on the approximate solutions. In particular, we have the following estimates for any $r \in V_{p}$ with $\|r\|_{H^{1}(\Omega_{b})} \le 1$ on the terms in the semidiscrete weak formulation for the pore pressure:
\begin{itemize}
\item \textbf{Term 1.} First, we recall the definitions of $\mathcal{J}^{(\eta^{n}_{N})^{\delta}}_{b}$ and $\nabla^{(\eta^{n}_{N})^{\delta}}_{b} r$ from \eqref{etanabla} and \eqref{jeta}, and the uniform geometric bounds in \eqref{uniformgeom1}. Since $\|\dot{\bd{\eta}}^{n + 1}_{N}\|_{L^{2}(\Omega_{b})}$ is uniformly bounded independently of $n$ and $N$, by the test function estimate $\|r\|_{H^{1}(\Omega_{b})} \le 1$ and properties of spatial convolution,
\begin{equation*}
\sup_{r \in V_{p}, \|r\|_{V_{p}} \le 1} \left|\alpha \int_{\Omega_{b}} \mathcal{J}^{(\eta^{n}_{N})^{\delta}}_{b} (\dot{\bd{\eta}}^{n + 1}_{N})^{\delta} \cdot \nabla^{(\eta^{n}_{N})^{\delta}}_{b} r \right| \le C, \qquad \text{ independently of $n$ and $N$}.
\end{equation*}
\item \textbf{Term 2.} Next, we recall the definition of the rescaled normal vector from \eqref{ntomega}. Note that $\|\bd{n}^{\omega^{n}_{N}}\| \le C$ for a constant $C$ independent of $n$ and $N$, by the pointwise bound of $\partial_{z}\omega^{n}_{N}$ in $C(0, T; C(\Gamma))$, arising from the uniform bound of $\bd{\omega}_{N}$ in $L^{\infty}(0, T; H^{2}(\Gamma))$ and Sobolev embedding. Since $\|r\|_{H^{1}(\Omega_{b})} \le 1$, and $\|\dot{\bd{\eta}}^{n + 1}_{N}\|_{L^{2}(\Omega_{b})} \le C$ independently of $n$ and $N$, we have the uniform bound (independent of $n$ and $N$):
\begin{equation*}
\sup_{r \in V_{p}, \|r\|_{V_{p}} \le 1} \left|\alpha \int_{\Gamma} \left(\left(\dot{\bd{\eta}}^{n + 1}_{N}\right)^{\delta} \cdot \bd{n}^{\omega^{n}_{N}}\right) r\right| \le C,
\end{equation*}
which we note depends potentially on $h$ but is independent of $N$ due to the uniform bound of $\bd{\omega}_{N}$ in $L^{\infty}(0, T; H^{2}(\Gamma))$ (uniform in $N$, depending on $h$).  
\item \textbf{Term 3.} Similarly to Term 1, we obtain the uniform estimate using \eqref{uniformgeom2}:
\begin{equation*}
\sup_{r \in V_{p}, \|r\|_{V_{p}} \le 1} \left|\kappa \int_{\Omega_{b}} \mathcal{J}^{(\eta^{n}_{N})^{\delta}}_{b} \nabla^{(\eta^{n}_{N})^{\delta}}_{b} p^{n + 1}_{N} \cdot \nabla^{(\eta^{n}_{N})^{\delta}}_{b} r\right| \le C\|p^{n + 1}_{N}\|_{H^{1}(\Omega_{b})}, \ \ \text{independently of $n$ and $N$}.
\end{equation*}
\item \textbf{Term 4.} Finally, for the last term, we recall from Proposition \ref{traceuniform} that we have a uniform estimate (independent of $N$) on $\|\bd{u}_{N}\|_{L^{2}(0, T; L^{2}(\Gamma))}$. So similarly to Term 2, we estimate (independently of $n$ and $N$) that:
\begin{equation*}
\sup_{r \in V_{p}, \|r\|_{V_{p}} \le 1} \left|\int_{\Gamma} \left[\left(\bd{u}^{n + 1}_{N} - (\dot{\bd{\eta}}^{n + 1}_{N})^{\delta}\right) \cdot \bd{n}^{\omega^{n}_{N}}\right] r\right| \le C\left(1 + \|\bd{u}^{n + 1}_{N}\|_{L^{2}(\Gamma)}\right),
\end{equation*}
where we recall the definition of the rescaled normal vector \eqref{ntomega}. 
\end{itemize}
Hence, combining these estimates, we have that
\begin{align*}
\left\|\frac{\tau_{\Delta t} p_{N} - p_{N}}{\Delta t}\right\|_{L^{1}(\Delta t, T; V_{p}')} &= (\Delta t) \sum_{n = 0}^{N - 1} \left\|\frac{p^{n + 1}_{N} - p^{n}_{N}}{\Delta t}\right\|_{V_{p}'} \le C(\Delta t) \sum_{n = 0}^{N - 1} \left(1 + \|\bd{u}^{n + 1}_{N}\|_{L^{2}(\Gamma)} + \|p^{n + 1}_{N}\|_{H^{1}(\Omega_{b})} \right) \\
&\le 2C(\Delta t) \sum_{n = 0}^{N - 1} \left(1 + \|\bd{u}^{n + 1}_{N}\|^{2}_{L^{2}(\Gamma)} + \|p^{n + 1}_{N}\|^{2}_{H^{1}(\Omega_{b})}\right) \\
&\le 2C\left(1 + \|\bd{u}_{N}\|^{2}_{L^{2}(0, T; L^{2}(\Gamma))} + \|p_{N}\|_{L^{2}(0, T; H^{1}(\Omega_{b})}^{2}\right) \le C,
\end{align*}
by the definition of the approximate solutions $\bd{u}_{N}$ and $p_{N}$, the uniform bounds of $\bd{u}_{N}$ in $L^{2}(0, T; L^{2}(\Gamma))$ and $p_{N}$ in $L^{2}(0, T; H^{1}(\Omega_{b}))$, independently of $N$, see Proposition \ref{uniformenergy} and Proposition \ref{traceuniform}.
\end{proof}

\medskip

\noindent \textbf{Compactness for the Biot/plate velocities.} Next, we use the Dreher-Jungel compactness criterion to show a similar convergence result for the approximate Biot and plate velocities. Since the Biot and plate velocities are coupled by the regularized kinematic coupling condition, namely $\bd{\xi}_{N}|_{\Gamma} = \bd{\zeta}_{N}$ by \eqref{weakBiot2} and the definitions of the approximate solutions in \eqref{approxdef} and \eqref{zetaN}, we must consider them as a pair for the following compactness arguments.

\begin{proposition}
Along a subsequence in $N$, and for $0 < s < 1$, 
\begin{equation*}
(\bd{\xi}_{N}, \bd{\zeta}_{N}) \to (\partial_{t}\bd{\eta}, \partial_{t}\bd{\omega}), \qquad \text{ in } L^{2}(0, T; H^{-s}(\Omega_{b}) \times H^{-s}(\Gamma)).
\end{equation*}
\end{proposition}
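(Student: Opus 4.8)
The plan is to apply the Dreher--Jungel compactness criterion (Theorem 1 in \cite{DreherJungel}) to the pair $(\bd{\xi}_N, \bd{\zeta}_N)$, treating the two velocity fields jointly because the regularized kinematic coupling condition $\bd{\xi}_N|_\Gamma = \bd{\zeta}_N$ links them. First I would set up the appropriate chain of embeddings: for $0 < s < 1$ we have the compact embedding $L^2(\Omega_b) \times L^2(\Gamma) \subset\subset H^{-s}(\Omega_b) \times H^{-s}(\Gamma) \subset (V_d \times H^2(\Gamma))'$, where the middle space is where we want convergence and the last space is where the discrete time-difference quotients will be controlled. From the uniform energy bounds in Proposition~\ref{geometricN} and Proposition~\ref{uniformenergy}, $\{\bd{\xi}_N\}$ is uniformly bounded in $L^\infty(0,T;L^2(\Omega_b))$ (and $\{\bd{\zeta}_N\}$ plays the role of a trace of the regularized $\bd{\xi}_N$, which by properties of the convolution and the uniform $H^1(\Omega_b)$ bound on $\bd{\eta}_N$ actually lands in $L^\infty(0,T;H^s(\Gamma))$ for every $s$, but $L^2(\Gamma)$ suffices here). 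The first hypothesis of the criterion — uniform boundedness in the ``good'' space $L^2(0,T; L^2(\Omega_b) \times L^2(\Gamma))$ — is therefore immediate.

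The substantive step is to bound the discrete time-difference quotient $\left\| \dfrac{\tau_{\Delta t}(\bd{\xi}_N, \bd{\zeta}_N) - (\bd{\xi}_N, \bd{\zeta}_N)}{\Delta t} \right\|_{L^1(\Delta t, T; (V_d \times H^2(\Gamma))')}$ uniformly in $N$. To do this I would test the semidiscrete momentum formulation \eqref{semi1} with $\bd{v} = \bd{0}$, $r = 0$, and an arbitrary pair $(\bd{\psi}, \bd{\varphi}) \in V_d \times H^2(\Gamma)$ satisfying $\bd{\psi}^\delta|_\Gamma = \bd{\varphi}$, isolating the two time-difference terms $\rho_b \int_{\Omega_b} \dfrac{\dot{\bd{\eta}}^{n+1}_N - \dot{\bd{\eta}}^n_N}{\Delta t}\cdot\bd{\psi}$ and $h\int_\Gamma \dfrac{\bd{\zeta}^{n+1}_N - \bd{\zeta}^n_N}{\Delta t}\cdot\bd{\varphi}$, and moving all remaining terms to the right-hand side. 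Each remaining term must be estimated by $C\,\|(\bd{\psi},\bd{\varphi})\|_{V_d\times H^2(\Gamma)}$ with $C$ independent of $N$ (but allowed to depend on $h$): the viscoelastic and elastic Biot bilinear terms are controlled by the uniform $H^1$ and $L^2$ energy bounds on $\bd{\eta}_N, \dot{\bd{\eta}}_N$; the Biot pressure/geometry terms $\int_{\Omega_b}\mathcal{J}^{(\eta^n_N)^\delta}_b p^{n+1}_N \nabla^{(\eta^n_N)^\delta}_b\cdot\bd{\psi}^\delta$ etc.\ are controlled using the uniform geometric nondegeneracy bounds \eqref{uniformgeom1}, the uniform $L^2(0,T;H^1(\Omega_b))$ bound on $p_N$, and the smoothing of the convolution (so $\|\bd{\psi}^\delta\|_{C^1}\le C_\delta\|\bd{\psi}\|_{H^1}$); the fluid viscous and inertial terms pair against the regularized $\bd{\psi}^\delta$ via the interface integrals and are controlled using Proposition~\ref{traceuniform} and \eqref{uniformgeom2}; the plate bending/viscous terms $h\int_\Gamma \hat\Delta\bd{\zeta}^{n+1/2}_N\cdot\Delta\bd{\varphi}$ and $h\int_\Gamma\hat\Delta\bd{\omega}^{n+1/2}_N\cdot\Delta\bd{\varphi}$ are controlled by the uniform $L^\infty(0,T;H^2(\Gamma))$ bounds on $\bd{\omega}_N$ and the dissipation bound \eqref{numerdiss} on $\sum h\|\Delta\bd{\zeta}^{n+1/2}_N\|^2$ (summable after Cauchy--Schwarz in $n$, which is exactly why the $L^1$-in-time norm — rather than $L^2$ — is used). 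Summing over $n$ with a factor $\Delta t$, applying Cauchy--Schwarz to convert sums of norms into (uniformly bounded) sums of squared norms, yields the required uniform $L^1$-in-time bound.

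With both hypotheses verified, the Dreher--Jungel theorem gives a subsequence along which $(\bd{\xi}_N,\bd{\zeta}_N)$ converges strongly in $L^2(0,T; H^{-s}(\Omega_b)\times H^{-s}(\Gamma))$ for $0<s<1$. To identify the limit as $(\partial_t\bd{\eta},\partial_t\bd{\omega})$, I would note that $\bd{\xi}_N$ and $\partial_t\overline{\bd{\eta}}_N$ differ by a vanishing amount in the relevant norm (the linear interpolant $\overline{\bd{\eta}}_N$ has $\partial_t\overline{\bd{\eta}}_N$ equal to the piecewise-constant difference quotient which coincides with $\dot{\bd{\eta}}^{n+1}_N = \bd{\xi}^{n+1}_N$; a standard estimate using \eqref{numerdiss} shows $\|\bd{\xi}_N - \partial_t\overline{\bd{\eta}}_N\|_{L^2(0,T;L^2(\Omega_b))}\to 0$), and similarly for $\bd{\zeta}_N$ versus $\partial_t\overline{\bd{\omega}}_N$; then since $\overline{\bd{\eta}}_N \to \bd{\eta}$ and $\overline{\bd{\omega}}_N\to\bd{\omega}$ strongly (Proposition~\ref{omegaconv}), their distributional time derivatives converge to $\partial_t\bd{\eta}, \partial_t\bd{\omega}$, which must agree with the strong limits just obtained. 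The main obstacle I anticipate is the bookkeeping in the difference-quotient estimate: ensuring that every term in the long semidiscrete formulation \eqref{semi1} — in particular the plate terms evaluated at the intermediate half-step $n+\tfrac12$ rather than at integer steps, and the geometry-dependent convolved terms — is genuinely bounded uniformly in $N$ in the dual norm $(V_d\times H^2(\Gamma))'$, using only quantities already controlled by the energy and dissipation estimates, with the $h$-dependence (harmless here, since $h>0$ is fixed) tracked but not allowed to hide an $N$-dependence.
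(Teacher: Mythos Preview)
Your proposal is correct and follows essentially the same approach as the paper: both apply the Dreher--Jungel criterion to the coupled pair $(\bd{\xi}_N,\bd{\zeta}_N)$, test the semidiscrete formulation \eqref{semi1} with $\bd{v}=\bd{0}$, $r=0$, and $(\bd{\psi},\bd{\varphi})$ satisfying the regularized kinematic coupling, and estimate the resulting terms using the uniform energy, dissipation, geometric, and trace bounds. One small notational point: since you can only test against pairs satisfying $\bd{\psi}^\delta|_\Gamma=\bd{\varphi}$, the dual space in which you control the difference quotient is $\mathcal{Q}_v'$ (the dual of the constrained subspace, as the paper defines it in \eqref{Qv}), not the full $(V_d\times H^2(\Gamma))'$; this is what makes the embedding $H^{-s}(\Omega_b)\times H^{-s}(\Gamma)\subset\mathcal{Q}_v'$ work, and your argument implicitly uses exactly this.
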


\begin{proof}
We recall the definition of the space $V_{d}$ from \eqref{Vd}, and we define the following Biot and plate velocity test space:
\begin{equation}\label{Qv}
\mathcal{Q}_{v} = \{(\bd{\psi}, \bd{\varphi}) \in V_{d} \times H^{2}(\Gamma): \bd{\psi}^{\delta}|_{\Gamma} = \bd{\varphi}\},
\end{equation}
endowed with the $H^{1}(\Omega_{b}) \times H^{2}(\Gamma)$ norm, where we emphasize that the regularized kinematic coupling condition is incorporated into the definition of the test space $\mathcal{Q}_{v}$. We can then observe the following chain of embeddings:
\begin{equation*}
L^{2}(\Omega_{b}) \times L^{2}(\Gamma) \subset \subset H^{-s}(\Omega_{b}) \times H^{-s}(\Gamma) \subset \mathcal{Q}_{v}'.
\end{equation*}
To apply the Dreher-Jungel compactness criterion (Theorem 1 in \cite{DreherJungel}) and conclude the desired convergence result, it suffices to establish the following estimate (independently of $N$):
\begin{equation*}
\left\|\frac{\tau_{\Delta t}(\bd{\xi}_{N}, \bd{\zeta}_{N}) - (\bd{\xi}_{N}, \bd{\zeta}_{N})}{\Delta t}\right\|_{L^{1}(\Delta t, T; \mathcal{Q}_{v}')} \le C,
\end{equation*}
since we already have a uniform bound on $\|(\bd{\xi}_{N}, \bd{\zeta}_{N})\|_{L^{2}(\Omega_{b}) \times L^{2}(\Gamma)}$ independently of $N$, as a result of the uniform energy estimates, see Proposition \ref{uniformenergy}.

So it suffices to estimate the following quantity for test functions $(\bd{\psi}, \bd{\varphi}) \in \mathcal{Q}_{v}$ such that $\|\bd{\psi}\|_{H^{1}(\Omega_{b})} \le 1$ and $\|\bd{\varphi}\|_{H^{2}(\Gamma)} \le 1$:
\begin{equation*}
\left|\int_{\Omega_{b}} \frac{\bd{\xi}^{n + 1}_{N} - \bd{\xi}^{n}_{N}}{\Delta t} \cdot \bd{\psi} + \int_{\Gamma} \left(\frac{\bd{\zeta}^{n + 1}_{N} - \bd{\zeta}^{n}_{N}}{\Delta t}\right) \cdot \bd{\varphi}\right|
\end{equation*}
To do this, we set the test functions $\bd{v}$ and $r$ to be zero in the semidiscrete weak formulation in \eqref{weakBiot1}. Hence, we have that the approximate Biot and plate velocities satisfy the following identity for all test functions $(\bd{\psi}, \varphi) \in \mathcal{Q}_{v}$:
\begin{small}
\begin{multline*}
\rho_{b} \int_{\Omega_{b}} \frac{\bd{\xi}^{n + 1}_{N} - \bd{\xi}^{n}_{N}}{\Delta t} \cdot \bd{\psi} + \int_{\Gamma} \left(\frac{1}{2}\bd{u}^{n + 1}_{N} \cdot \bd{u}^{n}_{N} - p^{n + 1}_{N}\right) \bd{\psi}^{\delta} \cdot \bd{n}^{\omega^{n}_{N}} + \beta \int_{\Gamma} \Big(\mathcal{S}^{\omega^{n}_{N}}_{\Gamma}\Big)^{-1} \left(\left(\dot{\bd{\eta}}^{n + 1}_{N}\right)^{\delta} - \bd{u}^{n + 1}_{N}\right) \cdot \bd{\tau}^{\omega^{n}_{N}} \left(\bd{\psi}^{\delta} \cdot \bd{\tau}^{\omega^{n}_{N}}\right) \\
+ h \int_{\Gamma} \left(\frac{\bd{\zeta}^{n + 1}_{N} - \bd{\zeta}^{n}_{N}}{\Delta t}\right) \cdot \bd{\varphi} + 2\mu_{e} \int_{\Omega_{b}} \bd{D}(\bd{\eta}^{n + 1}_{N}) : \bd{D}(\bd{\psi}) + \lambda_{e} \int_{\Omega_{b}} (\nabla \cdot \bd{\eta}^{n + 1}_{N}) (\nabla \cdot \bd{\psi}) + 2\mu_{v} \int_{\Omega_{b}} \bd{D}(\bd{\xi}^{n + 1}_{N}) : \bd{D}(\bd{\psi}) \\
+ \lambda_{v} \int_{\Omega_{b}} (\nabla \cdot \bd{\xi}^{n + 1}_{N}) (\nabla \cdot \bd{\psi}) - \alpha \int_{\Omega_{b}} \mathcal{J}^{(\eta^{n}_{N})^{\delta}}_{b} p^{n + 1}_{N} \nabla^{(\eta^{n}_{N})^{\delta}}_{b} \cdot \bd{\psi}^{\delta} + h \int_{\Gamma} \Delta \bd{\zeta}^{n + \frac{1}{2}}_{N} \cdot \Delta \bd{\varphi} + h \int_{\Gamma} \Delta \bd{\omega}^{n + \frac{1}{2}}_{N} \cdot \Delta \bd{\varphi} = 0.
\end{multline*}
\end{small}
We estimate the terms for test functions $\|\bd{\psi}\|_{H^{1}(\Omega_{b})} \le 1$ and $\|\bd{\varphi}\|_{H^{2}(\Gamma)} \le 1$ as follows:
\begin{itemize}
\item Recall the definition of the rescaled normal vector in \eqref{ntomega}. Since $\|\bd{\omega}^{n}_{N}\|_{H^{2}(\Gamma)}$ is uniformly bounded in $n$ and $N$, we have that $\|\bd{n}^{\omega^{n}}\|_{L^{\infty}(\Gamma)}$ is uniformly bounded in $n$ and $N$. Hence,
\begin{equation*}
\left|\int_{\Gamma} \left(\frac{1}{2} \bd{u}^{n + 1}_{N} \cdot \bd{u}^{n}_{N} - p^{n + 1}_{N}\right) \bd{\psi}^{\delta} \cdot \bd{n}^{\omega^{n}_{N}}\right| \le C\left(\|\bd{u}^{n}_{N}\|^{2}_{L^{2}(\Gamma)} + \|\bd{u}^{n + 1}_{N}\|_{L^{2}(\Gamma)}^{2} + \|p^{n + 1}_{N}\|_{L^{2}(\Gamma)}\right).
\end{equation*}
\item Recalling the definition in \eqref{ntomega}, we have that $\|\bd{\tau}^{\omega^{n}_{N}}\|_{L^{\infty}(\Gamma)} \le C$ independently of $n$ and $N$. We have the estimate $(\mathcal{S}^{\omega^{n}_{N}}_{\Gamma})^{-1} \le C$ independently of $n$ and $N$, for the arc length element defined in \eqref{ntdef2} as a direct consequence of the uniform geometric estimate \eqref{tnorm}. By the regularized kinematic coupling condition which is enforced in the scheme in \eqref{weakBiot2}, $(\dot{\bd{\eta}}^{n + 1}_{N})^{\delta}|_{\Gamma} = \bd{\zeta}^{n + 1}_{N}$ where we have a uniform bound on $\bd{\zeta}^{n + 1}_{N}$ in $L^{2}(\Gamma)$. Therefore, we estimate that
\begin{equation*}
\left|\beta \int_{\Gamma} \Big(\mathcal{S}^{\omega^{n}_{N}}_{\Gamma}\Big)^{-1} \left(\left(\dot{\bd{\eta}}^{n + 1}_{N}\right)^{\delta} - \bd{u}^{n + 1}_{N}\right) \cdot \bd{\tau}^{\omega^{n}_{N}} \left(\bd{\psi}^{\delta} \cdot \bd{\tau}^{\omega^{n}_{N}}\right)\right| \le C\left(1 + \|\bd{u}^{n + 1}_{N}\|_{L^{2}(\Gamma)}\right).
\end{equation*}
\item From \eqref{etanabla} and \eqref{jeta}, $\mathcal{J}^{(\eta^{n}_{N})^{\delta}}_{b} = \text{det}\left(\bd{I} + \nabla (\bd{\eta}^{n}_{N})^{\delta}\right)$ and
\begin{equation*}
\nabla^{(\eta^{n}_{N})^{\delta}}_{b} \cdot \bd{\psi}^{\delta} = \text{tr}\left(\nabla \bd{\psi}^{\delta} \cdot \left(\bd{I} + \nabla (\bd{\eta}^{n}_{N})^{\delta}\right)^{-1}\right).
\end{equation*}
Because the spatial regularization parameter $\delta$ is fixed, by the regularizing properties of convolution, $\|(\bd{\eta}^{n}_{N})^{\delta}\|_{H^{k}(\Omega_{b})}$ is uniformly bounded independently of $n$ and $N$ for any positive integer $k$. Since $\|p^{n}_{N}\|_{L^{2}(\Omega_{b})}$ is uniformly bounded independently of $n$ and $N$, we obtain using the uniform bounds in \eqref{uniformgeom1}:
\begin{equation*}
\left|\alpha \int_{\Omega_{b}} \mathcal{J}^{(\eta^{n}_{N})^{\delta}}_{b} p^{n + 1}_{N} \nabla^{(\eta^{n}_{N})^{\delta}}_{b} \cdot \bd{\psi}^{\delta}\right| \le C, \qquad \text{ independently of $n$ and $N$}.
\end{equation*}
\item Finally, by the uniform bounds of $\bd{\eta}^{n + 1}_{N}$ in $H^{1}(\Omega_{b})$ and $\bd{\omega}^{n + \frac{1}{2}}_{N}$ in $H^{2}(\Gamma)$, we obtain the following estimate for the remaining terms:
\begin{small}
\begin{multline*}
\left|2\mu_{e} \int_{\Omega_{b}} \bd{D}(\bd{\eta}^{n + 1}_{N}) : \bd{D}(\bd{\psi}) + \lambda_{e} \int_{\Omega_{b}} (\nabla \cdot \bd{\eta}^{n + 1}_{N}) (\nabla \cdot \bd{\psi}) + 2\mu_{v} \int_{\Omega_{b}} \bd{D}(\bd{\xi}^{n + 1}_{N}) : \bd{D}(\bd{\psi}) \right. \\
\left. + \lambda_{v} \int_{\Omega_{b}} (\nabla \cdot \bd{\xi}^{n + 1}_{N}) (\nabla \cdot \bd{\psi}) + h \int_{\Gamma} \Delta \bd{\zeta}^{n + \frac{1}{2}}_{N} \cdot \Delta \bd{\varphi} + h \int_{\Gamma} \Delta \bd{\omega}^{n + \frac{1}{2}}_{N} \cdot \Delta \bd{\varphi}\right| \\
\begin{cases}
\le C\left(1 + \|\bd{\zeta}^{n + \frac{1}{2}}_{N}\|_{H^{2}_{0}(\Gamma)}\right), \ \ \text{ for $\mu_{v}, \lambda_{v} = 0$ (purely poroelastic Biot medium)}, \\
\le C\left(1 + \|\bd{\xi}^{n + 1}_{N}\|_{H^{1}(\Omega_{b})} + \|\bd{\zeta}^{n + \frac{1}{2}}_{N}\|_{H^{2}_{0}(\Gamma)}\right), \ \ \text{ for $\mu_{v}, \lambda_{v} > 0$ (poroviscoelastic Biot medium)}.
\end{cases}
\end{multline*}
\end{small}
\end{itemize}
Hence, we obtain the final estimate that:
\begin{small}
\begin{multline*}
\left\|\frac{(\bd{\xi}^{n + 1}_{N}, \bd{\zeta}^{n + 1}_{N}) - (\bd{\xi}^{n}_{N}, \bd{\zeta}^{n}_{N})}{\Delta t}\right\|_{\mathcal{Q}_{v}'} \\
\begin{cases}
\le C\left(1 + \|\bd{u}^{n}_{N}\|^{2}_{L^{2}(\Gamma)} + \|\bd{u}^{n + 1}_{N}\|_{L^{2}(\Gamma)}^{2} + \|p^{n + 1}_{N}\|_{L^{2}(\Gamma)}^{2} + \|\bd{\zeta}^{n + \frac{1}{2}}_{N}\|_{H^{2}(\Gamma)}^{2}\right), \ \ \text{ for $\mu_{v}, \lambda_{v} = 0$}, \\
\le C\left(1 + \|\bd{u}^{n}_{N}\|^{2}_{L^{2}(\Gamma)} + \|\bd{u}^{n + 1}_{N}\|_{L^{2}(\Gamma)}^{2} + \|p^{n + 1}_{N}\|_{L^{2}(\Gamma)}^{2} + \|\bd{\xi}^{n + 1}_{N}\|^{2}_{H^{1}(\Omega_{b})} + \|\bd{\zeta}^{n + \frac{1}{2}}_{N}\|_{H^{2}(\Gamma)}^{2}\right), \ \ \text{ for $\mu_{v}, \lambda_{v} > 0$}.
\end{cases}
\end{multline*}
\end{small}
Therefore, summing over $n = 0, 1, ..., N - 1$, we obtain the desired estimate:
\begin{small}
\begin{multline*}
\left\|\frac{\tau_{\Delta t}(\bd{\xi}_{N}, \bd{\zeta}_{N}) - (\bd{\xi}_{N}, \bd{\zeta}_{N})}{\Delta t}\right\|_{L^{1}(\Delta t, T; \mathcal{Q}_{v}')} \\
\begin{cases}
\le 2C\left(T + \|\bd{u}_{0}\|_{H^{1}(\Omega)}^{2} + \|\bd{u}_{N}\|_{L^{2}(0, T; H^{1}(\Omega_{f}))}^{2} + \|p_{N}\|^{2}_{L^{2}(0, T; H^{1}(\Omega_{b}))} + \|\bd{\zeta}^{*}_{N}\|^{2}_{L^{2}(0, T; H^{2}_{0}(\Gamma))}\right) \le C, \ \ \text{ for $\mu_{v}, \lambda_{v} = 0$}, \\
\le 2C\left(T + \|\bd{u}_{0}\|_{H^{1}(\Omega)}^{2} + \|\bd{u}_{N}\|_{L^{2}(0, T; H^{1}(\Omega_{f}))}^{2} + \|p_{N}\|^{2}_{L^{2}(0, T; H^{1}(\Omega_{b}))} + \|\bd{\xi}_{N}\|^{2}_{L^{2}(0, T; H^{1}(\Omega_{b}))} + \|\bd{\zeta}_{N}^{*}\|_{L^{2}(0, T; H^{2}(\Gamma))}^{2}\right) \le C, \\
\qquad \qquad \qquad \qquad \qquad \qquad \qquad \qquad \qquad \qquad \qquad \qquad \qquad \qquad \qquad \qquad \qquad \qquad \qquad \text{ for } \mu_{v}, \lambda_{v} > 0,
\end{cases}
\end{multline*}
\end{small}
where we used the uniform trace bound in Proposition \ref{traceuniform} and the uniform dissipation estimates on the approximate solutions in Proposition \ref{uniformenergy}.
\end{proof}

\medskip

\noindent \textbf{Compactness arguments for the approximate fluid velocities.} Recall that the approximate (time-discrete) fluid velocity $\bd{u}^{n + 1}_{N}$ from \eqref{weakBiot1} is defined on a fluid domain $\Omega^{n}_{f, N} := \bd{\Phi}^{\omega^{n}_{N}}_{f}(\Omega_{f})$ determined by the plate displacement $\bd{\omega}^{n}_{N}$ from the previous time step, so that the fluid velocities are defined on fluid domains that are changing with respect to $n$ and $N$. To handle this, we will consider the fluid velocities to be defined on the maximal domain $\Omega := \{\bd{x} \in \R^{2} : |\bd{x}| < 2\}$. Note that by the injectivity of the Lagrangian map $\hat{\bd{\Phi}}^{\omega_{N}}_{\Gamma}$ and the fact that $\hat{\bd{\Phi}}^{\omega_{N}}_{\Gamma}(\hat{\Gamma}) \cap \partial \Omega = \varnothing$ in Proposition \ref{geometricN}, we have that $\Omega$ contains all of the moving time-discrete fluid domains $\Omega^{n}_{f, N}$. Then, we can define the approximate fluid velocities $\bd{u}_{N}$ on the maximal domain $\Omega$ by using an extension by zero, and by the properties of extension by zero (see Theorem 8.2 in \cite{KuanTawri}), we have the following result:

\begin{proposition}\label{fluidcompact}
The fluid velocities $\bd{u}_{N}$ defined on the maximal domain $\Omega$ are uniformly bounded in $L^{\infty}(0, T; L^{2}(\Omega))$ and $L^{2}(0, T; H^{s}(\Omega))$ for $0 < s < 1/2$. 
\end{proposition}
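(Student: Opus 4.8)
The plan is to treat the two bounds separately. The $L^{\infty}(0,T;L^{2}(\Omega))$ bound is essentially free: extension by zero is an isometry on $L^{2}$, so $\|\bd{u}_{N}(t)\|_{L^{2}(\Omega)}=\|\bd{u}_{N}(t)\|_{L^{2}(\Omega_{f}(t))}$, and the right-hand side is uniformly bounded in $N$ by the energy estimate of Proposition \ref{uniformenergy}. The substantive part is the $L^{2}(0,T;H^{s}(\Omega))$ estimate for $0<s<1/2$, and here I would combine the uniform $L^{2}(0,T;H^{1}(\Omega_{f}(t)))$ bound on the approximate velocities with the classical fact that extension by zero maps $H^{1}$ of a Lipschitz subdomain boundedly into $H^{s}$ of the ambient domain when $s<1/2$, the operator norm depending only on the Lipschitz character of the subdomain (this is precisely the content of Theorem~8.2 in \cite{KuanTawri}).

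More precisely, for each $n$ the velocity $\bd{u}^{n+1}_{N}$ belongs to $H^{1}(\Omega^{n}_{f,N})$ with $\Omega^{n}_{f,N}:=\bd{\Phi}^{\omega^{n}_{N}}_{f}(\Omega_{f})$, vanishes on $\partial\Omega$, and its extension by zero to $\Omega$ equals $\bd{u}^{n+1}_{N}$ on $\Omega^{n}_{f,N}$ and is $0$ on $\Omega\setminus\Omega^{n}_{f,N}$, so that the only possible discontinuity is a jump across the interface $\Gamma^{n}_{N}:=\bd{\Phi}^{\omega^{n}_{N}}_{\Gamma}(\Gamma)$. Since $H^{s}(\Omega)$ for $s<1/2$ carries no trace, such a jump is admissible, and one obtains a pointwise-in-time estimate of the form $\|\bd{u}^{n+1}_{N}\|_{H^{s}(\Omega)}\le C_{n,N}\,\|\bd{u}^{n+1}_{N}\|_{H^{1}(\Omega^{n}_{f,N})}$. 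The crux is to show that $C_{n,N}$ can be chosen independently of $n$ and $N$: by Proposition \ref{geometricN} the maps $\bd{\Phi}^{\omega^{n}_{N}}_{\Gamma}$ are injective with image disjoint from $\partial\Omega$, the non-degeneracy \eqref{tnorm} holds with a constant $\alpha>0$ independent of $n,N$, and $\bd{\omega}^{n}_{N}$ is uniformly bounded in $H^{2}(\Gamma)\hookrightarrow C^{1}(\Gamma)$; together these yield a uniform modulus of $C^{1}$- (hence Lipschitz-) regularity for the whole family $\{\Gamma^{n}_{N}\}_{n,N}$, and therefore a uniform extension-by-zero constant.

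With the uniform constant in hand, the conclusion follows by summing over time steps: $\|\bd{u}_{N}\|^{2}_{L^{2}(0,T;H^{s}(\Omega))}=\Delta t\sum_{n=0}^{N-1}\|\bd{u}^{n+1}_{N}\|^{2}_{H^{s}(\Omega)}\le C\,\Delta t\sum_{n=0}^{N-1}\|\bd{u}^{n+1}_{N}\|^{2}_{H^{1}(\Omega^{n}_{f,N})}=C\|\bd{u}_{N}\|^{2}_{L^{2}(0,T;H^{1}(\Omega_{f}(t)))}$, which is uniformly bounded by Proposition \ref{uniformenergy}. I expect the main obstacle to be exactly the uniformity of the extension-by-zero constant over the time-dependent geometries: the $H^{s}$ trace-tolerance argument itself is classical, but one must verify that the constant in Theorem~8.2 of \cite{KuanTawri} is controlled by the uniform-in-$(n,N)$ geometric bounds of Proposition \ref{geometricN} rather than degenerating along the sequence. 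An alternative route one might consider---pulling $\bd{u}^{n+1}_{N}$ back to the fixed reference domain via the ALE map as in the proof of Proposition \ref{traceuniform} and extending by zero there---is less convenient, since extension by zero and the ALE change of variables do not commute on the enlarged domain, so I would keep to the direct moving-domain argument above.
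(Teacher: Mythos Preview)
Your proposal is correct and follows the same approach as the paper: the paper simply invokes ``the properties of extension by zero (see Theorem 8.2 in \cite{KuanTawri})'' without further detail, and you have spelled out exactly that argument, including the key point that the extension-by-zero constant is uniform in $n,N$ thanks to the uniform $C^{1}$ control on the interfaces $\Gamma^{n}_{N}$ from Proposition \ref{geometricN}. Your treatment is in fact more explicit than the paper's, which leaves the uniformity check implicit.
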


The goal is then to use the uniform bounds and the semidiscrete weak formulation to show that the fluid velocities $\bd{u}_{N}$ are precompact in $L^{2}(0, T; L^{2}(\Omega))$, which is the content of the next proposition. To prove this result, we use an extension of the Aubin-Lions compactness lemma to sequences of functions defined on moving domains, see \cite{AubinLions}. For this purpose, it will be easier to work with fluid velocities defined on the physical domains, and we will rewrite the semidiscrete formulation for the fluid velocity functions defined on the physical domain. To do this, we recall that $\bd{u}^{n + 1}_{N}$ is a function in the following time-discrete solution space:
\begin{equation*}
V^{n}_{N} = \{\bd{u} \in H^{1}(\Omega^{n}_{f, N}) : \nabla \cdot \bd{u} = 0 \text{ on } \Omega^{n}_{f, N} \text{ and } \bd{u} = 0 \text{ on } \partial \Omega^{n}_{f, N} \setminus \Gamma^{n}_{N}\},
\end{equation*}
where 
\begin{equation*}
\Omega^{n}_{N} := \bd{\Phi}^{\omega^{n}_{N}}_{f}(\Omega_{f}), \qquad \Gamma^{n}_{N} = \bd{\Phi}^{\omega^{n}_{N}}_{\Gamma}(\Gamma),
\end{equation*}
and we define the corresponding test space $Q^{n}_{N}$ to be a more regular subspace of the solution space:
\begin{equation}\label{Qmoreregular}
Q^{n}_{N} = V^{n}_{N} \cap H^{3}(\Omega^{n}_{N}).
\end{equation}
We will also define the following function spaces on the entire maximal domain:
\begin{equation*}
V = H^{s}(\Omega) \text{ for some } 0 < s < 1/2, \qquad H = L^{2}(\Omega),
\end{equation*}
where by properties of extension by zero, the extension by zero of any function in $V^{n}_{N}$ is in the common solution space $V$ defined on the fixed maximal domain $\Omega$. We recall that because the Biot-fluid subproblem in the splitting scheme uses the previous plate displacement $\bd{\omega}^{n}_{N}$ to compute $\bd{u}^{n + 1}_{N}$, we have that $\bd{u}^{n + 1}_{N}$ is defined on the (previous) fluid domain $\Omega^{n}_{f, N}$. To write the semidiscrete formulation for the approximate fluid velocity, it is hence helpful to introduce the notation:
\begin{equation}\label{tildeuN}
\tilde{\bd{u}}^{n}_{N} = \bd{u}^{n}_{N} \circ \bd{\Phi}^{\omega^{n - 1}_{N}}_{f} \circ (\bd{\Phi}^{\omega^{n}_{N}}_{f})^{-1},
\end{equation}
which transforms the fluid velocity $\bd{u}^{n}_{N}$ on $\Omega^{n - 1}_{f, N}$ to a fluid velocity $\tilde{\bd{u}}^{n}_{N}$ defined on the domain $\Omega^{n}_{f, N}$. We then obtain the following semidiscrete formulation for the approximate fluid velocities defined on the physical fluid domain, which holds for all fluid velocity test functions $\bd{q} \in V^{n}_{N}$:
\begin{multline}\label{fluidnN}
\int_{\Omega^{n}_{f, N}} \frac{\bd{u}^{n + 1}_{N} - \tilde{\bd{u}}^{n}_{N}}{\Delta t} \cdot \bd{v} + 2\nu \int_{\Omega^{n}_{f, N}} \bd{D}(\bd{u}^{n + 1}_{N}) : \bd{D}(\bd{v}) -  \int_{\Gamma^{n}_{N}} \left(\frac{1}{2} \bd{u}^{n + 1}_{N} \cdot \tilde{\bd{u}}^{n}_{N} - p^{n + 1}_{N}\right) (\bd{v} \cdot \bd{n}) \\
+ \frac{1}{2} \int_{\Omega^{n}_{f, N}} \left[\left(\left(\tilde{\bd{u}}^{n}_{N} - \tilde{\bd{w}}^{n + 1}_{N}\right) \cdot \nabla \bd{u}^{n + 1}_{N}\right) \cdot \bd{v} - \left(\left(\tilde{\bd{u}}^{n}_{N} - \tilde{\bd{w}}^{n + 1}_{N}\right) \cdot \nabla \bd{v}\right) \cdot \bd{u}^{n + 1}_{N}\right] \\
+ \frac{1}{2} \int_{\Omega^{n}_{f, N}} \frac{1}{\Delta t} \left(\frac{\mathcal{J}^{\omega^{n + 1}_{N}}_{f}}{\mathcal{J}^{\omega^{n}_{N}}_{f}} - 1\right) \bd{u}^{n + 1}_{N} \cdot \bd{v} + \frac{1}{2} \int_{\Gamma^{n}_{N}} \left(\bd{u}^{n + 1}_{N} - \bd{\zeta}^{n + 1}_{N}\right) \cdot \bd{n} (\tilde{\bd{u}}^{n}_{N} \cdot \bd{v}) - \beta \int_{\Gamma^{n}_{N}} \left(\bd{\zeta}^{n + 1}_{N} - \bd{u}^{n + 1}_{N}\right) \cdot \bd{\tau} (\bd{v} \cdot \bd{\tau}) = 0,
\end{multline}
where $\bd{w}^{n + 1}_{N}$ is the discretized (fluid domain) ALE velocity defined on the fixed fluid domain $\Omega_{f}$:
\begin{equation}\label{wale}
\bd{w}^{n + 1}_{N}(x, y) = \frac{1}{\Delta t}\Big(\bd{\Phi}^{\omega^{n + 1}_{N}}_{f} - \bd{\Phi}^{\omega^{n}_{N}}_{f}\Big)(x, y) \quad \text{ for } (x, y) \in \Omega_{f},
\end{equation}
and on the physical fluid domain:
\begin{equation}\label{tildew}
\tilde{\bd{w}}^{n + 1}_{N} = \bd{w}^{n + 1}_{N} \circ \Big(\bd{\Phi}^{\omega^{n}_{N}}_{f}\Big)^{-1}.
\end{equation}
We can use this semidiscrete weak formulation and the Aubin-Lions compactness criterion for functions defined on moving domains \cite{AubinLions} to obtain the following strong convergence result for the approximate fluid velocities $\bd{u}_{N}$ extended by zero to the maximal domain $\Omega$. 

\begin{proposition}\label{compactfluid}
Along a subsequence in $N$, $\bd{u}_{N} \to \bd{u}$ in $L^{2}(0, T; L^{2}(\Omega))$. 
\end{proposition}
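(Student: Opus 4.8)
The plan is to apply the Aubin--Lions-type compactness lemma for sequences of functions on moving domains from \cite{AubinLions}, with fixed ambient space $V = H^s(\Omega)$ for some $0 < s < 1/2$, pivot space $H = L^2(\Omega)$, and the scale of more regular, time-discrete test spaces $Q^n_N = V^n_N \cap H^3(\Omega^n_{f,N})$ from \eqref{Qmoreregular} serving as the ``dual side'' on which time differences are controlled. Three hypotheses must be checked: (i) a uniform-in-$N$ bound on $\bd{u}_N$ (extended by zero to $\Omega$) in $L^\infty(0,T;H) \cap L^2(0,T;V)$, which is exactly Proposition \ref{fluidcompact}; (ii) uniform regularity of the family of moving fluid domains $\Omega^n_{f,N}$, together with a quantitative control of their rate of change by $\Delta t\,\|\bd{\zeta}^{n+\frac12}_N\|_{H^2(\Gamma)}$, which follows from the uniform $L^\infty(0,T;H^2(\Gamma))$ bound on $\bd{\omega}_N$ and the strong convergence $\overline{\bd{\omega}}_N \to \bd{\omega}$ in $C(0,T;H^s(\Gamma))$, $s<2$, of Proposition \ref{omegaconv}, so that by elliptic regularity the ALE maps $\bd{\Phi}^{\omega^n_N}_f$ are uniformly bounded and convergent in $C^1$, as in Proposition \ref{geometricN}; and (iii) a uniform bound, in an $\ell^1_{\Delta t}$-in-time sense, on the discrete time differences $\frac{\bd{u}^{n+1}_N - \tilde{\bd{u}}^n_N}{\Delta t}$ measured in $(Q^n_N)'$, where $\tilde{\bd{u}}^n_N$ is the transport \eqref{tildeuN} of $\bd{u}^n_N$ onto $\Omega^n_{f,N}$.

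The core computation is (iii), obtained from the semidiscrete momentum equation \eqref{fluidnN}. Fixing $\bd{v} \in Q^n_N$ with $\|\bd{v}\|_{H^3(\Omega^n_{f,N})} \le 1$, we estimate term by term: the viscous term is controlled by $\|\bd{D}(\bd{u}^{n+1}_N)\|_{L^2(\Omega^n_{f,N})}$; the boundary stress, boundary-convection, and Beavers--Joseph--Saffman terms are controlled by $\|p^{n+1}_N\|_{L^2(\Gamma^n_N)}$, $\|\bd{u}^{n+1}_N\|_{L^2(\Gamma^n_N)}$, $\|\tilde{\bd{u}}^n_N\|_{L^2(\Gamma^n_N)}$ and $\|\bd{\zeta}^{n+1}_N\|_{L^2(\Gamma)}$, using the trace embedding $H^3(\Omega^n_{f,N}) \hookrightarrow C(\overline{\Omega^n_{f,N}})$ on the uniformly regular domains and the uniform trace, energy, and dissipation bounds of Propositions \ref{uniformenergy} and \ref{traceuniform}; the bulk convection terms are handled by the uniform $L^2$ bounds on $\bd{u}_N$, on $\tilde{\bd{u}}^n_N$, and on the discrete ALE velocity $\tilde{\bd{w}}^{n+1}_N$ (the latter uniformly bounded since $\bd{w}^{n+1}_N$ inherits, via \eqref{wale}--\eqref{tildew} and elliptic regularity, a bound from $\bd{\zeta}^{n+\frac12}_N$, which is $\ell^2_{\Delta t}$-bounded), after moving the gradient onto $\bd{v}$ via the $H^3 \hookrightarrow W^{1,\infty}$ embedding; and the Jacobian-ratio term $\frac{1}{\Delta t}\big(\mathcal{J}^{\omega^{n+1}_N}_f/\mathcal{J}^{\omega^{n}_N}_f - 1\big)$ is bounded pointwise by $\|\partial_z \bd{\zeta}^{n+\frac12}_N\|_{L^\infty(\Gamma)}$, hence by $\|\bd{\zeta}^{n+\frac12}_N\|_{H^2(\Gamma)}$, using the uniform geometric estimates \eqref{uniformgeom2}. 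Collecting these yields $\big\|\frac{\bd{u}^{n+1}_N - \tilde{\bd{u}}^n_N}{\Delta t}\big\|_{(Q^n_N)'} \le C\big(1 + \|\bd{u}^{n+1}_N\|^2_{H^1} + \|p^{n+1}_N\|^2_{H^1} + \|\bd{u}^{n+1}_N\|^2_{L^2(\Gamma)} + \|\bd{\zeta}^{n+\frac12}_N\|^2_{H^2(\Gamma)}\big)$; multiplying by $\Delta t$ and summing over $n$ gives the desired uniform bound by Propositions \ref{uniformenergy} and \ref{traceuniform}.

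With (i)--(iii) established, the moving-domain Aubin--Lions lemma of \cite{AubinLions} produces a subsequence along which $\bd{u}_N \to \bd{u}$ strongly in $L^2(0,T;L^2(\Omega))$, and the limit coincides with the weak-$\ast$ limit already identified. I expect the main obstacle to be step (iii), and within it the comparison between $\bd{u}^{n+1}_N$ and $\tilde{\bd{u}}^n_N$, which live on different domains $\Omega^n_{f,N}$ and $\Omega^{n-1}_{f,N}$: this is precisely why the moving-domain version of the lemma, rather than the classical one, is required, and verifying its structural hypotheses on the family $\{\Omega^n_{f,N}\}$ --- uniform cone/regularity conditions plus the quantitative $\Delta t$-rate of change --- relies on the full strength of the uniform geometric estimates of Proposition \ref{geometricN} (themselves ultimately resting on the interface regularization $\bd{\omega}_N = \bd{\eta}_N^\delta|_\Gamma$).
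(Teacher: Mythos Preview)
Your overall architecture is correct and matches the paper's: apply the moving-domain Aubin--Lions lemma of \cite{AubinLions} with $V = H^s(\Omega)$, $H = L^2(\Omega)$, and the regular test spaces $Q^n_N$, using Proposition \ref{fluidcompact} for the spatial regularity and the semidiscrete momentum equation \eqref{fluidnN} for the time-difference control. Your term-by-term estimates for \eqref{fluidnN} are essentially right (the paper sharpens them via interpolation to a $3/2$-power form, but that is cosmetic).

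There is, however, a genuine gap in your hypothesis (iii). The condition actually needed (the paper calls it Generalized Property B, Lemma \ref{generalizedB}) is a bound on
\[
\left\|P^n_N\,\frac{\bd{u}^{n+1}_N - \bd{u}^n_N}{\Delta t}\right\|_{(Q^n_N)'},
\]
where both $\bd{u}^{n+1}_N$ and $\bd{u}^n_N$ are extended by zero to the maximal domain $\Omega$. You instead bound only $\frac{\bd{u}^{n+1}_N - \tilde{\bd{u}}^n_N}{\Delta t}$, the transported difference. These are not the same, and the discrepancy is exactly the content of Lemma \ref{tildecomp}: one must additionally control $\frac{\tilde{\bd{u}}^n_N - \bd{u}^n_N}{\Delta t}$ in $(Q^n_N)'$. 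This is the technically hardest step in the whole argument, requiring a decomposition of $\Omega_f$ into a region $A_1$ where both pulled-back points lie in the common domain (handled by a line-integral bound on $\nabla\hat{\bd{u}}^n_N$ along radial/circular paths) and a thin annular region $A_2$ of width $\sim (\Delta t)\|\bd{\zeta}^{n-\frac12}_N\|_{H^1(\Gamma)}$ near $\Gamma$ (handled via the Dirichlet condition on $\partial\Omega$ and the geometric estimates of Proposition \ref{geometricN}). None of this is supplied by the abstract lemma; it must be verified by hand.

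Your final paragraph suggests you sense a difficulty here but misplace it: you write that $\bd{u}^{n+1}_N$ and $\tilde{\bd{u}}^n_N$ ``live on different domains $\Omega^n_{f,N}$ and $\Omega^{n-1}_{f,N}$'', which is not so --- both live on $\Omega^n_{f,N}$ by construction of $\tilde{\bd{u}}^n_N$. The domain mismatch is between $\bd{u}^n_N$ (on $\Omega^{n-1}_{f,N}$, extended by zero) and $\tilde{\bd{u}}^n_N$ (on $\Omega^n_{f,N}$), and quantifying that mismatch in $(Q^n_N)'$ with the right $\Delta t$-scaling is precisely what is missing from your outline.
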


This proposition will follow from an Aubin-Lions compactness result \cite{AubinLions} for functions defined on moving domains as in the proof of Proposition 8.3 in \cite{FPSIJMPA}, once we verify the following essential condition, referred to in the proof of Proposition 8.3 in \cite{FPSIJMPA} as \textbf{Generalized Property B}.

\begin{lemma}\label{generalizedB}
Let $P^{n}_{N}: H \to \overline{Q^{n}_{N}}^{H}$ denote the orthogonal projection operator, where $\overline{Q^{n}_{N}}^{H}$ denotes the topological closure of $Q^{n}_{N} \subset H$ with respect to the topology of strong convergence in $H$. Then, the following estimate holds:
\begin{equation*}
\left\|P^{n}_{N} \frac{\bd{u}^{n + 1}_{N} - \bd{u}^{n}_{N}}{\Delta t} \right\|_{(Q^{n}_{N})'} \le C\left(a^{n}_{N} + \|\bd{u}^{n}_{N}\|_{V^{n}_{N}} + \|\bd{u}^{n + 1}_{N}\|_{V^{n + 1}_{N}}\right)^{3/2},
\end{equation*}
for nonnegative constants $\{a^{n}_{N}\}$ satisfying $(\Delta t) \displaystyle \sum_{n = 0}^{N - 1} |a^{n}_{N}|^{2} \le c$, where the constants $c$ and $C$ are independent of $n$ and $N$. 
\end{lemma}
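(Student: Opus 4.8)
The plan is to adapt the proof of the corresponding ``Generalized Property B'' estimate from Proposition 8.3 of \cite{FPSIJMPA}, tracking the two new features: the regularized kinematic coupling $\bd\zeta^{n+1}_N=(\dot{\bd\eta}^{n+1}_N)^\delta|_{\Gamma}$ and the vector-valued plate. Since $P^n_N$ is the $H$-orthogonal projection onto $\overline{Q^n_N}^H$ and $Q^n_N\subset\overline{Q^n_N}^H$, for any $\bd q\in Q^n_N$ we have $\langle P^n_N\tfrac{\bd u^{n+1}_N-\bd u^n_N}{\Delta t},\bd q\rangle=\tfrac{1}{\Delta t}\int_\Omega(\bd u^{n+1}_N-\bd u^n_N)\cdot\bd q$, so it suffices to estimate this pairing for $\bd q\in Q^n_N$ with $\|\bd q\|_{H^3(\Omega^n_N)}\le1$. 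The first step is a domain-matching reduction: because $\bd u^n_N$ is defined on $\Omega^{n-1}_{f,N}$ while $\bd q$ is supported in $\overline{\Omega^n_{f,N}}$, I would replace $\bd u^n_N$ by the transported velocity $\tilde{\bd u}^n_N$ from \eqref{tildeuN}. Writing $\bd\Phi^{\omega^n_N}_f-\bd\Phi^{\omega^{n-1}_N}_f=(\Delta t)\bd w^n_N$ with $\bd w^n_N$ the harmonic extension of $\bd\zeta^{n-\frac12}_N$ (by \eqref{plateweak1} and the splitting structure), elliptic regularity gives $\|\bd w^n_N\|_{C^1(\Omega_f)}\le C\|\bd\zeta^{n-\frac12}_N\|_{H^2(\Gamma)}$; combined with the uniform geometric control of Proposition \ref{geometricN} and the uniform $L^2$ energy bound on $\bd u^n_N$ from Proposition \ref{uniformenergy}, a change of variables shows that $\tfrac{1}{\Delta t}|\int_\Omega\bd u^n_N\cdot\bd q-\int_{\Omega^n_{f,N}}\tilde{\bd u}^n_N\cdot\bd q|\le C\|\bd\zeta^{n-\frac12}_N\|_{H^2(\Gamma)}\|\bd q\|_{H^3}$, which is of the claimed form with the $\|\bd\zeta^{n-\frac12}_N\|_{H^2(\Gamma)}$ factor absorbed into $a^n_N$, since $(\Delta t)\sum_n\|\bd\zeta^{n+\frac12}_N\|_{H^2(\Gamma)}^2\le C$ by \eqref{esth} (with $C$ depending on the fixed $h$, which is admissible here).

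The remaining quantity $\tfrac{1}{\Delta t}\int_{\Omega^n_{f,N}}(\bd u^{n+1}_N-\tilde{\bd u}^n_N)\cdot\bd q$ is handled by inserting the admissible test function $\bd v=\bd q\in V^n_N$ into the physical-domain semidiscrete formulation \eqref{fluidnN}, so that this term equals minus the sum of all other terms there. I would estimate those term by term with the bounds of Propositions \ref{geometricN}, \ref{uniformenergy} and \ref{traceuniform}, sorting them into three classes: (i) terms controlled linearly by $\|\bd u^n_N\|_{V^n_N}$ or $\|\bd u^{n+1}_N\|_{V^{n+1}_N}$ (the viscous term, and the slip terms containing $\bd\zeta^{n+1}_N$, which are benign because the regularized coupling gives $\|\bd\zeta^{n+1}_N\|_{L^2(\Gamma)}=\|(\dot{\bd\eta}^{n+1}_N)^\delta|_\Gamma\|_{L^2(\Gamma)}\le C_\delta\|\dot{\bd\eta}^{n+1}_N\|_{L^2(\Omega_b)}\le C_\delta$ uniformly in $n,N$), which after adding a constant are dominated by $(\,\cdot\,)^{3/2}$; (ii) terms controlled by an $L^2$-in-time summable prefactor, which go into $a^n_N$ — chiefly the Jacobian-quotient term $\tfrac{1}{2\Delta t}\int(\mathcal J^{\omega^{n+1}_N}_f/\mathcal J^{\omega^n_N}_f-1)\bd u^{n+1}_N\cdot\bd q$, whose prefactor is $O(\|\bd\zeta^{n+\frac12}_N\|_{H^2(\Gamma)})$ by elliptic regularity, and the pore-pressure boundary term $\int_{\Gamma^n_N}p^{n+1}_N(\bd q\cdot\bd n)$, since $\|p^{n+1}_N\|_{L^2(\Gamma)}$ is $L^2$-in-time summable; and (iii) the genuinely quadratic-in-$\bd u$ terms, which carry the $3/2$ power.

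For class (iii): for the quadratic interface terms $\int_{\Gamma^n_N}(\bd u^{n+1}_N\cdot\tilde{\bd u}^n_N)(\bd q\cdot\bd n)$ and $\int_{\Gamma^n_N}(\bd u^{n+1}_N\cdot\bd n)(\tilde{\bd u}^n_N\cdot\bd q)$ I would use that $\Gamma^n_N$ is one-dimensional, interpolate traces via $\|\bd u\|_{L^4(\Gamma^n_N)}\le C\|\bd u\|_{H^{1/4}(\Gamma^n_N)}\le C\|\bd u\|_{L^2(\Omega)}^{1/4}\|\bd u\|_{H^1}^{3/4}$, and use $\bd q|_{\Gamma^n_N}\in H^{5/2}\hookrightarrow L^\infty$ together with the uniform $L^2$ bounds on $\bd u^{n+1}_N,\bd u^n_N$ to obtain a bound by $C\|\bd u^{n+1}_N\|_{V^{n+1}_N}^{3/4}\|\bd u^n_N\|_{V^n_N}^{3/4}\|\bd q\|_{H^3}\le C(\|\bd u^{n+1}_N\|_{V^{n+1}_N}+\|\bd u^n_N\|_{V^n_N})^{3/2}\|\bd q\|_{H^3}$. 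For the convection term, already in antisymmetrized form in \eqref{fluidnN}, I would split the transport velocity $\tilde{\bd u}^n_N-\tilde{\bd w}^{n+1}_N$ from \eqref{tildew}: the fluid part, via the two-dimensional Ladyzhenskaya inequality $\|\bd u\|_{L^4}^2\le C\|\bd u\|_{L^2}\|\bd u\|_{H^1}$ applied to $\tilde{\bd u}^n_N$ and to $\bd q$, together with $\|\bd u^n_N\|_{L^2}\le C$, gives exactly $C\|\bd u^n_N\|_{V^n_N}^{1/2}\|\bd u^{n+1}_N\|_{V^{n+1}_N}\|\bd q\|_{H^3}$, i.e.\ the $3/2$ power; the ALE part I would integrate by parts to move the gradient off $\bd u^{n+1}_N$ onto $\bd q$, which is legitimate precisely because $Q^n_N\subset H^3$ (this is why the test space \eqref{Qmoreregular} was taken more regular), obtaining $C\|\tilde{\bd w}^{n+1}_N\|_{H^1}\|\bd q\|_{H^3}\|\bd u^{n+1}_N\|_{L^2}$ plus divergence and boundary contributions; since $\|\bd u^{n+1}_N\|_{L^2}\le C$ and $\|\tilde{\bd w}^{n+1}_N\|_{H^1}\le C\|\bd\zeta^{n+\frac12}_N\|_{H^2(\Gamma)}$ these fall into class (ii), and the new boundary term on $\Gamma^n_N$ is treated by the same trace interpolation. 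Summing the three classes and then over $n$, the $a^n_N$-contributions remain summable by \eqref{esth} and Proposition \ref{traceuniform}, and this is the estimate needed for Proposition \ref{compactfluid} to follow from the Aubin--Lions lemma on moving domains as in \cite{FPSIJMPA}. The main obstacle is the ALE-transport part of the convection: estimated naively it produces a product $\|\bd\zeta^{n+\frac12}_N\|\,\|\bd u^{n+1}_N\|_{V^{n+1}_N}$ that only controls a \emph{squared} quantity, so the $3/2$ scaling is recovered only by exploiting the extra $H^3$ regularity of the test functions to integrate by parts, and by checking carefully that every plate-velocity-dependent prefactor is $L^2$-in-time summable (hence legitimately an $a^n_N$), with an $h$-dependent constant that is harmless since $h>0$ is fixed in this section.
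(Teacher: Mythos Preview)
Your overall architecture---split off the domain-matching piece $\tfrac{1}{\Delta t}\int(\tilde{\bd u}^n_N-\bd u^n_N)\cdot\bd q$, then read $\tfrac{1}{\Delta t}\int(\bd u^{n+1}_N-\tilde{\bd u}^n_N)\cdot\bd q$ from the physical-domain semidiscrete identity \eqref{fluidnN} and bound the remaining terms---matches the paper's. The term-by-term estimates you sketch for \eqref{fluidnN} (classes (i)--(iii), the $L^4(\Gamma)$ interpolation for the quadratic boundary terms) are essentially the paper's Terms~1--6. One simplification you miss: since $\bd q\in Q^n_N\subset H^3(\Omega^n_{f,N})\hookrightarrow W^{1,\infty}$, the convection term is handled directly by $\|\bd q\|_{L^\infty}+\|\nabla\bd q\|_{L^\infty}\le C$ and Cauchy--Schwarz; there is no need to integrate the ALE part by parts or track the resulting boundary corrections.

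The genuine gap is your domain-matching step. You claim that a change of variables together with only the uniform $L^2$ bound on $\bd u^n_N$ gives $\tfrac{1}{\Delta t}\big|\int_\Omega\bd u^n_N\cdot\bd q-\int_{\Omega^n_{f,N}}\tilde{\bd u}^n_N\cdot\bd q\big|\le C\|\bd\zeta^{n-\frac12}_N\|_{H^2(\Gamma)}$. This fails on the non-overlap region. Transferring the shift to the smooth test function works on $\Omega^{n-1}_{f,N}\cap\Omega^n_{f,N}$, but on $\Omega^{n-1}_{f,N}\setminus\Omega^n_{f,N}$ the extension-by-zero of $\bd q$ vanishes while the transported $\bd q\circ\sigma^{-1}$ does not (there is no Dirichlet condition for $\bd q$ on $\Gamma^n_N$), so the integrand is $O(1)\cdot|\bd u^n_N|$ on a strip of measure $O((\Delta t)\|\bd\zeta^{n-\frac12}_N\|)$. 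With $\bd u^n_N$ only in $L^2$ this yields $O((\Delta t)^{1/2})$, hence $O((\Delta t)^{-1/2})$ after dividing by $\Delta t$---not summable. The paper isolates this step as the separate Lemma~\ref{tildecomp} and uses the $H^1$ regularity of $\bd u^n_N$: on the thin annulus it writes $\bd u^n_N\circ\bd\Phi^{\omega^{n-1}_N}_f$ via the fundamental theorem of calculus from the outer boundary $|\bd x|=2$ (where the Dirichlet condition holds), gaining one factor of the strip width and producing $C(\Delta t)\,\|\bd\zeta^{n-\frac12}_N\|_{H^1(\Gamma)}\,\|\bd u^n_N\|_{V^n_N}$, which is then packaged as $(\|\bd\zeta^{n-\frac12}_N\|_{H^1}^2+\|\bd u^n_N\|_{V^n_N})^{3/2}$. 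Your argument needs this ingredient (or an equivalent one) to close; the $L^2$ energy bound alone is not enough.
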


The strategy to establish Generalized Property B in Lemma \ref{generalizedB} is to use $\tilde{\bd{u}}_{N}$ defined in \eqref{tildeuN} as an intermediate step, by estimating the following quantity for test functions $\|\bd{q}\|_{Q^{n}_{N}} \le 1$:
\begin{equation}\label{triangleineq}
\left|\int_{\Omega} \left(\frac{\bd{u}^{n + 1}_{N} - \bd{u}^{n}_{N}}{\Delta t}\right) \cdot \bd{q}\right| \le \left|\int_{\Omega} \left(\frac{\bd{u}^{n + 1}_{N} - \tilde{\bd{u}}^{n}_{N}}{\Delta t}\right) \cdot \bd{q}\right| + \left|\int_{\Omega} \left(\frac{\tilde{\bd{u}}^{n}_{N} - \bd{u}^{n}_{N}}{\Delta t}\right) \cdot \bd{q}\right|.
\end{equation}
The first term can be estimated using the semidiscrete formulation \eqref{weakBiot1}, which will be the content of the proof of Lemma \ref{generalizedB} below. We will estimate the second term using the following lemma.

\begin{lemma}\label{tildecomp}
There exists a constant $C$ that is independent of $N$ such that for the test space $Q^{n}_{N}$ defined in \eqref{Qmoreregular}:
\begin{equation*}
\sup_{\|\bd{q}\|_{Q^{n}_{N}} \le 1} \left|\int_{\Omega^{n}_{f, N}} \left(\frac{\tilde{\bd{u}}^{n}_{N} - \bd{u}^{n}_{N}}{\Delta t}\right) \cdot \bd{q}\right| \le C\Big(a^{n}_{N} + \|\bd{u}^{n}_{N}\|_{V^{n}_{N}}\Big)^{3/2},
\end{equation*}
for nonnegative constants $\{a^{n}_{N}\}$ satisfying $\displaystyle (\Delta t) \sum_{n = 0}^{N - 1} |a^{n}_{N}|^{2} \le c$ for constants $c$ and $C$ that are independent of $n$ and $N$. 
\end{lemma}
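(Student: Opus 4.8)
## Proof proposal for Lemma \ref{tildecomp}

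The plan is to quantify how much the fluid velocity $\bd{u}^{n}_{N}$ changes when it is transported from the domain $\Omega^{n-1}_{f,N}$ to the domain $\Omega^{n}_{f,N}$ via the composition of ALE maps appearing in \eqref{tildeuN}. Since $\tilde{\bd{u}}^{n}_{N} = \bd{u}^{n}_{N} \circ \bd{\Phi}^{\omega^{n-1}_{N}}_{f} \circ (\bd{\Phi}^{\omega^{n}_{N}}_{f})^{-1}$, the difference $\tilde{\bd{u}}^{n}_{N} - \bd{u}^{n}_{N}$ is governed by the difference between the two consecutive ALE maps $\bd{\Phi}^{\omega^{n-1}_{N}}_{f}$ and $\bd{\Phi}^{\omega^{n}_{N}}_{f}$, which by \eqref{wale} is exactly $(\Delta t)\, \bd{w}^{n+1}_{N}$ in magnitude — more precisely the displacement between the two maps is $(\Delta t)$ times the discrete ALE velocity associated to the step $n-1 \to n$. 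First I would pull everything back to the fixed reference fluid domain $\Omega_{f}$ using the map $\bd{\Phi}^{\omega^{n}_{N}}_{f}$ (a bijection with uniformly controlled Jacobian by Proposition \ref{geometricN}, \eqref{uniformgeom2}), so that the integral over $\Omega^{n}_{f,N}$ becomes an integral over $\Omega_{f}$ with the bounded weight $\mathcal{J}^{\omega^{n}_{N}}_{f}$. On the reference domain the difference becomes $\hat{\bd{u}}^{n}_{N} \circ (\bd{\Phi}^{\omega^{n-1}_{N}}_{f})^{-1} \circ \bd{\Phi}^{\omega^{n}_{N}}_{f} - \hat{\bd{u}}^{n}_{N}$, where $\hat{\bd{u}}^{n}_{N}$ is the pullback to $\Omega_{f}$; this is a composition-with-a-small-perturbation-of-the-identity difference.

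The key analytic step is then: for a function $g \in H^{1}$ and a map $\Psi$ with $\|\Psi - \mathrm{id}\|_{W^{1,\infty}}$ small, one has $\|g \circ \Psi - g\|_{H^{-s}} \le C \|\Psi - \mathrm{id}\|_{L^{\infty}} \|g\|_{H^{1}}$ (or, tested against an $H^{3}$ test function, even a cruder estimate suffices), where the relevant map here is $\Psi = (\bd{\Phi}^{\omega^{n-1}_{N}}_{f})^{-1} \circ \bd{\Phi}^{\omega^{n}_{N}}_{f}$, whose deviation from the identity is controlled in $C^{1}(\Omega_{f})$ by $\|\bd{\omega}^{n}_{N} - \bd{\omega}^{n-1}_{N}\|_{H^{2}(\Gamma)}$ via the elliptic regularity estimate for the harmonic extension (the estimate used in Proposition \ref{geometricN}). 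Thus, testing against $\bd{q}$ with $\|\bd{q}\|_{Q^{n}_{N}} \le 1$ (hence $\bd{q} \in H^{3}(\Omega^{n}_{f,N})$, so after pullback $\bd{q}$ is bounded in a space with $s$-derivatives for the duality pairing), I would obtain
\begin{equation*}
\left|\int_{\Omega^{n}_{f,N}} \frac{\tilde{\bd{u}}^{n}_{N} - \bd{u}^{n}_{N}}{\Delta t} \cdot \bd{q}\right| \le \frac{C}{\Delta t} \|\bd{\omega}^{n}_{N} - \bd{\omega}^{n-1}_{N}\|_{H^{2}(\Gamma)}\, \|\bd{u}^{n}_{N}\|_{V^{n}_{N}}.
\end{equation*}
Now, from the plate subproblem \eqref{plateweak1} we have $\bd{\omega}^{n}_{N} - \bd{\omega}^{n-1}_{N} = \bd{\omega}^{n-1/2}_{N} - \bd{\omega}^{n-3/2}_{N} = (\Delta t)\, \bd{\zeta}^{n-1/2}_{N}$, so the factor $\tfrac{1}{\Delta t}\|\bd{\omega}^{n}_{N} - \bd{\omega}^{n-1}_{N}\|_{H^{2}(\Gamma)} = \|\bd{\zeta}^{n-1/2}_{N}\|_{H^{2}(\Gamma)}$. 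Setting $a^{n}_{N} := \|\bd{\zeta}^{n-1/2}_{N}\|_{H^{2}(\Gamma)} \, \|\bd{u}^{n}_{N}\|_{V^{n}_{N}}^{-1/2}$ (or, more robustly, $a^{n}_{N} := \|\bd{\zeta}^{n-1/2}_{N}\|_{H^{2}(\Gamma)}^{2}$ combined with Young's inequality to absorb the $\|\bd{u}^{n}_{N}\|_{V^{n}_{N}}$ factor into the $3/2$ power), the right-hand side is bounded by $C(a^{n}_{N} + \|\bd{u}^{n}_{N}\|_{V^{n}_{N}})^{3/2}$, and the required summability $(\Delta t)\sum_{n} |a^{n}_{N}|^{2} \le c$ follows directly from \eqref{esth}, i.e. from the uniform energy bound $(\Delta t)\, h \sum \|\bd{\zeta}^{n+1/2}_{N}\|^{2}_{H^{2}(\Gamma)} \le C$, since $h > 0$ is fixed. (One must be careful that the constant $c$ is allowed to depend on $h$, consistent with the rest of Section \ref{hproof}.)

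The main obstacle I anticipate is making the composition estimate $\|g \circ \Psi - g\|_{H^{-s}} \lesssim \|\Psi - \mathrm{id}\|_{L^{\infty}}\|g\|_{H^{1}}$ clean and legitimate, since $g = \hat{\bd{u}}^{n}_{N}$ is only $H^{1}$ and not smooth, and composition does not behave well on negative-order Sobolev spaces in general. The resolution is that we never need to estimate $g \circ \Psi - g$ in a negative-order norm per se; rather, we test against $\bd{q} \in H^{3}$ and perform a change of variables in one of the two terms, reducing the estimate to $\left|\int_{\Omega_{f}} \hat{\bd{u}}^{n}_{N} \cdot (\bd{q}\circ\Psi^{-1} \,\mathcal{J}_{\Psi^{-1}} - \bd{q})\right|$, where now $\bd{q}\circ\Psi^{-1}\mathcal{J}_{\Psi^{-1}} - \bd{q}$ is a difference of $H^{3}$ functions that, by the fundamental theorem of calculus along the homotopy from $\mathrm{id}$ to $\Psi^{-1}$ together with the uniform $C^{1}$ control of $\Psi^{-1}$ and the uniform geometric bounds of Proposition \ref{geometricN}, is bounded in $L^{2}(\Omega_{f})$ by $C\|\Psi - \mathrm{id}\|_{C^{1}(\Omega_{f})}\|\bd{q}\|_{H^{1}} \le C\|\Psi-\mathrm{id}\|_{C^1}$. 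Then Cauchy–Schwarz against $\|\hat{\bd{u}}^{n}_{N}\|_{L^{2}} \le C\|\bd{u}^{n}_{N}\|_{V^{n}_{N}}$ closes the estimate without ever composing a rough function with anything. This is essentially the same mechanism as in the proof of Proposition 8.3 of \cite{FPSIJMPA}; the only new feature is the bookkeeping of the regularized kinematic coupling, which here enters only through the identification $\bd{\omega}^{n}_{N} - \bd{\omega}^{n-1}_{N} = (\Delta t)\bd{\zeta}^{n-1/2}_{N}$ and does not complicate the estimate.
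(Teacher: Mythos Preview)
Your idea of transferring the composition onto the smooth test function $\bd{q}$ (instead of applying the fundamental theorem of calculus to the $H^{1}$ velocity, as the paper does via a polar path-integral on the annulus $\Omega_{f}$) is a reasonable alternative for the bulk of the integral, but the proposal has a genuine gap at the domain level that your change-of-variables step hides.

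The map $\Psi = (\bd{\Phi}^{\omega^{n-1}_{N}}_{f})^{-1} \circ \bd{\Phi}^{\omega^{n}_{N}}_{f}$ is \emph{not} a self-map of $\Omega_{f}$: the inverse $(\bd{\Phi}^{\omega^{n-1}_{N}}_{f})^{-1}$ is only defined on $\Omega^{n-1}_{f,N}$, so $\Psi$ is defined only on $A_{1}:=(\bd{\Phi}^{\omega^{n}_{N}}_{f})^{-1}(\Omega^{n}_{f,N}\cap\Omega^{n-1}_{f,N})$. Concretely, after your change of variables one obtains $\int_{\Omega^{n-1}_{f,N}} \bd{u}^{n}_{N}\cdot(\bd{q}\circ T)J_{T}$ with $T=\bd{\Phi}^{\omega^{n}_{N}}_{f}\circ(\bd{\Phi}^{\omega^{n-1}_{N}}_{f})^{-1}$, whereas $\int_{\Omega^{n}_{f,N}}\bd{u}^{n}_{N}\cdot\bd{q}$ is supported on $\Omega^{n-1}_{f,N}\cap\Omega^{n}_{f,N}$; the discrepancy is an integral over the thin strip $\Omega^{n-1}_{f,N}\setminus\Omega^{n}_{f,N}$, on which $\bd{q}$ (which vanishes only on $\partial\Omega$, not on $\Gamma^{n}_{N}$) is $O(1)$. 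A naive bound gives only $|\text{strip}|^{1/2}\|\bd{u}^{n}_{N}\|_{L^{2}}=O((\Delta t)^{1/2})$, which after dividing by $\Delta t$ blows up. This is exactly why the paper splits into $A_{1}$ and $A_{2}$: on the strip $A_{2}$ it recovers the full factor of $\Delta t$ by writing $|\bd{u}^{n}_{N}\circ\bd{\Phi}^{\omega^{n-1}_{N}}_{f}(r,\theta)|=\big|\int_{r}^{2}\partial_{r}(\cdot)\,d\tilde r\big|$, using the Dirichlet condition at the \emph{outer} wall $|\bd{x}|=2$, and then integrating over an annulus of width $\epsilon\sim(\Delta t)\|\bd{\zeta}^{n-1/2}_{N}\|_{H^{1}(\Gamma)}$. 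Your argument can be repaired with precisely this device, but as written the ``reducing the estimate to $\int \hat{\bd{u}}^{n}_{N}\cdot(\bd{q}\circ\Psi^{-1}\mathcal{J}_{\Psi^{-1}}-\bd{q})$'' step silently assumes $\Psi$ is globally defined and misses the strip contribution entirely.
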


\begin{proof}
Let us define $\hat{\bd{u}}^{n}_{N} := \bd{u}^{n}_{N} \circ \bd{\Phi}^{\omega^{n}_{N}}_{f}$. Then, consider an arbitrary $\bd{q}$ such that $\|\bd{q}\|_{Q^{n}_{N}} \le 1$ with $Q^{n}_{N}$ defined in \eqref{Qmoreregular}, so that $\|\bd{q}\|_{L^{\infty}(\Omega^{n}_{f, N})} \le C$. Recalling the definition of $\tilde{\bd{u}}^{n}_{N}$ from \eqref{tildeuN}, we estimate:
\begin{align*}
\left|\int_{\Omega^{n}_{f, N}} \Big(\tilde{\bd{u}}^{n}_{N} - \bd{u}^{n}_{N}\Big) \cdot \bd{q}\right| &\le C \int_{\Omega_{f}} \mathcal{J}^{\omega^{n}_{N}}_{f} \Big|\hat{\bd{u}}^{n}_{N} - \hat{\bd{u}}^{n}_{N} \circ \Big((\bd{\Phi}^{\omega^{n}_{N}}_{f})^{-1} \circ \bd{\Phi}^{\omega^{n - 1}_{N}}_{f}\Big)\Big| \\
&\le C \left (\int_{A_{1}} + \int_{A_{2}}\right ) \mathcal{J}^{\omega^{n}_{N}}_{f} \Big|\hat{\bd{u}}^{n}_{N} - \hat{\bd{u}}^{n}_{N} \circ \Big((\bd{\Phi}^{\omega^{n}_{N}}_{f})^{-1} \circ \bd{\Phi}^{\omega^{n - 1}_{N}}_{f}\Big)\Big|, \\
&\ \ \text{ for } A_{1} := (\bd{\Phi}^{\omega^{n}_{N}}_{f})^{-1} (\Omega^{n - 1}_{f, N} \cap \Omega^{n}_{f, N}) \text{ and } A_{2} := \Omega_{f} \setminus A_{1}.
\end{align*}

\medskip

\noindent \textbf{Estimate of integral over $A_{1}$.} We estimate the integral over $A_{1}$, where we recall that $\bd{u}^{n}_{N}$ is defined on $\Omega^{n - 1}_{N, f}$ (and extended by zero outside). By the definition of $A_{1}$, both $(x, y)$ and $\Big((\bd{\Phi}^{\omega^{n}_{N}}_{f})^{-1} \circ \bd{\Phi}^{\omega^{n - 1}_{N}}_{f}\Big)(x, y)$ are in $\Omega_{f}$ if $(x, y) \in A_{1}$. So let us define the polar coordinates of $(x, y)$ and $\Big((\bd{\Phi}^{\omega^{n}_{N}}_{f})^{-1} \circ \bd{\Phi}^{\omega^{n - 1}_{N}}_{f}\Big)(x, y)$ to be the $(r_{1}, \theta_{1})$ and $(r_{2}, \theta_{2})$ for some $\theta_{1}, \theta_{2} \in (-2\pi, 2\pi)$, see the Appendix and the discussion preceding Proposition \ref{gammaprop}. We then define as in the Appendix, the curves in polar coordinates for $0 \le s \le 1$:
\begin{equation*}
\gamma_{1}: s \to ((1 - s)r_{1} + sr_{2}, \theta_{1}), \qquad \gamma_{2}: s \to (r_{2}, (1 - s)\theta_{1} + s\theta_{2}).
\end{equation*}
See Figure \ref{gammafig} in the Appdendix. Thus, for $(x, y) \in A_{1} \subset \Omega_{f} := \{\bd{x} \in \R^{2} : 1 < |\bd{x}| < 2\}$, we can use the fundamental theorem of line integrals to estimate:
\begin{multline}\label{A1bound1}
\Big|\hat{\bd{u}}^{n}_{N} - \hat{\bd{u}}^{n}_{N} \circ \Big((\bd{\Phi}^{\omega^{n}_{N}}_{f})^{-1} \circ \bd{\Phi}^{\omega^{n - 1}_{N}}_{f}\Big)\Big| \le \int_{\gamma_{1}(x, y)} \nabla \hat{\bd{u}}^{n}_{N} \cdot d\bd{r} + \int_{\gamma_{2}(x, y)} \nabla \hat{\bd{u}}^{n}_{N} \cdot d\bd{r} \\
\le \Big|(x, y) - \Big((\bd{\Phi}^{\omega^{n}_{N}}_{f})^{-1} \circ \bd{\Phi}^{\omega^{n - 1}_{N}}_{f}\Big)(x, y)\Big| \left(\int_{1}^{2} |\nabla \hat{\bd{u}}^{n}_{N}(r, \theta_{1})| dr + \int_{|\bd{x}| = r_{2}} |\nabla \hat{\bd{u}}^{n}_{N}| ds \right).
\end{multline}
Note that
\begin{equation*}
|\bd{\Phi}^{\omega^{n}_{N}}_{f}(x, y) - \bd{\Phi}^{\omega^{n - 1}_{N}}_{f}(x, y)| \le \|\bd{\Phi}^{\omega^{n}_{N}}_{f}(\cdot) - \bd{\Phi}^{\omega^{n - 1}_{N}}_{f}(\cdot)\|_{H^{3/2}(\Omega_{f})} \le C(\Delta t) \|\bd{\zeta}^{n - \frac{1}{2}}_{N}\|_{H^{1}(\Gamma)},
\end{equation*}
and hence, by Proposition \ref{lengthtransform} and the uniform bounds \eqref{uniformgeom2}:
\begin{equation}\label{A1bound2}
\Big|(x, y) - \Big((\bd{\Phi}^{\omega^{n}_{N}}_{f})^{-1} \circ \bd{\Phi}^{\omega^{n - 1}_{N}}_{f}\Big)(x, y)\Big| \le \|(\nabla \bd{\Phi}^{\omega^{n}_{N}}_{f})^{-1}\| _{L^{\infty}(\Omega_{f})} \cdot |\bd{\Phi}^{\omega^{n}_{N}}_{f}(x, y) - \bd{\Phi}^{\omega^{n - 1}_{N}}_{f}(x, y)| \le C(\Delta t) \|\bd{\zeta}^{n - \frac{1}{2}}_{N}\|_{H^{1}(\Gamma)}.
\end{equation}
We also compute by a change of variables and the uniform geometric bounds in \eqref{uniformgeom2}:
\begin{align}\label{a1hardintegral}
\int_{A_{1}} \int_{|\bd{x}| = r_{2}} &|\nabla \hat{\bd{u}}^{n}_{N}| ds dx dy \nonumber \\
&= \int_{A_{1}} \int_{0}^{2\pi} \Big|\Big((\bd{\Phi}^{\omega^{n}_{N}}_{f})^{-1} \circ \bd{\Phi}^{\omega^{n - 1}_{N}}_{f}\Big)(x, y)\Big| \cdot \Bigg| \nabla \hat{\bd{u}}^{n}_{N}\Big(\Big|\Big((\bd{\Phi}^{\omega^{n}_{N}}_{f})^{-1} \circ \bd{\Phi}^{\omega^{n - 1}_{N}}_{f}\Big)(x, y)\Big|, \theta\Big)\Bigg| d\theta dx dy \nonumber\\
&\le \text{det}\Big(\nabla \Big(\bd{\Phi}^{\omega^{n}_{N}}_{f} \circ (\bd{\Phi}^{\omega^{n - 1}_{N}}_{f})^{-1}\Big)\Big) \int_{\Omega_{f}} \int_{0}^{2\pi} |(x, y)| \Big|\nabla \hat{\bd{u}}^{n}_{N}(|(x, y)|, \theta)\Big| d\theta dx dy \nonumber \\
&\le C \int_{\Omega_{f}} \int_{|x| = (x, y)} |\nabla \hat{\bd{u}}^{n}_{N}| ds dx dy,
\end{align}
where by the cofactor formula for the matrix inverse, the uniform lower bound on $\mathcal{J}^{\omega^{n}_{N}}_{f}$ and the upper bound on $|\nabla \bd{\Phi}^{\omega^{n}_{N}}_{f}|$ in \eqref{uniformgeom2} give a uniform upper bound on $|(\nabla \bd{\Phi}^{\omega^{n}_{N}}_{f})^{-1}|$. Hence, by integrating \eqref{A1bound1} over $A_{1}$ (which is bounded by its integral over the entire $\Omega_{f}$), by applying \eqref{A1bound2}, and by using the coarea formula (see \cite{Evans}) $\displaystyle \int_{\Omega_{f}} h = \int_{1}^{2} \int_{|\bd{x}| = r} h ds dr$, along with the uniform geometric bounds in Proposition \ref{geometricN} and \eqref{a1hardintegral}:
\begin{align}\label{unNtilde1}
\int_{A_{1}} &\Big|\hat{\bd{u}}^{n}_{N} - \hat{\bd{u}}^{n}_{N} \circ \Big((\bd{\Phi}^{\omega^{n}_{N}}_{f})^{-1} \circ \bd{\Phi}^{\omega^{n - 1}_{N}}_{f}\Big)\Big| \nonumber \\
&\le C(\Delta t) \|\bd{\zeta}^{n - \frac{1}{2}}_{N}\|_{H^{1}(\Gamma)} \left(\int_{1}^{2} \int_{0}^{2\pi} \int_{1}^{2} |\nabla \hat{\bd{u}}^{n}_{N}(\tilde{r}, \theta)| r d\tilde{r} d\theta dr + \int_{1}^{2} \int_{0}^{2\pi} \int_{0}^{2\pi} |\nabla \hat{\bd{u}}^{n}_{N}(r, \tilde{\theta})| rd\tilde{\theta} d\theta dr\right) \nonumber \\
&\le C(\Delta t) \|\bd{\zeta}^{n - \frac{1}{2}}_{N}\|_{H^{1}(\Gamma)} \|\hat{\bd{u}}^{n}_{N}\|_{H^{1}(\Omega_{f})} \le C(\Delta t) \|\bd{\zeta}^{n - \frac{1}{2}}_{N}\|_{H^{1}(\Gamma)} \|\bd{u}^{n}_{N}\|_{V^{n}_{N}} \nonumber \\
&\le C(\Delta t) \Big(\|\bd{\zeta}^{n - \frac{1}{2}}_{N}\|_{H^{1}(\Gamma)}^{2} + \|\bd{u}^{n}_{N}\|_{V^{n}_{N}}\Big)^{3/2}.
\end{align}

\medskip

\noindent \textbf{Estimate of integral over $A_{2}$.} By the definition of $A_{1}$ and $A_{2}$, we note that for $\bd{u}^{n}_{N}$ defined on $\Omega^{n - 1}_{f, N}$ and extended by zero outside, we have that $\hat{\bd{u}}^{n}_{N}|_{A_{2}} = \bd{0}$, so by a change of variables:
\begin{align}\label{A2boundinterm}
\int_{A_{2}} \mathcal{J}^{\omega^{n}_{N}}_{f} \Big|\hat{\bd{u}}^{n}_{N} - \hat{\bd{u}}^{n}_{N} \circ \Big((\bd{\Phi}^{\omega^{n}_{N}}_{f})^{-1} \circ \bd{\Phi}^{\omega^{n - 1}_{N}}_{f}\Big)\Big| &= \int_{A_{2}} \mathcal{J}^{\omega^{n}_{N}}_{f} \Big|\hat{\bd{u}}^{n}_{N} \circ \Big((\bd{\Phi}^{\omega^{n}_{N}}_{f})^{-1} \circ \bd{\Phi}^{\omega^{n - 1}_{N}}_{f}\Big)\Big| \nonumber \\
&= \int_{\left(\bd{\Phi}^{\omega^{n}_{N}}_{f}\right)^{-1}(\Omega^{n}_{f, N} \setminus \Omega^{n - 1}_{f, N})} \mathcal{J}^{\omega^{n}_{N}}_{f} \Big|\bd{u}^{n}_{N} \circ \bd{\Phi}^{\omega^{n - 1}_{N}}_{f}\Big|.
\end{align}
Let $\gamma(z; s)$ for $z \in \Gamma$ be the line connecting $\bd{\omega}^{n - 1}_{N}(z)$ and $\bd{\omega}^{n}_{N}(z)$, parametrized for fixed $z \in \Gamma$ by:
\begin{equation*}
\gamma(x; s) = s \bd{\Phi}^{\omega^{n - 1}_{N}}_{\Gamma}(z) + (1 - s) \bd{\Phi}^{\omega^{n}_{N}}_{\Gamma}(z), \qquad 0 \le s \le 1,
\end{equation*}
which has a length as a curve contained in the maximal domain $\Omega$ of
\begin{equation}\label{lengthcalc1}
\text{Length}(\gamma(z; \cdot)) \le \bd{\omega}^{n}_{N}(z) - \bd{\omega}^{n - 1}_{N}(z) = (\Delta t) \bd{\zeta}^{n - \frac{1}{2}}_{N}(z) \le c_{0}(\Delta t)\|\bd{\zeta}^{n - \frac{1}{2}}_{N}\|_{H^{1}(\Gamma)}.
\end{equation}
Therefore, every point in $\Omega^{n}_{f, N} \setminus \Omega^{n - 1}_{f, N}$ is a distance of less than or equal to $c_{0}(\Delta t) \|\bd{\zeta}^{n - \frac{1}{2}}_{N}\|_{H^{1}(\Gamma)}$ away from $\Gamma^{n}_{f, N}$. Furthermore, every point in $\Omega^{n}_{f, N} \setminus \Omega^{n - 1}_{f, N}$ can be connected to a point on $\Gamma^{n}_{N}$, via a curve lying entirely in $\overline{\Omega^{n}_{f, N}}$ of length less than or equal to $c_{0}(\Delta t) \|\bd{\zeta}^{n - \frac{1}{2}}_{N}\|_{H^{1}(\Gamma)}$. So by the transformation theorem for lengths in Proposition \ref{lengthtransform}, we conclude that the region $(\bd{\Phi}^{\omega^{n}_{N}}_{f})^{-1}(\Omega^{n - 1}_{f, N} \cap \Omega^{n}_{f, N})$ is contained within the thin annulus:
\begin{equation*}
D_{\epsilon} := \{\bd{x} \in \R^{2} : 1 < |\bd{x}| \le 1 + \epsilon \}, \qquad \text{ for } \epsilon = c_{0} (\Delta t) \|\bd{\zeta}^{n - \frac{1}{2}}_{N}\|_{H^{1}(\Gamma)} \cdot \|\nabla (\bd{\Phi}^{\omega^{n}_{N}}_{f})^{-1}\|_{L^{\infty}(\Omega^{n}_{f, N})}.
\end{equation*}
See Figure \ref{intersectfig} for a visual representation.

\begin{figure}
\center
\includegraphics[scale=0.4]{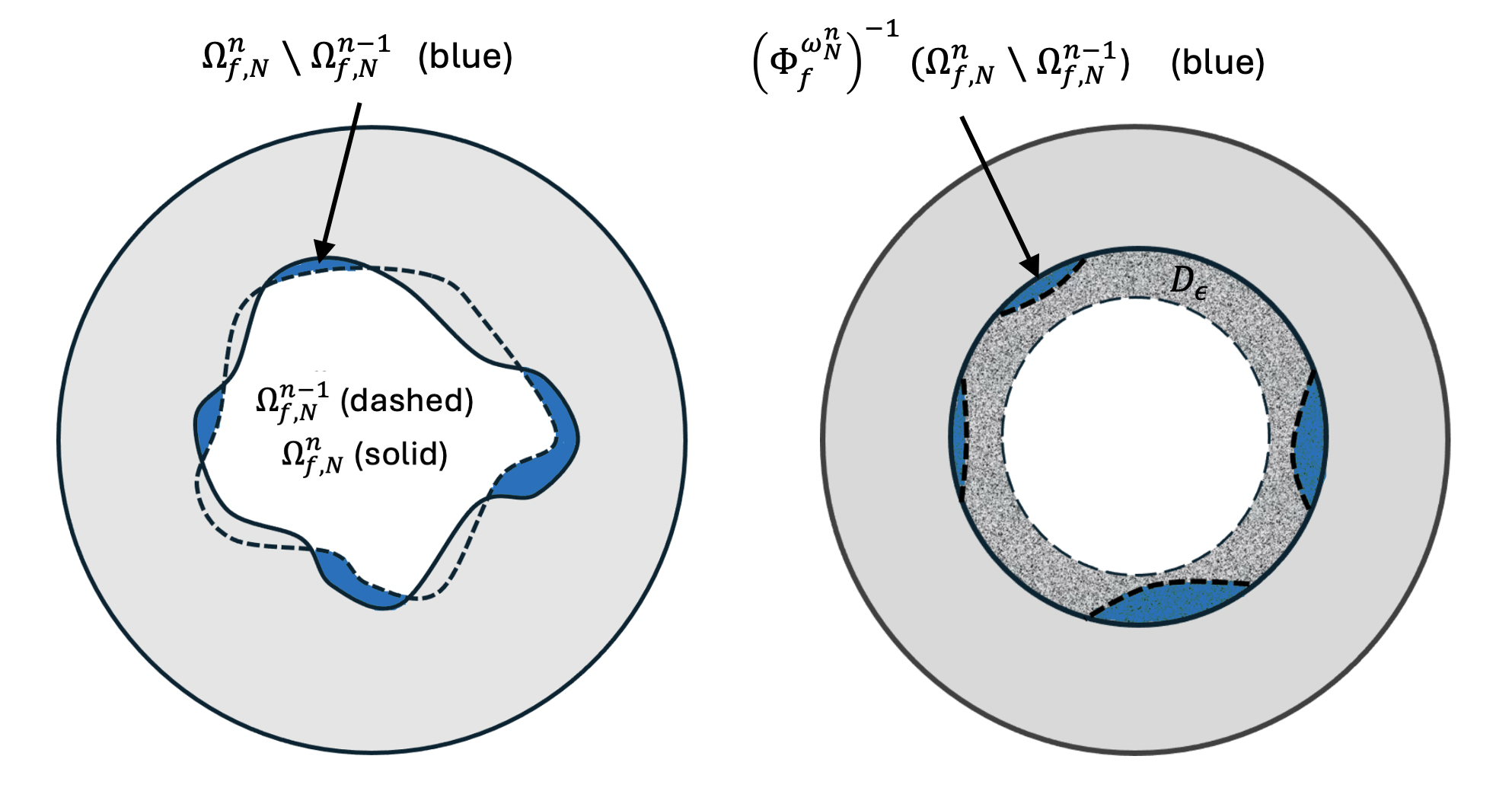}
\caption{An illustration of the integral over $A_{2}$ and the associated domains in both the moving domain (left) and the reference domain (right) configuration, transformed by the map $\bd{\Phi}^{\omega^{n}_{N}}_{f}$.}
\label{intersectfig}
\end{figure}

Thus, from \eqref{A2boundinterm}, Proposition \ref{geometricN} and the bounds \eqref{uniformgeom2}, and the Dirichlet boundary condition that $\hat{\bd{u}}^{n}_{N} = 0$ on $|\bd{x}| = 2$:
\begin{align}\label{unNtilde2}
\int_{A_{2}} \mathcal{J}^{\omega^{n}_{N}}_{f} \Big|\hat{\bd{u}}^{n}_{N} - \hat{\bd{u}}^{n}_{N} \circ \Big((\bd{\Phi}^{\omega^{n}_{N}}_{f})^{-1} \circ \bd{\Phi}^{\omega^{n - 1}_{N}}_{f}\Big)\Big| &\le C \int_{1}^{1 + \epsilon} \int_{0}^{2\pi} |\bd{u}^{n}_{N} \circ \bd{\Phi}^{\omega^{n - 1}_{N}}_{f}(r, \theta)| r dr d\theta \nonumber \\
&\le C \int_{1}^{1 + \epsilon} \int_{0}^{2\pi} \left|\int_{r}^{2} \partial_{r}\Big(\bd{u}^{n}_{N} \circ \bd{\Phi}^{\omega^{n - 1}_{N}}_{f}\Big)(\tilde{r}, \theta) d\tilde{r}\right| d\theta dr \nonumber \\
&\le C \int_{1}^{1 + \epsilon} \int_{0}^{2\pi} \int_{1}^{2} \Big|\nabla \Big(\bd{u}^{n}_{N} \circ \bd{\Phi}^{\omega^{n - 1}_{N}}_{f}\Big)(\tilde{r}, \theta)\Big| d\tilde{r} d\theta dr \nonumber  \\
&\le C\int_{1}^{1 + \epsilon} \Big\|\nabla\Big(\bd{u}^{n}_{N} \circ \bd{\Phi}^{\omega^{n - 1}_{N}}_{f}\Big)\Big\|_{L^{2}(\Omega_{f})} dr \nonumber \\
&\le C \epsilon \cdot \Big\|\bd{u}^{n}_{N} \circ (\bd{\Phi}^{\omega^{n - 1}_{N}}_{f})^{-1}\Big\|_{H^{1}(\Omega_{f})} \nonumber \\
&\le C(\Delta t) \|\bd{\zeta}^{n - \frac{1}{2}}_{N}\|_{H^{1}(\Gamma)} \cdot \|\bd{u}^{n}_{N}\|_{V^{n}_{N}} \nonumber \\
&\le C(\Delta t) \Big(\|\bd{\zeta}^{n - \frac{1}{2}}_{N}\|_{H^{1}(\Gamma)}^{2} + \|\bd{u}^{n}_{N}\|_{V^{n}_{N}}\Big)^{3/2}.
\end{align}

\medskip

\noindent \textbf{Conclusion of proof.} Combining \eqref{unNtilde1} and \eqref{unNtilde2}, we obtain
\begin{equation*}
\sup_{\|\bd{q}\|_{Q^{n}_{N}} \le 1} \left|\int_{\Omega^{n}_{f, N}} \frac{\tilde{\bd{u}}^{n}_{N} - \bd{u}^{n}_{N}}{\Delta t} \cdot \bd{q}\right| \le C\Big(1 + \|\bd{\zeta}^{n - \frac{1}{2}}_{N}\|^{2}_{H^{1}(\Gamma)} + \|\bd{u}^{n}_{N}\|_{V^{n}_{N}}\Big)^{3/2},
\end{equation*}
where we can set $a^{n}_{N} := 1 + \|\bd{\zeta}^{n - \frac{1}{2}}_{N}\|_{H^{1}(\Gamma)}^{2}$ in the statement of the lemma, since $\displaystyle (\Delta t) \sum_{n = 0}^{N - 1} |a^{n}_{N}|^{2} \le c$ as a result of $\bd{\zeta}^{*}_{N} \in L^{\infty}(0, T; L^{2}(\Gamma)) \cap L^{2}(0, T; H^{2}(\Gamma))$ and hence $\bd{\zeta}^{*}_{N} \in L^{4}(0, T; H^{1}(\Gamma))$. 
\end{proof}

\begin{proof}[Proof of Generalized Property B, Lemma \ref{generalizedB}]
We consider a fluid test function $\bd{v} \in Q^{n}_{N}$ such that $\|\bd{v}\|_{Q^{n}_{N}} \le 1$. Then, by the triangle inequality estimate in \eqref{triangleineq} and the estimate in Lemma \ref{tildecomp}, it suffices to estimate the following quantity
\begin{equation*}
\sup_{\|\bd{q}\|_{Q^{n}_{N}} \le 1} \left|\int_{\Omega^{n}_{f, N}} \frac{\bd{u}^{n + 1}_{N} - \tilde{\bd{u}}^{n}_{N}}{\Delta t} \cdot \bd{q}\right|.
\end{equation*}
We can estimate this, using the semidiscrete formulation for the fluid in \eqref{fluidnN}, which gives rise to the following terms:

\medskip

\noindent \textbf{Term 1.} We estimate, by using $\|\bd{q}\|_{H^{3}(\Omega)} \le 1$ along with the Cauchy-Schwarz inequality, that $\displaystyle \sup_{\|\bd{q}\|_{Q^{n}_{N}} \le 1} \left|\int_{\Omega^{n}_{f, N}} \bd{D}(\bd{u}^{n + 1}_{N}) : \bd{D}(\bd{q})\right| \le C\|\bd{u}^{n}_{N}\|_{V^{n}_{N}}$. Here, we recall the definition of the more regular test space $Q^{n}_{N}$ in \eqref{Qmoreregular}, which gives us an $H^{3}(\Omega)$ estimate on the fluid test function $\bd{q}$. 

\medskip

\noindent \textbf{Term 2.} Since $\|\bd{q}\|_{Q^{n}_{N}} \le 1$, by the trace inequality, $\|\bd{q}\|_{L^{2}(\Gamma)} \le C$ for a constant $C$ independent of $n$ and $N$, by uniform bounds on the ALE fluid domain map as in Proposition \ref{traceuniform}. Hence, we can estimate using Sobolev embedding that
\begin{align*}
\sup_{\|q\|_{Q^{n}_{N}} \le 1} \left|\int_{\Gamma^{n}_{N}} \left(\frac{1}{2} \bd{u}^{n + 1}_{N} \cdot \tilde{\bd{u}}^{n}_{N} - p^{n + 1}_{N}\right) (\bd{q} \cdot \bd{n})\right| &\le C\Big(\|\bd{u}^{n + 1}_{N}\|_{L^{4}(\Gamma)} \|\bd{u}^{n}_{N}\|_{L^{4}(\Gamma)} + \|p^{n + 1}_{N}\|_{L^{2}(\Gamma)}\Big) \\
&\le C\Big(\|\bd{u}^{n + 1}_{N}\|_{H^{3/4}(\Omega_{f})} \|\bd{u}^{n}_{N}\|_{H^{3/4}(\Omega_{f})} + \|p^{n + 1}_{N}\|_{H^{1}(\Omega_{b})}\Big).
\end{align*}
By using the uniform bound on $\bd{u}_{N}$ in $L^{\infty}(0, T; L^{2}(\Omega_{f}))$ and interpolation, we continue the chain of inequalities as:
\begin{equation*}
\le C\Big(\|\bd{u}^{n + 1}_{N}\|^{3/4}_{H^{1}(\Omega_{f})} \|\bd{u}^{n}_{N}\|^{3/4}_{H^{1}(\Omega_{f})} + \|p^{n + 1}_{N}\|_{H^{1}(\Omega_{b})}\Big) \le C\Big(1 + \|p^{n + 1}_{N}\|_{H^{1}(\Omega_{b})} + \|\bd{u}^{n}_{N}\|_{V^{n}_{N}} + \|\bd{u}^{n + 1}_{N}\|_{V^{n + 1}_{N}}\Big)^{3/2}.
\end{equation*}

\medskip

\noindent \textbf{Term 3.} Before estimating Term 3, we derive an estimate for the discretized (fluid domain) ALE velocity $\tilde{\bd{w}}^{n + 1}_{N}$ in \eqref{tildew}. We note that on the fixed domain, $\bd{w}^{n + 1}_{N} := (\Delta t)^{-1}\Big(\bd{\Phi}^{\omega^{n + 1}_{N}}_{f} - \bd{\Phi}^{\omega^{n}_{N}}_{f}\Big)$ defined in \eqref{wale} satisfies the harmonic equation $\Delta \bd{w}^{n + 1}_{N} = 0$ on $\Omega_{f}$
with boundary conditions
\begin{equation*}
\bd{w}^{n + 1}_{N} = \bd{0} \text{ on } \partial \Omega_{f} \setminus \Gamma, \quad \bd{w}^{n + 1}_{N} = (\Delta t)^{-1}(\bd{\omega}^{n + 1}_{N} - \bd{\omega}^{n}_{N}) = \bd{\zeta}^{n + 1/2}_{N} \text{ on } \Gamma.
\end{equation*}
Note that $\bd{\zeta}^{*}_{N}$ defined in \eqref{zetastar} is uniformly bounded in $L^{\infty}(0, T; L^{2}(\Gamma)) \cap L^{2}(0, T; H^{2}(\Gamma))$ and is hence in $L^{4}(0, T; H^{1}(\Gamma))$ by uniform estimates, so therefore we can estimate,
\begin{equation*}
\|\bd{w}^{n + 1}_{N}\|_{L^{2}(\Omega_{f})} \le C\|\bd{w}^{n + 1}_{N}\|_{H^{1}(\Omega_{f})} \le C\|\bd{\zeta}^{n + \frac{1}{2}}_{N}\|_{H^{1}(\Gamma)}.
\end{equation*}
Thus, since $\|\bd{q}\|_{Q^{n}_{N}} \le 1$, we can use the uniform bounds on the Jacobian of the fluid ALE map in \eqref{uniformgeom2} to deduce that
\begin{align*}
\sup_{\|q\|_{Q^{n}_{N}} \le 1} \Bigg|\frac{1}{2} \int_{\Omega^{n}_{f, N}} &[((\tilde{\bd{u}}^{n}_{N} - \tilde{\bd{w}}^{n + 1}_{N}) \cdot \nabla \bd{u}^{n + 1}_{N}) \cdot \bd{q} - ((\tilde{\bd{u}}^{n}_{N} - \tilde{\bd{w}}^{n + 1}_{N}) \cdot \nabla \bd{q}) \cdot \bd{u}^{n + 1}_{N}]\Bigg| \\
&\le C\Big(\|\bd{u}^{n}_{N}\|_{L^{2}(\Omega_{f})} \|\bd{u}^{n + 1}_{N}\|_{H^{1}(\Omega_{f})} + \|\bd{w}^{n + 1}_{N}\|_{L^{2}(\Omega_{f})} \|\bd{u}^{n + 1}_{N}\|_{H^{1}(\Omega_{f})}\Big) \\
&\le C\Big(1 + \|\bd{w}^{n + 1}_{N}\|_{L^{2}(\Omega_{f})}^{3} + \|\bd{u}^{n + 1}_{N}\|_{V^{n + 1}_{N}}^{3/2}\Big) \le C\Big(1 + \|\bd{w}^{n + 1}_{N}\|_{L^{2}(\Omega_{f})}^{2} + \|\bd{u}^{n + 1}_{N}\|_{V^{n + 1}_{N}}\Big)^{3/2} \\
&\le C\Big(1 + \|\bd{\zeta}^{n + \frac{1}{2}}_{N}\|_{H^{1}(\Gamma)}^{2} + \|\bd{u}^{n + 1}_{N}\|_{V^{n + 1}_{N}}\Big)^{3/2}.
\end{align*}

\medskip

\noindent \textbf{Term 4.} We estimate the following term:
\begin{equation*}
\sup_{\|\bd{q}\|_{Q^{n}_{N}} \le 1} \left|\int_{\Omega^{n}_{f, N}} \frac{1}{\Delta t} \left(\frac{\mathcal{J}^{\omega^{n + 1}_{N}}_{f}}{\mathcal{J}^{\omega^{n}_{N}}_{f}} - 1\right) \bd{u}^{n + 1}_{N} \cdot \bd{q}\right|
\end{equation*}
By the uniform bounds on the ALE map for the fluid domain in \eqref{uniformgeom2}, we have that $\mathcal{J}^{\omega^{n}_{N}}_{f} \ge c > 0$ for some positive constant $c$. We compute that
\begin{align*}
\mathcal{J}^{\omega^{n + 1}_{N}}_{f} - \mathcal{J}^{\omega^{n}_{N}}_{f} &= \Big(\partial_{x}\Phi^{\omega^{n + 1}_{N}}_{f, x} \partial_{y} \Phi^{\omega^{n + 1}_{N}}_{f, y} - \partial_{x} \Phi^{\omega^{n}_{N}}_{f, x} \partial_{y} \Phi^{\omega^{n}_{N}}_{f, y}\Big) - \Big(\partial_{x}\Phi^{\omega^{n + 1}_{N}}_{f, y}\partial_{y}\Phi^{\omega^{n + 1}_{N}}_{f, x} - \partial_{x}\Phi^{\omega^{n}_{N}}_{f, y} \partial_{y} \Phi^{\omega^{n}_{N}}_{f, x}\Big) \\
&= \partial_{x}\Big(\Phi^{\omega^{n + 1}_{N}}_{f, x} - \Phi^{\omega^{n}_{N}}_{f, x}\Big) \partial_{y}\Phi^{\omega^{n + 1}_{N}}_{f, y} \\
&+ \partial_{x}\Phi^{\omega^{n}_{N}}_{f, x} \partial_{y}\Big(\Phi^{\omega^{n + 1}_{N}}_{f, y} - \Phi^{\omega^{n}_{N}}_{f, y}\Big) - \partial_{x}\Big(\Phi^{\omega^{n + 1}_{N}}_{f, y} - \Phi^{\omega^{n}_{N}}_{f, y}\Big) \partial_{y}\Phi^{\omega^{n + 1}_{N}}_{f, x} - \partial_{x}\Phi^{\omega^{n}_{N}}_{f, y} \partial_{y} \Big(\Phi^{\omega^{n + 1}_{N}}_{f, x} - \Phi^{\omega^{n}_{N}}_{f, x}\Big).
\end{align*}
We recall the uniform bound on $\bd{\omega}_{N}$ in $L^{\infty}(0, T; H^{2}(\Gamma))$ from Proposition \ref{uniformenergy}, which implies that 
\begin{equation*}
\|\bd{\Phi}^{\omega^{n}_{N}}_{f}\|_{H^{5/2}(\Omega_{f})} \le C\|\nabla \bd{\Phi}^{\omega^{n}_{N}}_{f}\|_{L^{\infty}(\Omega_{f})} \le C
\end{equation*}
for a constant $C$ that is independent of $n$ and $N$, and furthermore
\begin{equation*}
\left\|(\Delta t)^{-1} \nabla \Big(\bd{\Phi}^{\omega^{n + 1}_{N}}_{f} - \bd{\Phi}^{\omega^{n}_{N}}_{f}\Big)\right\|_{L^{2}(\Omega_{f})} \le \left\|(\Delta t)^{-1} (\bd{\Phi}^{\omega^{n + 1}_{N}}_{f} - \bd{\Phi}^{\omega^{n}_{N}}_{f})\right\|_{H^{1}(\Omega_{f})} \le C\|\bd{\zeta}^{n + \frac{1}{2}}_{N}\|_{H^{1}(\Gamma)}.
\end{equation*}
Therefore, we estimate that
\begin{multline*}
\left\|(\Delta t)^{-1}\Big(\mathcal{J}^{\omega^{n + 1}_{N}}_{f} - \mathcal{J}^{\omega^{n}_{N}}_{f}\Big)\right\|_{L^{2}(\Omega_{f})} \\
\le C\Big(\|\nabla \bd{\Phi}^{\omega^{n}_{N}}_{f}\|_{L^{\infty}(\Omega_{f})} + \|\nabla \bd{\Phi}^{\omega^{n + 1}_{N}}_{f}\|_{L^{\infty}(\Omega_{f})}\Big)\left\|(\Delta t)^{-1} \nabla \Big(\bd{\Phi}^{\omega^{n + 1}_{N}}_{f} - \bd{\Phi}^{\omega^{n}_{N}}_{f}\Big)\right\|_{L^{2}(\Omega_{f})} \le C\|\bd{\zeta}^{n + \frac{1}{2}}_{N}\|_{H^{1}(\Gamma)},
\end{multline*}
and hence, 
\begin{equation*}
\sup_{\|\bd{q}\|_{Q^{n}_{N}} \le 1} \left|\int_{\Omega^{n}_{f, N}} \frac{1}{\Delta t} \left(\frac{\mathcal{J}^{\omega^{n + 1}_{N}}_{f}}{\mathcal{J}^{\omega^{n}_{N}}_{f}} - 1\right) \bd{u}^{n + 1}_{N} \cdot \bd{q}\right| \le C\|\bd{\zeta}^{n + \frac{1}{2}}_{N}\|_{H^{1}(\Gamma)} \|\bd{u}^{n + 1}_{N}\|_{L^{2}(\Omega_{f})} \le C\Big(1 + \|\bd{\zeta}^{n + \frac{1}{2}}_{N}\|_{H^{1}(\Gamma)}\Big)^{3/2}.
\end{equation*}

\medskip

\noindent \textbf{Term 5.} Next, we estimate the term, using $\|\bd{q}\|_{H^{3}(\Omega_{f})} \le 1$:
\begin{align*}
\sup_{\|\bd{q}\|_{Q^{n}_{N}} \le 1} \left|\int_{\Gamma^{n}_{N}} (\bd{u}^{n + 1}_{N} - \bd{\zeta}^{n + 1}_{N}) \cdot \bd{n} (\tilde{\bd{u}}^{n}_{N} \cdot \bd{q})\right| &\le C\Big(\|\bd{u}^{n + 1}_{N}\|_{L^{4}(\Gamma)} \|\bd{u}^{n}_{N}\|_{L^{4}(\Gamma)} + \|\bd{\zeta}^{n + 1}_{N}\|_{L^{2}(\Gamma)}\|\bd{u}^{n}_{N}\|_{L^{2}(\Gamma)}\Big) \\
&\le C\Big(\|\bd{u}^{n + 1}_{N}\|_{L^{4}(\Gamma)} \|\bd{u}^{n}_{N}\|_{L^{4}(\Gamma)} + \|\bd{u}^{n}_{N}\|_{L^{2}(\Gamma)}\Big) \\
&\le C\Big(1 + \|\bd{u}^{n}_{N}\|_{V^{n}_{N}} + \|\bd{u}^{n + 1}_{N}\|_{V^{n + 1}_{N}}\Big)^{3/2},
\end{align*}
using the same interpolation estimates as in Term 2.

\medskip

\noindent \textbf{Term 6.} Finally, we estimate the slip term, by using the fact that $\|\bd{q}\|_{H^{3}(\Omega_{f})} \le 1$, Sobolev embedding, and the trace theorem:
\begin{equation*}
\sup_{\|\bd{q}\|_{Q^{n}_{N}} \le 1} \left|\int_{\Gamma^{n}_{N}} (\bd{\zeta}^{n + 1}_{N} - \bd{u}^{n + 1}_{N}) \cdot \bd{\tau} (\bd{q} \cdot \bd{\tau})\right| \le C\Big(\|\bd{\zeta}^{n + 1}_{N}\|_{L^{2}(\Gamma)} + \|\bd{u}^{n + 1}_{N}\|_{L^{2}(\Gamma)}\Big) \le C\Big(1 + \|\bd{u}^{n + 1}_{N}\|_{V^{n + 1}_{N}}\Big)^{3/2}.
\end{equation*}

\medskip

\noindent \textbf{Conclusion.} Combining the preceding estimates, we obtain that
\begin{equation*}
\sup_{\|\bd{q}\|_{Q^{n}_{N}} \le 1} \left|\int_{\Omega^{n}_{f, N}} \frac{\bd{u}^{n + 1}_{N} - \tilde{\bd{u}}^{n}_{N}}{\Delta t} \cdot \bd{q}\right| \le C\Big(1 + \|p^{n + 1}_{N}\|_{H^{1}(\Omega_{b})} + \|\bd{\zeta}^{n + \frac{1}{2}}_{N}\|^{2}_{H^{1}(\Gamma)} + \|\bd{u}^{n}_{N}\|_{V^{n}_{N}} + \|\bd{u}^{n + 1}_{N}\|_{V^{n + 1}_{N}}\Big)^{3/2},
\end{equation*}
which establishes Generalized Property B, since $\displaystyle (\Delta t) \sum_{n = 1}^{N} a^{n}_{N} \le C$ uniformly in $N$ for
\begin{equation*}
a^{n}_{N} := 1 + \|p^{n}_{N}\|_{H^{1}(\Omega_{b})} + \|\bd{\zeta}^{n + \frac{1}{2}}_{N}\|_{H^{1/2}(\Gamma)}^{2}
\end{equation*}
by uniform bounds in $N$ of $p_{N} \in L^{2}(0, T; H^{1}(\Omega_{b}))$ and $\zeta^{*}_{N} \in L^{4}(0, T; H^{1}(\Gamma)) \cap L^{2}(0, T; H^{2}(\Gamma))$.
\end{proof}

We hence have convergence of the physical fluid velocities on the maximal fluid domain in $L^{2}(0, T; L^{2}(\Omega))$. Because the semidiscrete formulation involves the fluid velocity transferred to the fixed domain, we want to obtain a corresponding convergence result for the fluid velocities $\hat{\bd{u}}_{N}$ on the fixed domain $\Omega_{f}$. In addition, the semidiscrete weak formulation also involves the trace of the fluid velocity along the moving interface, so we also establish the following convergence result for the traces of the fluid velocities, which follows from the strong convergence $\bd{u}_{N} \to \bd{u}$ in $L^{2}(0, T; L^{2}(\Omega))$.

\begin{proposition}\label{traceconvprop}
We also have the following strong convergences for the fluid velocities $\hat{\bd{u}}_{N} := \bd{u}_{N} \circ \Phi^{\omega_{N}}_{f}$ defined on the fixed fluid domain $\Omega_{f}$:
\begin{equation*}
\hat{\bd{u}}_{N} \to \hat{\bd{u}}, \quad \text{ strongly in $L^{2}(0, T; L^{2}(\Omega_{f}))$},
\end{equation*}
\begin{equation*}
\hat{\bd{u}}_{N}|_{\Gamma} \to \hat{\bd{u}}|_{\Gamma}, \quad \text{ strongly in $L^{2}(0, T; L^{2}(\Gamma))$}.
\end{equation*}
\end{proposition}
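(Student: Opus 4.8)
The plan is to transfer the strong convergence $\bd{u}_N \to \bd{u}$ in $L^{2}(0,T;L^{2}(\Omega))$ from Proposition~\ref{compactfluid} to the fixed reference domain, using the uniform geometric control on the ALE maps from \eqref{uniformgeom2} and the strong convergence $\overline{\bd{\omega}}_N \to \bd{\omega}$ in $C(0,T;C^{1}(\Gamma))$ (which follows from Proposition~\ref{omegaconv} by taking $s \in (3/2,2)$ and using $H^{s}(\Gamma) \hookrightarrow C^{1}(\Gamma)$), and then to upgrade the resulting $L^{2}$-in-space convergence to a fractional Sobolev convergence strong enough to pass to traces. Throughout, $\hat{\bd{u}}_N = \bd{u}_N \circ \bd{\Phi}^{\omega_N}_{f}$ should be read, consistently with \eqref{hatu}, with $\bd{\Phi}^{\omega_N}_{f}$ built from the plate displacement of the preceding splitting substep; the difference between the consecutive indices $\omega^{n}_N, \omega^{n+1}_N$ is $O(\Delta t)$ in $C^{1}(\overline{\Omega_f})$ by elliptic regularity, hence negligible in the limit.

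For the first convergence, decompose $\hat{\bd{u}}_N - \hat{\bd{u}} = (\bd{u}_N - \bd{u}) \circ \bd{\Phi}^{\omega_N}_{f} + \big( \bd{u}\circ\bd{\Phi}^{\omega_N}_{f} - \bd{u}\circ\bd{\Phi}^{\omega}_{f} \big)$. For the first term, the change of variables $\bd{y} = \bd{\Phi}^{\omega_N}_{f}(\hat{\bd{x}})$ together with the lower bound $\mathcal{J}^{\omega_N}_{f} \ge c_{3}^{-1}$ from \eqref{uniformgeom2} gives $\int_{\Omega_f} |(\bd{u}_N-\bd{u})\circ\bd{\Phi}^{\omega_N}_{f}|^{2} \le C \int_{\Omega} |\bd{u}_N - \bd{u}|^{2}$, which tends to $0$ in $L^{2}(0,T)$ by Proposition~\ref{compactfluid} (here $\bd{u}_N$ is the zero-extension to the maximal domain $\Omega \supset \Omega^{n}_{f,N}$, which keeps the supports matched). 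For the second term, since $\bd{\Phi}^{\omega_N}_{f} \to \bd{\Phi}^{\omega}_{f}$ uniformly on $\overline{\Omega_f}$, uniformly in $t$, and both maps have Jacobians bounded above and below, a density argument — approximate $\bd{u}$ by functions continuous on $\overline{\Omega}$, for which composition converges uniformly, and control the error with the uniform Jacobian bounds — yields $\bd{u}\circ\bd{\Phi}^{\omega_N}_{f} \to \bd{u}\circ\bd{\Phi}^{\omega}_{f}$ in $L^{2}(0,T;L^{2}(\Omega_f))$. Hence $\hat{\bd{u}}_N \to \hat{\bd{u}} := \bd{u}\circ\bd{\Phi}^{\omega}_{f}$ strongly in $L^{2}(0,T;L^{2}(\Omega_f))$; moreover $\hat{\bd{u}} \in L^{2}(0,T;H^{1}(\Omega_f))$ as the weak limit of the sequence $\hat{\bd{u}}_N$, which is uniformly bounded in $L^{2}(0,T;H^{1}(\Omega_f))$ as shown in the proof of Proposition~\ref{traceuniform}.

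For the trace convergence, the point is that taking traces does not commute with mere $L^{2}$ convergence, so one first gains fractional regularity by interpolation: for $s \in (0,1)$ and a.e.\ $t$, $\|\hat{\bd{u}}_N(t) - \hat{\bd{u}}(t)\|_{H^{s}(\Omega_f)} \le \|\hat{\bd{u}}_N(t) - \hat{\bd{u}}(t)\|_{L^{2}(\Omega_f)}^{1-s} \|\hat{\bd{u}}_N(t) - \hat{\bd{u}}(t)\|_{H^{1}(\Omega_f)}^{s}$, and integrating in $t$ with H\"older's inequality (exponents $\tfrac{1}{1-s}, \tfrac{1}{s}$) gives $\|\hat{\bd{u}}_N - \hat{\bd{u}}\|_{L^{2}(0,T;H^{s}(\Omega_f))}^{2} \le \|\hat{\bd{u}}_N - \hat{\bd{u}}\|_{L^{2}(0,T;L^{2}(\Omega_f))}^{2(1-s)} \|\hat{\bd{u}}_N - \hat{\bd{u}}\|_{L^{2}(0,T;H^{1}(\Omega_f))}^{2s} \to 0$, using the first convergence and the uniform $H^{1}$ bound. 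Fixing $s \in (1/2,1)$, the trace operator $H^{s}(\Omega_f) \to L^{2}(\Gamma)$ is bounded, so $\hat{\bd{u}}_N|_{\Gamma} \to \hat{\bd{u}}|_{\Gamma}$ strongly in $L^{2}(0,T;L^{2}(\Gamma))$.

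The main obstacle is the first convergence: one must pass strong $L^{2}$ convergence through composition with the moving, $N$-dependent ALE maps, which requires simultaneously invoking the maximal-domain convergence, the uniform two-sided Jacobian bounds, the uniform convergence of the maps (with a density step since the limit velocity is a priori only $L^{2}$ in space), and careful bookkeeping of the zero-extension so that the supports coincide. Once this is established, the trace statement is a short interpolation-plus-trace-theorem argument, resting entirely on the uniform $H^{1}$ bound for the fixed-domain fluid velocities already obtained in the proof of Proposition~\ref{traceuniform}.
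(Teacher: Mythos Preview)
Your proof is correct and follows the same overall decomposition as the paper: both split $\hat{\bd{u}}_N - \hat{\bd{u}}$ into a term $(\bd{u}_N - \bd{u})\circ\bd{\Phi}^{\omega_N}_f$ handled by a change of variables and Proposition~\ref{compactfluid}, and a term $\bd{u}\circ\bd{\Phi}^{\omega_N}_f - \bd{u}\circ\bd{\Phi}^{\omega}_f$ coming from the convergence of the ALE maps; and both derive the trace convergence from the $L^2$ convergence via interpolation with the uniform $H^1$ bound (the paper simply cites Lemma~9.1 of \cite{FPSIJMPA} for this step, which is exactly your interpolation-plus-trace argument).

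The one genuine difference is in the second term. You use a density/continuity-of-translation argument: approximate $\bd{u}$ in $L^2(0,T;L^2(\Omega))$ by spatially continuous functions, for which composition with the uniformly converging maps converges pointwise, and control the approximation error via the two-sided Jacobian bounds. The paper instead exploits the $H^1$ spatial regularity of the limit $\bd{u}$ directly, repeating the geometric $A_1/A_2$ splitting from Lemma~\ref{tildecomp}: on $A_1$ (the preimage of the intersection of the two moving domains) one applies the fundamental theorem of calculus along radial/circular arcs to get a bound by $\|\bd{\omega}-\bd{\omega}_N\|_{H^1(\Gamma)}\|\bd{u}\|_{H^1}$, and on $A_2$ (a thin annulus of width $\sim\|\bd{\omega}-\bd{\omega}_N\|_{C(\Gamma)}$) one uses the vanishing of the extension by zero. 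Your route is more elementary and requires nothing of $\bd{u}$ beyond $L^2$ integrability, while the paper's route is quantitative and reuses machinery already developed for the compactness argument. Either is fine here.
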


\begin{proof}
By the uniform geometric bounds in Proposition \ref{geometricN}, we have that $\{\hat{\bd{u}}_{N}\}_{N = 1}^{\infty}$ and $\hat{\bd{u}}$ are all uniformly bounded in $L^{2}(0, T; H^{1}(\Omega_{f}))$ independently of $N$, so it suffices to show the convergence $\hat{\bd{u}}_{N} \to \hat{\bd{u}}$ in $L^{2}(0, T; L^{2}(\Omega_{f}))$ as this implies the corresponding convergence of the traces by Lemma 9.1 in \cite{FPSIJMPA}. We define
\begin{equation*}
\tilde{\bd{u}}_{N} := \bd{u} \circ \bd{\Phi}^{\omega_{N}}_{f}
\end{equation*}
and estimate
\begin{equation*}
\|\tilde{\bd{u}}_{N} - \hat{\bd{u}}_{N}\|_{L^{2}(0, T; L^{2}(\Omega_{f}))}^{2} = \int_{0}^{T} \int_{\Omega_{f}} |\tilde{\bd{u}}_{N} - \hat{\bd{u}}_{N}|^{2} = \int_{0}^{T} \int_{\Omega_{f, N}(t)} \mathcal{J}^{\omega_{N}(t)}_{f} |\bd{u}_{N} - \bd{u}|^{2} \le C\int_{0}^{T} \int_{\Omega} |\bd{u}_{N} - \bd{u}|^{2} \to 0,
\end{equation*}
where we used the fact that $\Omega^{n}_{f, N} \subset \Omega$ for all $n$ and $N$, the uniform geometric bounds in Proposition \ref{geometricN}, and the fact that $\bd{u}_{N} \to \bd{u}$ in $L^{2}(0, T; L^{2}(\Omega))$ on the maximal domain as $N \to \infty$. 

So it suffices to show that $\|\tilde{\bd{u}}_{N} - \hat{\bd{u}}\|_{L^{2}(0, T; L^{2}(\Omega_{f}))}^{2} \to 0$, which is a similar computation to the computation in Lemma \ref{tildecomp}. Namely, we estimate
\begin{equation*}
\int_{0}^{T} \int_{\Omega_{f}} |\tilde{\bd{u}}_{N} - \hat{\bd{u}}|^{2} = \int_{0}^{T} \int_{A_{1}(t)} \cdot + \int_{0}^{T} \int_{A_{2}(t)} \Big|\hat{\bd{u}} - \hat{\bd{u}} \circ \Big((\bd{\Phi}^{\omega_{N}}_{f})^{-1} \circ \bd{\Phi}^{\omega}_{f}\Big)\Big|^{2},
\end{equation*}
for
\begin{equation*}
A_{1}(t) := (\bd{\Phi}^{\omega_{N}(t)}_{f})^{-1} (\Omega_{f, N}(t) \cap \Omega_{f}(t)) \text{ and } A_{2}(t) := \Omega_{f} \setminus A_{1}(t),
\end{equation*}
where we are using $\Omega_{f}(t)$ to denote the moving fluid domain associated with the limiting structure displacement $\bd{\omega}(t)$. To estimate the integral over $A_{1}(t)$, note that
\begin{equation*}
|\bd{\Phi}^{\omega_{N}}_{f}(x, y) - \bd{\Phi}^{\omega}_{f}(x, y)| \le \|\bd{\Phi}^{\omega_{N}}(\cdot) - \bd{\Phi}^{\omega}_{N}(\cdot)\|_{H^{3/2}(\Omega_{f})} \le C\|\bd{\omega}_{N} - \bd{\omega}\|_{H^{1}(\Gamma)}.
\end{equation*}
Then, combining this with the same calculation as in \eqref{A1bound1}, \eqref{A1bound2}, and \eqref{unNtilde1}, we estimate that
\begin{align}\label{A1final}
\int_{0}^{T} \int_{A_{1}(t)} \Big|\hat{\bd{u}} - \hat{\bd{u}} \circ \Big((\bd{\Phi}^{\omega_{N}}_{f})^{-1} \circ \bd{\Phi}^{\omega}_{f}\Big)\Big|^{2} &\le C\int_{0}^{T} \|\bd{\omega} - \bd{\omega}_{N}\|_{H^{1}(\Gamma)}^{2} \|\bd{u}\|_{H^{1}(\Omega_{f}(t))}^{2} \nonumber \\
&\le C\|\bd{\omega} - \bd{\omega}_{N}\|_{C(0, T; H^{1}(\Gamma)}^{2} \|\bd{u}\|^{2}_{L^{2}(0, T; H^{1}(\Omega_{f}(t))} \to 0,
\end{align}
by the strong convergence of the plate displacements in Proposition \ref{omegaconv}. To estimate the integral over $A_{2}(t)$, we use the calculation in \eqref{lengthcalc1} to conclude that the maximum radial ``width" of the region $A_{2}(t)$ at a coordinate of $z \in \Gamma$ is less than or equal to $$r_{\Delta t}(t, z) := C(\bd{\omega}(t, z) - \bd{\omega}_{N}(t, z))$$ so that by the same calculation in \eqref{unNtilde2},
\begin{align}\label{A2final}
\int_{0}^{T} \int_{A_{2}(t)} \Big|\hat{\bd{u}} - \hat{\bd{u}} \circ \Big((\bd{\Phi}^{\omega_{N}}_{f})^{-1} \circ \bd{\Phi}^{\omega}_{f}\Big)\Big|^{2} 
&\le C \int_{0}^{T} \|\bd{\omega} - \bd{\omega}_{N}\|_{C(\Gamma)}^{2} \|\bd{u}\|_{H^{1}(\Omega_{f}(t))}^{2} \nonumber \\
&\le C \|\bd{\omega} - \bd{\omega}_{N}\|_{C(0, T; H^{3/2}(\Gamma))}^{2} \|\bd{u}\|^{2}_{L^{2}(0, T; H^{1}(\Omega_{f}(t)))} \to 0,
\end{align}
by the strong convergence in Proposition \ref{omegaconv}. This completes the proof, as we can combine \eqref{A1final} and \eqref{A2final} to conclude that $\hat{\bd{u}}_{N} \to \hat{\bd{u}}$ in $L^{2}(0, T; L^{2}(\Omega_{f}))$. 
\end{proof}

\medskip

\subsection{Convergence of test functions and passage to the limit}\label{testfunctions}

The final step of the existence proof is to pass to the limit as $N \to \infty$ in the semidiscrete formulation \eqref{weakBiot1} to obtain the limiting weak formulation given in Definition \ref{plateweak}. Let us summarize the strong convergences that we have so far, which will be useful for this limit passage.

\begin{proposition}[Strong convergences]\label{strongsummary}
We have the following \textit{strong} convergences of the approximate solutions:
\begin{itemize}
\item $\bd{\omega}_{N} \to \bd{\omega}$ strongly in $C(0, T; C(\Gamma))$.
\item $\bd{\eta}_{N} \to \bd{\eta}$ strongly in $C(0, T; L^{2}(\Gamma))$.
\item $p_{N} \to p$ strongly in $L^{2}(0, T; L^{2}(\Omega_{b}))$.
\item $(\bd{\xi}_{N}, \bd{\zeta}_{N}) \to (\partial_{t}\bd{\eta}, \partial_{t}\bd{\omega})$ strongly in $L^{2}(0, T; H^{-s}(\Omega_{b}) \times H^{-s}(\Gamma))$ for $0 < s < 1$.
\item $\hat{\bd{u}}_{N} \to \hat{\bd{u}}$ strongly in $L^{2}(0, T; L^{2}(\Omega_{f}))$.
\item $\hat{\bd{u}}_{N}|_{\Gamma} \to \hat{\bd{u}}|_{\Gamma}$ strongly in $L^{2}(0, T; L^{2}(\Gamma))$. 
\end{itemize}
\end{proposition}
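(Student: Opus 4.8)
The plan is essentially one of assembly and bookkeeping: this proposition collects into a single statement the strong compactness results already established --- Proposition~\ref{omegaconv} for the Biot and plate displacements, the Dreher--Jungel argument for $p_N$, the coupled Dreher--Jungel argument for $(\bd{\xi}_N,\bd{\zeta}_N)$, Proposition~\ref{compactfluid} for the fluid velocities extended by zero to the maximal domain, and Proposition~\ref{traceconvprop} for the fluid velocities and their traces on the reference geometry. The only genuine work is to pass between the piecewise-constant extensions $(\bd{u}_N,\bd{\eta}_N,p_N,\bd{\omega}_N,\bd{\zeta}_N)$ of \eqref{approxdef}--\eqref{zetaN} and the linear interpolations $(\overline{\bd{u}}_N,\overline{\bd{\eta}}_N,\overline{p}_N,\overline{\bd{\omega}}_N,\overline{\bd{\zeta}}_N)$, since the compactness arguments were phrased in terms of whichever representative was most convenient while the limit passage into the weak formulation of Definition~\ref{plateweak} requires all of them to converge to the \emph{same} limit.

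First I would record the elementary reconciliation estimate: for each dynamical variable $f$, one has $\|\overline{f}_N - f_N\|_{L^2(0,T;H)}\to 0$ in its natural energy space $H$ (namely $H^1(\Omega_b)$ for $\bd{\eta}$, $H^2(\Gamma)$ for $\bd{\omega}$, and $L^2$ for the velocities and the pore pressure), which follows from the uniform numerical-dissipation bound \eqref{numerdiss} controlling $(\Delta t)\sum_n\|f^{n+1}_N - f^n_N\|_H^2$, exactly as in the proof of Proposition~\ref{omegaconv}. Hence the piecewise-constant and interpolated families share the same weak and weak-star limits, and any strong limit obtained for one transfers to the other. With this in hand each bullet follows: $\bd{\omega}_N\to\bd{\omega}$ in $C(0,T;C(\Gamma))$ by Proposition~\ref{omegaconv} with $1/2<s<2$ so that $H^s(\Gamma)\hookrightarrow C(\Gamma)$; $\bd{\eta}_N\to\bd{\eta}$ in $C(0,T;L^2(\Gamma))$ by interpolating the $C(0,T;L^2(\Omega_b))$ convergence of Proposition~\ref{omegaconv} against the uniform $L^\infty(0,T;H^1(\Omega_b))$ bound of Proposition~\ref{uniformenergy} and applying the trace theorem; $p_N\to p$ in $L^2(0,T;L^2(\Omega_b))$ is the Dreher--Jungel statement verbatim; $(\bd{\xi}_N,\bd{\zeta}_N)\to(\partial_t\bd{\eta},\partial_t\bd{\omega})$ in $L^2(0,T;H^{-s}(\Omega_b)\times H^{-s}(\Gamma))$, $0<s<1$, is the coupled Dreher--Jungel statement, with the identification of the limit following by testing the discrete relation $\bd{\xi}_N=\partial_t\overline{\bd{\eta}}_N$ (which holds exactly, by the definition of $\dot{\bd{\eta}}^{n+1}_N$) against smooth functions and using the weak-star convergence $\overline{\bd{\eta}}_N\rightharpoonup\bd{\eta}$ in $W^{1,\infty}(0,T;L^2(\Omega_b))$; and the last two bullets are Proposition~\ref{compactfluid} followed by Proposition~\ref{traceconvprop}.

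The one point that genuinely needs care --- and the nearest thing to an obstacle --- is the staggering introduced by the splitting scheme. Because the plate velocity is updated twice per step, once in the plate subproblem (producing $\bd{\zeta}^{n+1/2}_N$, extended to $\bd{\zeta}^*_N$ via \eqref{zetastar}) and once in the Biot--fluid subproblem (producing $\bd{\zeta}^{n+1}_N$, extended to $\bd{\zeta}_N$ via \eqref{zetaN}), one has $\partial_t\overline{\bd{\omega}}_N=\bd{\zeta}^*_N$ rather than $\bd{\zeta}_N$. Identifying $\partial_t\bd{\omega}$ as the limit of $\bd{\zeta}_N$ therefore requires the auxiliary estimate $\|\bd{\zeta}_N-\bd{\zeta}^*_N\|_{L^2(0,T;L^2(\Gamma))}\to 0$, which is a direct consequence of the numerical-dissipation bound $h\sum_n\int_\Gamma|\bd{\zeta}^{n+1}_N-\bd{\zeta}^{n+1/2}_N|^2\le C$ contained in \eqref{numerdiss} (with $C$ depending on the fixed $h$, since $(\Delta t)\cdot C/h\to 0$ as $N\to\infty$); combined with the weak-star convergence $\overline{\bd{\omega}}_N\rightharpoonup\bd{\omega}$ in $W^{1,\infty}(0,T;L^2(\Gamma))$ this gives $\bd{\zeta}_N\rightharpoonup\partial_t\bd{\omega}$, which together with the strong $H^{-s}$ convergence from the Dreher--Jungel argument pins down the limit. (The transported fluid velocities $\tilde{\bd{u}}^n_N$ of \eqref{tildeuN} enter only internally, inside Lemma~\ref{generalizedB}, where they are absorbed by Lemma~\ref{tildecomp}, so Proposition~\ref{compactfluid} already delivers the convergence of the genuine approximate fluid velocities, not of an auxiliary transported version.) No estimate beyond those of the preceding subsections is needed; the proposition is closed by assembling them.
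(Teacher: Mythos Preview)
Your proposal is correct and matches the paper's approach: this proposition is stated in the paper purely as a summary of the strong convergences already proved in Section~\ref{compactN} (Propositions~\ref{omegaconv}, \ref{compactfluid}, \ref{traceconvprop}, and the two Dreher--J\"ungel results), with no separate proof given. You have in fact supplied more detail than the paper does, correctly identifying and resolving the two reconciliation issues---piecewise-constant versus linear interpolants via the numerical dissipation bound \eqref{numerdiss}, and the staggered plate velocities $\bd{\zeta}_N$ versus $\bd{\zeta}^*_N$---that the paper leaves implicit.
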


Because the moving domain appears in the semidiscrete weak formulations in terms of a nonlinear geometric condition on the test functions in the test space $\mathcal{V}^{\omega_{N}}_{f}$, namely:
\begin{equation*}
\nabla^{\tau_{\Delta t}\omega_{N}}_{f} \cdot \bd{v} = 0 \text{ on } \Omega_{f}
\end{equation*}
where we recall the definition of $\nabla^{\omega_{N}}_{f}$ from \eqref{omegadiff}, we must also carefully pass to the limit in the fluid test functions. Recall from the Biot/fluid subproblem in \eqref{weakBiot1} that the fluid velocity $\bd{u}^{n + 1}_{N}$ satisfies $\nabla^{\omega^{n}_{N}} \cdot \bd{u}^{n + 1}_{N} = 0$ on $\Omega_{f}$, in terms of the \textit{previous} structure displacement, and hence, the time-shifted approximate structure displacement $\tau_{\Delta t}\bd{\omega}_{N}$ appears in the divergence-free condition:
\begin{equation*}
\tau_{\Delta t} \bd{\omega}_{N}(t, \cdot) = \bd{\omega}_{N}(t - \Delta t, \cdot), \qquad \text{ for } \Delta t \le t \le T.
\end{equation*}
In contrast to other FSI problems which involve kinematic coupling between the fluid velocity and the plate velocity at the interface in terms of $\bd{u}|_{\Gamma} = \bd{\omega}$ (see for example \cite{MuhaCanic13}), the fluid test functions in this FPSI problem are fully decoupled from the other test functions, which somewhat simplifies this test function construction. This was already observed for moving domain (nonlinearly geometrically coupled) FPSI problems in \cite{FPSIJMPA}, for example.

The goal will be to consider a target test function for the limit and then approximate the target test function by test functions for the semidiscrete formulation \eqref{weakBiot1} which converge strongly to the target test function as $N \to \infty$. To define the \textit{target test space}, we define the function space $\mathcal{X}$ consisting of all continuously differentiable functions $\bd{v} \in C^{1}_{c}([0, T); H^{1}(\Omega))$ defined on the maximal domain $\Omega$, such that the following additional conditions are satisfied:
\begin{itemize}
\item \textbf{Spatial smoothness for all times.} $\bd{v}(t) \in C^{\infty}(\overline{\Omega})$ for all $t \in [0, T]$.
\item \textbf{Divergence-free condition.} $\nabla \cdot \bd{v} = 0$ for all $(t, x) \in [0, T] \times \Omega$.
\item \textbf{Dirichlet boundary conditions.} $\bd{v}|_{\partial \Omega_{f} \setminus \Gamma} = \bd{0}$. 
\end{itemize}
We will then use the ALE map to pull back the spatially smooth test functions in $\mathcal{X}$ back to the fixed fluid domain $\Omega_{f}$:
\begin{equation*}
\hat{\bd{v}} := \bd{v} \circ \bd{\Phi}^{\omega}_{f},
\end{equation*}
where $\omega$ is the limiting structure displacement. We denote the set of all possible test functions $\hat{\bd{v}}$ obtained this way from all possible choices of $\bd{v} \in \mathcal{X}$ by $\mathcal{X}^{\omega}_{f}$, which is the \textbf{target test space}. The target test space $\mathcal{X}^{\omega}_{f}$ has the essential property that it is dense in the full test space $\mathcal{V}^{\omega}_{f}$ defined in \eqref{Vomegaf} for the limiting problem, so it suffices to verify that the limiting weak formulation in Definition \ref{plateweak} holds for arbitrary test functions $\hat{\bd{v}} \in \mathcal{X}^{\omega}_{f}$. 

Using the fluid domain ALE map, we also define admissible test functions for each $N$ for the corresponding semidiscrete formulation using the test function $\bd{v} \in \mathcal{X}$:
\begin{equation}\label{hatvN}
\hat{\bd{v}}_{N} = \bd{v} \circ \bd{\Phi}^{\tau_{\Delta t} \omega_{N}}_{f},
\end{equation}
and the key property is that $\hat{\bd{v}}_{N}$ are admissible test functions in the sense that $\nabla^{\tau_{\Delta t} \omega_{N}}_{f} \cdot \hat{\bd{v}}_{N} = 0$ for all $t \in [0, T]$. Furthermore, these approximate test functions $\hat{\bd{v}}_{N}$ converge strongly to the limiting target test function $\bd{v} \in \mathcal{X}^{\omega}_{f}$ in the following sense, which (combined with the strong convergences in Proposition \ref{strongsummary} and the weak convergences in Proposition \ref{uniformenergy}) allows us to pass to the limit as $N \to \infty$ in the semidiscrete formulation with the test function $\hat{\bd{v}}_{N}$ to obtain the limiting weak formulation with test function $\hat{\bd{v}}$. 

\begin{proposition}\label{testconvergence}
For an arbitrary $\bd{v} \in \mathcal{X}$, we have the following convergences:
\begin{equation*}
\hat{\bd{v}}_{N} \to \hat{\bd{v}} \text{ and } \nabla \hat{\bd{v}}_{N} \to \nabla \hat{\bd{v}},
\end{equation*}
both uniformly and pointwise on $[0, T] \times \Omega_{f}$. 
\end{proposition}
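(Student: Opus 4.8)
The plan is to establish the convergence $\hat{\bd{v}}_{N} \to \hat{\bd{v}}$ and $\nabla \hat{\bd{v}}_{N} \to \nabla \hat{\bd{v}}$ by exploiting the elliptic (harmonic extension) definition of the ALE maps together with the strong convergence $\bd{\omega}_{N} \to \bd{\omega}$ in $C(0,T;C(\Gamma))$ from Proposition~\ref{strongsummary} (or, more sharply, in $C(0,T;H^{s}(\Gamma))$ for $s<2$ from Proposition~\ref{omegaconv}). First I would observe that since $\bd{v}\in\mathcal{X}$ is fixed and spatially smooth on the closed maximal domain $\overline{\Omega}$ for each time, with $\nabla\bd{v}$ and $\nabla^2\bd{v}$ bounded uniformly on $[0,T]\times\overline{\Omega}$, it suffices to control $\bd{\Phi}^{\tau_{\Delta t}\omega_{N}}_{f} - \bd{\Phi}^{\omega}_{f}$ and its gradient in a topology that embeds into $C^{1}(\overline{\Omega_f})$, and then to chain this through the chain rule: $\hat{\bd{v}}_{N}(t,\cdot) - \hat{\bd{v}}(t,\cdot) = \bd{v}(t,\bd{\Phi}^{\tau_{\Delta t}\omega_{N}}_{f}) - \bd{v}(t,\bd{\Phi}^{\omega}_{f})$, and $\nabla\hat{\bd{v}}_{N} = (\nabla\bd{v})\circ\bd{\Phi}^{\tau_{\Delta t}\omega_N}_f\cdot\nabla\bd{\Phi}^{\tau_{\Delta t}\omega_N}_f$, with the analogous expression for $\nabla\hat{\bd{v}}$.

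Second, I would quantify the convergence of the ALE maps. By linearity of the harmonic extension and elliptic regularity (as in the estimate following \eqref{aleeta2}, namely $\|\bd{\Phi}^{\omega_1}_f - \bd{\Phi}^{\omega_2}_f\|_{W^{2,p}(\Omega_f)}\le C_p\|\omega_1-\omega_2\|_{W^{2-1/p,p}(\Gamma)}$ for $p\ge2$), we get for $p>2$, by Sobolev embedding $W^{2,p}(\Omega_f)\hookrightarrow C^{1}(\overline{\Omega_f})$,
\begin{equation*}
\|\bd{\Phi}^{\tau_{\Delta t}\omega_{N}}_f - \bd{\Phi}^{\omega}_f\|_{C^{1}(\overline{\Omega_f})} \le C\,\|\bd{\Phi}^{\tau_{\Delta t}\omega_{N}}_f - \bd{\Phi}^{\omega}_f\|_{W^{2,p}(\Omega_f)} \le C\,\|\tau_{\Delta t}\bd{\omega}_{N} - \bd{\omega}\|_{W^{2-1/p,p}(\Gamma)}.
\end{equation*}
Since $\Gamma$ is one-dimensional, $W^{2-1/p,p}(\Gamma)$ interpolates between $L^2$ and $H^2$, and by the uniform bound of $\bd{\omega}_{N}$ in $L^{\infty}(0,T;H^{2}(\Gamma))$ (Proposition~\ref{uniformenergy}) together with $\bd{\omega}_{N}\to\bd{\omega}$ strongly in $C(0,T;H^{s}(\Gamma))$ for every $s<2$ (Proposition~\ref{omegaconv}), we obtain $\|\tau_{\Delta t}\bd{\omega}_{N}-\bd{\omega}\|_{C(0,T;W^{2-1/p,p}(\Gamma))}\to0$ for $p$ close enough to $2$; here the time-shift $\tau_{\Delta t}$ contributes an error $\|\tau_{\Delta t}\bd{\omega}_N - \bd{\omega}_N\|\le (\Delta t)\|\partial_t\overline{\bd\omega}_N\|$, which vanishes using the $H^1(0,T;H^2(\Gamma))$ bound on $\overline{\bd\omega}_N$ and interpolation. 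Hence $\bd{\Phi}^{\tau_{\Delta t}\omega_N}_f\to\bd{\Phi}^{\omega}_f$ and $\nabla\bd{\Phi}^{\tau_{\Delta t}\omega_N}_f\to\nabla\bd{\Phi}^{\omega}_f$ uniformly on $[0,T]\times\overline{\Omega_f}$. Combining with uniform continuity of $\bd{v}$, $\nabla\bd{v}$, $\nabla^2\bd{v}$ on the compact set $[0,T]\times\overline{\Omega}$ and the triangle inequality gives the claimed uniform (hence pointwise) convergence of $\hat{\bd{v}}_{N}$ and $\nabla\hat{\bd{v}}_{N}$. Finally, admissibility, i.e. $\nabla^{\tau_{\Delta t}\omega_N}_f\cdot\hat{\bd{v}}_N = 0$, follows directly: by \eqref{transformdiv}, $\nabla^{\tau_{\Delta t}\omega_N}_f\cdot\hat{\bd{v}}_N = (\nabla\cdot\bd{v})\circ\bd{\Phi}^{\tau_{\Delta t}\omega_N}_f = 0$ since $\bd{v}$ is divergence-free on $\Omega$, and likewise $\hat{\bd{v}}_N$ vanishes on $\partial\Omega_f\setminus\Gamma$ because $\bd{\Phi}^{\tau_{\Delta t}\omega_N}_f$ fixes that part of the boundary and $\bd{v}$ vanishes there.

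The main obstacle I anticipate is making the elliptic estimate deliver $C^{1}(\overline{\Omega_f})$ control uniformly in $N$ while only having $\bd{\omega}_N$ converge in $H^s(\Gamma)$ for $s<2$ and not in $H^2(\Gamma)$: one must be careful that the Sobolev index $2-1/p$ with $p>2$ is strictly below $2$, so that the interpolation argument closes, yet still above $3/2$, so that $W^{2,p}(\Omega_f)\hookrightarrow C^1$. This is a genuine but standard tightrope — it works because $2-1/p\in(3/2,2)$ precisely for $p\in(2,\infty)$ — and the same subtlety was already handled in the boundedness estimate preceding Proposition~2.1 of \cite{Tawri3D}, which the paper cites. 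A secondary technical point is the handling of the time-shift $\tau_{\Delta t}$, which must be absorbed into the $C(0,T;H^s(\Gamma))$ convergence using the uniform-in-$N$ $H^1$-in-time regularity of $\overline{\bd\omega}_N$; this is routine given the bounds already in hand.
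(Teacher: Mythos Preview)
Your proposal is correct and follows essentially the same route as the paper: reduce to $C^{1}(\overline{\Omega_f})$-convergence of the ALE maps via elliptic regularity for the harmonic extension, then feed this through the chain rule using the spatial smoothness of $\bd{v}$, and finally absorb the time shift $\tau_{\Delta t}$ using the uniform time regularity of $\bd{\omega}_N$. The only cosmetic difference is that the paper works in the $H^{5/2-\epsilon}(\Omega_f)$ scale (boundary data in $H^{2-\epsilon}(\Gamma)$) while you phrase the same elliptic estimate in the $W^{2,p}(\Omega_f)$ scale for $p>2$ (boundary data in $W^{2-1/p,p}(\Gamma)\supset H^{5/2-2/p}(\Gamma)$); both embed into $C^{1}$ and both require only $H^{s}(\Gamma)$-convergence of $\bd{\omega}_N$ for some $s<2$, which is exactly what Proposition~\ref{omegaconv} provides.
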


\begin{proof}
Since $\bd{v}$ is spatially smooth, it suffices to show that for $\epsilon > 0$ sufficiently small, chosen so that $5/2 - \epsilon > 2$:
\begin{equation}\label{vconv1}
\|\bd{\Phi}^{\omega}_{f} - \bd{\Phi}^{\tau_{\Delta t}\omega_{N}}_{f}\|_{L^{\infty}([0, T] \times \Omega_{f})} \le C\|\bd{\omega} - \tau_{\Delta t}\bd{\omega}_{N}\|_{C(0, T; H^{2 - \epsilon}(\Gamma))},
\end{equation}
\begin{equation}\label{vconv2}
\|\nabla \bd{\Phi}^{\omega}_{f} - \nabla \bd{\Phi}^{\tau_{\Delta t}\omega_{N}}_{f}\|_{L^{\infty}([0, T] \times \Omega_{f})} \le C\|\bd{\omega} - \tau_{\Delta t}\bd{\omega}_{N}\|_{C(0, T; H^{2 - \epsilon}(\Gamma))},
\end{equation}
as the result would follow by using that $\bd{v}$ is Lipschitz continuous in space (with a uniform Lipschitz constant in time) and from the convergence of $\bd{\omega}_{N} \to \bd{\omega}$ in $C(0, T; H^{s}(\Gamma))$ for all $0 < s < 2$. In addition, to show that $\|\bd{\omega}_{N} - \tau_{\Delta t} \bd{\omega}_{N}\|_{C(0, T; H^{2 - \epsilon}(\Gamma)} \to 0$ as $N \to \infty$, we use the uniform bound of $\bd{\omega}_{N}$ in $C^{\alpha}(0, T; H^{2 - \epsilon}(\Gamma))$ for some $\alpha > 0$, which follows from the interpolation and the uniform bounds of $\bd{\omega}_{N}$ in $L^{\infty}(0, T; H^{2}(\Gamma)) \cap W^{1, \infty}(0, T; L^{2}(\Gamma))$. 

To establish these inequalities \eqref{vconv1} and \eqref{vconv2}, note that $\bd{\Phi}^{\omega}_{f} - \bd{\Phi}^{\omega_{N}}_{f}$ satisfies $\Delta (\bd{\Phi}^{\omega}_{f} - \bd{\Phi}^{\omega_{N}}_{f}) = 0$. Furthermore, it is equal to $\bd{\omega} - \bd{\omega}_{N}$ along $\Gamma$ and has zero boundary data along the rest of $\partial \Omega_{f}$. Hence, by elliptic estimates, we conclude that
\begin{equation*}
\|\bd{\Phi}^{\omega}_{f} - \bd{\Phi}^{\omega_{N}}_{f}\|_{L^{\infty}(0, T; H^{5/2 - \epsilon}(\Omega_{f}))} \le C\|\bd{\omega} - \bd{\omega}_{N}\|_{L^{\infty}(0, T; H^{2 - \epsilon}(\Gamma))},
\end{equation*}
which establishes the desired estimates by Sobolev embedding, since $5/2 - \epsilon > 2$. 
\end{proof}

We can thus conclude the proof of existence of a regularized interface weak solution to the FPSI problem with a plate $h > 0$, with a regularized kinematic coupling condition, in the sense of Definition \ref{plateweak}, by considering $\hat{\bd{v}} \in \mathcal{X}^{\omega}_{f}$ in the target test space (which is dense in the full test space $\mathcal{V}^{\omega}_{f}$) and considering an approximating sequence of test functions $\hat{\bd{v}}_{N}$ as defined in \eqref{hatvN}, which are admissible test functions for the approximate semidiscrete weak formulation in \eqref{weakBiot1}, integrated over time on each subinterval $[t_{n}, t_{n + 1}]$ and summed over $n = 0, 1, ..., N - 1$. We can then pass to the limit as $N \to \infty$ in each of the terms in the resulting approximate weak formulations using the weak and weak-star convergences in Proposition \ref{uniformenergy}, the strong convergences in Proposition \ref{strongsummary}, and the strong convergence of the test functions $\hat{\bd{v}}_{N}$ to $\hat{\bd{v}}$ in Proposition \ref{testconvergence}. We then obtain the limiting weak formulation in Definition \ref{plateweak} in the limit as $N \to \infty$. For more details on the limit passage in the approximate weak formulations, we refer the reader to Section 9.3 in \cite{FPSIJMPA}, and related works on prototypical incompressible FSI models in Section 7.2 of \cite{MuhaCanic13} and Section 2.7.2 of \cite{CanicLectureNotes}. This establishes the existence of a regularized interface weak solution to the approximate system with a plate of thickness $h > 0$, see Theorem \ref{hthm}.

\section{Passage to the limit in the plate thickness, $h \to 0$}\label{finallimit}

Having obtained existence of weak regularized interface solutions for the approximate problem with a plate of thickness $h > 0$ in the sense of Definition \ref{plateweak}, we pass to the limit in the plate thickness $h \to 0$ with the objective of obtaining a weak (regularized interface) formulation for the coupled moving domain Biot-fluid problem without a plate interface separating the poroelastic Biot medium and the incompressible viscous fluid, in the sense of Definition \ref{regularweak}. Such a problem of a practical importance in applications of fluid-poroelastic structure interaction where the poroelastic medium is in direct contact with the fluid. 

In the previous Section \ref{hproof}, we have established existence of a regularized interface weak solution to a moving domain FPSI problem containing a plate of thickness $h > 0$ separating the Biot medium and the fluid, see Theorem \ref{hthm}. To emphasize the dependence on the thickness of the plate, we denote the solution to the problem with plate thickness $h > 0$ by $(\bd{u}_{h}, \bd{\eta}_{h}, p_{h}, \bd{\omega}_{h})$. We emphasize that the problem will still contain a regularization by $\delta$, and we will leave the parameter $\delta$ implicit in the notation for notational simplicity. While we take the limit as $h \to 0$ in the solutions $(\bd{u}_{h}, \bd{\eta}_{h}, p_{h}, \bd{\omega}_{h})$, we keep the regularization parameter $\delta > 0$ fixed but arbitrary throughout the limit passage as $h \to 0$.

From the existence proof, we have that the weak solutions to the regularized FPSI problem with plate thickness $h > 0$ satisfy the following energy inequality, as a consequence of the weak convergences in Proposition \ref{uniformenergy} and weak lower-semicontinuity:
\begin{multline}\label{energyhineq}
\frac{1}{2} \int_{\Omega_{f}(t)} |\bd{u}_{h}|^{2} + \frac{1}{2} \rho_{b} \int_{\Omega_{b}} |\partial_{t} \bd{\eta}_{h}|^{2} + \frac{1}{2} \int_{\Gamma} \left|h^{1/2} \partial_{t}\bd{\omega}_{h}\right|^{2} + \int_{0}^{t} \int_{\Gamma} \left|h^{1/2} \Delta \bd{\zeta}_{h}\right|^{2} + \frac{1}{2} \int_{\Gamma} \left|h^{1/2} \Delta \bd{\omega}_{h}\right|^{2} + \mu_{e} \int_{\Omega_{b}} |\bd{D}(\bd{\eta}_{h})|^{2} \\
+ \frac{1}{2} \lambda_{e} \int_{\Omega_{b}} |\nabla \cdot \bd{\eta}_{h}|^{2} + \frac{1}{2} c_{0} \int_{\Omega_{b}} |p_{h}|^{2} + 2\nu \int_{0}^{t} \int_{\Omega_{f}(t)} |\bd{D}(\bd{u}_{h})|^{2} + \beta \int_{0}^{t} \int_{\Gamma} \left(\mathcal{J}_{\Gamma}^{\eta_{h}}\right)^{-1} \left|\left(\bd{\zeta}_{h} - \bd{u}_{h}\right) \cdot \bd{\tau}^{\omega_{h}}\right|^{2} \\
+ 2\mu_{v} \int_{0}^{t} \int_{\Omega_{b}} |\bd{D}(\partial_{t}\bd{\eta}_{h})|^{2} + \lambda_{v} \int_{0}^{t} \int_{\Omega_{b}} |\nabla \cdot \partial_{t}\bd{\eta}_{h}|^{2} + \kappa \int_{0}^{t} \int_{\Omega_{b}} \mathcal{J}^{\eta^{\delta}_{h}}_{b} \left|\nabla^{\eta^{\delta}_{h}}_{b} p_{h}\right|^{2} \le E_{0},
\end{multline}
where $E_{0}$ is the initial energy, defined by:
\begin{multline*}
E_{0} := \frac{1}{2} \int_{\Omega_{f}(0)} |\bd{u}_{0}|^{2} + \frac{1}{2}\rho_{b} \int_{\Omega_{b}} |\partial_{t}\bd{\eta}_{0}|^{2} + \frac{1}{2} \int_{\Gamma} |h^{1/2} \partial_{t}\bd{\omega}_{0}|^{2} + \frac{1}{2} \int_{\Gamma} \left|h^{1/2} \Delta \bd{\omega}_{0}\right|^{2} + \mu_{e} \int_{\Omega_{b}} |\bd{D}(\bd{\eta}_{0})|^{2} \\
+ \frac{1}{2} \lambda_{e} \int_{\Omega_{b}} |\nabla \cdot \bd{\eta}_{0}|^{2} + \frac{1}{2} c_{0} \int_{\Omega_{b}} |p_{0}|^{2},
\end{multline*}
and the initial interface displacement $\bd{\omega}_{0}$ is defined by $\bd{\omega}_{0} := (\bd{\eta}_{0})^{\delta}|_{\Gamma}$ in agreement with the regularized kinematic coupling condition.

Before passing to the limit as the plate thickness $h \to 0$, we need some uniform control on geometric quantities involved in the problem. Namely, we want uniform control over the displacement of the fluid-Biot interface (which for $h > 0$ is the plate displacement) to avoid collision with the boundary of $\Omega$ (the entire combined domain) and geometric degeneracy. In addition, we want uniform control on the regularized Lagrangian map $\bd{\Phi}^{\eta^{\delta}}_{b}$ for the Biot domain and the ALE map for the fluid domain $\bd{\Phi}^{\omega}_{f}$, which will allow us to convert between time-dependent and fixed Biot/fluid domains uniformly in the plate thickness parameter $h$. This is the content of the following result, where we will assume without loss of generality that $h \le 1$, as we take the limit $h \to 0$ (compare to the corresponding uniform geometric bounds in $N$ for the $h$-level problem in Proposition \ref{coercivegeometry} and Proposition \ref{geometricN}). 

\begin{proposition}\label{geometryh}
There exists a time $T > 0$ independent of $0 < h \le 1$, such that there exists a regularized interface weak solution to the FPSI problem with plate thickness $h > 0$ on the time interval $[0, T]$. Furthermore, for this $T > 0$, there exist positive constants $R_{max}$, $c_{0}$, $c_{1}$, $c_{2}$, and $c_{3}$ (which are all independent of $0 < h \le 1$) such that the following geometric bounds hold uniformly in $0 < h \le 1$ for all $t \in [0, T]$:
\begin{enumerate}
\item \textbf{Non-degeneracy of the moving interface.} The map $\bd{\Phi}^{\omega_{h}}_{\Gamma}: \Gamma \to \Gamma(t)$ is injective, $\bd{\Phi}^{\omega_{h}}_{\Gamma}(\Gamma) \cap \partial \Omega = \varnothing$ for all $t \in [0, T]$, and for a positive constant $\alpha > 0$ that is independent of $0 < h \le 1$:
\begin{equation}\label{thbound}
|(-\sin(z), \cos(z)) + \partial_{z}\bd{\omega}_{h}(z)| \ge \alpha > 0, \qquad \text{ for all } z \in \Gamma.
\end{equation}
\item \textbf{Invertibility of the regularized Lagrangian map.} There exists positive constants $c_{0}, c_{1}, c_{2} > 0$ such that 
\begin{equation*}
\text{det}(\bd{I} + \nabla \bd{\eta}^{\delta}_{h}) \ge c_{0} > 0, \ \ |(\bd{I} + \nabla \bd{\eta}^{\delta}_{h})^{-1}| \ge c_{1} > 0, \ \  |\bd{I} + \nabla \bd{\eta}^{\delta}_{h}| \le c_{2} \quad \text{ on } \overline{\Omega_{b}}.
\end{equation*}
\item \textbf{Non-degeneracy of the (fluid domain) ALE map.} There exists a positive constant $c_{3} > 0$ such that 
\begin{equation*}
0 < c_{3}^{-1} \le \mathcal{J}^{\omega_{h}}_{f} \le c_{3}, \quad \text{ on } \overline{\Omega_{f}}, \quad |\nabla \bd{\Phi}^{\omega_{h}}_{f}| \le c_{3}, \quad \text{ on } \overline{\Omega_{f}}. 
\end{equation*}
\end{enumerate}
\end{proposition}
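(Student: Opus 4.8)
The plan is to convert the $h$-uniform energy bound \eqref{energyhineq} into $h$-uniform $C^{k}$ control of the regularized Biot displacement, and then run a continuation argument exactly as in Proposition \ref{geometricN}, with the plate-energy estimate \eqref{esth} replaced by an estimate coming purely from the Biot energy. First, since $h \le 1$ and the initial data is fixed, the initial energy $E_{0}$ in \eqref{energyhineq} is bounded independently of $h$ (the plate terms $\tfrac12\int_{\Gamma}|h^{1/2}\partial_{t}\bd{\omega}_{0}|^{2}$ and $\tfrac12\int_{\Gamma}|h^{1/2}\hat{\Delta}\bd{\omega}_{0}|^{2}$ are dominated by their values at $h=1$). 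Hence \eqref{energyhineq} yields, uniformly in $0 < h \le 1$: $\bd{\eta}_{h}$ bounded in $L^{\infty}(0,T;H^{1}(\Omega_{b}))$ and $\partial_{t}\bd{\eta}_{h}=\bd{\xi}_{h}$ bounded in $L^{\infty}(0,T;L^{2}(\Omega_{b}))$; in particular $\bd{\eta}_{h}$ is bounded in $W^{1,\infty}(0,T;L^{2}(\Omega_{b}))$, with $\bd{\eta}_{h}(0)=\bd{\eta}_{0}$ fixed. The remaining information from \eqref{energyhineq} about the plate displacement is scaled by $h^{1/2}$ and degenerates as $h\to0$; this is the central difficulty, and it is bypassed by the regularized kinematic coupling condition $\bd{\omega}_{h}=\bd{\eta}_{h}^{\delta}|_{\Gamma}$.

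Second, I pass to the regularized quantities. Because $\delta$ is fixed, the operator $f\mapsto (Ef*\varphi_{\delta})|_{\Omega_{b}}$, with $E$ the extension of Definition \ref{extension} and $\varphi_{\delta}$ the mollifier, maps $L^{2}(\Omega_{b})$ continuously into $C^{k}(\overline{\Omega_{b}})$ for every $k\ge 0$, with norm depending only on $k$ and $\delta$. Consequently $\{\bd{\eta}_{h}^{\delta}\}$ and $\{\bd{\xi}_{h}^{\delta}=\partial_{t}\bd{\eta}_{h}^{\delta}\}$ are bounded in $L^{\infty}(0,T;C^{k}(\overline{\Omega_{b}}))$ uniformly in $0<h\le1$, and $t\mapsto\bd{\eta}_{h}^{\delta}(t)$ is Lipschitz with values in $C^{k}(\overline{\Omega_{b}})$ with an $h$-independent Lipschitz constant. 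Since $\bd{\eta}_{h}^{\delta}(0)=\bd{\eta}_{0}^{\delta}$, this gives $\|\bd{\eta}_{h}^{\delta}(t)-\bd{\eta}_{0}^{\delta}\|_{C^{1}(\overline{\Omega_{b}})}\le C_{\delta}\,t$ and, by the trace inequality, $\|\bd{\omega}_{h}(t)-\bd{\omega}_{0}\|_{C^{1}(\Gamma)}\le C_{\delta}\,t$ for all $t\in[0,T]$. For the fluid ALE map, $\bd{\Phi}^{\omega_{h}}_{f}-\bd{\Phi}^{\omega_{0}}_{f}$ is harmonic with boundary data $\bd{\omega}_{h}-\bd{\omega}_{0}$ on $\Gamma$ and zero on $\partial\Omega_{f}\setminus\Gamma$, so by elliptic regularity and Sobolev embedding $\|\bd{\Phi}^{\omega_{h}}_{f}-\bd{\Phi}^{\omega_{0}}_{f}\|_{C^{1}(\overline{\Omega_{f}})}\le C\|\bd{\omega}_{h}-\bd{\omega}_{0}\|_{H^{2-\epsilon}(\Gamma)}\le C_{\delta}\,t$, using the $h$-uniform $C^{k}(\Gamma)$ bound on $\bd{\omega}_{h}$ (which controls any $H^{2-\epsilon}(\Gamma)$ norm) and its Lipschitz-in-time continuity.

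Third, the continuation argument. By the hypotheses of Theorem \ref{mainthm}, the initial maps $\bd{\Phi}^{\eta_{0}^{\delta}}_{b}$, $\bd{\Phi}^{\omega_{0}}_{f}$ and the initial interface parametrization satisfy the non-degeneracy conditions with strict constants (determinant bounded below, gradient bounded above, $\|\bd{r}'\|\ge\alpha>0$, injectivity, no contact with $\partial\Omega$). Combining this with the three estimates above and Propositions \ref{gammainjective} and \ref{mapinjective}, there is $T>0$ — determined only by those initial constants and by $\delta$, hence independent of $0<h\le1$ — on which all of the asserted conclusions hold: $\bd{\Phi}^{\omega_{h}}_{\Gamma}$ is injective with $\bd{\Phi}^{\omega_{h}}_{\Gamma}(\Gamma)\cap\partial\Omega=\varnothing$ and \eqref{thbound} valid; $\text{det}(\bd{I}+\nabla\bd{\eta}_{h}^{\delta})\ge c_{0}$, $|(\bd{I}+\nabla\bd{\eta}_{h}^{\delta})^{-1}|\ge c_{1}$ and $|\bd{I}+\nabla\bd{\eta}_{h}^{\delta}|\le c_{2}$ on $\overline{\Omega_{b}}$; and $c_{3}^{-1}\le\mathcal{J}^{\omega_{h}}_{f}\le c_{3}$, $|\nabla\bd{\Phi}^{\omega_{h}}_{f}|\le c_{3}$ on $\overline{\Omega_{f}}$. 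This is precisely the argument of Proposition \ref{geometricN}, the only change being that after the passage $N\to\infty$ the $h$-dependent bound \eqref{esth} on the discrete plate velocity is replaced by the identity $\bd{\zeta}_{h}=\bd{\xi}_{h}^{\delta}|_{\Gamma}$ together with the $h$-independent bound on $\bd{\xi}_{h}$ in $L^{\infty}(0,T;L^{2}(\Omega_{b}))$; since the maximal interval of existence of the solution constructed in Section \ref{hproof} is governed exactly by these non-degeneracy conditions, that solution extends to the common interval $[0,T]$. The main obstacle is the first point above — the plate-energy estimates on $\bd{\omega}_{h}$ carry powers of $h$ and give no uniform control — and the proof turns entirely on the observation that the regularized kinematic coupling recovers $\bd{\omega}_{h}$, and every geometric object built from it, from the $h$-uniformly bounded and smoothed Biot quantities instead.
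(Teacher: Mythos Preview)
Your proposal is correct and follows essentially the same route as the paper: both proofs exploit the regularized kinematic coupling $\bd{\omega}_{h}=\bd{\eta}_{h}^{\delta}|_{\Gamma}$ together with the $h$-uniform bound on $\partial_{t}\bd{\eta}_{h}$ in $L^{\infty}(0,T;L^{2}(\Omega_{b}))$ and the smoothing effect of the $\delta$-convolution to obtain $\|\bd{\omega}_{h}(t)-\bd{\omega}_{0}\|$ and $\|\bd{\eta}_{h}^{\delta}(t)-\bd{\eta}_{0}^{\delta}\|$ bounded by $C_{\delta}\,t$ in high-order norms, then invoke Proposition \ref{gammainjective} and elliptic regularity for the ALE map to close the non-degeneracy for small $T$ independent of $h$. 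The paper phrases the intermediate bounds in $H^{k}$ norms on $\Gamma$ and $\Omega_{b}$ whereas you go directly to $C^{k}$, but this is cosmetic.
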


\begin{remark}
While we showed existence of a regularized interface weak solution in Section \ref{hproof}, the result in Proposition \ref{geometryh} completes the proof of Theorem \ref{hthm} on the approximate $h$-level problem, by verifying the final claim that the time of existence $T > 0$ of the regularized interface weak solution can be taken to be independent of the plate thickness parameter $0 < h \le 1$.
\end{remark}

\begin{proof}
We begin with the observation that since the regularization parameter $\delta > 0$ is fixed, we have that for every positive integer $k$ and for a constant $C$ independent of $h$ (but potentially depending on $\delta$ and $k$):
\begin{equation}\label{zetak}
\|\bd{\zeta}_{h}\|_{L^{\infty}(0, T; H^{k}(\Gamma))} \le C \|\partial_{t}\bd{\eta}^{\delta}_{h}\|_{L^{\infty}(0, T; H^{k + 1}(\Omega_{b}))} \le C(\delta, k) \|\partial_{t}\bd{\eta}_{h}\|_{L^{\infty}(0, T; L^{2}(\Omega_{b}))} \le C(\delta, k),
\end{equation}
since the uniform energy estimates give a uniform bound on $\partial_{t}\bd{\eta}_{h}$ in $L^{\infty}(0, T; L^{2}(\Omega_{b}))$ that is independent of $h$. We have also used the regularized kinematic coupling condition here, so that $\bd{\zeta}_{h} = \partial_{t}\bd{\eta}^{\delta}_{h}|_{\Gamma}$. Therefore, we have that for $t \in [0, T]$ for $T > 0$ to be determined and for $\bd{\omega}_{0} := \bd{\eta}_{0}|_{\Gamma}$:
\begin{equation}\label{omegahinc}
\|\bd{\omega}_{h}(t, \cdot) - \bd{\omega}_{0}\|_{H^{k}(\Gamma)} \le \int_{0}^{t} \|\bd{\zeta}_{h}(s, \cdot)\|_{H^{k}(\Gamma)} ds \le C(\delta, k) T.
\end{equation}
Therefore, by choosing $T$ sufficiently small and applying Proposition \ref{gammainjective}, we obtain the results on the Lagrangian map for the plate $\bd{\Phi}^{\omega_{h}}_{\Gamma}$. We also obtain the results on the Lagrangian map for the Biot medium, since the above estimate \eqref{zetak} holds for all $k$, and hence, we can estimate for $0 \le t \le T$:
\begin{equation*}
\|\bd{\eta}_{h}^{\delta}(t) - \bd{\eta}^{\delta}_{0}\|_{C^{1}(\Omega_{b})} \le \|\bd{\eta}^{\delta}_{h}(t) - \bd{\eta}^{\delta}_{0}\|_{H^{3}(\Omega_{b})} \le \int_{0}^{t} \|\partial_{t}\bd{\eta}^{\delta}_{h}(s)\|_{H^{3}(\Omega_{b})} ds \le C(\delta) T.
\end{equation*}
Finally, we can estimate the ALE map for the fluid domain using results on traces and the estimate \eqref{omegahinc} applied for $k = 2$ to obtain the following estimate which is independent of $h$:
\begin{equation*}
\|\bd{\Phi}^{\omega_{h}(t)}_{f} - \bd{\Phi}^{\omega_{0}}_{f}\|_{C^{1}(\Omega_{f})} \le C\|\bd{\Phi}^{\omega_{h}(t)}_{f} - \bd{\Phi}^{\omega_{0}}_{f}\|_{H^{5/2}(\Omega_{f})} \le C\|\bd{\omega}_{h}(t) - \bd{\omega}_{0}\|_{H^{2}(\Gamma)} \le C(\delta) T,
\end{equation*}
and hence by continuity, we can obtain the desired result on the ALE fluid domain map. 
\end{proof}

Therefore, we have the following uniform boundedness results, where these bounds are uniform in the plate thickness $h > 0$:

\begin{proposition}\label{uniformh}
For each fixed $\delta > 0$, we have the following bounds which are uniform in the plate thickness $0 < h \le 1$:
\begin{itemize}
\item $\{\bd{u}_{h}\}_{h > 0}$ is uniformly bounded in $L^{\infty}(0, T; L^{2}(\Omega_{f}(t)))$ and $L^{2}(0, T; H^{1}(\Omega_{f}(t)))$.
\item $\{\bd{\eta}_{h}\}_{h > 0}$ is uniformly bounded in $L^{\infty}(0, T; H^{1}(\Omega_{b}))$ and $W^{1, \infty}(0, T; L^{2}(\Omega_{b}))$. 
\item $\{p_{h}\}_{h > 0}$ is uniformly bounded in $L^{\infty}(0, T; L^{2}(\Omega_{b}))$ and $L^{2}(0, T; H^{1}(\Omega_{b}))$. 
\item $\{\bd{\omega}_{h}\}_{h > 0}$ is uniformly bounded in $L^{\infty}(0, T; H^{k}(\Gamma))$ for all positive integers $k$.
\item $\{\bd{\zeta}_{h}\}_{h > 0}$ is uniformly bounded in $L^{\infty}(0, T; H^{k}(\Gamma))$ for all positive integers $k$. 
\item $\{h^{1/2}\bd{\omega}_{h}\}_{h > 0}$ is uniformly bounded in $W^{1, \infty}(0, T; L^{2}(\Gamma))$.
\item $\{h^{1/2}\bd{\omega}_{h}\}_{h > 0}$ is uniformly bounded in $L^{\infty}(0, T; H^{2}(\Gamma))$. 
\item $\{h^{1/2}\bd{\zeta}_{h}\}_{h > 0}$ is uniformly bounded in $L^{2}(0, T; H^{2}(\Gamma))$. 
\end{itemize}
Hence, along a subsequence in $h \to 0$, we have the following convergences:
\begin{align*}
\bd{\eta}_{h} \rightharpoonup \bd{\eta}, \qquad &\text{weakly-star in $L^{\infty}(0, T; H^{1}(\Omega_{b}))$ and weakly-star in $W^{1, \infty}(0, T; L^{2}(\Omega_{b}))$}, \\
p_{h} \rightharpoonup p, \qquad &\text{weakly-star in $L^{\infty}(0, T; L^{2}(\Omega_{b}))$ and weakly in $L^{2}(0, T; H^{1}(\Omega_{b}))$}. \\
\end{align*}
\end{proposition}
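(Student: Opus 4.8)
The plan is to prove Proposition~\ref{uniformh} by reading off the uniform-in-$h$ bounds directly from the energy inequality \eqref{energyhineq}, then upgrading the plate/velocity bounds via the regularized kinematic coupling condition, and finally extracting weakly(-star) convergent subsequences by Banach--Alaoglu. First I would observe that the right-hand side $E_0$ of \eqref{energyhineq} is bounded uniformly in $0 < h \le 1$: the only $h$-dependent terms are $\frac12\int_\Gamma |h^{1/2}\partial_t\bd{\omega}_0|^2$ and $\frac12\int_\Gamma|h^{1/2}\hat{\Delta}\bd{\omega}_0|^2$, both of which are bounded by $\frac12\|\bd{\omega}_0\|_{H^2(\Gamma)}^2 = \frac12\|(\bd{\eta}_0)^\delta|_\Gamma\|_{H^2(\Gamma)}^2$ since $h \le 1$, and this is finite because $\bd{\eta}_0 \in H^1(\hat\Omega_b)$ and spatial convolution at fixed scale $\delta$ maps $H^1$ into $C^\infty$, so its trace lies in $H^2(\Gamma)$. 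Hence $E_0 \le C(\delta)$ independently of $h$. From \eqref{energyhineq} one then reads off immediately: $\bd{u}_h$ bounded in $L^\infty(0,T;L^2(\Omega_f(t)))\cap L^2(0,T;H^1(\Omega_f(t)))$ (using Korn on the moving domain together with the uniform geometric bounds of Proposition~\ref{geometryh} to pass from $\|\bd{D}(\bd{u}_h)\|_{L^2}$ to $\|\bd{u}_h\|_{H^1}$); $\partial_t\bd{\eta}_h$ bounded in $L^\infty(0,T;L^2(\Omega_b))$, and $\bd{\eta}_h$ bounded in $L^\infty(0,T;H^1(\Omega_b))$ via the $\mu_e,\lambda_e$ terms plus Korn's inequality and a Poincar\'e-type argument; $p_h$ bounded in $L^\infty(0,T;L^2(\Omega_b))$, and bounded in $L^2(0,T;H^1(\Omega_b))$ after converting $\int \hat{\mathcal J}^{\eta_h^\delta}_b|\hat\nabla^{\eta_h^\delta}_b p_h|^2 \ge c\int|\nabla p_h|^2$ exactly as in \eqref{kappabound}, using the uniform bounds (2) of Proposition~\ref{geometryh}; and the three $h$-weighted plate quantities $h^{1/2}\partial_t\bd{\omega}_h$, $h^{1/2}\hat\Delta\bd{\omega}_h$, $h^{1/2}\hat\Delta\bd{\zeta}_h$ bounded in the stated norms directly.

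Next I would establish the $h$-independent (unweighted) bounds on $\bd{\omega}_h$ and $\bd{\zeta}_h$, which is the crux of why the regularized interface method is needed and which does \emph{not} follow from \eqref{energyhineq} alone. Here I invoke the regularized kinematic coupling condition $\bd{\omega}_h = \bd{\eta}_h^\delta|_\Gamma$, hence $\bd{\zeta}_h = \partial_t\bd{\omega}_h = (\partial_t\bd{\eta}_h)^\delta|_\Gamma = \bd{\xi}_h^\delta|_\Gamma$. Since $\partial_t\bd{\eta}_h = \bd{\xi}_h$ is bounded in $L^\infty(0,T;L^2(\Omega_b))$ uniformly in $h$, the smoothing estimate for convolution at fixed scale $\delta$ (using the strong $1$-extension of Definition~\ref{extension}, so $\|E\bd{\xi}_h\|_{L^2(\R^2)} \le C\|\bd{\xi}_h\|_{L^2(\Omega_b)}$, followed by $\|(E\bd{\xi}_h)*\varphi_\delta\|_{H^{k+1}(\R^2)} \le C(\delta,k)\|E\bd{\xi}_h\|_{L^2(\R^2)}$) combined with the trace inequality $H^{k+1}(\Omega_b)\hookrightarrow H^{k+1/2}(\Gamma)\hookrightarrow H^k(\Gamma)$ gives $\|\bd{\zeta}_h\|_{L^\infty(0,T;H^k(\Gamma))} \le C(\delta,k)$ for every $k$, which is precisely the chain of inequalities \eqref{zetak}. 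Integrating in time as in \eqref{omegahinc}, $\|\bd{\omega}_h(t)\|_{H^k(\Gamma)} \le \|\bd{\omega}_0\|_{H^k(\Gamma)} + \int_0^t\|\bd{\zeta}_h(s)\|_{H^k(\Gamma)}\,ds \le C(\delta,k)(1+T)$ uniformly in $h$, giving the uniform bound on $\bd{\omega}_h$ in $L^\infty(0,T;H^k(\Gamma))$. This argument is self-contained modulo the properties of convolution already recorded in Section~\ref{regintmethod}.

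Finally, with all uniform bounds in hand, I would apply the Banach--Alaoglu theorem (in the separable-predual setting, so that weak-star sequential compactness holds) to extract a single subsequence $h \to 0$ along which $\bd{\eta}_h \rightharpoonup \bd{\eta}$ weakly-star in $L^\infty(0,T;H^1(\Omega_b))$ and weakly-star in $W^{1,\infty}(0,T;L^2(\Omega_b))$ (consistency of the two limits because the weak-star $L^\infty(0,T;L^2)$ limit of $\partial_t\bd{\eta}_h$ must coincide with the distributional time derivative of the limit $\bd{\eta}$), and $p_h \rightharpoonup p$ weakly-star in $L^\infty(0,T;L^2(\Omega_b))$ and weakly in $L^2(0,T;H^1(\Omega_b))$. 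The uniform bounds on $\bd{u}_h$, $\bd{\omega}_h$, $\bd{\zeta}_h$ are not asserted to pass to named limits in this particular proposition statement, so they just need to be recorded; the corresponding convergences (which will require the compactness machinery of Section~\ref{compactN} transplanted to the parameter $h$) are the subject of the subsequent subsections.

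The main obstacle is conceptual rather than technical: one must not expect any $h$-independent control of $\bd{\omega}_h$ or $\bd{\zeta}_h$ from the energy identity, because the plate elastodynamics in \eqref{plate} carries the factor $h$ on every term, so the only bounds \eqref{energyhineq} yields are the degenerate $h$-weighted ones. The entire point of the regularized kinematic coupling $\bd{\omega}_h = \bd{\eta}_h^\delta|_\Gamma$ is to transfer the genuinely $h$-uniform control of $\bd{\eta}_h$ (coming from the Biot energy, untouched by $h$) through the fixed-scale mollifier onto the interface quantities; the verification that the mollifier at scale $\delta$ indeed produces estimates in $H^k(\Gamma)$ for all $k$ with constants independent of $h$ — and hence a $C^1$-in-space interface uniformly in $h$, which is what makes Proposition~\ref{geometryh} and the passage to the limit possible — is the step that genuinely uses the structure of the method and deserves to be spelled out carefully.
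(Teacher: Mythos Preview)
Your proposal is correct and follows essentially the same approach as the paper: read off the direct bounds from the energy inequality \eqref{energyhineq} using that $E_0$ is uniformly bounded for $0<h\le 1$, invoke the geometric bounds of Proposition~\ref{geometryh} to pass from moving-domain or transformed-gradient quantities to reference-domain norms (exactly the \eqref{kappabound} computation you cite), and then obtain the unweighted $H^k(\Gamma)$ bounds on $\bd{\omega}_h,\bd{\zeta}_h$ via the regularized kinematic coupling $\bd{\omega}_h=\bd{\eta}_h^\delta|_\Gamma$ together with convolution smoothing and the trace inequality, precisely as in \eqref{zetak}--\eqref{omegahinc}. Your write-up is in fact more explicit than the paper's short proof, particularly in spelling out why $E_0$ is $h$-uniform and in emphasizing the conceptual role of the regularized coupling.
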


\begin{proof}
This follows from the energy estimate in \eqref{energyhineq} which holds uniformly in $h$, since the initial energy $E_{0}$ is uniformly bounded in $0 < h \le 1$. We, however, emphasize that this result can only be obtained from the energy estimate \eqref{energyhineq} as a result of the previous Proposition \ref{geometryh}, since some of the uniform estimates are on the moving Biot domain, whereas the uniform estimates above are given on the reference Biot domain. As an example, the energy estimate implies that there exists a constant $C$ independent of $h$ such that for all $0 < h \le 1$:
\begin{equation*}
\int_{0}^{T} \int_{\Omega_{b}} \mathcal{J}^{\eta^{\delta}_{h}}_{b} \left|\nabla^{\eta^{\delta}_{h}}_{b} p_{h}\right|^{2} \le C.
\end{equation*}
Since this estimate involves $\mathcal{J}^{\eta^{\delta}_{h}}_{b} = \text{det}(\bd{I} + \nabla \bd{\eta}^{\delta}_{h})$ and the transformed gradient defined in \eqref{etanabla}:
\begin{equation*}
\nabla^{\eta^{\delta}_{h}}_{b} p_{h} = \nabla p_{h} \cdot (\bd{I} + \nabla \bd{\eta}^{\delta}_{h})^{-1},
\end{equation*}
we can only obtain an estimate for $\displaystyle \int_{0}^{T} \int_{\Omega_{b}} |\nabla p_{h}|^{2}$ that is uniform in $0 < h \le 1$ using the previous geometric bounds in Proposition \ref{geometryh}, which hold uniformly in $0 < h \le 1$ and $t \in [0, T]$, see \eqref{kappabound}.

We remark that the uniform bounds on the plate displacements and velocities $\{\bd{\omega}_{h}\}_{h > 0}$ and $\{\bd{\zeta}_{h}\}_{h > 0}$ follow from the uniform bounds on the Biot displacements $\{\bd{\eta}_{h}\}_{h > 0}$ in $W^{1, \infty}(0, T; L^{2}(\Omega_{b}))$ and the regularized kinematic coupling condition $\bd{\omega}_{h} = \bd{\eta}^{\delta}_{h}|_{\Gamma}$, where we use the regularizing properties of spatial convolution and the trace inequality. 
\end{proof}

With the uniform geometric bounds in Proposition \ref{geometryh}, we can similarly show the following uniform bounds on the trace of the fluid velocities and on the fluid velocities transferred to the fixed reference domain via the ALE map:
\begin{equation}\label{hatuh}
\hat{\bd{u}}_{h} := \bd{u}_{h} \circ \bd{\Phi}^{\omega_{h}}_{f}, \qquad \text{ on } \Omega_{f}.
\end{equation}

\begin{proposition}\label{tracehuniform}
The fluid velocities $\hat{\bd{u}}_{h}$ defined in \eqref{hatuh} are uniformly bounded in $L^{\infty}(0, T; L^{2}(\Omega_{f})) \cap L^{2}(0, T; H^{1}(\Omega_{f}))$ and the traces of the fluid velocities $\bd{u}_{h}|_{\Gamma}$ are uniformly bounded in $L^{2}(0, T; L^{2}(\Gamma))$, independently of $0 < h \le 1$. 
\end{proposition}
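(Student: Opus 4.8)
\textbf{Proof proposal for Proposition \ref{tracehuniform}.}

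The plan is to reduce everything to uniform bounds that are already available on the moving fluid domain via Proposition \ref{uniformh}, transported to the fixed reference domain $\Omega_{f}$ using the uniform geometric control on the ALE map from Proposition \ref{geometryh}. The argument is essentially the analogue of the proof of Proposition \ref{traceuniform} (the uniform-in-$N$ trace bound), but now with the uniform-in-$h$ geometric estimates of Proposition \ref{geometryh} playing the role that Proposition \ref{geometricN} played in the $N$-discrete setting. First I would recall from Proposition \ref{uniformh} that $\{\bd{u}_{h}\}_{h>0}$ is uniformly bounded in $L^{\infty}(0,T;L^{2}(\Omega_{f}(t)))\cap L^{2}(0,T;H^{1}(\Omega_{f}(t)))$ independently of $0<h\le 1$, and from Proposition \ref{geometryh} that there are constants $c_{3},c_{3}^{-1}>0$, independent of $h$, with $c_{3}^{-1}\le \mathcal{J}^{\omega_{h}}_{f}\le c_{3}$ and $|\nabla\bd{\Phi}^{\omega_{h}}_{f}|\le c_{3}$ on $\overline{\Omega_{f}}$ for all $t\in[0,T]$.

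For the $L^{\infty}(0,T;L^{2}(\Omega_{f}))$ bound on $\hat{\bd{u}}_{h}=\bd{u}_{h}\circ\bd{\Phi}^{\omega_{h}}_{f}$, a change of variables gives, for a.e.\ $t$,
\begin{equation*}
\int_{\Omega_{f}} |\hat{\bd{u}}_{h}(t)|^{2} = \int_{\Omega_{f}(t)} \big(\mathcal{J}^{\omega_{h}(t)}_{f}\big)^{-1}\circ(\bd{\Phi}^{\omega_{h}}_{f})^{-1}\,|\bd{u}_{h}(t)|^{2} \le c_{3}\int_{\Omega_{f}(t)} |\bd{u}_{h}(t)|^{2},
\end{equation*}
so taking $\mathrm{esssup}$ in $t$ yields the bound from the uniform $L^{\infty}(0,T;L^{2}(\Omega_{f}(t)))$ bound on $\bd{u}_{h}$. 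For the $L^{2}(0,T;H^{1}(\Omega_{f}))$ bound, I would use the transformation rule for gradients, as in \eqref{omegadiff}/\eqref{gradtransform}: $\nabla\hat{\bd{u}}_{h} = \big((\nabla\bd{u}_{h})\circ\bd{\Phi}^{\omega_{h}}_{f}\big)\,\nabla\bd{\Phi}^{\omega_{h}}_{f}$, so that
\begin{equation*}
\int_{0}^{T}\!\!\int_{\Omega_{f}} |\nabla\hat{\bd{u}}_{h}|^{2} \le C\int_{0}^{T}\!\!\int_{\Omega_{f}(t)} |\nabla\bd{u}_{h}|^{2}\,|\nabla\bd{\Phi}^{\omega_{h}}_{f}|^{2}\,\big(\mathcal{J}^{\omega_{h}}_{f}\big)^{-1} \le C\int_{0}^{T}\!\!\int_{\Omega_{f}(t)} |\nabla\bd{u}_{h}|^{2},
\end{equation*}
which is uniformly bounded by Proposition \ref{uniformh}; combined with the $L^{2}$ bound just obtained this gives the claimed uniform $L^{2}(0,T;H^{1}(\Omega_{f}))$ bound. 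Finally, since $\Omega_{f}$ is a fixed, smooth reference domain, the standard trace inequality $\|\hat{\bd{u}}_{h}|_{\hat{\Gamma}}\|_{L^{2}(\Gamma)}\le C\|\hat{\bd{u}}_{h}\|_{H^{1}(\Omega_{f})}$ holds with a constant independent of $h$, and since the ALE map restricts on $\hat{\Gamma}$ to the interface parametrization $\bd{\Phi}^{\omega_{h}}_{\Gamma}$ whose Jacobian (arc-length element $\mathcal{S}^{\omega_{h}}_{\Gamma}$) is uniformly bounded above and below by \eqref{thbound}, the trace of $\bd{u}_{h}$ along $\Gamma(t)$ is controlled, after another change of variables on the one-dimensional interface, by $\|\hat{\bd{u}}_{h}|_{\hat{\Gamma}}\|_{L^{2}(\Gamma)}$. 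Integrating in $t$ then gives the uniform $L^{2}(0,T;L^{2}(\Gamma))$ bound for $\bd{u}_{h}|_{\Gamma}$.

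The only place where care is genuinely needed — and hence the main obstacle — is making sure that every geometric quantity ($\mathcal{J}^{\omega_{h}}_{f}$, $|\nabla\bd{\Phi}^{\omega_{h}}_{f}|$, $|(\nabla\bd{\Phi}^{\omega_{h}}_{f})^{-1}|$, and the interface arc-length element) is controlled by a constant \emph{independent of $h$ and of $t\in[0,T]$}; this is precisely the content of Proposition \ref{geometryh}, which in turn relied on the regularized kinematic coupling condition $\bd{\omega}_{h}=\bd{\eta}^{\delta}_{h}|_{\hat{\Gamma}}$ to transfer the $h$-independent energy bound on $\partial_{t}\bd{\eta}_{h}$ (not on the $h$-scaled plate energy) into $H^{k}(\hat{\Gamma})$ control of $\bd{\omega}_{h}$ and $\bd{\zeta}_{h}$. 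Once that uniform geometric control is invoked, everything above is a routine change of variables plus the trace theorem on a fixed domain, with no dependence on $h$ entering the constants.
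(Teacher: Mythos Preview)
Your proposal is correct and follows essentially the same approach as the paper. The paper does not give an explicit proof of this proposition but indicates that it follows ``similarly'' from the uniform geometric bounds in Proposition~\ref{geometryh}, which is exactly the analogue of the proof of Proposition~\ref{traceuniform} (change of variables via the ALE map using the uniform bounds on $\mathcal{J}^{\omega_h}_f$ and $|\nabla\bd{\Phi}^{\omega_h}_f|$, followed by the trace inequality on the fixed reference domain) that you have written out in detail.
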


\subsection{Compactness arguments in the limit as $h \to 0$}

\noindent \textbf{Function spaces.} Because the weak formulation for the solutions with plate thickness $h$ now satisfy a fully continuous-in-time weak formulation in Definition \ref{plateweak} rather than a semidiscrete weak formulation, we must work with different function spaces, though the underlying ideas behind the compactness arguments remain the same, as for the $N \to \infty$ limit passage in Section \ref{compactN}. We define these function spaces here, and identify the appropriate compact embedding result that we will invoke in the singular limit passage as $h \to 0$. For a Banach space $B$ with associated norm $\|\cdot\|_{B}$, we define the fractional Sobolev space $W^{\alpha, p}(0, T; B)$ to be the set of all functions $f \in L^{p}(0, T; B)$ such that the following associated norm is finite:
\begin{equation}\label{fractionalnorm}
\|f\|_{W^{\alpha, p}(0, T; B)}^{p} = \|f\|_{L^{p}(0, T; B)}^{p} + \int_{0}^{T} \int_{0}^{T} \frac{\|f(t) - f(s)\|_{B}}{|t - s|^{1 + \alpha p}} ds dt.
\end{equation}
We then obtain the following compact embedding result, which can be found in Theorem 2.1 in \cite{FlandoliGatarek}.

\begin{proposition}\label{compactness}
Let $B_{0} \subset \subset B \subset B_{1}$ be reflexive Banach spaces, and let $0 < \alpha < 1$ and $1 < p < \infty$. Then,
\begin{equation*}
L^{p}(0, T; B_{0}) \cap W^{\alpha, p}(0, T; B_{1}) \subset \subset L^{p}(0, T; B).
\end{equation*}
\end{proposition}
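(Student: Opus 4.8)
The statement to prove is the compact embedding result Proposition~\ref{compactness}: for reflexive Banach spaces $B_0 \subset\subset B \subset B_1$ and exponents $0 < \alpha < 1$, $1 < p < \infty$, one has $L^p(0,T;B_0) \cap W^{\alpha,p}(0,T;B_1) \subset\subset L^p(0,T;B)$.

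\medskip

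\noindent\textbf{Approach.} This is a classical Aubin--Lions--Simon type result; the plan is to cite the proof strategy of Flandoli--Gatarek (Theorem 2.1 in \cite{FlandoliGatarek}) and indicate the two ingredients. First I would establish the \emph{interpolation-type inequality}: for every $\varepsilon > 0$ there is a constant $C_\varepsilon$ such that $\|v\|_B \le \varepsilon \|v\|_{B_0} + C_\varepsilon \|v\|_{B_1}$ for all $v \in B_0$. This follows from the compactness of the embedding $B_0 \hookrightarrow B$ by a standard contradiction argument: if it failed, there would be a sequence $v_n$ with $\|v_n\|_B = 1$ but $\varepsilon\|v_n\|_{B_0} + n\|v_n\|_{B_1} < 1$, forcing $v_n$ bounded in $B_0$ and $v_n \to 0$ in $B_1$; by compactness a subsequence converges in $B$ to some limit, which must be $0$ by uniqueness of limits in $B_1$, contradicting $\|v_n\|_B = 1$. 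Second, I would recall the \emph{compactness-in-time mechanism}: the fractional Sobolev bound in $W^{\alpha,p}(0,T;B_1)$ controls time-translates in $L^p(0,T;B_1)$, i.e. $\|\tau_h f - f\|_{L^p(0,T-h;B_1)} \le C h^{\alpha}\|f\|_{W^{\alpha,p}(0,T;B_1)}$, which gives equicontinuity of translations in the weaker topology.

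\medskip

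\noindent\textbf{Key steps in order.} (1) Take a bounded sequence $\{f_n\}$ in $L^p(0,T;B_0) \cap W^{\alpha,p}(0,T;B_1)$; by reflexivity extract a subsequence converging weakly in $L^p(0,T;B_0)$ to some $f$. (2) Reduce to showing $f_n \to f$ strongly in $L^p(0,T;B)$; replacing $f_n$ by $f_n - f$ it suffices to treat the case $f_n \rightharpoonup 0$. (3) Apply the interpolation inequality pointwise in time and integrate: $\|f_n\|_{L^p(0,T;B)} \le \varepsilon \|f_n\|_{L^p(0,T;B_0)} + C_\varepsilon \|f_n\|_{L^p(0,T;B_1)} \le \varepsilon M + C_\varepsilon \|f_n\|_{L^p(0,T;B_1)}$, where $M$ bounds the $L^p(0,T;B_0)$ norms. (4) Show $\|f_n\|_{L^p(0,T;B_1)} \to 0$: this is where the fractional-in-time regularity enters. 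Using the bound on time-translates together with a standard mollification/Fr\'echet--Kolmogorov argument in $L^p(0,T;B_1)$, the sequence $\{f_n\}$ is precompact in $L^p(0,T;B_1)$ since it is bounded in $L^p(0,T;B_0)$ (hence bounded in $L^p(0,T;B_1)$ and, crucially, uniformly equicontinuous under time-translation); its only possible strong limit is the weak limit $0$. (5) Combining, $\limsup_n \|f_n\|_{L^p(0,T;B)} \le \varepsilon M$; letting $\varepsilon \to 0$ gives $f_n \to 0$ strongly in $L^p(0,T;B)$, completing the proof.

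\medskip

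\noindent\textbf{Main obstacle.} The delicate point is step (4): converting the fractional-time control in the ambient space $B_1$ (which is \emph{weaker} than $B$) into actual precompactness in $L^p(0,T;B_1)$. One must combine the uniform time-translation estimate coming from the $W^{\alpha,p}$ bound with a spatial-regularization argument that respects the fact that $B_1$ may not itself be compactly embedded in anything useful; the resolution is exactly the Flandoli--Gatarek lemma, which handles the vector-valued Kolmogorov--Riesz compactness criterion in $L^p(0,T;B_1)$. Since the present paper only needs this as a black-box tool for the singular limit $h \to 0$, I would state the result, give the contradiction proof of the interpolation inequality in full, sketch the translation estimate, and refer the reader to Theorem 2.1 in \cite{FlandoliGatarek} for the remaining compactness-in-time details rather than reproducing them.
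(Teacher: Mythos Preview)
The paper does not prove this proposition at all: it simply states the result and attributes it to Theorem~2.1 in \cite{FlandoliGatarek}. Your proposal is therefore more detailed than the paper's own treatment, and your plan to sketch the argument and ultimately cite \cite{FlandoliGatarek} is entirely in line with how the paper handles it.

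One comment on the sketch itself: in step~(4) you assert precompactness of $\{f_n\}$ in $L^p(0,T;B_1)$ via a ``Fr\'echet--Kolmogorov'' argument using only boundedness and uniform equicontinuity of time-translates. In the vector-valued setting this is not sufficient on its own; the Kolmogorov--Riesz criterion also needs a tightness condition, e.g.\ that the sets $\{\int_E f_n : n\}$ are precompact in $B_1$ for each measurable $E \subset [0,T]$. Here this holds because those integrals are bounded in $B_0$ and $B_0 \subset\subset B \subset B_1$ gives $B_0 \subset\subset B_1$. You allude to this in the ``main obstacle'' paragraph, but as written step~(4) does not invoke the compact embedding and so reads as a gap. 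Either make the use of $B_0 \subset\subset B_1$ explicit there, or---since the paper treats the proposition as a black box---simply cite \cite{FlandoliGatarek} as the paper does.
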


\medskip

\noindent \textbf{Compactness of the Biot displacement.} We can use Arzela-Ascoli to obtain the following convergence result for the Biot displacements, and by using the regularized interface condition, we can also obtain a corresponding convergence result for the interface displacements.

\begin{proposition}\label{etahconv}
There exists a limiting Biot displacement $\bd{\eta}$ and a corresponding interface displacement $\bd{\omega} := \bd{\eta}^{\delta}|_{\Gamma}$, for which we have the following convergence results as $h \to 0$, along a subsequence:
\begin{equation*}
\bd{\eta}_{h} \to \bd{\eta}, \quad \text{ in } C(0, T; L^{2}(\Omega_{b})), \qquad \bd{\omega}_{h} \to \bd{\omega} \quad \text{ in } C(0, T; H^{k}(\Gamma)),
\end{equation*}
for all positive integers $k$.
\end{proposition}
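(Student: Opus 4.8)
The plan is to obtain strong convergence of $\bd{\eta}_h$ via the classical Arzelà--Ascoli theorem, by viewing $\{\bd{\eta}_h\}$ as an equi-Hölder family of curves valued in $L^2(\Omega_b)$, and then to transfer this to $\bd{\omega}_h$ through the regularized kinematic coupling condition $\bd{\omega}_h = \bd{\eta}_h^\delta|_{\Gamma}$ using the smoothing properties of spatial convolution together with the trace inequality. First I would recall from Proposition~\ref{uniformh} that $\{\bd{\eta}_h\}_{0<h\le 1}$ is uniformly bounded in $L^\infty(0,T;H^1(\Omega_b))$ and in $W^{1,\infty}(0,T;L^2(\Omega_b))$. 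The latter bound gives, for any $0\le s\le t\le T$,
\begin{equation*}
\|\bd{\eta}_h(t,\cdot) - \bd{\eta}_h(s,\cdot)\|_{L^2(\Omega_b)} \le \int_s^t \|\partial_t\bd{\eta}_h(\tau,\cdot)\|_{L^2(\Omega_b)}\, d\tau \le C\,|t-s|,
\end{equation*}
so $\{\bd{\eta}_h\}$ is uniformly Lipschitz (hence equicontinuous) as a family of maps $[0,T]\to L^2(\Omega_b)$. For fixed $t$, the uniform $H^1(\Omega_b)$ bound and the compact embedding $H^1(\Omega_b)\subset\subset L^2(\Omega_b)$ make $\{\bd{\eta}_h(t,\cdot)\}$ relatively compact in $L^2(\Omega_b)$. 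By Arzelà--Ascoli, a subsequence converges in $C(0,T;L^2(\Omega_b))$ to some limit $\bd{\eta}$; by uniqueness of limits this $\bd{\eta}$ coincides with the weak-star limit of $\bd{\eta}_h$ in $L^\infty(0,T;H^1(\Omega_b))$ already extracted in Proposition~\ref{uniformh}, so in particular $\bd{\eta}(t,\cdot)\in H^1(\Omega_b)$ and $\bd{\eta}^\delta$ is well-defined.

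Next I would pass this convergence to the interface displacements. Since the convolution kernel $\varphi_\delta$ and the extension operator $E$ are both fixed bounded linear operators (independent of $h$), and since convolution with a smooth compactly supported kernel maps $L^2$ continuously into $H^k$ for every $k$, we have for every positive integer $k$
\begin{equation*}
\|\bd{\eta}_h^\delta(t,\cdot) - \bd{\eta}^\delta(t,\cdot)\|_{H^{k+1}(\Omega_b)} = \|(E(\bd{\eta}_h-\bd{\eta})*\varphi_\delta)|_{\Omega_b}(t,\cdot)\|_{H^{k+1}(\Omega_b)} \le C(\delta,k)\,\|\bd{\eta}_h(t,\cdot)-\bd{\eta}(t,\cdot)\|_{L^2(\Omega_b)}.
\end{equation*}
Taking the supremum over $t\in[0,T]$ and using $\bd{\eta}_h\to\bd{\eta}$ in $C(0,T;L^2(\Omega_b))$ gives $\bd{\eta}_h^\delta\to\bd{\eta}^\delta$ in $C(0,T;H^{k+1}(\Omega_b))$ for every $k$. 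Finally, applying the trace inequality $\|g|_{\Gamma}\|_{H^{k+1/2}(\Gamma)}\le C\|g\|_{H^{k+1}(\Omega_b)}$ (and, since $k$ is arbitrary, in particular $\|g|_{\Gamma}\|_{H^k(\Gamma)}\le C\|g\|_{H^{k+1}(\Omega_b)}$) to $g = \bd{\eta}_h^\delta(t,\cdot)-\bd{\eta}^\delta(t,\cdot)$ yields, with $\bd{\omega}_h := \bd{\eta}_h^\delta|_{\Gamma}$ and $\bd{\omega} := \bd{\eta}^\delta|_{\Gamma}$,
\begin{equation*}
\|\bd{\omega}_h - \bd{\omega}\|_{C(0,T;H^k(\Gamma))} \le C(\delta,k)\,\|\bd{\eta}_h - \bd{\eta}\|_{C(0,T;L^2(\Omega_b))} \to 0,
\end{equation*}
for every positive integer $k$, which is the claimed convergence.

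There is no serious obstacle here; the argument is routine once the uniform bounds of Proposition~\ref{uniformh} are in hand. The one point requiring a little care is that the constants arising from convolution and the trace inequality depend on $\delta$ and $k$ but \emph{not} on $h$ --- this is exactly the mechanism by which the regularized interface method buys uniform-in-$h$ geometric control, and it is essential that $\delta>0$ is held fixed throughout the limit $h\to 0$. A secondary bookkeeping point is the compatibility of the various extracted subsequences: the Arzelà--Ascoli subsequence for $\bd{\eta}_h$ must be taken so that its limit agrees with the weak-star limit from Proposition~\ref{uniformh}, which follows because strong convergence in $C(0,T;L^2(\Omega_b))$ implies the weak-star limit in $L^\infty(0,T;H^1(\Omega_b))$ (after passing to a further subsequence) is the same function. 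After this, all subsequent compactness extractions in the $h\to 0$ analysis can be taken as sub-subsequences of this common one.
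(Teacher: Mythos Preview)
Your proof is correct and follows essentially the same approach as the paper. The paper's proof is a one-liner invoking the compact embedding $L^{\infty}(0, T; H^{1}(\Omega_{b})) \cap W^{1, \infty}(0, T; L^{2}(\Omega_{b})) \subset\subset C(0, T; L^{2}(\Omega_{b}))$ together with the regularized kinematic coupling $\bd{\omega}_h = \bd{\eta}_h^\delta|_\Gamma$ and the smoothing properties of convolution; you have simply unpacked that embedding via Arzel\`a--Ascoli and made the convolution/trace step explicit.
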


\begin{proof}
This follows from the uniform boundedness results (uniform in $h$), the following compact embedding $L^{\infty}(0, T; H^{1}(\Omega_{b})) \cap W^{1, \infty}(0, T; L^{2}(\Omega_{b})) \subset \subset C(0, T; L^{2}(\Omega_{b}))$, and the regularized interface condition $\bd{\omega}_{h} = \bd{\eta}^{\delta}_{h}|_{\Gamma}$ combined with the regularizing properties of spatial convolution.
\end{proof}

\medskip

\noindent \textbf{Compactness for the Biot pore pressure.} By taking only the pressure test function to be nonzero in Definition \ref{plateweak}, we obtain that the pore pressure $p_{h}$ must satisfy the following weak formulation for all times $0 \le s \le t \le T$, for all test functions $r \in V_{p}$, where $V_{p}$ is defined in \eqref{Vp}:
\begin{multline}\label{poreincrement}
c_{0} \int_{\Omega_{b}} (p_{h}(t) - p_{h}(s)) r = \alpha \int_{s}^{t} \int_{\Omega_{b}} \mathcal{J}^{\eta_{h}^{\delta}}_{b} \partial_{t} \bd{\eta}^{\delta}_{h} \cdot \nabla^{\eta^{\delta}_{h}}_{b} r + \alpha \int_{s}^{t} \int_{\Gamma} (\bd{\zeta}_{h} \cdot \bd{n}^{\omega_{h}}) r \\
- \kappa \int_{s}^{t} \int_{\Omega_{b}} \mathcal{J}^{\eta^{\delta}_{h}}_{b} \nabla^{\eta^{\delta}_{h}}_{b} p_{h} \cdot \nabla^{\eta^{\delta}_{h}}_{b} r + \int_{s}^{t} \int_{\Gamma} ((\bd{u}_{h} - \bd{\zeta}_{h}) \cdot \bd{n}^{\omega_{h}}) r.
\end{multline}
Using this weak formulation, we can obtain the following strong convergence result for the pore pressures $p_{h}$ in the limit as $h \to 0$. 

\begin{proposition}\label{phconv}
Along a subsequence in $h$, $p_{h} \to p$ in $L^{2}(0, T; L^{2}(\Omega_{b}))$. 
\end{proposition}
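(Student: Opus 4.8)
\textbf{Proof proposal for Proposition \ref{phconv}.}
The plan is to apply the Dreher--Jüngel--type or Flandoli--Gatarek compactness criterion (Proposition \ref{compactness}) with the chain of spaces $B_0 = H^1(\Omega_b) \subset\subset B = L^2(\Omega_b) \subset B_1 = V_p'$, exactly in the spirit of the $N\to\infty$ argument for $p_N$ in Section \ref{compactN}, but now adapted to the continuous-in-time weak formulation \eqref{poreincrement}. First I would note that the uniform bound $\|p_h\|_{L^2(0,T;H^1(\Omega_b))} \le C$ holds independently of $h$ by Proposition \ref{uniformh} (using the uniform geometric bounds of Proposition \ref{geometryh} to pass from $\int \mathcal{J}^{\eta_h^\delta}_b |\nabla^{\eta_h^\delta}_b p_h|^2$ to $\int |\nabla p_h|^2$, as in \eqref{kappabound}). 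Since the first ingredient $L^2(0,T;B_0)$-boundedness is in hand, the only thing to verify is an equicontinuity-in-time estimate of the form $\|p_h\|_{W^{\alpha,2}(0,T;V_p')} \le C$ uniformly in $h$, or equivalently a uniform bound on the time increments $\|p_h(t) - p_h(s)\|_{V_p'}$ in terms of $|t-s|$.

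The estimate on the increments comes directly from testing \eqref{poreincrement} against $r \in V_p$ with $\|r\|_{H^1(\Omega_b)} \le 1$ and bounding each of the four terms on the right-hand side. For the first term, the uniform geometric bounds \eqref{uniformgeom1}-type estimates from Proposition \ref{geometryh} control $\mathcal{J}^{\eta_h^\delta}_b$ and $(\bd{I}+\nabla\bd{\eta}_h^\delta)^{-1}$, and the energy bound controls $\partial_t \bd{\eta}_h$ in $L^\infty(0,T;L^2(\Omega_b))$, hence $\partial_t \bd{\eta}_h^\delta$ in $L^\infty(0,T;H^k(\Omega_b))$ for any $k$ by the regularizing property of convolution at fixed $\delta$; this gives a bound of the form $C|t-s|$. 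The boundary terms involving $\bd{\zeta}_h$ are handled using $\bd{\zeta}_h = \partial_t\bd{\eta}_h^\delta|_\Gamma$ and the uniform bound of $\bd{\zeta}_h$ in $L^\infty(0,T;H^k(\Gamma))$ (Proposition \ref{uniformh}) together with $\|\bd{n}^{\omega_h}\|_{L^\infty(\Gamma)} \le C$ from \eqref{thbound}; again this yields $C|t-s|$. The dissipation term $\kappa\int_s^t\int_{\Omega_b}\mathcal{J}^{\eta_h^\delta}_b \nabla^{\eta_h^\delta}_b p_h \cdot \nabla^{\eta_h^\delta}_b r$ is only controlled in $L^1$ in time, so it contributes $C\int_s^t \|p_h\|_{H^1(\Omega_b)} \le C|t-s|^{1/2}\|p_h\|_{L^2(0,T;H^1(\Omega_b))} \le C|t-s|^{1/2}$ by Cauchy--Schwarz. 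The final boundary term $\int_s^t\int_\Gamma ((\bd{u}_h - \bd{\zeta}_h)\cdot\bd{n}^{\omega_h})r$ uses the uniform trace bound $\|\bd{u}_h|_\Gamma\|_{L^2(0,T;L^2(\Gamma))} \le C$ from Proposition \ref{tracehuniform}, contributing $C|t-s|^{1/2}(1 + \|\bd{u}_h\|_{L^2(0,T;L^2(\Gamma))})$. Combining, $\|p_h(t) - p_h(s)\|_{V_p'} \le C|t-s|^{1/2}$ uniformly in $h$, which is more than enough to give $p_h \in W^{\alpha,2}(0,T;V_p')$ uniformly for any $\alpha < 1/2$.

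With both ingredients established, Proposition \ref{compactness} applied to $H^1(\Omega_b) \subset\subset L^2(\Omega_b) \subset V_p'$ yields that $\{p_h\}$ is precompact in $L^2(0,T;L^2(\Omega_b))$, so along a subsequence $p_h \to p$ strongly in $L^2(0,T;L^2(\Omega_b))$, and this limit agrees with the weak limit identified in Proposition \ref{uniformh}. I expect the main obstacle to be purely bookkeeping rather than conceptual: one must make sure every geometric quantity appearing in \eqref{poreincrement} (the Jacobian, the transformed gradient, the rescaled normal) is controlled uniformly in $h$, which is precisely what Proposition \ref{geometryh} was set up to provide, and one must be careful that the constants depending on the fixed regularization parameter $\delta$ do not secretly depend on $h$ — they do not, since all $\delta$-dependence enters through convolution estimates applied to quantities ($\partial_t\bd{\eta}_h$, $\bd{\eta}_h$) that are bounded uniformly in $h$ by the energy inequality \eqref{energyhineq}. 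A minor subtlety worth flagging is that \eqref{poreincrement} is the integrated (increment) form of the pressure equation, obtained by choosing only $r \ne 0$ in Definition \ref{plateweak}; one should confirm this form is legitimately implied by the weak formulation, but this is routine since the pressure test function is decoupled from the others in the FPSI weak formulation.
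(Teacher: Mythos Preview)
Your proposal is correct and follows essentially the same approach as the paper: both apply the Flandoli--Gatarek compactness criterion (Proposition \ref{compactness}) with the chain $H^1(\Omega_b)\subset\subset L^2(\Omega_b)\subset V_p'$, obtain the increment estimate $\|p_h(t)-p_h(s)\|_{V_p'}\le C|t-s|^{1/2}$ by testing \eqref{poreincrement} with $r\in V_p$ and bounding the four right-hand side terms exactly as you describe, and then conclude a uniform $W^{\alpha,p}(0,T;V_p')$ bound. The only cosmetic difference is that the paper allows general $p$ with $\alpha p<1/2$ (using its particular definition \eqref{fractionalnorm}) whereas you fix $p=2$; this is immaterial since any positive $\alpha$ suffices.
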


\begin{proof}
We use the compactness criterion stated in Proposition \ref{compactness}. We first note that due to Proposition \ref{uniformh}, there exists a uniform constant $C$ that is independent of $h$ such that
\begin{equation*}
\|p_{h}\|_{L^{2}(0, T; H^{1}(\Omega_{b}))} \le C.
\end{equation*}

Next, we claim that 
\begin{equation}\label{incrementph}
\|p_{h}(t) - p_{h}(s)\|_{V_{p}'} \le C|t - s|^{1/2}, \text{ for all $s, t \in [0, T]$},
\end{equation}
for a constant $C$ that is independent of $h$. To obtain this increment estimate, we can use the weak formulation for the increments of the pore pressure $p_{h}(t) - p_{h}(s)$ in \eqref{poreincrement} and test by $r \in V_{p}$ with $\|r\|_{H^{1}(\Omega_{b})} \le 1$. We can estimate the resulting terms as follows:
\begin{itemize}
\item For the first term, we observe that $\|\partial_{t}\bd{\eta}^{\delta}_{h}\|_{L^{\infty}(0, T; L^{2}(\Omega_{b}))}$ is uniformly bounded in $h$, and by the geometric bounds (uniform in $h$) in Proposition \ref{geometryh} on $\mathcal{J}^{\eta^{\delta}_{h}}_{b}$ and $|(\bd{I} + \nabla \bd{\eta}^{\delta}_{h})^{-1}|$ and the assumption that $\|\nabla r\|_{L^{2}(\Omega_{b})} \le 1$, we deduce that $\displaystyle \left|\int_{s}^{t} \int_{\Omega_{b}} \mathcal{J}^{\eta^{\delta}_{h}}_{b} \partial_{t} \bd{\eta}^{\delta}_{h} \cdot \nabla^{\eta^{\delta}_{h}}_{b} r\right| \le C|t - s|$. 

\item The second term is also bounded by $\displaystyle \left|\int_{s}^{t} \int_{\Gamma} (\bd{\zeta}_{h} \cdot \bd{n}^{\omega_{h}}) r\right| \le C|t - s|$ once we make the following observations. First, $\|\bd{\zeta}_{h}\|_{L^{\infty}(0, T; L^{2}(\Gamma))}$ is uniformly bounded in $h$ by Proposition \ref{uniformh} and $\|r\|_{L^{2}(\Gamma)} \le C$ by the trace inequality. Since $|\partial_{z}\bd{\omega}_{h}| \le C$ pointwise along $\Gamma$ by the uniform high regularity bounds on $\bd{\omega}_{h}$ in $L^{\infty}(0, T; H^{k}(\Omega))$ for all positive integers $k$, we deduce that $|\bd{n}^{\omega_{h}}| \le C$ pointwise along $\Gamma$ also, where the rescaled normal vector is defined in \eqref{ntomega}, which gives the desired estimate. 

\item By the geometric bounds on $\mathcal{J}^{\eta^{\delta}_{h}}_{b}$ and $|(\bd{I} + \nabla \bd{\eta}^{\delta}_{h})^{-1}|$ in Proposition \ref{geometryh}, the uniform bound on $\|\nabla p_{h}\|_{L^{2}(0, T; L^{2}(\Omega_{b}))}$, and the fact that $\|\nabla r\|_{L^{2}(\Omega_{b})} \le 1$, we obtain
\begin{equation*}
\left|\int_{s}^{t} \int_{\Omega_{b}} \mathcal{J}^{\eta^{\delta}_{h}}_{b} p_{h} \cdot \nabla^{\eta^{\delta}_{h}}_{b} r\right| \le C|t - s|^{1/2}.
\end{equation*}
\item Finally, because $\bd{u}_{h}|_{\Gamma}$ is bounded uniformly in $h$ in $L^{2}(0, T; L^{2}(\Gamma))$ by Proposition \ref{tracehuniform}, and because $\bd{\zeta}_{h}$ is bounded uniformly in $h$ in $L^{2}(0, T; L^{2}(\Gamma))$ by Proposition \ref{uniformh}, and $|\bd{n}^{\omega_{h}}| \le C$ pointwise along $\Gamma$, we obtain the final bound that 
\begin{equation*}
\left|\int_{s}^{t} \int_{\Gamma} ((\bd{u}_{h} - \bd{\zeta}_{h}) \cdot \bd{n}^{\omega_{h}}) r\right| \le C|t - s|^{1/2}.
\end{equation*}
\end{itemize}
Combining these estimates gives the desired increment estimate \eqref{incrementph}, from which we deduce that for any choice of $0 < \alpha < 1$ and $1 < p < \infty$ such that $\alpha p < 1/2$:
\begin{equation*}
\|p_{h}\|_{W^{\alpha, p}(0, T; V_{p}')}^{p} \le \|p_{h}\|^{p}_{L^{p}(0, T; V_{p}')} + \int_{0}^{T} \int_{0}^{T} |t - s|^{-(\alpha p + 1/2)} ds dt < \infty,
\end{equation*}
since $\|p_{h}\|_{L^{p}(0, T: V_{p}')} \le \|p_{h}\|_{L^{p}(0, T: L^{2}(\Omega_{b}))} \le T^{1/p} \|p_{h}\|_{L^{\infty}(0, T; L^{2}(\Omega_{b}))}$. The strong convergence result then follows from Proposition \ref{compactness} using the chain of embeddings $H^{1}(\Omega_{b}) \subset \subset L^{2}(\Omega_{b}) \subset V_{p}'$. 
\end{proof}

\medskip

\noindent \textbf{Compactness for the Biot/interface velocity.} Next, we consider the limit of the Biot velocity and the plate velocity $(\bd{\xi}_{h}, \bd{\zeta}_{h})$ for $\bd{\xi}_{h} := \partial_{t}\bd{\eta}_{h}$. In the limit as $h \to 0$, there is no plate in the resulting problem, and we will hence instead refer to $\bd{\zeta}_{h}$ as the \textit{interface velocity}, since in the limit as $h \to 0$, $\bd{\zeta}_{h}$ will converge to the velocity of the regularized interface between the Biot medium and the fluid. We consider the following test space, as before in \eqref{Qv}:
\begin{equation*}
\mathcal{Q}_{v} = \{(\bd{\psi}, \bd{\varphi}) \in V_{d} \times H^{2}(\Gamma) : \bd{\psi}^{\delta}|_{\Gamma} = \bd{\varphi}\},
\end{equation*}
and we note (by using Definition \ref{plateweak}) that the Biot and interface velocities $(\bd{\xi}_{h}, \bd{\zeta}_{h})$ satisfy the following weak formulation for all test functions $(\bd{\psi}, \bd{\varphi}) \in \mathcal{Q}_{v}$ and for all times $0 \le s \le t \le T$:
\begin{small}
\begin{multline*}
\rho_{b} \int_{\Omega_{b}} (\bd{\xi}_{h}(t) - \bd{\xi}_{h}(s)) \cdot \bd{\psi} + h\int_{\Gamma} (\bd{\zeta}_{h}(t) - \bd{\zeta}_{h}(s)) \cdot \bd{\varphi} = \int_{s}^{t} \int_{\Gamma} \left(p_{h} - \frac{1}{2}|\bd{u}_{h}|^{2}\right) (\bd{\psi}^{\delta} \cdot \bd{n}^{\omega_{h}}) \\
+ \beta \int_{s}^{t} \int_{\Gamma} \Big(\mathcal{S}^{\omega_{h}}_{\Gamma}\Big)^{-1} (\bd{u}_{h} - \bd{\zeta}_{h}) \cdot \bd{\tau}^{\omega_{h}} \left(\bd{\psi}^{\delta} \cdot \bd{\tau}^{\omega_{h}}\right) - 2\mu_{e} \int_{s}^{t} \int_{\Omega_{b}} \bd{D}(\bd{\eta}_{h}) : \bd{D}(\bd{\psi}) - \lambda_{e} \int_{s}^{t} \int_{\Omega_{b}} (\nabla \cdot \bd{\eta}_{h}) (\nabla \cdot \bd{\psi}) \\
- 2\mu_{v} \int_{s}^{t} \int_{\Omega_{b}} \bd{D}(\bd{\xi}_{h}) : \bd{D}(\bd{\psi}) - 2\lambda_{v} \int_{s}^{t} \int_{\Omega_{b}} (\nabla \cdot \bd{\xi}_{h}) (\nabla \cdot \bd{\psi}) + \alpha \int_{s}^{t} \int_{\Omega_{b}} \mathcal{J}^{\eta^{\delta}_{h}}_{b} p_{h} \nabla^{\eta^{\delta}_{h}}_{b} \cdot \bd{\psi}^{\delta} + h \int_{s}^{t} \int_{\Gamma} \Delta \bd{\zeta}_{h} \cdot \Delta \bd{\varphi} + h \int_{s}^{t} \int_{\Gamma} \Delta \bd{\omega}_{h} \cdot \Delta \bd{\varphi}.
\end{multline*}
\end{small}
We have the following convergence result for the Biot velocity and interface velocity.
\begin{proposition}\label{velhconv}
Along a subsequence, $\bd{\xi}_{h} \to \bd{\xi}$ in $L^{2}(0, T; H^{-s}(\Omega_{b}))$ for $-1 < s < 0$ and $\bd{\zeta}_{h} \to \bd{\xi}^{\delta}|_{\Gamma}$ in $L^{2}(0, T; H^{k}(\Gamma))$ for all positive integers $k$. Furthermore, we can explicitly identify $\bd{\xi} = \partial_{t}\bd{\eta}$. 
\end{proposition}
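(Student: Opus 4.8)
The plan is to prove Proposition~\ref{velhconv} by combining two distinct mechanisms: (i) for the interface velocity $\bd{\zeta}_h$, the regularization by $\delta$ together with the regularized kinematic coupling condition $\bd{\zeta}_h = \partial_t\bd{\eta}_h^\delta|_\Gamma$ gives uniform bounds in $L^\infty(0,T;H^k(\Gamma))$ for every $k$ (Proposition~\ref{uniformh}), so strong convergence in $L^2(0,T;H^k(\Gamma))$ will follow from a compactness argument that only needs a modest temporal increment estimate; (ii) for the Biot velocity $\bd{\xi}_h = \partial_t\bd{\eta}_h$, the uniform bounds are only $L^2(0,T;H^1(\Omega_b))\cap L^\infty(0,T;L^2(\Omega_b))$ (purely poroelastic case: only $L^\infty(0,T;L^2(\Omega_b))$ in space), so we must work in negative-order spaces and use the coupled weak formulation displayed just above the proposition to control temporal increments. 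The natural framework is Proposition~\ref{compactness} with the chain $L^2(\Omega_b)\times L^2(\Gamma)\subset\subset H^{-s}(\Omega_b)\times H^{-s}(\Gamma)\subset \mathcal{Q}_v'$, paired with the test space $\mathcal{Q}_v = \{(\bd{\psi},\bd{\varphi})\in V_d\times H^2(\Gamma): \bd{\psi}^\delta|_\Gamma=\bd{\varphi}\}$, exactly as in the $N\to\infty$ analysis.

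First I would record the uniform bounds already available from Proposition~\ref{uniformh}: $(\bd{\xi}_h,\bd{\zeta}_h)$ is uniformly bounded in $L^2(0,T;H^1(\Omega_b)\times H^2(\Gamma))$ in the poroviscoelastic case and in $L^\infty(0,T;L^2(\Omega_b))\times L^\infty(0,T;H^k(\Gamma))$ in general, hence in particular in $L^2(0,T;L^2(\Omega_b)\times L^2(\Gamma))$, which supplies the ``$L^p(0,T;B_0)$'' ingredient of Proposition~\ref{compactness} with $B_0 = L^2(\Omega_b)\times L^2(\Gamma)$, $p=2$. Next I would establish the increment estimate: testing the displayed weak formulation for $(\bd{\xi}_h,\bd{\zeta}_h)$ with $(\bd{\psi},\bd{\varphi})\in\mathcal{Q}_v$, $\|\bd{\psi}\|_{H^1(\Omega_b)}\le 1$, $\|\bd{\varphi}\|_{H^2(\Gamma)}\le 1$, I bound each term over $[s,t]$ term by term, using: the uniform geometric bounds of Proposition~\ref{geometryh} for $\mathcal{J}_b^{\eta_h^\delta}$, $\bd{n}^{\omega_h}$, $\bd{\tau}^{\omega_h}$ and $(\mathcal{S}^{\omega_h}_\Gamma)^{-1}$; the uniform trace bound on $\bd{u}_h|_\Gamma$ in $L^2(0,T;L^2(\Gamma))$ (Proposition~\ref{tracehuniform}); the uniform bounds on $\bd{u}_h$ in $L^2(0,T;H^1)\cap L^\infty(0,T;L^2)$ (for the $|\bd{u}_h|^2$ term, interpolated as in Term~2 of Lemma~\ref{generalizedB}); the uniform bounds on $\bd{\eta}_h, \bd{\xi}_h, p_h$; and the regularizing property of convolution, which keeps $\|\bd{\psi}^\delta\|_{H^k(\Omega_b)}\le C(\delta,k)$ so that all the $\delta$-convolved test-function terms are controlled. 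The viscoelastic plate terms $h\int\Delta\bd{\zeta}_h\cdot\Delta\bd{\varphi}$ and $h\int\Delta\bd{\omega}_h\cdot\Delta\bd{\varphi}$ are controlled uniformly since $\{h^{1/2}\bd{\zeta}_h\}$ is bounded in $L^2(0,T;H^2(\Gamma))$ and $\{h^{1/2}\bd{\omega}_h\}$ in $L^\infty(0,T;H^2(\Gamma))$, giving a prefactor $h^{1/2}\le 1$. This yields $\|(\bd{\xi}_h,\bd{\zeta}_h)(t)-(\bd{\xi}_h,\bd{\zeta}_h)(s)\|_{\mathcal{Q}_v'}\le C\big(|t-s|+|t-s|^{1/2}\big)$, hence a uniform $W^{\alpha,p}(0,T;\mathcal{Q}_v')$ bound for suitable $\alpha,p$ with $\alpha p<1/2$, exactly as in Proposition~\ref{phconv}. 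Applying Proposition~\ref{compactness} gives strong convergence $(\bd{\xi}_h,\bd{\zeta}_h)\to(\bd{\xi},\bd{\zeta})$ in $L^2(0,T;H^{-s}(\Omega_b)\times H^{-s}(\Gamma))$ for $0<s<1$ (equivalently $-1<-s<0$).

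To upgrade the interface-velocity convergence to $L^2(0,T;H^k(\Gamma))$ for all $k$, I would use the uniform bound of $\bd{\zeta}_h$ in $L^\infty(0,T;H^{k+1}(\Gamma))$ from Proposition~\ref{uniformh} together with the just-obtained strong convergence in a weak space: interpolating between the strong $H^{-s}(\Gamma)$ convergence and the uniform $H^{k+1}(\Gamma)$ bound gives strong convergence in $H^k(\Gamma)$ for every $k$, and the $L^2$-in-time integrability follows from dominated convergence using the uniform $L^\infty(0,T;H^{k+1})$ bound. Finally, to identify the limits: since $\bd{\eta}_h\to\bd{\eta}$ in $C(0,T;L^2(\Omega_b))$ (Proposition~\ref{etahconv}) and $\bd{\xi}_h=\partial_t\bd{\eta}_h$ is uniformly bounded in, say, $L^2(0,T;L^2(\Omega_b))$ with a weak limit, a standard distributional-derivative argument (pair with $\phi(t)\zeta(x)$, integrate by parts in $t$, pass to the limit using both convergences) shows $\bd{\xi}=\partial_t\bd{\eta}$; similarly, from $\bd{\zeta}_h=\partial_t\bd{\eta}_h^\delta|_\Gamma$ and the convergence $\bd{\eta}_h^\delta\to\bd{\eta}^\delta$ (which follows from $\bd{\eta}_h\to\bd{\eta}$ and the continuity of convolution with the fixed kernel $\varphi_\delta$), we get $\bd{\zeta}=\partial_t\bd{\eta}^\delta|_\Gamma=\bd{\xi}^\delta|_\Gamma$, using that convolution commutes with $\partial_t$ and the trace operator is bounded. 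I expect the main obstacle to be the term $\int_s^t\int_\Gamma\tfrac12|\bd{u}_h|^2(\bd{\psi}^\delta\cdot\bd{n}^{\omega_h})$: unlike all other terms it is quadratic in $\bd{u}_h|_\Gamma$, so the crude trace bound $\|\bd{u}_h|_\Gamma\|_{L^2(0,T;L^2(\Gamma))}$ is not by itself enough for an increment estimate with a clean $|t-s|$ power, and one must instead interpolate $\|\bd{u}_h\|_{L^4(\Gamma)}\le C\|\bd{u}_h\|_{H^{3/4}(\Omega_f)}\le C\|\bd{u}_h\|_{H^1}^{3/4}\|\bd{u}_h\|_{L^2}^{1/4}$ and use that $\|\bd{u}_h\|^{3/2}_{L^2(0,T;H^1)}$-type quantities are finite, so the contribution over $[s,t]$ is bounded by $C|t-s|^{1/4}$ (or better by Hölder with the $L^{4/3}$-in-time integrability of $\|\bd{u}_h\|_{L^4(\Gamma)}^2$); this still gives a uniform $W^{\alpha,p}$ bound for small enough $\alpha p$, which suffices for Proposition~\ref{compactness}.
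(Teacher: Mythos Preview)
Your overall strategy---Flandoli--Gatarek compactness (Proposition~\ref{compactness}) with the chain $L^{2}(\Omega_{b})\times L^{2}(\Gamma)\subset\subset H^{-s}(\Omega_{b})\times H^{-s}(\Gamma)\subset\mathcal{Q}_{v}'$, an increment estimate coming from the displayed coupled weak formulation, then identification via the regularized kinematic coupling---is exactly the paper's route, and your handling of the quadratic trace term $\int_{\Gamma}|\bd{u}_{h}|^{2}(\bd{\psi}^{\delta}\cdot\bd{n}^{\omega_{h}})$ via the $L^{8/3}(0,T;H^{3/4}(\Omega_{f}))$ interpolation matches the paper's estimate \eqref{inc14}, yielding the sharp exponent $|t-s|^{1/4}$.

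There is, however, one genuine gap. The displayed weak formulation reads
\[
\rho_{b}\int_{\Omega_{b}}(\bd{\xi}_{h}(t)-\bd{\xi}_{h}(s))\cdot\bd{\psi}\;+\;h\int_{\Gamma}(\bd{\zeta}_{h}(t)-\bd{\zeta}_{h}(s))\cdot\bd{\varphi}\;=\;\text{RHS},
\]
with an explicit factor of $h$ in front of the plate increment. Testing with $(\bd{\psi},\bd{\varphi})\in\mathcal{Q}_{v}$ and bounding the right-hand side therefore controls $\|(\rho_{b}\Delta\bd{\xi}_{h},\,h\Delta\bd{\zeta}_{h})\|_{\mathcal{Q}_{v}'}$, not $\|(\Delta\bd{\xi}_{h},\,\Delta\bd{\zeta}_{h})\|_{\mathcal{Q}_{v}'}$ as you claim. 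The discrepancy is $(1-h/\rho_{b})\langle\Delta\bd{\zeta}_{h},\bd{\varphi}\rangle$, and from the available uniform bound $\bd{\zeta}_{h}\in L^{\infty}(0,T;H^{k}(\Gamma))$ you only get $|\langle\Delta\bd{\zeta}_{h},\bd{\varphi}\rangle|\le C$ with no $|t-s|$ decay, so the increment estimate for the unscaled pair does not follow. This is precisely the place where the vanishing plate inertia matters: as $h\to 0$ the weak formulation gives you no temporal control on $\bd{\zeta}_{h}$ through the plate term.

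The paper fixes this by applying the compactness argument to the pair $(\bd{\xi}_{h},\,h\bd{\zeta}_{h})$ instead. The increment estimate then follows verbatim from your term-by-term bounds, compactness yields strong convergence of $(\bd{\xi}_{h},\,h\bd{\zeta}_{h})$ in $L^{2}(0,T;H^{-s}\times H^{-s})$, and the second component is simply $h\bd{\zeta}_{h}\to 0$. The useful output is the first component: $\bd{\xi}_{h}\to\bd{\xi}$ strongly in $L^{2}(0,T;H^{-s}(\Omega_{b}))$. Convergence of $\bd{\zeta}_{h}$ is then recovered \emph{a posteriori} from the relation $\bd{\zeta}_{h}=\bd{\xi}_{h}^{\delta}|_{\Gamma}$ and the smoothing property of the fixed convolution, which sends $H^{-s}(\Omega_{b})$ continuously into $H^{k}(\Gamma)$ for every $k$. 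Your alternative for the last step---interpolating strong low-order convergence against the uniform $L^{\infty}(0,T;H^{k+1}(\Gamma))$ bound---would also work, but only once you have strong convergence of $\bd{\zeta}_{h}$ in \emph{some} space, which requires first patching the $h$-factor issue as above.
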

\begin{proof}
Consider the ordered pair of Biot velocities and plate velocities $(\bd{\xi}_{h}, h\bd{\zeta}_{h})$ in the parameter $h > 0$, where we multiply the plate velocity by $h$ due to the weak formulation above. Note that $(\bd{\xi}_{h}, h\bd{\zeta}_{h})$ are uniformly bounded independently of $h$ in $L^{2}(0, T; L^{2}(\Omega_{b}) \times L^{2}(\Gamma))$ by Proposition \ref{uniformh} (and in fact $\|h\bd{\zeta}_{h}\|_{L^{2}(0, T; L^{2}(\Gamma))} \to 0$ as $h \to 0$). Furthermore, from the weak formulation, we can show that $(\bd{\xi}_{h}, h\bd{\zeta}_{h})$ is uniformly bounded in $W^{\alpha, 2}(0, T; \mathcal{Q}_{v}')$ for an appropriate choice of $\alpha > 0$, by establishing an increment estimate. To do this, we estimate each of the terms of the weak formulation for the Biot and plate velocity for a test function $(\bd{\psi}, \bd{\varphi}) \in \mathcal{Q}_{v}$, with $\|\bd{\psi}\|_{H^{1}(\Omega_{b})} \le 1$ and $\|\bd{\varphi}\|_{H^{2}_{0}(\Gamma)} \le 1$, as follows:
\begin{itemize}
\item Note that by the trace inequality, $p_{h}$ is uniformly bounded in $L^{2}(0, T; L^{2}(\Gamma))$ and by the trace inequality combined with the properties of spatial convolution, $\|\bd{\psi}^{\delta}\|_{L^{\infty}(\Gamma)} \le C$. Furthermore, we have the estimate $|\bd{n}^{\omega_{h}}| \le C$ uniformly in $h$, where the rescaled normal vector is defined in \eqref{ntomega}. We estimate:
\begin{align}\label{inc14}
\left|\int_{s}^{t} \int_{\Gamma} \left(p_{h} - \frac{1}{2}|\bd{u}_{h}|^{2}\right) (\bd{\psi}^{\delta} \cdot \bd{n}^{\omega^{\delta}_{h}})\right| &\le C \int_{s}^{t}\left(\|p_{h}\|_{L^{2}(\Gamma)} + \|\bd{u}_{h}\|_{L^{2}(\Gamma)}^{2}\right) \nonumber \\
&\le C \int_{s}^{t} \left(\|p_{h}\|_{L^{2}(\Gamma)} + \|\hat{\bd{u}}_{h}\|^{2}_{H^{3/4}(\Omega_{f})}\right) \le C|t - s|^{1/4},
\end{align}
where we used the fact that $\hat{\bd{u}}_{h} := \bd{u}_{h} \circ \bd{\Phi}^{\omega_{h}}_{f} \in L^{\infty}(0, T; L^{2}(\Omega_{f})) \cap L^{2}(0, T; H^{1}(\Omega_{f}))$ and hence is in $L^{8/3}(0, T; H^{3/4}(\Omega_{f}))$ uniformly in $h$, by the estimates in Proposition \ref{tracehuniform} and the geometric bounds in Proposition \ref{geometryh}.
\item Next, we use the facts that $|\bd{\tau}^{\omega^{\delta}_{h}}| \le C$ uniformly in $h$ where the rescaled tangent vector is defined in \eqref{ntomega}, the geometric estimate $\mathcal{J}^{\omega^{\delta}_{h}}_{\Gamma} \ge C$ by the definition in \eqref{ntdef2} and the uniform bound \eqref{thbound}, the assumption on the test function that $\|\bd{\psi}\|_{H^{1}(\Omega_{f})} \le 1$, and the trace inequality to obtain:
\begin{multline*}
\left|\beta \int_{s}^{t} \int_{\Gamma} \Big(\mathcal{S}^{\omega_{h}}_{\Gamma}\Big)^{-1} (\bd{u}_{h} - \bd{\zeta}_{h}) \cdot \bd{\tau}^{\omega_{h}} \left(\bd{\psi}^{\delta} \cdot \bd{\tau}^{\omega_{h}}\right)\right| \\
\le C\int_{s}^{t} \left(\|\bd{\zeta}_{h}\|_{L^{2}(\Gamma)} + \|\bd{u}_{h}\|_{L^{2}(\Gamma)}\right) \le C\int_{s}^{t} \left(\|\bd{\zeta}_{h}\|_{L^{2}(\Gamma)} + \|\hat{\bd{u}}_{h}\|_{H^{1}(\Omega_{f})}\right) \le C|t - s|^{1/2},
\end{multline*}
by the uniform bound of $\hat{\bd{u}}_{h}$ in $L^{2}(0, T; H^{1}(\Omega_{f}))$ in Proposition \ref{tracehuniform} and the uniform bound of $\bd{\zeta}_{h}$ in $L^{2}(0, T; H^{k}(\Gamma))$ for all positive integers $k$, see Proposition \ref{uniformh}. 
\item Next, we estimate using the uniform pointwise geometric bounds on $\mathcal{J}^{\eta^{\delta}_{h}}_{b}$ and $|(\bd{I} + \nabla \bd{\eta}^{\delta}_{h})^{-1}|$ in Proposition \ref{geometryh}, that
\begin{equation*}
\left|\int_{s}^{t} \int_{\Omega_{b}} \mathcal{J}^{\eta^{\delta}_{h}}_{b} p_{h} \nabla^{\eta^{\delta}_{h}}_{b} \cdot \bd{\psi}^{\delta}\right| \le C \int_{s}^{t} \|p_{h}\|_{L^{2}(\Omega_{b})} \le C|t - s|.
\end{equation*}
\end{itemize}
Hence, we deduce the following increment estimate:
\begin{equation*}
\|(\bd{\xi}_{h}, h\bd{\zeta}_{h})(t) - (\bd{\xi}_{h}, h\bd{\zeta}_{h})(s)\|_{\mathcal{Q}_{v}'} \le C|t - s|^{1/4},
\end{equation*}
and by the definition of the fractional Sobolev space in \eqref{fractionalnorm}, for any $\alpha$ such that $0 < \alpha < 1/8$, we obtain that
\begin{equation*}
\|(\bd{\xi}_{h}, h\bd{\zeta}_{h})\|_{W^{\alpha, 2}(0, T; \mathcal{Q}_{v}')} \le C, \quad \text{ uniformly in $h$}.
\end{equation*}
Therefore, using the compactness result in Proposition \ref{compactness} along with the chain of embeddings $L^{2}(\Omega_{b}) \times L^{2}(\Gamma) \subset \subset H^{-s}(\Omega_{b}) \times H^{-s}(\Gamma) \subset \mathcal{Q}_{v}'$, we obtain that $(\bd{\xi}_{h}, h\bd{\zeta}_{h})$ converges strongly in $L^{2}(0, T; L^{2}(\Omega_{b}) \times L^{2}(\Gamma))$ to some limiting ordered pair of functions.

We will hence denote the limit of $\bd{\xi}_{h}$ in $L^{2}(0, T; L^{2}(\Omega_{b}))$ by $\bd{\xi}$, and since $\bd{\xi}_{h} \rightharpoonup \partial_{t}\bd{\eta}$ weakly-star in $L^{\infty}(0, T; L^{2}(\Omega_{b}))$ by the weak-star convergence of $\bd{\eta}_{h} \rightharpoonup \bd{\eta}$ in $W^{1, \infty}(0, T; L^{2}(\Omega_{b}))$, we must have that $\bd{\xi} = \partial_{t}\bd{\eta}$ by uniqueness properties of the limit. By the regularized kinematic coupling condition, $\bd{\zeta}_{h} = \bd{\xi}_{h}^{\delta}|_{\Gamma}$, so by the regularizing properties of spatial convolution and the trace inequality, we obtain from the strong convergence $\bd{\xi}_{h} \to \bd{\xi}$ in $L^{2}(0, T; L^{2}(\Gamma))$ that $\bd{\zeta}_{h} \to \bd{\xi}^{\delta}|_{\Gamma}$ in $L^{2}(0, T; H^{k}(\Gamma))$ for any positive integer $k$. 
\end{proof}

\medskip

\noindent \textbf{Compactness arguments for the fluid velocity.} For the fluid velocities, we must handle the issue of moving domains when obtaining strong convergence results as in Proposition \ref{fluidcompact}, since the approximate fluid velocities $\bd{u}_{h}$ are all defined on different moving fluid domains. However, this will be quite simple compared to the semidiscrete case in the existence proof for the $h$-level problem in Proposition \ref{fluidcompact}. The main difficulty in the case of the semidiscrete scheme in Proposition \ref{fluidcompact} is that the time-discrete fluid velocities satisfy different weak formulations on each time interval, due to the interface update being staggered from the fluid/Biot update, which leads to requiring us to estimate the difference between consecutive ALE fluid maps $\bd{\Phi}^{\omega^{n}_{N}}_{f}$ and $\bd{\Phi}^{\omega^{n + 1}_{N}}_{f}$, see \eqref{tildeuN} and \eqref{fluidnN}. However, because our approximate solutions $\bd{u}_{h}$ in the limit passage as $h \to 0$ are time-continuous (rather than discretized in time), the compactness arguments for the fluid velocity are simpler, as we will be able to use an extension to by zero to express the weak formulation for $\bd{u}_{h}$ on a common maximal fluid domain, where we can use an increment estimate and compactness via fractional Sobolev spaces (Proposition \ref{compactness}) to conclude strong convergence. 

To do this, we extend the fluid velocities $\bd{u}_{h}$ from $\Omega_{h}(t) := \bd{\Phi}^{\omega_{h}(t)}_{f}(\Omega_{f})$ to the fixed maximal domain $\Omega := \{\bd{x} \in \R^{2} : |\bd{x}| < 2\}$, as was done in the proof of existence for fixed $h$ when passing to the limit as $N \to \infty$, see the discussion preceding Proposition \ref{compactfluid} and its statement. This is possible due to the non-degeneracy results on $\bd{\Phi}^{\omega_{h}}_{f}(\Gamma)$ stated in Proposition \ref{geometryh}. Because the fluid velocity in particular is decoupled from the plate velocity, we can treat the fluid velocities as functions on the maximal domain and directly estimate the fluid velocities in a dual norm for a fixed function space, in order to appeal to the compactness result in Proposition \ref{compactness} to directly obtain strong convergence. We emphasize that having the fluid velocity function decoupled from the other parts of the solution greatly simplifies this limit procedure, in contrast to for example having the fluid velocity kinematically coupled to the plate velocity via a kinematic coupling condition (such as the no-slip condition), as in past works on FSI in \cite{AubinLions,CanicLectureNotes, MS22}. 

We hence obtain the following strong convergence result for the fluid velocities $\bd{u}_{h}$ defined on the fixed maximal fluid domain:
\begin{proposition}
Along a subsequence in $h$, $\bd{u}_{h} \to \bd{u}$ in $L^{2}(0, T; L^{2}(\Omega))$ for some limiting function $\bd{u}$ that is equal to zero outside of the limiting moving fluid domain $\Omega_{f}(t)$ defined by the regularized interface displacement $\bd{\eta}^{\delta}|_{\Gamma}$ and the corresponding moving interface $\Gamma^{\eta^{\delta}}(t)$. 
\end{proposition}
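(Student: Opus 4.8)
The strategy is to apply the abstract compactness criterion in Proposition \ref{compactness} to the fluid velocities extended by zero to the maximal domain $\Omega$, exactly as was done for the $N\to\infty$ limit but now in the cleaner continuous-in-time setting. Recall from Proposition \ref{tracehuniform} and Proposition \ref{geometryh} that $\hat{\bd{u}}_h := \bd{u}_h\circ\bd{\Phi}^{\omega_h}_f$ is uniformly bounded in $L^\infty(0,T;L^2(\Omega_f))\cap L^2(0,T;H^1(\Omega_f))$ with uniform geometric control of the ALE map; combined with the mapping properties of extension by zero (as in Theorem 8.2 of \cite{KuanTawri}), this gives that $\{\bd{u}_h\}$ is uniformly bounded in $L^\infty(0,T;L^2(\Omega))$ and in $L^2(0,T;H^s(\Omega))$ for $0<s<1/2$. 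This supplies the ``$B_0$'' bound in Proposition \ref{compactness} with $B_0 = H^s(\Omega)$, $B=L^2(\Omega)$, $B_1 = V'$ for a suitable Hilbert space $V\subset\subset H^s(\Omega)$ dense in the divergence-free admissible test space (e.g. $V = H^3(\Omega)\cap\{\nabla\cdot\bd{v}=0,\ \bd{v}|_{\partial\Omega\setminus\Gamma}=\bd{0}\}$, so that $L^2(\Omega)\subset\subset H^s(\Omega)\subset\subset V'$ up to identification).

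First I would fix $0<s\le t\le T$ and derive the increment estimate
\begin{equation*}
\|\bd{u}_h(t) - \bd{u}_h(s)\|_{V'} \le C|t-s|^{1/4},
\end{equation*}
with $C$ independent of $0<h\le1$, by testing the continuous-in-time weak formulation in Definition \ref{plateweak} with a divergence-free test function $\bd{v}\in V$ supported (after composition with the ALE map) in the fluid region, so that all Biot/plate and pressure-volume terms that are not boundary terms drop out. The surviving terms — the viscous term $2\nu\int\mathcal{J}^{\omega_h}_f\bd{D}^{\omega_h}_f(\bd{u}_h):\bd{D}^{\omega_h}_f(\bd{v})$, the convective terms, the $\partial_t\mathcal{J}^{\omega_h}_f$ term, the two interface boundary terms $\tfrac12\int_{\hat\Gamma}(\hat{\bd{u}}_h - \hat{\bd{\xi}}^\delta_h)\cdot\hat{\bd{n}}^{\omega_h}(\hat{\bd{u}}_h\cdot\bd{v})$ and $\int_{\hat\Gamma}(\tfrac12|\hat{\bd{u}}_h|^2 - \hat p_h)(\cdot)\cdot\hat{\bd{n}}^{\omega_h}$, and the Beavers--Joseph--Saffman slip term — are all estimated using: the uniform geometric bounds of Proposition \ref{geometryh}; the uniform bounds of $\bd{u}_h$ in $L^2(0,T;H^1(\Omega_f(t)))\cap L^\infty(0,T;L^2)$; the uniform trace bounds of Proposition \ref{tracehuniform}; the uniform bounds on $\bd{\zeta}_h$, $p_h$ from Propositions \ref{uniformh}, \ref{phconv}; and the $H^3$ regularity of $\bd{v}$ to absorb derivatives landing on the test function. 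The worst exponent comes from the quadratic trace term $\int_s^t\int_{\hat\Gamma}|\hat{\bd{u}}_h|^2(\cdot)$ handled via $\hat{\bd{u}}_h\in L^{8/3}(0,T;H^{3/4}(\Omega_f))$ (interpolation plus the trace theorem), giving the $|t-s|^{1/4}$ rate exactly as in the proof of Proposition \ref{velhconv}; this is harmless for the compactness conclusion. From the increment bound I then get, for $\alpha p<1/4$, that $\{\bd{u}_h\}$ is bounded in $W^{\alpha,p}(0,T;V')$ uniformly in $h$, and Proposition \ref{compactness} yields a subsequence with $\bd{u}_h\to\bd{u}$ strongly in $L^2(0,T;L^2(\Omega))$.

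It remains to identify the limit and show $\bd{u}$ vanishes outside the limiting moving fluid domain $\Omega_f(t)$ determined by $\bd{\eta}^\delta|_{\hat\Gamma}$. For the support property, I would use the strong convergence $\bd{\omega}_h\to\bd{\omega}=\bd{\eta}^\delta|_{\hat\Gamma}$ in $C(0,T;H^k(\hat\Gamma))$ from Proposition \ref{etahconv} together with elliptic estimates for the harmonic ALE maps (as in Proposition \ref{testconvergence}) to conclude $\bd{\Phi}^{\omega_h}_f\to\bd{\Phi}^{\omega}_f$ uniformly on $\overline{\Omega_f}$, hence the moving domains $\Omega_h(t)$ converge to $\Omega_f(t)$ in the Hausdorff sense with uniform-in-$h$ nondegeneracy; since each $\bd{u}_h$ is supported in $\overline{\Omega_h(t)}$, any point outside $\overline{\Omega_f(t)}$ is outside $\overline{\Omega_h(t)}$ for $h$ small, so the $L^2$ limit $\bd{u}$ vanishes there. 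The main obstacle is really just the bookkeeping in the increment estimate — tracking which uniform bound controls each boundary term and making sure every constant is genuinely independent of $h$ (in particular that the $h\int_{\hat\Gamma}\Delta\bd{\zeta}_h\cdot\Delta\bd{\varphi}$ and $h\int_{\hat\Gamma}\Delta\bd{\omega}_h\cdot\Delta\bd{\varphi}$ plate terms, which do involve $h$, do not appear once $\bd{v}$ is taken purely in the fluid test space, so they never enter this particular estimate). This argument is genuinely simpler than its $N\to\infty$ counterpart (Proposition \ref{compactfluid}) precisely because the fluid velocity is decoupled from the interface velocity, so no intermediate transported velocity $\tilde{\bd{u}}$ and no comparison of consecutive ALE maps is needed.
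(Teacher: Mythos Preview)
Your approach is essentially the same as the paper's: apply the fractional Aubin--Lions criterion (Proposition \ref{compactness}) by combining the uniform $L^2(0,T;H^s(\Omega))$ bound on the zero-extended velocities with an increment estimate of order $|t-s|^{1/4}$ in a dual space, then read off the support property from the uniform convergence of the interfaces. The paper carries this out with one simplification worth noting: rather than testing the fixed-domain formulation in Definition \ref{plateweak} and dealing with ALE time-derivative terms like $\partial_t\hat{\mathcal{J}}^{\omega_h}_f$, it observes that any $\bd{v}\in\mathcal{Q}_M=\{\bd{v}\in H^1(\Omega):\nabla\cdot\bd{v}=0,\ \bd{v}=0\text{ on }\partial\Omega\}$ restricts to an admissible test function on each physical domain $\Omega_{f,h}(t)$, so one can write the weak formulation directly on the maximal domain (equation \eqref{fluidhweak}) with a single $h$-independent test function and no Jacobian factors in the time-derivative term. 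This makes the increment estimate shorter and removes the need for $H^3$ regularity on the test function --- $H^1$ suffices. Your route via the reference-domain formulation with $\hat{\bd v}=\bd v\circ\bd\Phi^{\omega_h}_f$ also works but carries the extra ALE bookkeeping you flagged. (Aside: your chain of embeddings has a typo --- it should read $H^s(\Omega)\subset\subset L^2(\Omega)\subset V'$, not $L^2\subset\subset H^s$.)
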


\begin{proof}
We will appeal to the compactness result in Proposition \ref{compactness}, where we first note that the fluid velocities $\bd{u}_{h}$ extended to the maximal fluid domain $\Omega$ are uniformly bounded in $L^{2}(0, T; H^{s}(\Omega))$ for any $s$ such that $0 < s < 1/2$, due to the uniform bound of $\bd{u}_{h} \in L^{2}(0, T; H^{1}(\Omega_{f})$ and properties of extension by zero on Lipschitz domains (see Theorem 8.2 in \cite{KuanTawri}). We then define the following common test space on the maximal fluid domain:
\begin{equation*}
\mathcal{Q}_{M} = \{\bd{v} \in H^{1}(\Omega) : \nabla \cdot \bd{v} = 0 \text{ on } \Omega \text{ and } \bd{v} = 0 \text{ on } \partial \Omega_{f} \setminus \Gamma\}.
\end{equation*}
We claim that the fluid velocities $\bd{u}_{h}$ on the maximal fluid domain are bounded in $W^{\alpha, 2}(0, T; \mathcal{Q}_{M}')$ for some $\alpha > 0$, uniformly in the plate thickness parameter $h$. To show this, we note that for any $\bd{v} \in \mathcal{Q}_{M}$, the restriction to the moving fluid domains $1_{\Omega_{f, h}(t)} \bd{v}$ are admissible test functions for the weak formulation with parameter $h > 0$, and hence using Definition \ref{plateweak}, for any (spacetime) test function $\bd{v} \in C_{c}^{1}([0, T); \mathcal{Q}_{M})$, the following weak formulation for $\bd{u}_{h}$ holds on the fixed maximal domain:
\begin{small}
\begin{multline}\label{fluidhweak}
-\int_{0}^{T} \int_{\Omega} \bd{u}_{h} \cdot \partial_{t} \bd{v} - \int_{0}^{T} \int_{\Omega} \bd{u}_{h} \cdot (\bd{u}_{h} \cdot \nabla) \bd{v} + \int_{0}^{T} \int_{\Gamma(t)} \left(\bd{u}_{h} - \bd{\zeta}_{h} \right) \cdot \bd{n} (\bd{u}_{h} \cdot \bd{v}) + 2\nu \int_{0}^{T} \int_{\Omega_{f}(t)} \bd{D}(\bd{u}_{h}) : \bd{D}(\bd{v}) \\
- \int_{0}^{T} \int_{\Gamma(t)} \left(\frac{1}{2}|\bd{u}_{h}|^{2} - p_{h}\right) (\bd{v} \cdot \bd{n}) - \beta \int_{0}^{T} \int_{\Gamma(t)} (\bd{\zeta}_{h} - \bd{u}_{h}) \cdot \bd{\tau} (\bd{v} \cdot \bd{\tau}) = \int_{\Omega} \bd{u}_{0} \cdot \bd{v}(0).
\end{multline}
\end{small}
This follows by the support properties of the function $\bd{u}_{h}$ and the extension by zero, where we note that we also fully transferred the advective derivative in the second term of \eqref{fluidhweak} to the test function using integration by parts. Hence, using the weak continuity of $\bd{u}_{h}$, we obtain that for all (spatial) test functions $\bd{v} \in \mathcal{Q}_{M}$ and for all times $0 \le s \le t \le T$:
\begin{small}
\begin{multline*}
\int_{\Omega} (\bd{u}_{h}(t) - \bd{u}_{h}(s)) \cdot \bd{v} = \int_{s}^{t} \int_{\Omega} \bd{u}_{h} \cdot (\bd{u}_{h} \cdot \nabla) \bd{v} + \int_{s}^{t} \int_{\Gamma(t)} (\bd{\zeta}_{h} - \bd{u}_{h}) \cdot \bd{n} (\bd{u}_{h} \cdot \bd{v}) \\
- 2\nu \int_{s}^{t} \int_{\Omega_{f}(t)} \bd{D}(\bd{u}_{h}) : \bd{D}(\bd{v}) + \int_{s}^{t} \int_{\Gamma(t)} \left(\frac{1}{2} |\bd{u}_{h}|^{2} - p_{h}\right) (\bd{v} \cdot \bd{n}) + \beta \int_{s}^{t} \int_{\Gamma(t)} (\bd{\zeta}_{h} - \bd{u}_{h}) \cdot \bd{\tau} (\bd{v} \cdot \bd{\tau}). 
\end{multline*}
\end{small}
We estimate the terms for $\|\bd{v}\|_{Q_{M}} \le 1$ (so that $\|\bd{v}\|_{H^{1}(\Omega)} \le 1$) as follows:
\begin{itemize}
\item We estimate using geometric bounds on the Jacobian of the fluid ALE map (Proposition \ref{geometryh}):
\begin{align*}
\left|\int_{s}^{t} \int_{\Omega} \bd{u}_{h} \cdot (\bd{u}_{h} \cdot \nabla) \bd{v}\right| &\le \int_{s}^{t} \|\bd{u}_{h}\|_{L^{4}(\Omega_{f}(t))}^{2} \le C \int_{s}^{t} \|\hat{\bd{u}}_{h}\|_{L^{4}(\Omega_{f})}^{2} \\
&\le C\int_{s}^{t} \|\hat{\bd{u}}_{h}\|_{H^{1/2}(\Omega_{f})}^{2} \le \int_{s}^{t} \left(\|\hat{\bd{u}}_{h}\|_{L^{2}(\Omega_{f}(t))} \|\hat{\bd{u}}_{h}\|_{H^{1}(\Omega_{f}(t))}\right) \le C|t - s|^{1/2},
\end{align*}
since $\hat{\bd{u}}_{h}$ is uniformly bounded in $L^{\infty}(0, T; L^{2}(\Omega_{f})) \cap L^{2}(0, T; H^{1}(\Omega_{f}))$ by Proposition \ref{tracehuniform}.
\item Next, since $\|\bd{v}\|_{H^{1}(\Omega_{f})} \le 1$, by the trace inequality and Sobolev embedding, $\|\bd{v}\|_{L^{4}(\Gamma)} \le 1$. Therefore,
\begin{multline*}
\left|\int_{s}^{t} \int_{\Gamma(t)} (\bd{\zeta}_{h} - \bd{u}_{h}) \cdot \bd{n} (\bd{u}_{h} \cdot \bd{v})\right| \le C\int_{s}^{t} \left(\|\bd{\zeta}_{h}\|_{L^{2}(\Gamma)} + \|\bd{u}_{h}\|_{L^{4}(\Gamma)}\right) \|\bd{u}_{h}\|_{L^{4}(\Gamma)} \\
\le C\int_{s}^{t} \left(1 + \|\bd{u}_{h}\|_{L^{4}(\Gamma)}^{2}\right) \le C \int_{s}^{t} \Big(1 + \|\bd{u}_{h}\|_{H^{1/4}(\Gamma)}^{2}\Big) \le C \int_{s}^{t} \Big(1 + \|\bd{u}_{h}\|_{H^{3/4}(\Omega_{f})}^{2}\Big) \le C|t - s|^{1/4}
\end{multline*}
since $\hat{\bd{u}}_{h}$ is uniformly bounded in $L^{\infty}(0, T; L^{2}(\Omega_{f})) \cap L^{2}(0, T; H^{1}(\Omega_{f}))$ and hence also in $L^{8/3}(0, T; H^{3/4}(\Omega_{f}))$ by Proposition \ref{tracehuniform}. Similarly, by the uniform bound of $p_{h}$ in $L^{2}(0, T; H^{1}(\Omega_{b}))$ given in Proposition \ref{uniformh},
\begin{equation*}
\left|\int_{s}^{t} \int_{\Gamma(t)} \left(\frac{1}{2}|\bd{u}_{h}|^{2} - p_{h}\right) (\bd{v} \cdot \bd{n})\right| \le C\int_{s}^{t} \left(\|p_{h}\|_{L^{2}(\Gamma)} + \|\bd{u}_{h}\|_{L^{4}(\Gamma)}^{2}\right) \le C|t - s|^{1/4},
\end{equation*}
and by using the uniform bound of $\bd{\zeta}_{h}$ in $L^{2}(0, T; H^{k}(\Gamma))$ for all positive integers $k$ given in Proposition \ref{uniformh}, 
\begin{align*}
\left|\int_{s}^{t} \int_{\Gamma(t)} (\bd{\zeta}_{h} - \bd{u}_{h}) \cdot \bd{\tau} (\bd{v} \cdot \bd{\tau})\right| &\le C\int_{s}^{t} \left(\|\bd{\zeta}_{h}\|_{L^{2}(\Gamma)} + \|\bd{u}_{h}\|_{L^{2}(\Gamma)}\right) \\
&\le C\int_{s}^{t} \left(\|\bd{\zeta}_{h}\|_{L^{2}(\Gamma)} + \|\bd{u}_{h}\|_{H^{1}(\Gamma)}\right) \le C|t - s|^{1/2}.
\end{align*}
\item Finally, by using the uniform estimate of $\bd{u}_{h}$ in $L^{2}(0, T; H^{1}(\Omega_{f}(t)))$ in Proposition \ref{uniformh}, we estimate
\begin{equation*}
\left|2\nu \int_{s}^{t} \int_{\Omega_{f}(t)} \bd{D}(\bd{u}_{h}) : \bd{D}(\bd{v})\right| \le C|t - s|^{1/2},
\end{equation*}
\end{itemize}
So we obtain the final estimate that
\begin{equation*}
\sup_{\|v\|_{Q_{M}} \le 1} \left|\int_{\Omega} (\bd{u}_{h}(t) - \bd{u}_{h}(s)) \cdot \bd{v}\right| \le C|t - s|^{1/4},
\end{equation*}
and hence, $\|\bd{u}_{h}\|_{W^{\alpha, 2}(0, T; \mathcal{Q}_{M}')} \le C$ uniformly in $h$ for any $\alpha$ such that $0 < \alpha < 1/8$ by \eqref{fractionalnorm}. The desired strong convergence of $\bd{u}_{h} \to \bd{u}$ along a subsequence in $h$ follows by the compact embedding in Proposition \ref{compactness}. The support properties of the limit $\bd{u}$ follow from the uniform pointwise convergence of $\bd{\omega}_{h}$ to $\bd{\eta}|_{\Gamma}$ and the fact that $L^{2}(0, T; L^{2}(\Omega))$ convergence implies convergence for almost every $(t, \bd{x}) \in [0, T] \times \Omega$ along a subsequence in $h$, combined with the support properties of the extension by zero of $\bd{u}_{h}$. 
\end{proof}

\subsection{Passage to the limit}

We conclude the proof of the main result Theorem \ref{mainthm} by sketching how we can pass to the limit in the weak formulation for the approximate problem with plate of thickness $h > 0$ in Definition \ref{plateweak} to obtain the limiting weak formulation for the regularized interface problem in Definition \ref{regularweak}. We summarize the convergences we have obtained so far below, which follow from Proposition \ref{etahconv}, Proposition \ref{phconv}, Proposition \ref{velhconv}, and an argument similar to Proposition \ref{traceconvprop}. Using these convergences, we can pass to the limit in the regularized interface weak formulations for the approximate problem with a plate of thickness $h > 0$ (Definition \ref{plateweak}) to obtain a regularized interface weak solution to the original direct contact problem (Definition \ref{regularweak}).  

\begin{theorem}\label{singularlimit}
Let $(\bd{u}_{h}, \bd{\omega}_{h}, \bd{\eta}_{h}, p_{h})$ be regularized interface weak solutions for the approximate problem with plate of thickness $h > 0$ in the sense of Definition \ref{plateweak}, which all exist on a common time interval $[0, T]$, for the same initial data, satisfying the conditions of Theorem \ref{hthm}. We have the following convergences in the limit as $h \to 0$.
\begin{itemize}
\item $\bd{\eta}_{h} \to \bd{\eta}$ in $C(0, T; L^{2}(\Omega_{b}))$.
\item $p_{h} \to p$ in $L^{2}(0, T; L^{2}(\Omega_{b}))$.
\item $\bd{\xi}_{h} \to \bd{\xi}$ in $L^{2}(0, T; H^{-s}(\Omega_{b}))$ for $-1 < s < 0$. 
\item $\bd{u}_{h} \to \bd{u}$ in $L^{2}(0, T; L^{2}(\Omega))$ on the maximal domain.
\item $\bd{u}_{h}|_{\Gamma} \to \bd{u}|_{\Gamma}$ in $L^{2}(0, T; L^{2}(\Gamma))$. 
\item $\bd{\omega}_{h} \to \bd{\eta}^{\delta}|_{\Gamma}$ in $C(0, T; H^{k}(\Gamma))$ for all positive integers $k$.
\item $\bd{\zeta}_{h} \to \bd{\xi}^{\delta}|_{\Gamma}$ in $L^{2}(0, T; H^{k}(\Gamma))$ for all positive integers $k$.
\end{itemize}
Furthermore, the limiting solution $(\bd{u}, \bd{\eta}, p)$ is a regularized interface weak solution to the original direct contact problem in the sense of Definition \ref{regularweak}.
\end{theorem}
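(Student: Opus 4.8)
The bulk of the work is already carried out in Propositions~\ref{etahconv}, \ref{phconv} and \ref{velhconv}, the uniform geometric control of Proposition~\ref{geometryh}, and the fluid-velocity compactness on the maximal domain; what remains is to (i) transfer the fluid-velocity convergence to the fixed reference domain and to the interface, (ii) construct admissible approximating test functions for the $h$-problem, and (iii) pass to the limit $h\to0$ term by term in the weak formulation of Definition~\ref{plateweak} and recover that of Definition~\ref{regularweak}. The conceptual pillar is that $\delta>0$ is held fixed, so the regularizing properties of spatial convolution at scale $\delta$ promote the available convergences — which for the velocities are only in negative Sobolev spaces — to strong convergence of the \emph{convolved} quantities: from $\bd{\eta}_h\to\bd{\eta}$ in $C(0,T;L^2(\Omega_b))$ and $\partial_t\bd{\eta}_h\to\partial_t\bd{\eta}$ in $L^2(0,T;H^{-s}(\Omega_b))$ one gets $\bd{\eta}_h^\delta\to\bd{\eta}^\delta$ and $\partial_t\bd{\eta}_h^\delta\to\partial_t\bd{\eta}^\delta$ strongly in $L^2(0,T;C^2(\overline{\Omega_b}))$, hence $\hat{\mathcal{J}}^{\eta_h^\delta}_b\to\hat{\mathcal{J}}^{\eta^\delta}_b$ and the transformed operator $\hat{\nabla}^{\eta_h^\delta}_b$ converge strongly, and $\bd{\omega}_h=\bd{\eta}_h^\delta|_{\hat{\Gamma}}\to\bd{\eta}^\delta|_{\hat{\Gamma}}$ in $C(0,T;H^k(\hat{\Gamma}))$, $\bd{\zeta}_h\to\partial_t\bd{\eta}^\delta|_{\hat{\Gamma}}$ in $L^2(0,T;H^k(\hat{\Gamma}))$ for every $k$. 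Feeding this strong interface convergence through the harmonic extension and elliptic regularity yields $\bd{\Phi}^{\omega_h}_f\to\bd{\Phi}^{\eta^\delta}_f$ in $C(0,T;C^1(\overline{\Omega_f}))$ and, since $\partial_t\bd{\omega}_h=\bd{\zeta}_h$ converges strongly, also $\partial_t\bd{\Phi}^{\omega_h}_f\to\partial_t\bd{\Phi}^{\eta^\delta}_f$; consequently $\hat{\mathcal{J}}^{\omega_h}_f$, $\partial_t\hat{\mathcal{J}}^{\omega_h}_f$, $(\hat{\nabla}\bd{\Phi}^{\omega_h}_f)^{-1}$, the ALE velocity, and the rescaled $\hat{\bd{n}}^{\omega_h},\hat{\bd{\tau}}^{\omega_h},(\hat{\mathcal{S}}^{\omega_h}_\Gamma)^{-1}$ all converge strongly (in $C$ or $L^\infty$) to their $\eta^\delta$-counterparts, the lower bounds of Proposition~\ref{geometryh} keeping the inverses and $(\hat{\mathcal{S}}^{\omega_h}_\Gamma)^{-1}$ controlled.

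For step (i) I would repeat the $A_1(t)/A_2(t)$ splitting argument of Proposition~\ref{traceconvprop}, now with $\bd{\Phi}^{\omega_h}_f$ in place of $\bd{\Phi}^{\omega_N}_f$: from $\bd{u}_h\to\bd{u}$ in $L^2(0,T;L^2(\Omega))$ on the maximal domain and the $C^1$ convergence of the ALE maps one obtains $\hat{\bd{u}}_h:=\bd{u}_h\circ\bd{\Phi}^{\omega_h}_f\to\hat{\bd{u}}$ strongly in $L^2(0,T;L^2(\Omega_f))$, whence $\hat{\bd{u}}_h|_{\hat{\Gamma}}\to\hat{\bd{u}}|_{\hat{\Gamma}}$ strongly in $L^2(0,T;L^2(\hat{\Gamma}))$; together with Proposition~\ref{tracehuniform} this also gives $\hat{\bd{u}}_h\rightharpoonup\hat{\bd{u}}$ weakly in $L^2(0,T;H^1(\Omega_f))$. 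For step (ii), given a target test function $\hat{\bd{v}}=\bd{v}\circ\bd{\Phi}^{\eta^\delta}_f$ with $\bd{v}$ smooth, compactly supported in time, divergence-free and vanishing on $\partial\Omega_f\setminus\hat{\Gamma}$ (such $\hat{\bd{v}}$ dense in $\mathcal{V}^{\eta^\delta}_f$, cf.\ Section~\ref{testfunctions}), set $\hat{\bd{v}}_h:=\bd{v}\circ\bd{\Phi}^{\omega_h}_f$; then $\nabla^{\omega_h}_f\cdot\hat{\bd{v}}_h=0$, so $(\hat{\bd{v}}_h,\hat{\bd{\psi}}^\delta|_{\hat{\Gamma}},\hat{\bd{\psi}},\hat{r})$ is an admissible test quadruple for the $h$-problem, and, exactly as in Proposition~\ref{testconvergence} (now indexed by $h$), $\hat{\bd{v}}_h\to\hat{\bd{v}}$, $\nabla\hat{\bd{v}}_h\to\nabla\hat{\bd{v}}$ and $\partial_t\hat{\bd{v}}_h\to\partial_t\hat{\bd{v}}$ uniformly on $[0,T]\times\overline{\Omega_f}$, since $\bd{v}$ is Lipschitz and $\bd{\omega}_h\to\bd{\eta}^\delta|_{\hat{\Gamma}}$ in $C(0,T;H^{2-\epsilon}(\hat{\Gamma}))$.

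For step (iii) I would insert this quadruple into Definition~\ref{plateweak} and let $h\to0$. The three plate terms carry an explicit factor $h$ in front of quantities ($\partial_t\bd{\omega}_h=\bd{\zeta}_h$, $\Delta\partial_t\bd{\omega}_h$, $\Delta\bd{\omega}_h$) bounded in $L^\infty(0,T;L^2(\hat{\Gamma}))$ uniformly in $h$ by Proposition~\ref{uniformh}, so they vanish — the plate and the component $\hat{\bd{\varphi}}$ disappear. In every surviving term the integrand is a product each of whose factors converges, pairwise, either strongly in $L^2$ (or in $L^p$, $p<4$, in two dimensions, using that $\hat{\bd{u}}_h$ is bounded in $L^4(0,T;L^4)$) or weakly in $L^2$ against a strongly convergent partner: the viscous term pairs $\hat{\mathcal{J}}^{\omega_h}_f\hat{\bd{D}}^{\omega_h}_f(\hat{\bd{v}}_h)$ (strong) with $\hat{\bd{D}}^{\omega_h}_f(\hat{\bd{u}}_h)$ (weak $L^2$); the $\partial_t$-term and the inertial term pair $\hat{\mathcal{J}}^{\omega_h}_f\partial_t\hat{\bd{v}}_h$ and $\hat{\mathcal{J}}^{\omega_h}_f\hat{\bd{v}}_h$ (strong) with $\hat{\bd{u}}_h$ (strong); the two skew-symmetrized convective and ALE-transport terms pair $\hat{\bd{u}}_h\otimes\hat{\bd{v}}_h$ or $\hat{\bd{u}}_h\otimes\hat{\bd{u}}_h$ (strong in $L^2_{t,x}$) with a transformed derivative of $\hat{\bd{u}}_h$ (weak $L^2$) or of $\hat{\bd{v}}_h$ (strong $L^\infty$), the ALE velocity $\hat{\bd{w}}^{\omega_h}$ converging strongly; the interface quadratic, boundary-pressure and Beavers--Joseph--Saffman terms combine the strong traces $\hat{\bd{u}}_h|_{\hat{\Gamma}}$, $\bd{\zeta}_h$, the weak trace $\hat{p}_h|_{\hat{\Gamma}}$, and the strongly convergent $\hat{\bd{n}}^{\omega_h},\hat{\bd{\tau}}^{\omega_h},(\hat{\mathcal{S}}^{\omega_h}_\Gamma)^{-1}$; the Biot elastic terms are linear in $\hat{\bd{\eta}}_h$ (weak-star $L^\infty(0,T;H^1)$) and $\partial_t\hat{\bd{\eta}}_h$ (weak $L^2(0,T;H^1)$ in the viscoelastic case); and the pore-pressure terms on the regularized Biot domain pair $\hat{\mathcal{J}}^{\eta_h^\delta}_b$, $\hat{\nabla}^{\eta_h^\delta}_b\hat{r}$ and $\partial_t\bd{\eta}_h^\delta$ (strong) with $\hat{p}_h$ (strong) or $\hat{\nabla}^{\eta_h^\delta}_b\hat{p}_h$ (weak $L^2$). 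The limit of these identities is precisely the regularized interface weak formulation of Definition~\ref{regularweak}: the constraint $\nabla^{\eta^\delta}_f\cdot\hat{\bd{u}}=0$ passes from $\nabla^{\omega_h}_f\cdot\hat{\bd{u}}_h=0$ by strong coefficient and weak gradient convergence, $\bd{\omega}=\bd{\eta}^\delta|_{\hat{\Gamma}}$ holds by construction, the membership $(\hat{\bd{u}},\hat{\bd{\eta}},\hat{p})\in\mathcal{V}^{\eta^\delta}$ follows from weak lower semicontinuity applied to \eqref{energyhineq} (whose initial energy is uniformly bounded for $0<h\le1$), and the initial data are attained since all time-derivative terms appear as integrations by parts against $C_c^1([0,T))$ test functions with $h$-independent right-hand sides. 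The main obstacle, as in the $N\to\infty$ passage, is keeping the test-function construction honest — $\hat{\bd{v}}_h$ must be genuinely $\omega_h$-divergence-free for each $h$ yet converge strongly enough to pass to the limit in the convective and ALE-transport terms — but this is exactly the harmonic-extension template of Section~\ref{testfunctions} and Proposition~\ref{testconvergence}; in fact, because the $h$-solutions are already continuous in time, this passage is cleaner than the semidiscrete one (no intermediate ALE-map correction is needed), and we refer to Section~9.3 of \cite{FPSIJMPA} and to \cite{MuhaCanic13} for the remaining routine details.
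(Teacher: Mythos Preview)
Your proposal is correct and follows essentially the same approach as the paper: collect the compactness results from Propositions~\ref{etahconv}, \ref{phconv}, \ref{velhconv} and the fluid-velocity proposition, reuse the $A_1/A_2$ splitting of Proposition~\ref{traceconvprop} for the trace convergence, construct admissible $h$-level test functions via the harmonic-extension template of Section~\ref{testfunctions}, and pass to the limit term by term with the plate terms vanishing. The only minor deviation is that for the plate terms you invoke the full $L^\infty(0,T;H^k(\hat\Gamma))$ bounds on $\bd{\omega}_h,\bd{\zeta}_h$ coming from the regularized kinematic coupling (yielding a full factor $h\to0$), whereas the paper uses the $h^{1/2}$-weighted energy bounds to extract an extra $h^{1/2}$; both arguments are valid.
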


The passage to the limit from the $h$-level weak formulation to the limiting weak formulation features similar difficulties as to the passage in $N \to \infty$ for the approximate problem, in the sense that there are integrals over moving fluid domains. Hence, we can use the same technique of identifying target test functions and constructing test functions that are admissible for each of the approximate $h$-level weak formulations, which converge appropriate to the target test function. We can hence pass to the limit as $h \to 0$ in the approximate weak formulations in Definition \ref{plateweak} to obtain the limiting weak formulation in Definition \ref{regularweak} via the same arguments as in Section \ref{testfunctions}. We only make comments about the associated plate terms in the weak formulation in Definition \ref{plateweak} and how they vanish in the limit as $h \to 0$:
\begin{equation*}
-h \int_{0}^{T} \int_{\Gamma} \partial_{t}\bd{\omega} \cdot \partial_{t}\bd{\varphi} + h \int_{0}^{T} \int_{\Gamma} \Delta \partial_{t}\bd{\omega} \cdot \Delta \bd{\varphi} + h \int_{0}^{T} \int_{\Gamma} \Delta \bd{\omega} \cdot \Delta \bd{\varphi}.
\end{equation*}
Note that by the uniform bound of $h^{1/2}\bd{\omega}_{h}$ in $L^{\infty}(0, T; L^{2}(\Gamma))$, $h^{1/2}\Delta \bd{\zeta}_{h}$ in $L^{2}(0, T; L^{2}(\Gamma))$, and $h^{1/2}\Delta \bd{\omega}_{h}$ in $L^{\infty}(0, T; L^{2}(\Gamma))$, we have that these terms converge to zero, since $\bd{\varphi}$ is a fixed test function. For example,
\begin{equation*}
\left|h \int_{0}^{T} \int_{\Gamma} \partial_{t}\bd{\omega}_{h} \cdot \partial_{t}\bd{\varphi}\right| \le h^{1/2} \|h^{1/2}\partial_{t}\bd{\omega}_{h}\|_{L^{\infty}(0, T; L^{2}(\Gamma))} \|\partial_{t}\bd{\varphi}\|_{L^{1}(0, T; L^{2}(\Gamma))} \to 0,
\end{equation*}
as there is an extra $h^{1/2}$ factor, which causes the integral to converge to zero in the limit as $h \to 0$. Thus, we can pass to the limit in the weak formulation for the $h$-level problem to obtain a regularized interface weak solution to the direct contact FPSI problem with nonlinear geometric coupling in the sense of Definition \ref{plateweak}, which completes the proof of the main result in Theorem \ref{mainthm}.

\section{Conclusions}\label{conclusion}

In this manuscript, we introduced a {\emph{regularized interface method}} to study fluid-poroelastic structure interaction between an incompressible, viscous fluid and a poroelastic medium modeled by the Biot equations. The two systems are nonlinearly coupled across a moving interface defined by the trace of the Biot displacement. We focused on weak, finite-energy solutions to this problem, which is notoriously difficult to analyze due to the limited regularity of the interface. This lack of regularity complicates the rigorous definition of a weak solution: key geometric quantities--such as the Jacobian of the Lagrangian map on the Biot domain--lack sufficient regularity to ensure all terms in the formally derived weak formulation are well-defined.

The regularized interface method developed here is specifically designed to address this issue by providing a framework for defining a weak solution in the presence of low-regularity interface geometry. It consists of two main components: (1) spatial regularization of the Biot displacement to define a smooth, regularized moving interface, and (2) modification of the weak formulation to ensure that the associated energy is consistent with the original, non-regularized problem.

The regularization introduced via a fixed parameter $\delta > 0$ provides uniform control over higher-order spatial derivatives of the Biot displacement. This, in turn, enables uniform-in-time control of key geometric quantities and it allows us to control the onset of geometric degeneracies--at least locally in time. 
Although the regularized interface method was initially developed to address the ill-defined nature of the direct contact interface problems, the additional regularity gained from spatial convolution has a further important benefit: it allows for the treatment of \emph{vector-valued displacements} in the Biot medium. By controlling higher-order norms of the regularized Biot displacement, we can enforce injectivity uniformly in time for the approximate solutions, provided $\delta$ remains fixed. We emphasize that the inclusion of \textit{bulk poroelasticity with vector-valued displacements} in the context of weak solutions is a novel aspect of the present work. 

We established the existence of a direct contact regularized interface weak solution via an approximation scheme that introduces a plate of thickness $h > 0$ at the interface and then takes the singular limit as $h \to 0$. Existence for the approximate problem, involving  a plate,  was proven using a Lie operator splitting approach, which alternates between solving the plate subproblem and the Biot/fluid subproblem (see Section \ref{hproof}).
We emphasize that the existence analysis is rather technically involved, despite the spatial regularization of the Biot displacement providing uniform control over higher-order norms. In the approximate problem, the interface--modeled as a thin plate--undergoes vector-valued displacements from its reference configuration $\Gamma$. This necessitated the use of a harmonic extension ALE map for the fluid domain (see \eqref{alef1} and \eqref{alef2}) and required establishing uniform geometric control over mappings from fixed to reference domains (see Proposition \ref{geometricN}). 
These controls were essential for transitioning between the Lagrangian and Eulerian formulations and for avoiding geometric degeneracy.

Moreover, the more intricate geometry of the vector-valued interface displacements introduced substantial complexity in the compactness analysis. In particular, establishing compactness for the fluid velocities required nontrivial estimates comparing different time-discrete moving fluid domains, which vary due to the moving approximate geometries $\Gamma^{n}_{\Delta t}$. These technical challenges were addressed in Proposition \ref{tildecomp}.

Next, we addressed passing to the limit as $h \to 0$ in the ``approximate'' problems with a plate of thickness $h$  to obtain a limiting regularized interface weak solution for the direct contact problem, as stated in Theorem \ref{singularlimit}. 
A main component of this analysis was again, finding uniform geometric bounds independent of $h$, see Proposition \ref{geometryh}, and also showing that the regularized interface weak solutions for the approximate $h$-level problems all exist on a uniform time interval, which was accomplished by energy estimates. 
This completed the proof of the main result stated in Theorem \ref{mainthm}.

We remark that the regularized interface method introduced here, based on an explicit regularization of the direct contact interface,  may initially appear to be  a purely technical or artificial tool for mathematical analysis--especially since each choice of regularization parameter $\delta > 0$ leads to a distinct regularized interface weak solution.  However, the preliminary results of our current work indicate that the $\delta$-regularization of the problem introduced here, is a {\emph{``good approximation''}} to the original, non-regularized problem in the sense that our ``approximate'' solutions of the $\delta$-regularized problems converge, as $\delta \to 0$ to the classical solution of the non-regularized problem when such a solution exists. More precisely, our preliminary results indicate that we will be able to establish \textit{weak-classical consistency} -- given a smooth (classical) solution $(\bd{u}, \bd{\eta}, p)$ to the original direct contact Biot-fluid problem that exists on some time interval $[0, T]$, the regularized interface weak solutions  $(\bd{u}_{\delta}, \bd{\eta}_{\delta}, p_{\delta})$ all exist on that same uniform time interval $[0, T]$, and furthermore, these solutions converge in an appropriate norm as $\delta \to 0$ to the smooth (classical) solution $(\bd{u}, \bd{\eta}, p)$. 

We note that for a slightly different regularization scheme applied to a simpler FPSI problem with nonlinear geometric coupling and only transverse interface displacements, involving a separating reticular plate at the interface, a similar weak-classical consistency result has been established in \cite{FPSIJMPA}. Our experience with that analysis and our preliminary results so far  give us reason to be optimistic that a similar result can be established for the regularized interface method, applied to the more challenging setting of the direct contact Biot-fluid problem, studied here.

We conclude with the remark that, although the regularized interface method was developed specifically for the direct contact Biot-fluid problem considered in this manuscript, its advantages suggest that it may serve as a robust framework for analyzing a broader class of moving domain problems--particularly those in which the geometry depends on solutions with low regularity. The method's ability to provide uniform geometric control, even in the presence of vector-valued displacements in bulk poroelasticity, points to its potential applicability to direct contact FSI problem with bulk elasticity, and other systems involving complex geometric nonlinearities. Extending the regularized interface method to such broader settings represents a promising direction for future research.

\section*{Acknowledgements}

Jeffrey Kuan was supported by the National Science Foundation under the Mathematical Sciences Postdoctoral Research Fellowship (MSPRF) DMS-2303177.
Sun\v{c}ica \v{C}ani\'{c} was supported in part by the National Science Foundation under grants DMS-2408928, DMS-2247000, and DMS-2011319. Boris Muha was supported by the Croatian Science Foundation under the project number IP-2022-10-2962.

\section{Appendix}

\begin{figure}
\center
\includegraphics[scale=0.4]{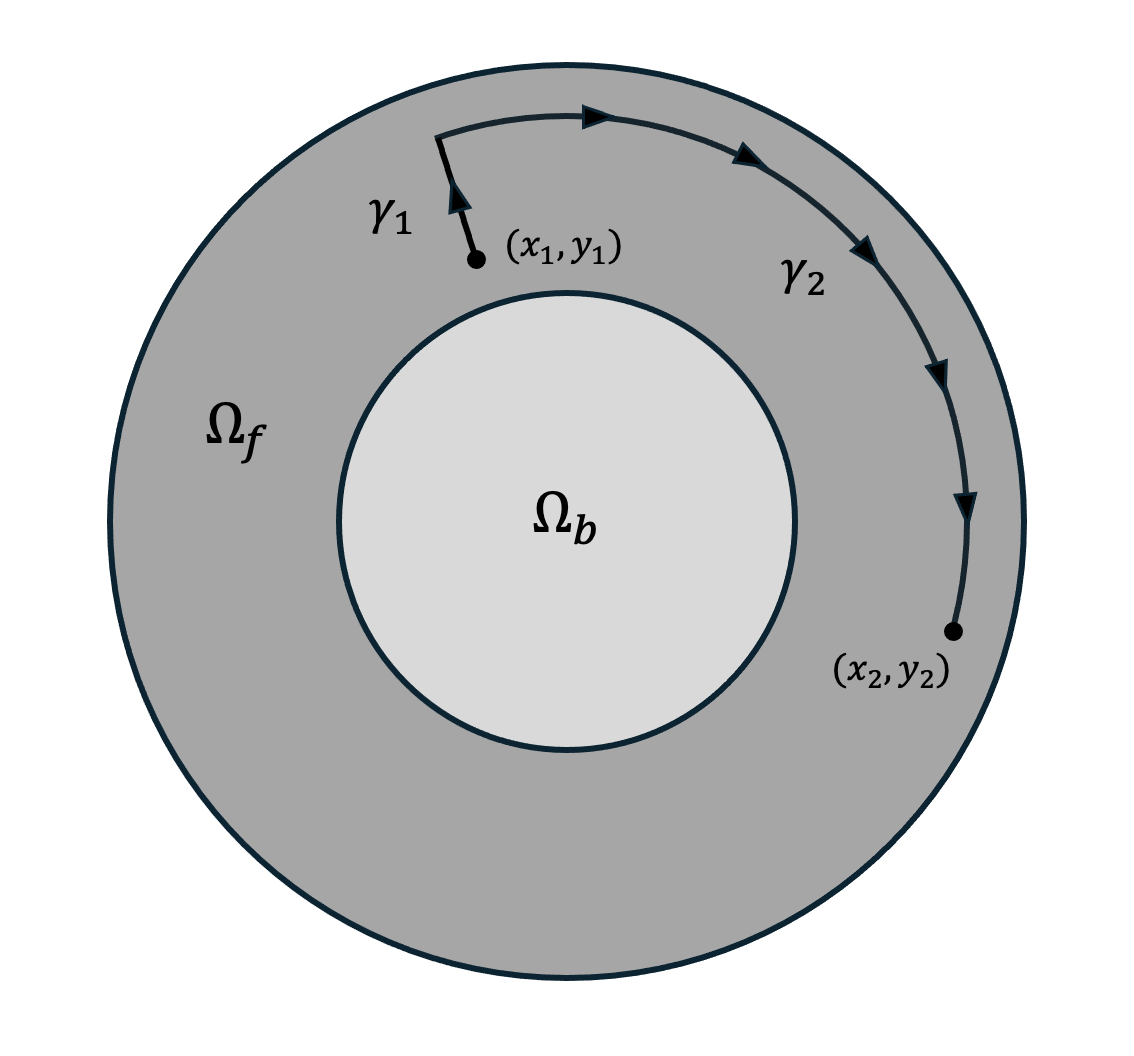}
\caption{The curves $\gamma_{1}$ and $\gamma_{2}$ connecting two points $(x_{1}, y_{1})$ and $(x_{2}, y_{2})$ in $\Omega_{f}$, where $\gamma_{1}$ is purely radial and $\gamma_{2}$ is purely circular (constant radius).}
\label{gammafig}
\end{figure}

In the appendix, we collect some auxiliary results that are used in the analysis. The first is a result related to the (reference) fluid domain geometry $\Omega_{f} := \{\bd{x} \in \R^{2} : 1 < |\bd{x}| < 2\}$. Because $\Omega_{f}$ is not convex, as the line segment connecting two points in $\Omega_{f}$ is not necessarily contained entirely within $\Omega_{f}$, we must define an alternative notion of ``distance" within $\Omega_{f}$ that is equivalent to the usual Euclidean norm. 

Let $(x_{1}, y_{1})$ and $(x_{2}, y_{2})$ be two points in $\Omega_{f}$, with associated polar coordinates $(r_{1}, \theta_{1})$ and $(r_{2}, \theta_{2})$ respectively, where $1 < r_{i} < 2$ for $i = 1, 2$ and $\theta_{1}, \theta_{2} \in (-2\pi, 2\pi)$ are chosen so that $|\theta_{1} - \theta_{2}| < \pi$. (While polar coordinates can be expressed modulo $2\pi$ in the angle, we choose representatives $\theta_{1}$ and $\theta_{2}$ that are the ``closest" to each other). Let $\gamma_{1}(x_{1}, y_{1}, x_{2}, y_{2})$ and $\gamma_{2}(x_{1}, y_{1}, x_{2}, y_{2})$ be the curves parametrized by $0 \le s \le 1$ by
\begin{equation*}
\gamma_{1}(x_{1}, y_{1}, x_{2}, y_{2}): s \to ((1 - s)r_{1} + sr_{2}, \theta_{1}),
\end{equation*}
\begin{equation*}
\gamma_{2}(x_{1}, y_{1}, x_{2}, y_{2}): (r_{2}, (1 - s)\theta_{1} + s\theta_{2}),
\end{equation*}
in polar coordinates. Then, let $\gamma(x_{1}, y_{1}, x_{2}, y_{2})$ be the composition of the two curves $\gamma_{1}(x_{1}, y_{1}, x_{2}, y_{2})$ followed by $\gamma_{2}(x_{1}, y_{1}, x_{2}, y_{2})$. See Figure \ref{gammafig}. Since $\gamma(x_{1}, y_{1}, x_{2}, y_{2})$ is a curve contained entirely within $\Omega_{f}$ connecting $(x_{1}, y_{1})$ and $(x_{2}, y_{2})$, its length can be thought of as a natural measure of distance within $\Omega_{f}$. We claim that the length of this connecting curve $\gamma(x_{1}, y_{1}, x_{2}, y_{2})$ is equivalent to the usual Euclidean distance.

\begin{proposition}\label{gammaprop}
Consider points $(x_{1}, y_{1})$ and $(x_{2}, y_{2})$ in $\Omega_{f}$, connected by the curve $\gamma(x_{1}, y_{1}, x_{2}, y_{2}) \subset \Omega$ defined above. Then,
\begin{equation*}
|(x_{1}, y_{1}) - (x_{2}, y_{2})| \le \text{Length}\Big(\gamma(x_{1}, y_{1}, x_{2}, y_{2})\Big) \le 5|(x_{1}, y_{1}) - (x_{2}, y_{2})|,
\end{equation*}
for all $(x_{1}, y_{1}), (x_{2}, y_{2}) \in \Omega_{f}$. 
\end{proposition}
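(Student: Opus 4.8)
\textbf{Proof proposal for Proposition \ref{gammaprop}.}

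The lower bound is immediate: $\gamma(x_1,y_1,x_2,y_2)$ is a curve joining $(x_1,y_1)$ to $(x_2,y_2)$, so its length is at least the length of the straight segment between those points, namely $|(x_1,y_1)-(x_2,y_2)|$. The real content is the upper bound, and the plan is to estimate the lengths of the two pieces $\gamma_1$ and $\gamma_2$ separately and then add them. For the radial piece $\gamma_1$, which goes from $(r_1,\theta_1)$ to $(r_2,\theta_1)$ at fixed angle, the length is exactly $|r_1-r_2|$. Since $r_i=|(x_i,y_i)|$, the reverse triangle inequality gives $|r_1-r_2|\le|(x_1,y_1)-(x_2,y_2)|$, so the radial piece contributes at most one copy of the Euclidean distance.

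For the circular piece $\gamma_2$, which travels along the circle of radius $r_2$ from angle $\theta_1$ to angle $\theta_2$, the length is $r_2\,|\theta_1-\theta_2|$. Here $r_2<2$, so it suffices to show $|\theta_1-\theta_2|\le 2|(x_1,y_1)-(x_2,y_2)|$; combining these yields a contribution of at most $4$ copies of the Euclidean distance, and together with the radial piece the constant $5$ follows. The key is therefore an elementary bound relating the angular separation of two points in the annulus to their Euclidean distance. First I would introduce the auxiliary point $P=(r_2,\theta_1)$ (the common endpoint of $\gamma_1$ and $\gamma_2$) and note that $(x_2,y_2)=(r_2,\theta_2)$ lies on the same circle $|{\bd x}|=r_2$. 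The chord of that circle subtending the angle $|\theta_1-\theta_2|$ has Euclidean length $2r_2\sin(|\theta_1-\theta_2|/2)$. Using the assumption $|\theta_1-\theta_2|<\pi$ (which is built into the choice of representatives, and is why the nonconvexity of $\Omega_f$ forces us to pick the ``closest'' angular representatives in the first place) together with the elementary inequality $\sin(\phi/2)\ge \phi/\pi$ for $0\le\phi\le\pi$, we get
\begin{equation*}
|P-(x_2,y_2)| = 2r_2\sin\!\left(\frac{|\theta_1-\theta_2|}{2}\right)\ge \frac{2r_2}{\pi}|\theta_1-\theta_2|\ge \frac{2}{\pi}|\theta_1-\theta_2|,
\end{equation*}
since $r_2>1$. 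Hence $r_2|\theta_1-\theta_2|\le \frac{\pi r_2}{2}|P-(x_2,y_2)| \le \pi\,|P-(x_2,y_2)|$.

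It then remains to bound $|P-(x_2,y_2)|$ by a constant times $|(x_1,y_1)-(x_2,y_2)|$. Since $P=(r_2,\theta_1)$ and $(x_1,y_1)=(r_1,\theta_1)$ share the angle $\theta_1$, we have $|P-(x_1,y_1)|=|r_1-r_2|\le|(x_1,y_1)-(x_2,y_2)|$, so by the triangle inequality $|P-(x_2,y_2)|\le |P-(x_1,y_1)|+|(x_1,y_1)-(x_2,y_2)|\le 2|(x_1,y_1)-(x_2,y_2)|$. Feeding this back gives $\mathrm{Length}(\gamma_2)=r_2|\theta_1-\theta_2|\le 2\pi|(x_1,y_1)-(x_2,y_2)|$, which together with $\mathrm{Length}(\gamma_1)\le|(x_1,y_1)-(x_2,y_2)|$ would actually produce a constant $1+2\pi$; to get the sharper constant $5$ claimed in the statement I would instead compare $\mathrm{Length}(\gamma_2)$ directly to the chord $|P-(x_2,y_2)|$ on the same circle (a circular arc of central angle $<\pi$ has length at most $\frac{\pi}{2}$ times its chord, using arc $=r\phi$ and chord $=2r\sin(\phi/2)\ge \frac{2r\phi}{\pi}$), giving $\mathrm{Length}(\gamma_2)\le \frac{\pi}{2}|P-(x_2,y_2)|\le \frac{\pi}{2}\cdot 2|(x_1,y_1)-(x_2,y_2)|=\pi|(x_1,y_1)-(x_2,y_2)|$, whence $\mathrm{Length}(\gamma)\le(1+\pi)|(x_1,y_1)-(x_2,y_2)|\le 5|(x_1,y_1)-(x_2,y_2)|$ since $1+\pi<5$. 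The one step deserving care is the reduction to the ``closest angular representatives,'' i.e. verifying that the hypothesis $|\theta_1-\theta_2|<\pi$ can always be arranged and that with this choice $\gamma$ stays inside $\Omega_f$; this is where the geometry of the annulus genuinely enters, and it is the only place where a naive convexity argument fails. Everything else is routine trigonometry, so I expect no serious obstacle.
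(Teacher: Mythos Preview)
Your proof is correct and follows essentially the same route as the paper's: both bound the radial piece by $|r_1-r_2|\le|(x_1,y_1)-(x_2,y_2)|$ via the reverse triangle inequality, bound the circular arc by a constant times its chord, and then control the chord $|P-(x_2,y_2)|$ through the triangle inequality using $|P-(x_1,y_1)|=|r_1-r_2|$. The only cosmetic difference is that you use the sharp arc-to-chord ratio $\pi/2$ (yielding the intermediate constant $1+\pi$) whereas the paper uses the looser ratio $2$ (yielding $1+4=5$ directly); both land within the stated bound.
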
 

\begin{proof}
We first show the assertion when $r_{1} = r_{2}$, namely, when $|(x_{1}, y_{1})| = |(x_{2}, y_{2})|$. In this case, the curve $\gamma(x_{1}, y_{1}, x_{2}, y_{2})$ is just the curve $\gamma_{2}(x_{1}, y_{1}, x_{2}, y_{2})$. Without loss of generality, by rotation and reflection, we can assume that $\theta_{1} = 0$ and $0 < \theta_{2} \le \pi$, in which case, we can compute, where $r$ is the common value of $r_{1} = r_{2}$:
\begin{equation*}
\text{Length}\Big(\gamma(x_{1}, y_{1}, x_{2}, y_{2})\Big) = r\theta_{2}, 
\end{equation*}
\begin{equation*}
|(x_{1}, y_{1}) - (x_{2}, y_{2})| = |(r, 0) - (r\cos(\theta_{2}), r\sin(\theta_{2}))| = r\sqrt{2}\Big(\sqrt{1 - \cos(\theta_{2})}\Big).
\end{equation*}
Since $0 < \theta_{2} \le \pi$, we can estimate $\displaystyle \frac{\sqrt{1 - \cos(\theta_{2})}}{\theta_{2}} \ge \frac{1}{\sqrt{2}}$ for all $0 < \theta_{2} \le \pi$. Therefore, we conclude that
\begin{multline}\label{gammalengthr}
\text{Length}\Big(\gamma(x_{1}, y_{1}, x_{2}, y_{2})\Big) \le 2|(x_{1}, y_{1}) - (x_{2}, y_{2})|, \qquad \text{ for all } (x_{1}, y_{1}), (x_{2}, y_{2}) \in \Omega_{f} \\
\text{ with } |(x_{1}, y_{1})| = |(x_{2}, y_{2})|.
\end{multline}

Now consider general $(x_{1}, y_{1})$ and $(x_{2}, y_{2})$ in $\Omega_{f}$ and let $(x^{*}, y^{*}) \in \Omega_{f}$ be the intermediate point, namely the endpoint of $\gamma_{1}(x_{1}, y_{1}, x_{2}, y_{2})$ and the starting point of $\gamma_{2}(x_{1}, y_{1}, x_{2}, y_{2})$, which has polar coordinates $(r_{2}, \theta_{1})$. Then, we calculate that
\begin{equation*}
\text{Length}\Big(\gamma(x_{1}, y_{1}, x_{2}, y_{2})\Big) = \text{Length}\Big(\gamma_{1}(x_{1}, y_{1}, x_{2}, y_{2})\Big) + \text{Length}\Big(\gamma_{2}(x_{1}, y_{1}, x_{2}, y_{2})\Big).
\end{equation*}
By the definition of $\gamma_{1}(x_{1}, y_{1}, x_{2}, y_{2})$, we estimate that
\begin{equation*}
\text{Length}\Big(\gamma_{1}(x_{1}, y_{1}, x_{2}, y_{2})\Big) = \Big||(x_{1}, y_{1})| - |(x_{2}, y_{2})|\Big| \le |(x_{1}, y_{1}) - (x_{2}, y_{2})|. 
\end{equation*}
Since $\gamma_{2}(x_{1}, y_{1}, x_{2}, y_{2})$ connects the points $(x^{*}, y^{*})$ and $(x_{2}, y_{2})$ along a constant radius, by \eqref{gammalengthr},
\begin{align*}
\text{Length}\Big(\gamma_{2}(x_{1}, y_{1}, x_{2}, y_{2})\Big) &\le 2\|(x^{*}, y^{*}) - (x_{2}, y_{2})\| \\
&\le 2\Big(|(x_{1}, y_{1}) - (x_{2}, y_{2})| + |(x^{*}, y^{*}) - (x_{1}, y_{1})|\Big) \\
&\le 2\Big(|(x_{1}, y_{1}) - (x_{2}, y_{2})| + \text{Length}\Big(\gamma_{1}(x_{1}, y_{1}, x_{2}, y_{2})\Big)\Big) \\
&\le 4\Big(|(x_{1}, y_{1}) - (x_{2}, y_{2})|\Big).
\end{align*}
So we conclude that
\begin{equation*}
|(x_{1}, y_{1}) - (x_{2}, y_{2})| \le \text{Length}\Big(\gamma(x_{1}, y_{1}, x_{2}, y_{2})\Big) \le 5|(x_{1}, y_{1}) - (x_{2}, y_{2})|,
\end{equation*}
with the lower bound being immediate and the upper bound following from the preceding calculations.
\end{proof}

Finally, we state a result about the transformation of lengths of curves under composition by a differentiable map.

\begin{proposition}\label{lengthtransform}
Let $f: U \to \R^{2}$ be a $C^{1}$ map, where $U \subset \R^{2}$ is an open set. If $\gamma$ is a piecewise smooth curve in $U$, then 
\begin{equation*}
\text{Length}\Big(f(\gamma)\Big) \le \|\nabla f\|_{L^{\infty}(U)} \cdot \text{Length}(\gamma).
\end{equation*}
\end{proposition}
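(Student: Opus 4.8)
\textbf{Proof proposal for Proposition \ref{lengthtransform}.}

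The plan is to reduce the statement to a single smooth piece and then estimate the arc-length integral directly. First I would observe that, since $\gamma$ is piecewise smooth, it decomposes into finitely many smooth pieces $\gamma_1, \dots, \gamma_m$, and both $\text{Length}(\gamma)$ and $\text{Length}(f(\gamma))$ are additive over this decomposition (the image $f(\gamma)$ is again piecewise smooth because $f \in C^1$ and each $\gamma_i$ is smooth, so $f \circ \gamma_i$ is $C^1$). Hence it suffices to prove the inequality when $\gamma$ itself is a single $C^1$ curve, say parametrized by $\bd{r}: [a, b] \to U$ with $\bd{r} \in C^1$.

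Next I would write the two lengths as integrals. By definition,
\begin{equation*}
\text{Length}(\gamma) = \int_a^b |\bd{r}'(t)| \, dt, \qquad \text{Length}(f(\gamma)) = \int_a^b \left| \frac{d}{dt} \big( f(\bd{r}(t)) \big) \right| dt.
\end{equation*}
By the chain rule, $\frac{d}{dt} f(\bd{r}(t)) = \nabla f(\bd{r}(t)) \, \bd{r}'(t)$, where $\nabla f$ denotes the Jacobian matrix of $f$. The key step is the pointwise bound $|\nabla f(\bd{r}(t)) \, \bd{r}'(t)| \le \|\nabla f(\bd{r}(t))\|_{\mathrm{op}} \, |\bd{r}'(t)| \le \|\nabla f\|_{L^\infty(U)} \, |\bd{r}'(t)|$, where $\|\cdot\|_{\mathrm{op}}$ is the operator norm, which is controlled by (a fixed multiple of, or for a suitable convention equal to) the $L^\infty$ norm of the entries of $\nabla f$; since $\bd{r}([a,b]) \subset U$, this is valid for every $t \in [a, b]$. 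Integrating this pointwise inequality over $[a, b]$ yields $\text{Length}(f(\gamma)) \le \|\nabla f\|_{L^\infty(U)} \, \text{Length}(\gamma)$, and summing over the smooth pieces gives the general statement.

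The argument is essentially routine, so there is no serious obstacle; the only point requiring a little care is the bookkeeping between the operator norm of the Jacobian and whatever norm is meant by $\|\nabla f\|_{L^\infty(U)}$ in the ambient notation (Frobenius norm, or entrywise sup norm), but in every reasonable convention these are equivalent on $2 \times 2$ matrices, so at most the constant changes and the stated form holds with the intended norm. One should also note that $f(\gamma)$ may fail to be injective or may have cusps where $\nabla f \, \bd{r}'$ vanishes, but this is harmless: arc length is still well-defined as the integral above, and the inequality only becomes easier at such points.
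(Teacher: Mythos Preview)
Your proposal is correct and follows essentially the same approach as the paper: reduce to a single smooth piece, parametrize, apply the chain rule to get $\frac{d}{dt}f(\bd{r}(t)) = \nabla f(\bd{r}(t))\,\bd{r}'(t)$, bound pointwise by $\|\nabla f\|_{L^\infty(U)}|\bd{r}'(t)|$, and integrate. The paper's proof is a terse one-line version of exactly this computation; your additional remarks about norm conventions and possible cusps are reasonable but not needed for the argument as stated.
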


\begin{proof}
Consider the case where $\gamma$ is smooth (the piecewise smooth case follows by considering each smooth piece separately), and let $\bd{r}(s)$ for $a \le s \le b$ be a parametrization of $\gamma$. Then, $f(\bd{r}(s))$ is a parametrization of $f(\gamma)$. We compute
\begin{equation*}
\text{Length}\Big(f(\gamma)\Big) = \int_{a}^{b} |\nabla f(\bd{r}(s)) \bd{r}'(s)| ds \le \|\nabla f\|_{L^{\infty}(U)} \int_{a}^{b} |\bd{r}'(s)| ds \le \|\nabla f\|_{L^{\infty}(U)} \text{Length}(\gamma).
\end{equation*}
\end{proof}

{\bf{Conflict of interest statment:}} The authors declare that they have no conflicts of interest, financial or otherwise, related to the subject matter of this manuscript.

\bibliographystyle{plain}
\bibliography{FPSIBibliography}

\begin{thebibliography}{10}

\bibitem{Adams}
R.~A. Adams and J.~J.~F. Fournier.
\newblock {\em Sobolev spaces}, volume 140 of {\em Pure and Applied
  Mathematics}.
\newblock Academic Press, Oxford, second edition, 2003.

\bibitem{AEN19}
I.~Ambartsumyan, V.~J. Ervin, T.~Nguyen, and I.~Yotov.
\newblock A nonlinear {S}tokes-{B}iot model for the interaction of a
  non-{N}ewtonian fluid with poroelastic media.
\newblock {\em ESAIM Math. Model. Numer. Anal.}, 53(6):1915--1955, 2019.

\bibitem{Gurvich}
G.~Avalos, E.~Gurvich, and J.~Webster.
\newblock Weak and strong solutions for a fluid-poroelastic structure
  interaction via a semigroup approach.
\newblock {\em Preprint}, 2024.

\bibitem{AW25}
George Avalos and Justin~T Webster.
\newblock Uniqueness of weak solutions for biot-stokes interactions.
\newblock {\em arXiv preprint arXiv:2502.07061}, 2025.

\bibitem{QuainiQuarteroniPoroelastic}
S.~Badia, A.~Quaini, and A.~Quarteroni.
\newblock Coupling {B}iot and {N}avier-{S}tokes equations for modelling
  fluid-poroelastic media interaction.
\newblock {\em Journal of Computational Physics}, 228(21):7986--8014, 2009.

\bibitem{barGruLasTuff2}
V.~Barbu, Z.~Gruji\'{c}, I.~Lasiecka, and A.~Tuffaha.
\newblock Existence of the energy-level weak solutions for a nonlinear
  fluid-structure interaction model.
\newblock In {\em Fluids and waves}, volume 440 of {\em Contemp. Math.}, pages
  55--82. Amer. Math. Soc., Providence, RI, 2007.

\bibitem{BarGruLasTuff}
V.~Barbu, Z.~Gruji\'{c}, I.~Lasiecka, and A.~Tuffaha.
\newblock Smoothness of weak solutions to a nonlinear fluid-structure
  interaction model.
\newblock {\em Indiana Univ. Math. J.}, 57(3):1173--1207, 2008.

\bibitem{BGM23}
Mathieu Barr\'e, C\'eline Grandmont, and Philippe Moireau.
\newblock Analysis of a linearized poromechanics model for incompressible and
  nearly incompressible materials.
\newblock {\em Evol. Equ. Control Theory}, 12(3):846--906, 2023.

\bibitem{Biotwell1}
H.~Barucq, M.~Madaune-Tort, and P.~Saint-Macary.
\newblock Theoretical aspects of wave propagation for {B}iot's consolidation
  problem.
\newblock {\em Monografías del Seminario Matemático García de Galdeano},
  31:449--458, 2004.

\bibitem{Biotwell2}
H.~Barucq, M.~Madaune-Tort, and P.~Saint-Macary.
\newblock On nonlinear {B}iot's consolidation models.
\newblock {\em Nonlinear Anal.}, 63:e985--e995, 2005.

\bibitem{BdV1}
H.~Beir\~{a}o~da Veiga.
\newblock On the existence of strong solutions to a coupled fluid-structure
  evolution problem.
\newblock {\em J. Math. Fluid Mech.}, 6(1):21--52, 2004.

\bibitem{BKS24}
Barbora Bene\v~sov\'a, Malte Kampschulte, and Sebastian Schwarzacher.
\newblock A variational approach to hyperbolic evolutions and fluid-structure
  interactions.
\newblock {\em J. Eur. Math. Soc. (JEMS)}, 26(12):4615--4697, 2024.

\bibitem{Biot1}
M.~A. Biot.
\newblock General theory of three-dimensional consolidation.
\newblock {\em J. Appl. Phys.}, 12(2):155--164, 1941.

\bibitem{Biot2}
M.~A. Biot.
\newblock Theory of elasticity and consolidation for a porous anisotropic
  solid.
\newblock {\em J. Appl. Phys.}, 26(2):182--185, 1955.

\bibitem{Biotwell7}
L.~Bociu, G.~Guidoboni, R.~Sacco, and J.~T. Webster.
\newblock Analysis of nonlinear poro-elastic and poro-visco-elastic models.
\newblock {\em Arch. Ration. Mech. Anal.}, 222(3):1445--1519, 2016.

\bibitem{BiotWell22}
L.~Bociu, B.~Muha, and J.~T. Webster.
\newblock Weak solutions in nonlinear poroelasticity with incompressible
  constituents.
\newblock {\em Nonlinear Anal. Real World Appl.}, 67:Paper No. 103563, 22,
  2022.

\bibitem{BiotWell23}
L.~Bociu, B.~Muha, and J.~T. Webster.
\newblock Mathematical effects of linear visco-elasticity in quasi-static
  {B}iot models.
\newblock {\em J. Math. Anal. Appl.}, 527(2):Paper No. 127462, 2023.

\bibitem{BCMW21}
L.~Bociu, S.~\v{C}ani\'{c}, B.~Muha, and J.~T. Webster.
\newblock Multilayered poroelasticity interacting with {S}tokes flow.
\newblock {\em SIAM J. Math. Anal.}, 53(6):6243--6279, 2021.

\bibitem{Biotwell8}
L.~Bociu and J.~T. Webster.
\newblock Nonlinear quasi-static poroelasticity.
\newblock {\em J. Differential Equations}, 296:242--278, 2021.

\bibitem{BrKS24}
Dominic Breit, Malte Kampschulte, and Sebastian Schwarzacher.
\newblock Compressible fluids interacting with 3{D} visco-elastic bulk solids.
\newblock {\em Math. Ann.}, 390(4):5495--5552, 2024.

\bibitem{BukacBiot}
M.~Buka\v{c}.
\newblock A loosely-coupled scheme for the interaction between a fluid, elastic
  structure and poroelastic material.
\newblock {\em J. Comput. Phys.}, 313:377--399, 2016.

\bibitem{BYZFPSI}
M.~Buka\v{c}, I.~Yotov, and P.~Zunino.
\newblock An operator splitting approach for the interaction between a fluid
  and a multilayered poroelastic structure.
\newblock {\em Numer. Methods Partial Differential Equations},
  31(4):1054--1100, 2014.

\bibitem{YifanDES}
S.~Canic, Y.~Wang, and M.~Buka\v{c}.
\newblock A next-generation mathematical model for drug eluting stents.
\newblock {\em {SIAM} J. Appl. Math.}, 81(4):1503--1529, 2021.

\bibitem{MixedBiot4}
S.~Caucao, T.~Li, and I.~Yotov.
\newblock A cell-centered finite volume method for the {N}avier-{S}tokes/{B}iot
  model.
\newblock In R.~Kl\"{o}fkorn, E.~Keilegavlen, F.~A. Radu, and J.~Fuhrmann,
  editors, {\em Finite volumes for complex applications {I}{X} - {M}ethods,
  theoretical aspects, examples}, volume 323 of {\em Springer Proceedings in
  Mathematics \& Statistics}, pages 325--333, Bergen, Norway, June 2020. FVCA
  9, Springer.

\bibitem{MixedBiot2}
S.~Caucao, T.~Li, and I.~Yotov.
\newblock A multipoint stress-flux mixed finite element model for the
  {S}tokes-{B}iot model.
\newblock {\em Numer. Math.}, 152:411--473, 2022.

\bibitem{CGH14}
P.~Causin, G.~Guidoboni, A.~Harris, D.~Prada, R.~Sacco, and S.~Terragni.
\newblock A poroelastic model for the perfusion of the lamina cribrosa in the
  optic nerve head.
\newblock {\em Math. Biosci.}, 257:33--41, 2014.

\bibitem{Ces17}
A.~Cesmelioglu.
\newblock Analysis of the coupled {N}avier-{S}tokes/{B}iot problem.
\newblock {\em J. Math. Anal. Appl.}, 456(2):970--991, 2017.

\bibitem{CDEM}
A.~Chambolle, B.~Desjardins, M.~J. Esteban, and C.~Grandmont.
\newblock Existence of weak solutions for the unsteady interaction of a viscous
  fluid with an elastic plate.
\newblock {\em J. Math. Fluid Mech.}, 7(3):368--404, 2005.

\bibitem{ChengShkollerCoutand}
C.~H.~A. Cheng, D.~Coutand, and S.~Shkoller.
\newblock Navier-{S}tokes equations interacting with a nonlinear elastic
  biofluid shell.
\newblock {\em SIAM J. Math. Anal.}, 39(3):742--800, 2007.

\bibitem{ChenShkoller}
C.~H.~A. Cheng and S.~Shkoller.
\newblock The interaction of the 3{D} {N}avier-{S}tokes equations with a moving
  nonlinear {K}oiter elastic shell.
\newblock {\em SIAM J. Math. Anal.}, 42(3):1094--1155, 2010.

\bibitem{Ciarlet}
P.~G. Ciarlet.
\newblock {\em Mathematical Elasticity Volume I: Three-Dimensional Elasticity},
  volume~20 of {\em Studies in Mathematics and Its Applications}.
\newblock Elsevier Science Publishers B.V., Amsterdam, 1988.

\bibitem{CSS1}
D.~Coutand and S.~Shkoller.
\newblock Motion of an elastic solid inside an incompressible viscous fluid.
\newblock {\em Arch. Ration. Mech. Anal.}, 176(1):25--102, 2005.

\bibitem{CSS2}
D.~Coutand and S.~Shkoller.
\newblock The interaction between quasilinear elastodynamics and the
  {N}avier-{S}tokes equations.
\newblock {\em Arch. Ration. Mech. Anal.}, 179(3):303--352, 2006.

\bibitem{DreherJungel}
M.~Dreher and A.~J\"{u}ngel.
\newblock Compact families of piecewise constant functions in ${L}^{p}(0, {T};
  {B})$.
\newblock {\em Nonlinear Anal.}, 75(6):3072--3077, 2012.

\bibitem{Evans}
L.~C. Evans.
\newblock {\em Partial differential equations}, volume~19 of {\em Graduate
  Studies in Mathematics}.
\newblock American Mathematical Society, Providence, second edition, 2010.

\bibitem{FlandoliGatarek}
F.~Flandoli and D.~Gatarek.
\newblock Martingale and stationary solutions for stochastic {N}avier-{S}tokes
  equations.
\newblock {\em Prob. Theory Related Fields}, 102(3):367--391, 1995.

\bibitem{GWG15}
V.~Girault, M.~F. Wheeler, B.~Ganis, and M.~E. Mear.
\newblock A lubrication fracture model in a poro-elastic medium.
\newblock {\em Math. Models Methods Appl. Sci.}, 25(4):587--645, 2015.

\bibitem{CG}
C.~Grandmont.
\newblock Existence of weak solutions for the unsteady interaction of a viscous
  fluid with an elastic plate.
\newblock {\em SIAM J. Math. Anal.}, 40(2):716--737, 2008.

\bibitem{Grandmont16}
C.~Grandmont and M.~Hillairet.
\newblock Existence of global strong solutions to a beam-fluid interaction
  system.
\newblock {\em Arch. Ration. Mech. Anal.}, 220(3):1283--1333, 2016.

\bibitem{FSIforBIO_Lukacova}
C.~Grandmont, M.~Luk\'{a}\v{c}ov\'{a}-Medvid'ov\'{a}, and \v{S}.
  Ne\v{c}asov\'{a}.
\newblock Mathematical and numerical analysis of some {F}{S}{I} problems.
\newblock In T.~Bodn\'{a}r, G.~P. Galdi, and \v{S}. Ne\v{c}asov\'{a}, editors,
  {\em Fluid-structure interaction and biomedical applications}, Advances in
  Mathematical Fluid Mechanics, pages 1--77. Birkh\"{a}user, 2014.

\bibitem{Grandmont2D}
C.~Grandmont and L.~Sabbagh.
\newblock Existence and uniqueness of strong solutions to a bi-dimensional
  fluid-structure interaction system.
\newblock 2024.

\bibitem{Grisvard}
P.~Grisvard.
\newblock {\em Elliptic problems in nonsmooth domains}, volume~69 of {\em
  Classics in Applied Mathematics}.
\newblock Society for Industrial and Applied Mathematics (SIAM), Philadelphia,
  PA, 2011.

\bibitem{IgnatovaKukavica}
M.~Ignatova, I.~Kukavica, I.~Lasiecka, and A.~Tuffaha.
\newblock On well-posedness for a free boundary fluid-structure model.
\newblock {\em J. Math. Phys.}, 53(11):115624, 13, 2012.

\bibitem{ignatova2014well}
M.~Ignatova, I.~Kukavica, I.~Lasiecka, and A.~Tuffaha.
\newblock On well-posedness and small data global existence for an interface
  damped free boundary fluid-structure model.
\newblock {\em Nonlinearity}, 27(3):467--499, 2014.

\bibitem{KSS2D}
M.~Kampschulte, S.~Schwarzacher, and G.~Sperone.
\newblock Unrestricted deformations of thin elastic structures interacting with
  fluids.
\newblock {\em J. Math. Pures Appl. (9)}, 173:96--148, 2023.

\bibitem{KuanTawri}
J.~Kuan and K.~Tawri.
\newblock Existence of weak martingale solutions to a stochastic
  fluid-structure interaction problem with a compressible viscous fluid.
\newblock {\em J. Differential Equations}, To appear.

\bibitem{NonlinearFPSI_CRM23}
J.~Kuan, S.~\v{C}ani\'{c}, and B.~Muha.
\newblock Existence of a weak solution to a regularized moving boundary
  fluid-structure interaction problem with poroelastic media.
\newblock {\em Comptes Rendus M\'{e}canique}, 351(S1):1--30, 2023.

\bibitem{FPSIJMPA}
J.~Kuan, S.~\v{C}ani\'{c}, and B.~Muha.
\newblock Fluid-poroviscoelastic structure interaction problem with nonlinear
  geometric coupling.
\newblock {\em J. Math. Pures Appl.}, 188:345--445, 2024.

\bibitem{Kuk}
I.~Kukavica and A.~Tuffaha.
\newblock Solutions to a fluid-structure interaction free boundary problem.
\newblock {\em DCDS-A}, 32(4):1355--1389, 2012.

\bibitem{KukavicaTuffahaZiane}
I.~Kukavica, A.~Tuffaha, and M.~Ziane.
\newblock Strong solutions for a fluid structure interaction system.
\newblock {\em Adv. Differential Equations}, 15(3-4):231--254, 2010.

\bibitem{LengererRuzicka}
D.~Lengeler and M.~R{\r u}{\v z}i{\v c}ka.
\newblock Weak solutions for an incompressible {N}ewtonian fluid interacting
  with a {K}oiter type shell.
\newblock {\em Arch. Ration. Mech. Anal.}, 211(1):205--255, 2014.

\bibitem{Lequeurre}
J.~Lequeurre.
\newblock Existence of strong solutions to a fluid-structure system.
\newblock {\em SIAM J. Math. Anal.}, 43(1):389--410, 2011.

\bibitem{LDQ11}
M.~Lesinigo, C.~D'Angelo, and A.~Quarteroni.
\newblock A multiscale {D}arcy-{B}rinkman model for fluid flow in fractured
  porous media.
\newblock {\em Numer. Math.}, 117(4):717--752, 2011.

\bibitem{MixedBiot3}
T.~Li, S.~Caucao, and I.~Yotov.
\newblock An augmented fully mixed formulation for the quasistatic
  navier-stokes-biot model.
\newblock {\em IMA J. Numer. Anal.}, 44(2):1153--1210, 2024.

\bibitem{MixedBiot}
T.~Li and I.~Yotov.
\newblock A mixed elasticity formulation for fluid-poroelastic structure
  interaction.
\newblock {\em ESIAM Math. Model. Numer. Anal.}, 56:1--40, 2022.

\bibitem{MMNT22}
V\'aclav M\'acha, Boris Muha, \v~S\'arka Ne\v~casov\'a, Arnab Roy, and Srdan
  Trifunovi\'c.
\newblock Existence of a weak solution to a nonlinear fluid-structure
  interaction problem with heat exchange.
\newblock {\em Comm. Partial Differential Equations}, 47(8):1591--1635, 2022.

\bibitem{MuhaCanic13}
B.~Muha and S.~{\v C}ani\'c.
\newblock Existence of a weak solution to a nonlinear fluid-structure
  interaction problem modeling the flow of an incompressible, viscous fluid in
  a cylinder with deformable walls.
\newblock {\em Arch. Ration. Mech. Anal.}, 207(3):919--968, 2013.

\bibitem{BorSun3d}
B.~Muha and S.~{\v C}ani\'c.
\newblock A nonlinear, 3{D} fluid-structure interaction problem driven by the
  time-dependent dynamic pressure data: a constructive existence proof.
\newblock {\em Commun. Inf. Syst.}, 13(3):357--397, 2013.

\bibitem{BorSunMultiLayered}
B.~Muha and S.~{\v C}ani\'c.
\newblock Existence of a solution to a fluid-multi-layered-structure
  interaction problem.
\newblock {\em J. Differential Equations}, 256(2):658--706, 2014.

\bibitem{BorSunNonLinearKoiter}
B.~Muha and S.~{\v C}ani\'c.
\newblock Fluid-structure interaction between an incompressible, viscous 3{D}
  fluid and an elastic shell with nonlinear {K}oiter membrane energy.
\newblock {\em Interfaces Free Bound.}, 17(4):465--495, 2015.

\bibitem{BorSunSlip}
B.~Muha and S.~{\v C}ani\'c.
\newblock Existence of a weak solution to a fluid-elastic structure interaction
  problem with the {N}avier slip boundary condition.
\newblock {\em J. Differential Equations}, 260(12):8550--8589, 2016.

\bibitem{AubinLions}
B.~Muha and S.~{\v C}ani\'c.
\newblock A generalization of the {A}ubin-{L}ions-{S}imon compactness lemma for
  problems on moving domains.
\newblock {\em J. Differential Equations}, 266(12):8370--8418, 2019.

\bibitem{MS22}
Boris Muha and Sebastian Schwarzacher.
\newblock Existence and regularity of weak solutions for a fluid interacting
  with a non-linear shell in three dimensions.
\newblock {\em Ann. Inst. H. Poincar\'e{} C Anal. Non Lin\'eaire},
  39(6):1369--1412, 2022.

\bibitem{Biotwell3}
S.~Owczarek.
\newblock A {G}alerkin method for {B}iot consolidation model.
\newblock {\em Math. Mech. Solids}, 15(1):42--56, 2010.

\bibitem{OyekoleBukacFPSI}
O.~Oyekole and M.~Buka\v{c}.
\newblock Second-order, loosely coupled methods for fluid-poroelastic material
  interaction.
\newblock {\em Numer. Methods Partial Differential Equations}, 36(4):800--822,
  2020.

\bibitem{Raymond}
J.-P. Raymond and M.~Vanninathan.
\newblock A fluid-structure model coupling the {N}avier-{S}tokes equations and
  the {L}am\'{e} system.
\newblock {\em J. Math. Pures Appl. (9)}, 102(3):546--596, 2014.

\bibitem{Biotwell4}
R.~E. Showalter.
\newblock Diffusion in poro-elastic media.
\newblock {\em J. Math. Anal. Appl.}, 251(1):310--340, 2000.

\bibitem{Sho05}
R.~E. Showalter.
\newblock Poroelastic filtration coupled to {S}tokes flow.
\newblock In {\em Control theory of partial differential equations}, volume 242
  of {\em Lect. Notes Pure Appl. Math.}, pages 229--241. Chapman \& Hall/CRC,
  Boca Raton, FL, 2005.

\bibitem{Biotwell5}
R.~E. Showalter and N.~Su.
\newblock Partially saturated flow in a poroelastic medium.
\newblock {\em Discrete Contin. Dyn. Syst. Ser. B}, 1(4):403--420, 2001.

\bibitem{TW20}
Srdan Trifunovi\'c and Ya-Guang Wang.
\newblock Existence of a weak solution to the fluid-structure interaction
  problem in 3{D}.
\newblock {\em J. Differential Equations}, 268(4):1495--1531, 2020.

\bibitem{CanicLectureNotes}
S.~\v{C}ani\'{c}.
\newblock Fluid-structure interaction with incompressible fluids.
\newblock In L.~C. Berselli and M.~Ru\v zi\v cka, editors, {\em Progress in
  Mathematical Fluid Dynamics}, volume 2272 of {\em Lecture Notes in
  Mathematics}, pages 15--87. Springer, 2020.

\bibitem{FSIStent}
S.~\v{C}ani\'{c}, M.~Gali\'{c}, and B.~Muha.
\newblock Analysis of a 3{D} nonlinear, moving boundary problem describing
  fluid-mesh-shell interaction.
\newblock {\em Trans. Amer. Math. Soc.}, 373(9):6621--6681, 2020.

\bibitem{Tawri3D}
S.~\v{C}ani\'{c}, B.~Muha, and K.~Tawri.
\newblock Existence and regularity results for a nonlinear fluid-structure
  interaction problem with three-dimensional structural displacement.
\newblock {\em Preprint}, 2024.

\bibitem{Biotwell6}
A.~\v{Z}en\'{i}\v{s}ek.
\newblock The existence and uniqueness theorem in {B}iot's consolidation
  theory.
\newblock {\em Aplikace Matematiky}, 29(3):194--211, 1984.

\bibitem{YifanCMAME}
Y.~Wang, S.~Canic, M.~Bukac, and J.~Tambaca.
\newblock Fluid-structure interaction between pulsatile blood flow and a curved
  stented coronary artery on a beating heart: a four stent computational study.
\newblock {\em Computer Methods in Applied Mechanics and Engineering},
  35(15):679--700, 2019.

\bibitem{FluidsCanic}
Y.~Wang, S.~\v{C}ani\'{c}, M.~Buka\v{c}, C.~Blaha, and S.~Roy.
\newblock Mathematical and computational modeling of a poroelastic cell
  scaffold in a bioartificial pancreas.
\newblock {\em Fluids}, 7(7):222, 2022.

\bibitem{YRC14}
J.~Young, B.~Rivi\`ere, Jr. C.~S.~Cox, and K.~Uray.
\newblock A mathematical model of intestinal oedema formation.
\newblock {\em Math. Med. Biol.}, 31(1):1--15, 2014.

\end{thebibliography}

{\bf{Data availability statement:}} This manuscript has no associated data. 

\end{document}